\newtheorem{remark}{Remark}[section]
\newtheorem{theorem}{Theorem}[section]
\newtheorem{lemma}[theorem]{Lemma}
\newtheorem{cor}[theorem]{Corollary}
\newtheorem{prop}[theorem]{Proposition}
\newtheorem*{claim}{Claim}
\newtheorem*{convention}{\textbf{Convention}}
\def \no{\nonumber}
\newcommand{\R}{\mathbb{R}}
\newcommand{\ud}{\mathrm{d}}
\newcommand{\Sn}{\mathbb{S}^n}
\newcommand{\al}{\alpha}                
\newcommand{\lda}{\lambda}
\newcommand{\om}{\Omega}                
\newcommand{\pa}{\partial}
\newcommand{\va}{\varepsilon}
\numberwithin{equation}{section}
\newdimen\bibspace
\renewenvironment{thebibliography}[1]{%
 \section*{\refname 
       \@mkboth{\MakeUppercase\refname}{\MakeUppercase\refname}}%
     \list{\@biblabel{\@arabic\c@enumiv}}%
          {\settowidth\labelwidth{\@biblabel{#1}}%
           \leftmargin\labelwidth
           \advance\leftmargin\labelsep
           \itemsep\bibspace
           \parsep\z@skip     %
           \@openbib@code
           \usecounter{enumiv}%
           \let\p@enumiv\@empty
           \renewcommand\theenumiv{\@arabic\c@enumiv}}%
     \sloppy\clubpenalty4000\widowpenalty4000%
     \sfcode`\.\@m}
    {\def\@noitemerr
      {\@latex@warning{Empty `thebibliography' environment}}%
     \endlist}
\begin{document}

\title{A fractional conformal curvature flow on the unit sphere}

\author{\medskip Xuezhang Chen~ and~ Pak Tung Ho}

\date{}

\maketitle

\begin{abstract}

We study a fractional conformal curvature flow on the standard unit sphere and prove a  perturbation result of the fractional Nirenberg problem with fractional exponent $\sigma \in (1/2,1)$.  This extends the result of Chen-Xu (Invent. Math. 187, no. 2, 395-506, 2012) for the scalar curvature flow on the standard unit sphere. 

\medskip 

{{\bf $\mathbf{2020}$ MSC:} 35R09,35B44,35K55 (53C21,58E05,58K55)}

\medskip 

{{\bf Keywords:} Fractional Nirenberg problem, fractional curvature flow, blow-up analysis, Morse theory.}

\end{abstract}

\section{Introduction}\label{Sect:Intro}

Let $(\Sn,g_{\Sn})$ be the unit sphere equipped with the standard round metric  $g_{\Sn}$ with $n\ge 2$. Denote by $[g_{\Sn}]=\{\rho g_{\Sn}; 0<\rho\in C^\infty(\Sn) \}$ the conformal class of $g_{\Sn}$.  By viewing $(\Sn,[g_{\Sn}])$ as conformal infinity of the Poincar\'e ball, Graham-Zworski \cite{GZ} established that  there is a family of conformally covariant (pseudo-)differential  operators $P^g_\sigma$ for $g\in [g_{\Sn}]$, where $\sigma\in (0,n/2)$ and the principle symbol of $P_\sigma^g$ is $|\xi|^{2\sigma}$.  These operators satisfy the following conformal invariance: 
\begin{equation}\label{conformal_invariance}
P_\sigma^{\rho^{\frac{4}{n-2\sigma}}g}(\phi)=\rho^{-\frac{n+2\sigma}{n-2\sigma}}P^g_\sigma(\rho \phi),\quad \forall ~\rho, \phi \in C^\infty(\Sn) \quad \mathrm{with~~}\rho>0.
\end{equation}
We call
\begin{equation}\label{1.6}
R_\sigma^g=P_\sigma^g(1)
\end{equation}
  $Q$-curvature of order $2 \sigma$. Up to positive constants, $R_1^g$ is the scalar curvature and $R_2^g$ is the fourth order  $Q$-curvature of Paneitz \cite{Paneitz} and Branson \cite{Bran85}, respectively. We refer to Fefferman-Graham \cite{FG85, FG}, Graham-Jenne-Mason-Sparling \cite{GJMS},   Gover-Peterson \cite{GP}, Juhl \cite{Juhl} and references therein for the construction of conformally covariant operators on manifolds and $Q$-curvatures related to them. 

The operator $P_\sigma:=P^{g_{\Sn}}_\sigma$ is referred to the intertwining operator,  see Beckner \cite{Beckner}, Branson \cite{Bran} and Morpurgo \cite{Morpurgo}.  It can be written as \begin{equation}\label{1.2}
P_\sigma=\frac{\Gamma(B+\frac{1}{2}+\sigma)}{\Gamma(B+\frac{1}{2}-\sigma)}\quad\mathrm{ and }\quad B=\sqrt{-\Delta_{g_{\Sn}}+\left(\frac{n-1}{2}\right)^2},
\end{equation}
where $\Gamma$ is the Gamma function and $\Delta_{g_{\Sn}}$ is the Laplace-Beltrami operator of $g_{\Sn}$.  More precisely, $B$ and $P_\sigma$ are determined by the formulas 
\[
B(Y^{(k)})= \left(k+\frac{n-1}{2}\right) Y^{(k)} \quad \mathrm{and}\quad P_\sigma(Y^{(k)})=\frac{\Gamma(k+\frac{n}{2}+\sigma)}{\Gamma(k+\frac{n}{2}-\sigma)} Y^{(k)}
\] for every spherical harmonic polynomial $Y^{(k)}$ of degree $k\ge 0$.  Furthermore, $P_\sigma$ is the pull back of the fractional Laplacian $(-\Delta)^\sigma$ on $\R^n$ via the stereographic projection through
\begin{equation} \label{eq:r1}
(P_\sigma(\phi))\circ \Psi=  (\det \ud\Psi)^{-\frac{n+2\sigma}{2n}}(-\Delta)^\sigma( (\det \ud\Psi)^{\frac{n-2\sigma}{2n}}\phi\circ \Psi)\quad \mathrm{for~~}\phi\in C^2(\Sn),
\end{equation}
where $\Psi: \mathbb{R}^n \to \Sn$ is the inverse of the stereographic projection from the south pole $S$ and $\Psi^\ast(g_{\Sn})=(\det \ud\Psi)^{2/n}g_{\Sn}$.  When $\sigma\in (0,1)$, Pavlov and Samko \cite{Pavlov&Samko} proved that
\begin{equation}\label{1.2.4}
P_\sigma(v)(x)
=c_{n,-\sigma}\int_{\Sn}\frac{v(x)-v(y)}{|x-y|^{n+2\sigma}}\ud V_{g_{\Sn}}(y)+R_\sigma v(x) \quad \mathrm{for~~}v\in C^2(\Sn),
\end{equation}
 where $|x-y|$ is the Euclidean distance in $\R^{n+1}$ between $x$ and $y$,  $c_{n,-\sigma}=\frac{2^{2\sigma}\sigma\Gamma(\frac{n+2\sigma}{2})}{\pi^{\frac{n}{2}}\Gamma(1-\sigma)}$ and $ R_\sigma=P_\sigma(1)=\frac{\Gamma(\frac{n}{2}+\sigma)}{\Gamma(\frac{n}{2}-\sigma)}$ is the  $Q$-curvature of  order $2\sigma$ with respect to $g_{\Sn}$. 
 
In a series of papers \cite{Jin&Li&Xiong1,Jin&Li&Xiong2,Jin&Li&Xiong3},  Jin-Li-Xiong studied the prescribing fractional $Q$-curvature problem on $\Sn$ for $\sigma\in (0,n/2)$, equivalently, the fractional Nirenberg problem, generalizing the classical Nirenberg problem  ($\sigma=1$). This  problem is equivalent to solving
\begin{equation}\label{1.2.5}
P_\sigma(v)=fv^{\frac{n+2\sigma}{n-2\sigma}} ,\quad  v>0 \quad\mathrm{on~~}\Sn ,
\end{equation}
where  $f$ is a given continuous function on $\Sn$.  When $\sigma=1/2$ and $2$, it recovers the prescribing mean curvature problem and Paneitz-Branson $Q$-curvature problem, respectively; see  \cite{Jin&Li&Xiong1, Jin&Li&Xiong3} for brief reviews of the classical Nirenberg problem and its generalizations, as well as references in this field. Other studies on the fractional Nirenberg problem include Escobar-Garcia \cite{EG}, Chen-Zheng \cite{CZ},  Abdelhedi-Chtioui-Hajaiej \cite{ACH}, Chen-Liu-Zheng \cite{CLZ}, Guo-Nie-Niu-Tang \cite{G+},  Liu-Ren \cite{LR}, Niu-Tang-Wang \cite{NTW}, etc.   The limiting case $\sigma=n/2$ is of particular interest; see Moser \cite{Moser}, Chang-Yang \cite{CY87}, Wei-Xu \cite{WX98, WX09}, Brendle \cite{Brendle1, Brendle2}, Da Lio-Martinazzi-Rivi\'ere \cite{DMR}, etc. When $\sigma\in (0,1)$, \eqref{1.2.5} is related to the fractional Yamabe problem, which has been studied by Gonz\'alez-Qing \cite{GQ}, Gonz\'alez-Wang \cite{GW}, Kim-Musso-Wei \cite{KMW}, Ndiaye-Sire-Sun \cite{NSS}, Mayer-Ndiaye \cite{MN}, etc.

In this paper, we are interested in a flow approach to  the fractional Nirenberg problem. The flow approach has been studied in the limiting case ($\sigma=n/2$)  by  Brendle \cite{Brendle2,Brendle4},  Struwe  \cite{S}, Malchiodi-Struwe \cite{MS} and Chen-Xu \cite{Chen&Xu1}, Ho \cite{Ho,Ho2,Ho3}, etc. The scalar curvature flow was firstly introduced by Chen-Xu \cite{Chen&Xu}, to which this paper is close.  Since the heat kernel of $\pa_t +(-\Delta)^{\sigma} $ changes signs when $\sigma>1$, we confine our present investigation to the range  $\sigma\in (0,1)$. If $\sigma>1$,  one might consider some nonlocal  flows as Baird-Fardoun-Regbaoui \cite{BFR} and Gursky-Malchiodi \cite{GM} did for the fourth order $Q$-curvature problem. 

For any positive smooth function  $f $ on $\Sn$, we study the Cauchy problem
\begin{align}\label{1.3} 
\begin{cases}
~~~\frac{\partial g}{\partial t}=-(R_\sigma^g-\alpha f)g \quad \mathrm{on~~}\Sn \times (0,\infty),  \\ 
 g(0)=g_0\in [g_{\Sn}],
\end{cases}
\end{align}
where
\begin{equation}\label{1.4}
\alpha(t)=\frac{\int_{\Sn}R_\sigma^g\ud V_g}{\int_{\Sn}f\ud V_g}.
\end{equation}  
The above evolution equation is a negative gradient flow of the normalized total fractional $Q$-curvature functional
\[
\mathcal{S}(g)= \frac{\int_{\Sn}R_\sigma^g \ud V_{g}}{(\int_{\Sn}f\ud V_{g})^{\frac{n-2\sigma}{n}}} \quad \mathrm{for~~} g\in [g_{\Sn}].
\]
If $\sigma=1$, it is the scalar curvature flow initially  studied by Chen-Xu \cite{Chen&Xu}; see also Mayer \cite{Mayer}. 
 If $f=1$ and $\sigma \in (0,1)$, it is the  fractional Yamabe flow  studied by Jin-Xiong \cite{Jin&Xiong}; see also Daskalopoulos-Sire-V\'azquez \cite{DSV} and Chan-Sire-Sun \cite{CSS} on manifolds. Using the localization formula of Caffarelli-Silvestre \cite{CaS} or Chang-Gonz\'alez \cite{Chang-G}, this flow with $\sigma=1/2$  coincides with the one of prescribing mean curvature in the Euclidean unit  ball by Xu-Zhang \cite{XZ}.  
 
 If  we write $g(t)= u(t)^{4/(n-2\sigma)} g_{\Sn}$ with $u(0)=u_0\in C^\infty(\Sn)$ being a positive smooth function, then \eqref{1.3} becomes
\begin{align}\label{1.7} 
\begin{cases}
~~~\frac{\partial u}{\partial t}=-\frac{n-2\sigma}{4}(R_\sigma^g-\alpha f)u \quad \mathrm{on~~}\Sn \times (0,T),\\  
u(0)=u_0,
\end{cases}
\end{align}
where $T \leq \infty$.
By \eqref{1.6}, the above first equation becomes
\begin{equation}\label{1.8}
\frac{4}{n+2\sigma}\frac{\partial }{\partial t} u^{\frac{n+2\sigma}{n-2\sigma}}=
-P_\sigma(u)+\alpha f u^{\frac{n+2\sigma}{n-2\sigma}}\quad \mathrm{on~~}\Sn\times (0,T). 
\end{equation}

Our first theorem asserts that the Cauchy problem has a unique global solution. 

\begin{theorem}\label{thm:1} Suppose  that $\sigma\in (0,1)$ and $u_0, f \in C^{\infty}(\Sn)$ are positive functions. Then for any $0<T<\infty$, the Cauchy problem \eqref{1.7} has a unique smooth positive solution in $\Sn\times (0,T]$. 
 \end{theorem}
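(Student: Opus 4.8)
The plan is to prove short-time existence together with positivity, then derive a priori bounds that are uniform on every finite time interval, and finally continue the solution up to an arbitrary $T<\infty$. For $\sigma=1/2$ this is the prescribed mean curvature flow of Xu--Zhang and for $f\equiv1$ the fractional Yamabe flow of Jin--Xiong, so the argument below is an adaptation of those.

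\emph{Step 1: local existence, uniqueness, positivity, smoothness.} Applying the conformal covariance \eqref{conformal_invariance} with $\rho=u$ gives $R_\sigma^g=u^{-\frac{n+2\sigma}{n-2\sigma}}P_\sigma(u)$, so \eqref{1.7}, in the form \eqref{1.8}, is the quasilinear nonlocal parabolic equation $\frac{4}{n+2\sigma}\partial_t\big(u^{\frac{n+2\sigma}{n-2\sigma}}\big)=-P_\sigma(u)+\alpha f\,u^{\frac{n+2\sigma}{n-2\sigma}}$ with $\alpha$ the nonlocal functional \eqref{1.4}; equivalently $\partial_t u=-\frac{n-2\sigma}{4}u^{-\frac{4\sigma}{n-2\sigma}}P_\sigma(u)+\frac{n-2\sigma}{4}\alpha f\,u$, which is uniformly parabolic wherever $u$ is trapped between positive constants. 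I would realize $P_\sigma$ through the Caffarelli--Silvestre / Chang--Gonz\'alez extension, rewriting the flow as a degenerate parabolic problem $\mathrm{div}(y^{1-2\sigma}\nabla U)=(\text{lower order})$ on $\Sn\times[0,\infty)$ with a dynamic boundary condition on $\Sn\times\{0\}$ prescribing the weighted conormal derivative; linearizing at $u_0$ and using the Schauder theory for such degenerate operators, a contraction mapping in parabolic (weighted) H\"older spaces yields a unique solution on some interval $[0,\tau]$. (Alternatively one uses Duhamel's formula for the analytic semigroup $e^{-tP_\sigma}$ on $\Sn$.) Positivity of $U$, hence of $u$, follows from the maximum principle for $\mathrm{div}(y^{1-2\sigma}\nabla\,\cdot\,)$, valid precisely because $1-2\sigma\in(-1,1)$, i.e. $\sigma\in(0,1)$ --- exactly the restriction singled out in the introduction; parabolic smoothing and bootstrapping give $u\in C^\infty(\Sn\times(0,\tau])$.

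\emph{Step 2: a priori estimates on $[0,T]$.} First, the normalization \eqref{1.4} makes the flow volume preserving: differentiating $\int_{\Sn}u^{\frac{2n}{n-2\sigma}}\,\ud V_{g_{\Sn}}=\int_{\Sn}\ud V_g$ along \eqref{1.7} gives $\frac{d}{dt}\int_{\Sn}\ud V_g=-\tfrac n2\big(\int_{\Sn}R_\sigma^g\,\ud V_g-\alpha\int_{\Sn}f\,\ud V_g\big)=0$; in particular $\sup_{\Sn}u(\cdot,t)\ge c_0>0$. Second, $\mathcal S(g(t))$ is nonincreasing (negative gradient flow) and bounded below by $Y_\sigma(\Sn,[g_{\Sn}])\,(\max_{\Sn}f)^{-\frac{n-2\sigma}{n}}>0$ via the sharp fractional Sobolev inequality, so $\int_{\Sn}uP_\sigma(u)\,\ud V_{g_{\Sn}}$ stays bounded; together with the conserved volume this gives a uniform $H^\sigma(\Sn)$ bound on $u$ and traps $\alpha(t)$ between two positive constants. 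A De Giorgi--Nash--Moser iteration for the nonlocal parabolic equation then upgrades this to $u(\cdot,t)\le C(T)$ on $[0,T]$. For the positive lower bound I would differentiate $R_\sigma^g=u^{-\frac{n+2\sigma}{n-2\sigma}}P_\sigma(u)$ and use \eqref{conformal_invariance} once more to obtain the evolution equation $\partial_t R_\sigma^g=\tfrac{n+2\sigma}{4}R_\sigma^g(R_\sigma^g-\alpha f)-\tfrac{n-2\sigma}{4}P_\sigma^g(R_\sigma^g-\alpha f)$ (plus the contribution of $\dot\alpha$); a maximum-principle argument for $P_\sigma^g$ then yields $R_\sigma^g\le C(T)$, whence $\partial_t\log u=-\tfrac{n-2\sigma}{4}(R_\sigma^g-\alpha f)\ge-C(T)$ and so $u(\cdot,t)\ge(\min_{\Sn}u_0)e^{-C(T)t}>0$ on $[0,T]$. (Alternatively the lower bound can be extracted from a parabolic Harnack inequality for \eqref{1.8} together with $\sup u\ge c_0$.) With $u$ trapped between positive constants, \eqref{1.8} is uniformly parabolic and Schauder estimates plus bootstrapping give uniform $C^\infty(\Sn\times[\tau,T])$ bounds for every $\tau>0$.

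\emph{Step 3: continuation, and the main difficulty.} If $T_{\max}<\infty$ were the maximal existence time, applying Step 2 with $T=T_{\max}$ gives bounds uniform on $[0,T_{\max})$, so $u(\cdot,t)$ converges in $C^\infty(\Sn)$ to a smooth positive function as $t\uparrow T_{\max}$, which can serve as initial data in Step 1 to extend the solution past $T_{\max}$ --- a contradiction; hence $T_{\max}=\infty$, and the solution is smooth and positive on $\Sn\times(0,\infty)$, a fortiori on $\Sn\times(0,T]$ for every finite $T$. Uniqueness on the whole interval follows by patching the local uniqueness. The hard part is the positive lower bound on $u$ (equivalently the upper bound on $R_\sigma^g$): the principal coefficient $u^{-\frac{4\sigma}{n-2\sigma}}$ of the linearized equation blows up where $u$ is small, so one must either exploit the fast-diffusion structure of \eqref{1.8} in a Harnack estimate robust under this singularity, or control $R_\sigma^g$ through its own nonlocal parabolic equation --- the latter delicate because the kernel of $P_\sigma^g$ involves $u$ itself. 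Setting up the Schauder and regularity theory for the degenerate extension problem is where most of the technical work lies.
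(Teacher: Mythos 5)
Your overall scheme (local existence, finite-time a priori bounds, continuation) is the one the paper follows, and several ingredients are right: volume preservation, the two-sided bound on $\alpha$ from the Sobolev inequality and the monotonicity of $\mathcal S$, and the evolution equation for $R_\sigma^g$. But the two central estimates in Step 2 are obtained by arguments that do not work, and in fact you have the logical direction of the curvature bound reversed. The maximum principle applied to $\partial_t R_\sigma^g=-\tfrac{n-2\sigma}{4}P^g_\sigma(R_\sigma^g-\alpha f)+\tfrac{n+2\sigma}{4}R_\sigma^g(R_\sigma^g-\alpha f)$ yields a \emph{lower} bound $R_\sigma^g-\alpha f\ge\gamma$ (Lemma \ref{lemma1.6}): the quadratic term $\tfrac{n+2\sigma}{4}(R_\sigma^g)^2\ge 0$ has the favorable sign for comparison from below. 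For an \emph{upper} bound the same term produces a Riccati inequality $\tfrac{\ud}{\ud t}\max(R_\sigma^g-\alpha f)\lesssim \sigma\,\max(R_\sigma^g-\alpha f)^2+C$, whose solutions can blow up before an arbitrary prescribed $T$; so your claimed ``maximum-principle argument yields $R_\sigma^g\le C(T)$'' does not go through, and with it your route to the lower bound on $u$ collapses. The lower bound $R_\sigma^g-\alpha f\ge\gamma$ is instead used, via $\partial_t u=-\tfrac{n-2\sigma}{4}(R_\sigma^g-\alpha f)u$, to give the \emph{upper} bound $u\le C_1e^{C_2T}$ — not the lower bound.

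Your proposed upper bound on $u$ is also unsupported: a uniform-in-time $H^\sigma(\Sn)$ bound cannot be upgraded to $L^\infty$ by De Giorgi--Nash--Moser, because the nonlinearity $u^{\frac{n+2\sigma}{n-2\sigma}}$ is Sobolev-critical and the iteration only closes under a smallness condition on $\|R_\sigma^g u^{\frac{4\sigma}{n-2\sigma}}\|_{L^{n/(2\sigma)}}$ over small sets — exactly what fails at concentration points, and the paper stresses (Kazdan--Warner, $f=2+x_{n+1}$) that solutions need not be uniformly bounded. The correct order is: curvature lower bound $\Rightarrow$ ODE upper bound on $u$; then the lower bound on $u$ comes from the pointwise inequality $P_\sigma(u)\ge(\alpha f+\gamma)u^{\frac{n+2\sigma}{n-2\sigma}}$, the weak Harnack inequality (via the extension) giving $\inf_{\Sn}u\ge C^{-1}\int_{\Sn}u\,\ud V_{g_{\Sn}}$, and volume conservation $\omega_n\le C(\inf u)(\sup u)^{\frac{n+2\sigma}{n-2\sigma}}$, which converts the upper bound into a lower bound (Lemma \ref{lemma1.9}). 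You mention a Harnack inequality only as an aside, but it is the essential step, not an alternative. Once $u$ is pinched between positive constants, the Hölder and Schauder theory for the nonlocal parabolic equation (Proposition \ref{prop4.2_in_Jin&Xiong}) finishes the continuation argument as you describe.
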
 

Due to the Kazdan-Warner obstruction \cite{Jin&Li&Xiong1} to the existence of  positive smooth solutions of \eqref{1.2.5},  solutions of  \eqref{1.7} are not necessarily uniformly bounded. In particular, it is the case  when $f=2+x_{n+1}$. Through establishing a Stroock-Varopoulos type inequality, we apply a similar argument in  Schwetlick-Struwe \cite{Schwetlick&Struwe}, Brendle \cite{Brendle3} and Chen-Xu \cite{Chen&Xu} to derive evolution  equations of fractional $Q$-curvature and its $L^p(\Sn,g)$-convergence for all $p\ge 1$, as well as its $H^\sigma(\Sn,g)$- convergence.  

The next theorem is a natural extension of Chen-Xu \cite{Chen&Xu} to the fractional Nirenberg problem. 

\begin{theorem}\label{main_thm} 
Suppose  that $\sigma \in (1/2,1)$ and $f \in C^{\infty}(\Sn)$ is a positive Morse function.  Assume that:
\begin{itemize} 
\item[(i)] 
\begin{equation}\label{sbc}
\max_{\Sn}f\big/\min_{\Sn}f<2^{\frac{2\sigma}{n-2\sigma}};
\end{equation} 
\item[(ii)] 
$$|\nabla f|_{g_{\Sn}}^2+|\Delta_{g_{\Sn}}f|^2 \neq 0 \quad \mathrm{on~~} \Sn;$$ 
\item[(iii)] For any integer $0\le i\le n$,  denote 
\begin{equation}\label{gamma_j}
\gamma_i=\# \{\theta\in \Sn; \nabla f(\theta)=0, \Delta_{\Sn} f(\theta)<0, \mathrm{ind} (f,\theta)=n-i\},
\end{equation}
where $\mathrm{ind} (f,\theta)$ stands for the Morse index of $f$ at the critical point $\theta$. 
There are no nonnegative constants  $k_0, k_1,\cdots, k_n$ satisfying
\begin{equation}\label{assump:Morse_Sys_Cond}
\gamma_0=1+k_0, \quad \gamma_i=k_{i-1}+k_i\quad\mathrm{ for~~}1\leq i\leq n, \quad k_n=0.
\end{equation}
\end{itemize} 
Then there exists at least one positive smooth solution of  \eqref{1.2.5}.
\end{theorem}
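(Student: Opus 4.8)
The plan is to obtain a solution of \eqref{1.2.5} as a limit of the flow \eqref{1.7}, using the flow as the deformation in an infinite-dimensional Morse theory in the spirit of Chen--Xu \cite{Chen&Xu}, and feeding into it the blow-up analysis of Jin--Li--Xiong \cite{Jin&Li&Xiong1,Jin&Li&Xiong2,Jin&Li&Xiong3}. Run \eqref{1.7} from $g_0=g_{\Sn}$; by Theorem \ref{thm:1} this gives a global smooth positive solution $g(t)=u(t)^{4/(n-2\sigma)}g_{\Sn}$. Since \eqref{1.3} is the negative gradient flow of $\mathcal{S}$, the energy $t\mapsto\mathcal{S}(g(t))$ is nonincreasing, and it is bounded below by $Y_\sigma(\max_{\Sn}f)^{-(n-2\sigma)/n}$, where $Y_\sigma$ is the fractional Yamabe constant of $(\Sn,g_{\Sn})$; hence $\mathcal{S}(g(t))$ converges, and (via the Stroock--Varopoulos type inequality) one obtains the $L^p$-- and $H^\sigma$--convergence of the fractional $Q$-curvature $R_\sigma^{g(t)}$ discussed after Theorem \ref{thm:1}. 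The easy alternative is then standard: if $\{u(t)\}$ is bounded in $H^\sigma(\Sn)$ (equivalently in $C^0$), then along a sequence $t_j\to\infty$ it converges to a nonnegative $H^\sigma$ weak solution $v$ of \eqref{1.2.5}; the regularity theory for $P_\sigma$ upgrades $v$ to $C^\infty$, and the strong maximum principle yields $v>0$, which finishes the proof.

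The substance of the argument is the complementary case, in which the flow concentrates as $t\to\infty$. Here I would adapt the concentration--compactness for the nonlocal parabolic equation \eqref{1.8} (in the spirit of Struwe \cite{S}, Schwetlick--Struwe \cite{Schwetlick&Struwe}, Brendle \cite{Brendle3} and Jin--Xiong \cite{Jin&Xiong}) together with the elliptic blow-up analysis of \cite{Jin&Li&Xiong1,Jin&Li&Xiong2}: along any $t_j\to\infty$, the normalised solutions either converge or split off finitely many standard bubbles (modelled on the extremals of the fractional Sobolev inequality) with concentration rates $\to\infty$, plus a residual solution of \eqref{1.2.5}, with precise energy quantization. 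Because $\mathcal{S}(g(t))\le\mathcal{S}(g(0))=\mathcal{S}(g_{\Sn})$ and the pinching hypothesis \eqref{sbc} keeps $\mathcal{S}(g_{\Sn})$ strictly below the two-bubble level $2^{2\sigma/n}Y_\sigma(\max_{\Sn}f)^{-(n-2\sigma)/n}$, only a single bubble can form, with no surviving residual solution (otherwise the energy would exceed this level as well). A Pohozaev--Kazdan--Warner identity for this bubble forces its concentration point $p$ to be a critical point of $f$; hypothesis (ii), which is automatic off the critical set and reads $\Delta_{\Sn}f\neq0$ on it, combined with $f$ being Morse, then forces $\nabla f(p)=0$ and $\Delta_{\Sn}f(p)<0$. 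The restriction $\sigma\in(1/2,1)$, rather than the full range $\sigma\in(0,1)$ of Theorem \ref{thm:1}, is used in this finer blow-up analysis.

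Finally I would run the Morse-theoretic count. Suppose, for contradiction, that \eqref{1.2.5} has no positive smooth solution. Then, by the two previous steps, no flow line converges (a convergent one would converge to an equilibrium, i.e.\ a solution), and from every initial datum in the contractible variational space $\Sigma^+$ of normalised positive conformal factors the flow concentrates on a single bubble at some $\theta\in\Sn$ with $\nabla f(\theta)=0$ and $\Delta_{\Sn}f(\theta)<0$; these are exactly the critical points at infinity of $\mathcal{S}$. A standard computation of the Morse index at infinity (as in Chen--Xu \cite{Chen&Xu}) assigns to the one associated with such $\theta$ the index $n-\mathrm{ind}(f,\theta)$, so the number of critical points at infinity of index $i$ equals $\gamma_i$ of \eqref{gamma_j}. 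Using the flow as a deformation on $\Sigma^+$, whose Poincar\'e polynomial is $1$, the Morse inequalities furnish nonnegative integers $k_0,\dots,k_n$ with $k_n=0$ and
\[
\sum_{i=0}^{n}\gamma_i\,t^i=1+(1+t)\sum_{i=0}^{n-1}k_i\,t^i,
\]
which is precisely the system \eqref{assump:Morse_Sys_Cond}. This contradicts hypothesis (iii); hence \eqref{1.2.5} admits a positive smooth solution.

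I expect the main obstacle to be the blow-up analysis in the second paragraph: establishing isolated simple blow-up, the sharp energy quantization, and the exclusion of multiple bubbles and of a surviving residual solution for the \emph{nonlocal and time-dependent} equation \eqref{1.8} requires lifting the elliptic estimates of Jin--Li--Xiong to the parabolic setting and controlling the flow uniformly across concentration times, e.g.\ via the Caffarelli--Silvestre extension \cite{CaS} and a Struwe-type global compactness argument. A secondary difficulty is making the Morse theory at infinity rigorous for this flow---characterising the critical points at infinity, computing their indices, and justifying the deformation of $\Sigma^+$---since the flow replaces the abstract negative gradient flow only once the blow-up picture is fully under control.
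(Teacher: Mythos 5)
Your proposal follows essentially the same route as the paper: global existence of the flow, $L^p$/$H^\sigma$ convergence of $R_\sigma^{g(t)}-\alpha f$ via a Stroock--Varopoulos type inequality, concentration--compactness with the pinching condition \eqref{sbc} ruling out more than one bubble and a surviving residual, the Kazdan--Warner/shadow-flow analysis forcing concentration at a critical point of $f$ with $\Delta_{\Sn}f<0$, and finally the Morse-theoretic identity $\sum_j\gamma_js^j=1+(1+s)\sum_jk_js^j$ contradicting hypothesis \textit{(iii)}. The plan, including the role of $\sigma\in(1/2,1)$ and the identified technical obstacles, matches the paper's argument (Sections \ref{Sect:Blow-up_anal}--\ref{Sect:Existence}).
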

We refer to \cite{Chen&Xu} for more comments on the condition \textit{(iii)} in Theorem \ref{main_thm}. Condition \textit{(i)} guarantees that only single bubble occurs in the following blow-up analysis. Due to the same reason, we call Theorem \ref{main_thm}  a perturbation result of the fractional Nirenberg problem.   The perturbation theorems for the Nirenberg problem was proved by Chang-Yang \cite{CY}, Li \cite{L}, Malchiodi \cite{M},  Ji \cite{J1,J2} and references therein. Jin-Li-Xiong \cite{Jin&Li&Xiong2}  proved a perturbation theorem for the fractional Nirenberg problem  when $\sigma\in (1/2,1)$. Jin-Li-Xiong \cite{Jin&Li&Xiong3}  established uniform a priori estimates with respect to $\sigma\in (0,n/2)$ and a degree argument deforming  $\sigma$ to $\sigma=1$ allows them to avoid proving perturbation theorems for the fractional Nirenberg problem.  Moreover, conditions \textit{(i)} and \textit{(iii)} together avoid concentration of any sequence of flow solutions.

In comparison to \cite{Chen&Xu}, we need to overcome several difficulties stemming from the nonlocality. We will handle some useful  nonlocal ingredients in the consequent sections.  During the long journey to the proof of our main theorem, we need several new insights,  see, e.g.,  the proofs of Lemma \ref{lemma5.4} and  the characterization of homotopy types of critical points of $f$ in the Morse theory part in Proposition \ref{proposition7.1} \textit{(i)} etc. Especially, in order to show that a certain sub-level set of energy $\mathcal{S}(g)$ is contractible in Proposition \ref{proposition7.1} \textit{(i)}, our new strategy is to construct a relatively simpler but new homotopy  and develop new estimates to quantify a gap between a normalized flow $v(t)$ and its asymptotic limit $1$ in the $C^1(\Sn)$-norm, precisely, there exists a positive constant $\va_0$, depending only on $n,\sigma$, such that $\|v(T_1)-1\|_{C^1(\Sn)}<\va_0$ for some finite time $T_1=T_1(u_0)>0$, where $u_0$ is the initial datum of the flow. This enables us to make the proof more transparent.

The paper is organized as follows. In Section \ref{Sect:Prelim}, we establish an extension formula of  $P_\sigma$ in the unit ball $\overline{B_1}$ of $\R^{n+1}$ and a Stroock-Varopoulos type inequality, which is crucial in the proof of the asymptotic behaviors of $R_\sigma^{g(t)}$, as well as other elementary estimates. In Section \ref{Sect:Property_flow}, we collect some basic properties of the fractional $Q$-curvature flow \eqref{1.3}. In Section \ref{Sect:Integ_curv_est}, we prove $L^p$ and $H^\sigma$ estimates of $\alpha f-R_\sigma^{g(t)}$ with respect to the flow metric $g(t)$.  The purpose of Section \ref{Sect:Blow-up_anal} is devoted to the blow-up analysis for any time sequence of solutions to the flow \eqref{1.3} and establish a compactness-concentration lemma, which shows that if concentration phenomenon occurs, then there exists only single bubble under condition \eqref{sbc}. Starting from Section \ref{Sect:Finite_dim_dynamics}, we restrict ourselves to $\sigma \in (1/2,1)$. Part of the reason is that when $0<\sigma< 1/2$,  the compact embedding of  $H^{2\sigma,p}(\Sn)\hookrightarrow C^1(\Sn)$ is missed, especially for a normalized flow $v(t)$; see \eqref{def:fractional_Sobolev_space} below for the definition of the space $H^{2\sigma,p}(\Sn)$. However, this is crucial to the rest part of this scheme.  Subsequently, we adopt a contradiction argument, suppose that the flow diverges for any initial data, then we derive that  $v(t)\to 1$ in $C^{1,\lambda}(\Sn)$ for suitable $\lambda\in (0,1)$. In Section \ref{Sect:Existence}, through some elaborate efforts to characterize the homotopy type of an appropriate sub-level set of $\mathcal{S}(g)$, we reach a contradiction with condition \textit{(iii)} in Theorem \ref{main_thm}.

\vspace{0.2cm}
\noindent{\bf Acknowledgments.} 
Chen's research was supported by NSFC (No.11771204), A Foundation for the Author of National Excellent Doctoral Dissertation of China (No.201417) and start-up grant of 2016 Deng Feng program B at Nanjing University. Ho's research was supported by Basic Science Research Program through the National Research Foundation of Korea (NRF) funded by the Ministry of Education, Science and Technology (2019041021), and by Korea Institute for Advanced Study (KIAS) grant funded by the Korea government (MSIP). 

\bigskip

\section{Preliminaries}\label{Sect:Prelim}

For  $\sigma \in (0,1)$, we define $H^\sigma(\Sn)$ as the fractional Sobolev space with the norm
\[
\|u\|_{H^\sigma(\Sn)}= \left(\int_{\Sn} u P_\sigma u\,\ud V_{g_{\Sn}}\right)^{1/2}.
\]
The Sobolev inequality asserts that
\begin{equation}\label{ineq:Sobolev}
Y_\sigma(\Sn)\left(\int_{\Sn} |u|^{\frac{2n}{n-2\sigma}}\,\ud V_{g_{\Sn}} \right)^{\frac{n-2\sigma}{n}} \le  \int_{\Sn} u P_\sigma u\,\ud V_{g_{\Sn}} \quad \mathrm{for~~}u\in H^\sigma (\Sn),
\end{equation}
with equality if and only if $$u=(\det \ud \phi)^{\frac{n-2\sigma}{2n}}$$ 
for some conformal transformation $\phi$ on $\Sn$, up to a nonzero constant multiple. See Beckner \cite[Theorem 6]{Beckner} together with \eqref{eq:r1}. By \eqref{ineq:Sobolev} and  \eqref{conformal_invariance} , for $g\in [g_{\Sn}]$ there holds
\begin{equation}\label{eq:sobolev}
Y_\sigma(\Sn)\left(\int_{\Sn} |u|^{\frac{2n}{n-2\sigma}}\,\ud V_g \right)^{\frac{n-2\sigma}{n}} \le  \int_{\Sn} u P_\sigma^g u\,\ud V_g \quad \mathrm{for~~}u\in H^\sigma (\Sn,g).
\end{equation}

 We are going to derive an extension formula for $P_\sigma^g$ in the unit ball $\overline {B_1}\subset \R^{n+1}$, which is of independent interest. Let $g=u^{4/(n-2\sigma)} g_{\Sn}$ and $u\in C^\infty(\Sn)$ be positive, and $v\in C^\infty (\Sn)$. 
\medskip

\emph{Step 1.} An extension formula in  $\R^{n+1}_+:=\{x=(x', x_{n+1}) \in \R^{n+1}; x_{n+1}>0\}$ equipped with an appropriate Riemannian metric.

\medskip

Denote $e_{n+1}=(0,\cdots, 0, 1)\in \R^{n+1}$. Let $\bar \Psi: \R^{n+1}_+\to B_1$ be an inversion with respect to the sphere $\pa B_{\sqrt 2}(-e_{n+1})$, which is a conformal map from $\mathbb{R}^{n+1}_+$ to $B_1$, explicitly,
\[
\bar \Psi (x)=\frac{2(x+e_{n+1})}{|x+e_{n+1}|^2}-e_{n+1}:=y \in B_1.
\]
In particular, $\Psi:=\bar \Psi|_{\pa \R^{n+1}_+}: \pa \mathbb{R}^{n+1}_+ \to \Sn=\pa B_1$ is the inverse of the stereographic projection from the south pole $S=-e_{n+1}$, and its inverse is given by
\[
x=\bar \Psi^{-1}(y)= \frac{2(y+e_{n+1})}{|y+e_{n+1}|^2}-e_{n+1}.
\]
Thus, we obtain
 \begin{equation}\label{eq:extension1}
 x_{n+1}= \frac{1-|y|^2} {|y+e_{n+1}|^2} \mathrm{~~and~~} |J_{ \bar \Psi}(x)|:=\left(\frac{\sqrt{2}}{|x+e_{n+1}|}\right)^{2(n+1)}=\left(\frac{\sqrt{2}}{|y+e_{n+1}|}\right)^{-2(n+1)}.
\end{equation}

We now set $w(x')=\det \ud \Psi(x')^{\frac{n-2\sigma}{2n}} u\circ \Psi(x')$, where
\begin{equation}\label{eq:extension2}
\det \ud \Psi(x')=\left(\frac{\sqrt{2}}{|x'+e_{n+1}|}\right)^{2n}=\left(\frac{\sqrt{2}}{|y+e_{n+1}|}\right)^{-2n}
\end{equation}
for $x' \in \pa \R^{n+1}_+$ and $y=\Psi(x') \in \Sn=\pa B_1$.
In other words, 
$$w^{\frac{4}{n-2\sigma}}|\ud x'|^2=\Psi^\ast(u^{\frac{4}{n-2\sigma}}g_{\Sn}).$$
For $x\in \mathbb{R}_+^{n+1}$, we define
\begin{equation} \label{eq:poisson}
W(x)=\mathcal{P}_\sigma[w](x):= \beta(n,\sigma)\int_{\R^n} \frac{x_{n+1}^{2\sigma}}{(|x'-y'|^2+x_{n+1}^2)^{\frac{n+2\sigma}{2}}} w(y')\,\mathrm{d} y'
\end{equation}
and
\begin{equation*}
\xi:=\mathcal{P}_\sigma[w v\circ \Psi],
 \end{equation*}
 where $\beta(n,\sigma)$ is a positive constant defined in Theorem \ref{thm:lf} below.  
 Then $W(x',0)=w(x')$, $\xi(x',0)=w(x')v(\Psi(x'))$ and $W(x)>0$ in $\overline{\mathbb{R}_+^{n+1}}$. Since $u$ and $v$ are smooth on $\Sn$, it follows from the regularity theory (see \cite{Jin&Li&Xiong1}) that
\[
W_{0,1},~ x_{n+1}^{1-2\sigma}\partial_{x_{n+1}} W_{0,1},~ \xi_{0,1}, ~x_{n+1}^{1-2\sigma}\partial_{x_{n+1}} \xi_{0,1} \in C^0(B_1\times [0,1]),
\]
where
\[
W_{0,1}(x)=|x|^{2\sigma-n}W(\frac{x}{|x|^2})
\]
and $\xi_{0,1}$ is defined in the same way. Hence in the following computations, we can always identify $\R^{n+1}_+$ with the unit ball of $\R^{n+1}$.

By Caffarelli-Silvestre \cite{CaS}, $W $ satisfies
\begin{align*}
\mathrm{div}(x_{n+1}^{1-2\sigma} \nabla_{x} W)=0 \quad \mathrm{in~~}\mathbb{R}^{n+1}_+ \mathrm{~~and~~} -\lim_{x_{n+1}\to 0} x_{n+1}^{1-2\sigma} \partial_{x_{n+1}} W=N(\sigma)(-\Delta)^\sigma W(x',0),
\end{align*}
where $N(\sigma)=2^{1-2\sigma}\Gamma(1-\sigma)/\Gamma(\sigma)$, as well as does $\xi$.
Let us introduce a Riemannian metric
\[
\bar g= W(x)^{\frac{4}{n-2\sigma}} \, |\ud x|^2 \quad \mathrm{~~in~~}\R^{n+1}_+.
\]
It follows from the proof of \cite[Proposition 3.2]{JX} or a direct computation that for $a=\frac{2-4\sigma}{n-2\sigma}$, there hold
\begin{align} \label{eq:localformula_extension-1}
\mathrm{div}_{\bar g}\left(x_{n+1}^{1-2\sigma} W^{a} \nabla_{\bar g} \frac{\xi}{W}\right)= 0 \quad \mathrm{~~in~~}\R^{n+1}_+
\end{align}
and
\begin{align}\label{eq:product_fractional_Laplace}
&\lim_{x_{n+1}\to 0}x_{n+1}^{1-2\sigma} W^{a}\frac{\partial}{\partial \nu_{\bar g}} \frac{\xi}{W}=-\lim_{x_{n+1}\to 0}x_{n+1}^{1-2\sigma} W^{a-\frac{2}{n-2\sigma}}\frac{\partial}{\partial x_{n+1}} \frac{\xi}{W}\no\\
=&N(\sigma)w^{-\frac{n+2\sigma}{n-2\sigma}}(-\Delta)^\sigma(wv\circ \Psi) -N(\sigma) v\circ \Psi w^{-\frac{n+2\sigma}{n-2\sigma}}(-\Delta)^\sigma w.
\end{align}
Under stereographic projection coordinates, it follows from \eqref{conformal_invariance} and \eqref{eq:r1} that
\[
P^g_\sigma(\phi)\circ \Psi=w^{-\frac{n+2\sigma}{n-2\sigma}}(-\Delta)^\sigma(w \phi \circ \Psi),  \quad \forall ~\phi\in C^\infty(\Sn).
\]
This together with \eqref{eq:product_fractional_Laplace} and \eqref{eq:r1} yields
\begin{equation} \label{eq:localformula_extension-2}
P^g_\sigma(v)\circ \Psi= \frac{1}{N(\sigma)}\lim_{x_{n+1}\to 0}x_{n+1}^{1-2\sigma} W^{a}\frac{\partial}{\partial \nu_{\bar g}} \frac{\xi}{W} + (R_\sigma^g v)\circ \Psi.
\end{equation}

\medskip

\emph{Step 2.} Express formulae \eqref{eq:localformula_extension-1}, \eqref{eq:localformula_extension-2} in $\overline {B_1}$.

\medskip
Notice that the push-forward metric of $(\overline{B_1}, \bar g)$ by the map $\bar \Psi$ is given by
\begin{align*}
\tilde g:=&\bar \Psi_\ast (\bar g) = (W\circ \bar \Psi^{-1})^{\frac{4}{n-2\sigma}} |J_{\bar \Psi^{-1}}|^{\frac{2}{n+1}}\, |\ud y|^2 \\
=&\left(W\circ \bar \Psi^{-1} |J_{ \bar \Psi^{-1}}|^{\frac{n-2\sigma}{2(n+1)}}\right)^{\frac{4}{n-2\sigma}} \,|\ud y|^2 :=\tilde W^{\frac{4}{n-2\sigma}} \,|\ud y|^2.
\end{align*}
This together with \eqref{eq:extension1} implies that $|J_{\bar \Psi^{-1}}|=(\frac{\sqrt{2}}{|y+e_{n+1}|})^{2(n+1)}$ and
\begin{align*}
(x_{n+1}^{1-2\sigma} W^{a})\circ \bar \Psi^{-1}(y)&= (1-|y|^2)^{1-2\sigma} |y+e_{n+1}|^{-2(1-2\sigma)} (W\circ \bar \Psi^{-1}(y))^{\frac{2-4\sigma}{n-2\sigma}} \\&
=2^{2\sigma-1}(1-|y|^2)^{1-2\sigma} \tilde W(y)^a.
\end{align*}
In terms of the variable $y$, \eqref{eq:localformula_extension-1} and \eqref{eq:localformula_extension-2} become
\begin{align*}
&\mathrm{div}_{\tilde g}\left((1-|y|^2)^{1-2\sigma} \tilde W^{a} \nabla_{\tilde g} \frac{\tilde \xi}{\tilde W}\right)= 0 \quad \mathrm{in~~}B_1,\\
&P^g_\sigma(v)(x)= \frac{2^{2\sigma-1}}{N(\sigma)}\lim_{y\to x}(1-|y|^2)^{1-2\sigma} \tilde W^{a}  \frac{\partial}{\partial \nu_{\tilde g}} \frac{\tilde \xi}{\tilde W} +  R_\sigma^g v \quad \mathrm{for~~any~~}x\in \Sn,
\end{align*}
where $\lim_{y\to x}$ is understood as $y=\tau x$ with $\tau \nearrow 1$, and  $\tilde \xi= \xi\circ \bar \Psi^{-1} |J_{\bar \Psi^{-1}}|^{\frac{n-2\sigma}{2(n+1)}} $. For $x\in B_1$ and $y\in \pa B_1$, by  \eqref{eq:poisson} we have
\begin{align*}
&W(\bar \Psi^{-1}(x))= \beta(n,\sigma) \int_{\Sn} \frac{(1-|x|^2)^{2\sigma}|x+e_{n+1}|^{-4\sigma}}{| \bar \Psi^{-1} (x)- \bar \Psi^{-1}(y)|^{n+2\sigma}} w(\bar \Psi^{-1}(y)) |\left. J_{\bar \Psi^{-1}}\right|_{\Sn}(y)| \,\ud V_{g_{\Sn}}(y)\\
=&\beta(n,\sigma)  \int_{\Sn} \frac{(1-|x|^2)^{2\sigma} |x+e_{n+1}|^{n-2\sigma}|y+e_{n+1}|^{n+2\sigma} }{  2^{n+2\sigma} | x- y|^{n+2\sigma}} u(y)\left(\frac{\sqrt{2}}{|y+e_{n+1}|}\right)^{n+2\sigma}\,\ud V_{g_{\Sn}}(y)\\
=&\beta(n,\sigma) \left(\frac{|x+e_{n+1}|}{\sqrt{2}}\right)^{n-2\sigma} 2^{-2\sigma} \int_{\Sn} \frac{(1-|x|^2)^{2\sigma} }{  | x- y|^{n+2\sigma}} u(y) \,\ud V_{g_{\Sn}}(y),
\end{align*}
where the second identity follows from definition of $w$, \eqref{eq:extension2} and the following elementary identity
\[
\frac{|x+e_{n+1}|}{\sqrt{2}} \frac{|y+e_{n+1}|}{\sqrt{2}} |\bar \Psi^{-1}(x)-\bar \Psi^{-1}(y)|=|x-y|.
\]
By definition of $\tilde W$ and \eqref{eq:extension1} we conclude that
\begin{equation*}
\tilde W(x)=2^{-2\sigma}  \beta(n,\sigma)  \int_{\Sn} \frac{(1-|x|^2)^{2\sigma} }{  | x- y|^{n+2\sigma}} u(y) \,\ud V_{g_{\Sn}}(y)=:\mathcal{Q}_\sigma [u](x)
\end{equation*}
for $x \in B_1$. Similarly, we have $\bar \xi(x)= \mathcal{Q}_\sigma [u v](x)$. Furthermore, it is easy to check that $\lim_{x\to y} \mathcal{Q}_\sigma[w](x)=w(y)$ for any fixed $y\in \Sn$ and $w\in C(\Sn)$.

In conclusion, we are now in a position to state the following extension formula in the unit ball.

\begin{theorem}[Extension formula of $P_\sigma$] \label{thm:lf}
Given $\sigma \in (0,1)$, and let $g=u^{4/(n-2\sigma)} g_{\Sn}$ with $0<u\in C^\infty(\Sn)$, then for any  $v\in C^\infty(\Sn)$, there hold
\begin{align}
\label{eq:lf-1}
&\mathrm{div}_{\tilde g}\left((1-|y|^2)^{1-2\sigma} U^{a} \nabla_{\tilde g} \frac{ V}{ U}\right)= 0 \quad \mathrm{in~~}B_1,\\
&P^g_\sigma(v)= \frac{2^{2\sigma-1}}{N(\sigma)} \mathcal{N}_{\tilde g}\frac{V(x)}{U(x)} +  R_\sigma^g v(x) \quad \qquad\mathrm{on~~} \Sn,
\label{eq:lf-2}
\end{align}
where $a=\frac{2-4\sigma}{n-2\sigma}, N(\sigma)=2^{1-2\sigma}\Gamma(1-\sigma)/\Gamma(\sigma), \tilde g= U(y)^{4/(n-2\sigma)}|\ud y|^2$ is a Riemannian metric on $\overline{B_1}$, and
\begin{equation}\label{eq:lf_ball}
U(y)= \mathcal{Q}_\sigma [u](y) = 2^{-2\sigma}  \beta(n,\sigma)  \int_{\Sn} \frac{(1-|y|^2)^{2\sigma} }{  | x- y|^{n+2\sigma}} u(x) \,\ud V_{g_{\Sn}}(x) \mathrm{~~for~~} y \in B_1,
\end{equation}
similarly, $V(y)= \mathcal{Q}_\sigma [u v](y)$, and
$\mathcal{N}_{\tilde g}=\lim_{y\to x}(1-|y|^2)^{1-2\sigma}  U(y)^{a}  \frac{\partial}{\partial \nu_{\tilde g}} $ is the conormal derivative respect to the divergence equation \eqref{eq:lf-1},  $\lim_{y\to x}$ is understood as $y=\tau x$ with $\tau \nearrow 1$, and $\beta(n,\sigma)$ is a positive constant such that 
$$\beta(n,\sigma)\int_{\mathbb{R}^n}\frac{x_{n+1}^{2\sigma}}{(|x'|^2+x_{n+1}^2)^{\frac{n+2\sigma}{2}}}\ud x'=1.$$
\end{theorem}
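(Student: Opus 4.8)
**The plan is to assemble the proof from the two-step construction already carried out in the main text, which essentially IS the proof.** The statement of Theorem \ref{thm:lf} collects the formulas derived in Step 1 (the half-space picture, via Caffarelli--Silvestre and the conformal covariance \eqref{eq:r1}, \eqref{conformal_invariance}) and Step 2 (pushing forward along the inversion $\bar\Psi$ to the ball). So the proof proposal amounts to organizing that derivation cleanly and verifying the change-of-variable bookkeeping. Concretely, I would first record the half-space identities \eqref{eq:localformula_extension-1}--\eqref{eq:localformula_extension-2}: starting from the Poisson extension $W=\mathcal P_\sigma[w]$ of $w=(\det\ud\Psi)^{(n-2\sigma)/(2n)}u\circ\Psi$, the Caffarelli--Silvestre equation $\mathrm{div}(x_{n+1}^{1-2\sigma}\nabla W)=0$ with Neumann data $-\lim x_{n+1}^{1-2\sigma}\partial_{x_{n+1}}W=N(\sigma)(-\Delta)^\sigma W(x',0)$, and likewise for $\xi=\mathcal P_\sigma[wv\circ\Psi]$. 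Setting $\bar g=W^{4/(n-2\sigma)}|\ud x|^2$ and $a=(2-4\sigma)/(n-2\sigma)$, the degenerate-elliptic equation \eqref{eq:localformula_extension-1} for the ratio $\xi/W$ and its conormal derivative \eqref{eq:product_fractional_Laplace} follow from the computation in \cite[Proposition 3.2]{JX}; combining with the conformal covariance rewritten as $P^g_\sigma(\phi)\circ\Psi=w^{-(n+2\sigma)/(n-2\sigma)}(-\Delta)^\sigma(w\,\phi\circ\Psi)$ gives \eqref{eq:localformula_extension-2}.

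\emph{The second step} is the conformal transplantation to $B_1$ via the inversion $\bar\Psi(x)=2(x+e_{n+1})/|x+e_{n+1}|^2-e_{n+1}$. Here I would carefully track the three Jacobian/conformal factors: the identity $x_{n+1}=(1-|y|^2)/|y+e_{n+1}|^2$ for the defining functions, $|J_{\bar\Psi^{-1}}|=(\sqrt2/|y+e_{n+1}|)^{2(n+1)}$ for the volume, and the key cancellation $(x_{n+1}^{1-2\sigma}W^a)\circ\bar\Psi^{-1}(y)=2^{2\sigma-1}(1-|y|^2)^{1-2\sigma}\tilde W(y)^a$, which works precisely because $a=(2-4\sigma)/(n-2\sigma)$ is the exponent matching the conformal weight of $W$. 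Since $\mathrm{div}_{\bar g}(\cdots\nabla_{\bar g}(\xi/W))=0$ is a conformally natural (divergence-form, zero-order-free) equation, it is invariant under $\bar\Psi_\ast$ up to the computed factor, giving \eqref{eq:lf-1}; the conormal derivative transforms accordingly and produces \eqref{eq:lf-2}. Finally I would compute $\tilde W$ explicitly: plugging the formula for $W$ into the definition $\tilde W=W\circ\bar\Psi^{-1}\,|J_{\bar\Psi^{-1}}|^{(n-2\sigma)/(2(n+1))}$ and using the Ptolemy-type identity $\tfrac{|x+e_{n+1}|}{\sqrt2}\tfrac{|y+e_{n+1}|}{\sqrt2}|\bar\Psi^{-1}(x)-\bar\Psi^{-1}(y)|=|x-y|$ collapses everything to $\tilde W(x)=2^{-2\sigma}\beta(n,\sigma)\int_{\Sn}(1-|x|^2)^{2\sigma}|x-y|^{-(n+2\sigma)}u(y)\,\ud V_{g_{\Sn}}(y)=\mathcal Q_\sigma[u](x)$, and identically $\bar\xi=\mathcal Q_\sigma[uv]$. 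The boundary trace $\lim_{x\to y}\mathcal Q_\sigma[w](x)=w(y)$ follows from the approximate-identity property of the kernel, guaranteeing $U|_{\Sn}=u$, $V|_{\Sn}=uv$ so that $V/U|_{\Sn}=v$, which is what makes \eqref{eq:lf-2} a genuine Dirichlet-to-Neumann statement.

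\emph{The main obstacle} is not conceptual but the regularity/integrability justification of the boundary limits: the weight $(1-|y|^2)^{1-2\sigma}$ degenerates on $\Sn$, so the conormal derivative $\mathcal N_{\tilde g}$ must be interpreted in the weighted sense, and one must justify that $W_{0,1}$ and $x_{n+1}^{1-2\sigma}\partial_{x_{n+1}}W_{0,1}$ extend continuously up to the boundary — this is where the smoothness of $u,v$ on $\Sn$ and the regularity theory of \cite{Jin&Li&Xiong1} (already invoked in the excerpt to put $W_{0,1},\,x_{n+1}^{1-2\sigma}\partial_{x_{n+1}}W_{0,1},\,\xi_{0,1},\,x_{n+1}^{1-2\sigma}\partial_{x_{n+1}}\xi_{0,1}\in C^0(B_1\times[0,1])$) is essential, and also why it is legitimate to ``identify $\R^{n+1}_+$ with the unit ball'' before taking limits. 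A secondary technical point is checking that the computation of \cite[Proposition 3.2]{JX} applies verbatim to $\xi/W$ with the stated exponent $a$; I would either cite it directly or reproduce the short computation $\mathrm{div}_{\bar g}(x_{n+1}^{1-2\sigma}W^a\nabla_{\bar g}(\xi/W))=W^{-(n+2\sigma)/(n-2\sigma)}\mathrm{div}(x_{n+1}^{1-2\sigma}\nabla\xi)-(\xi/W)W^{-(n+2\sigma)/(n-2\sigma)}\mathrm{div}(x_{n+1}^{1-2\sigma}\nabla W)$, both terms vanishing by Caffarelli--Silvestre. Everything else is change-of-variables bookkeeping that I would state but not belabor.
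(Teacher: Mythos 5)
Your proposal follows essentially the same route as the paper: the paper's ``proof'' of Theorem \ref{thm:lf} is exactly the two-step derivation preceding its statement (half-space Caffarelli--Silvestre picture plus the equation for $\xi/W$ from \cite[Proposition 3.2]{JX}, then conformal transplantation to $B_1$ via the inversion $\bar\Psi$ with the same Jacobian bookkeeping and the Ptolemy-type identity), and you correctly identify the role of the regularity statements for $W_{0,1}$, $x_{n+1}^{1-2\sigma}\partial_{x_{n+1}}W_{0,1}$ in justifying the weighted boundary limits. The only blemish is immaterial: in your sketched verification of the divergence identity the conformal prefactor should be $W^{1-2(n+1)/(n-2\sigma)}$ rather than $W^{-(n+2\sigma)/(n-2\sigma)}$, but since both divergence terms vanish by Caffarelli--Silvestre this does not affect the conclusion.
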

\begin{remark}
When $\sigma=1/2$, the formula \eqref{eq:lf_ball} coincides with the classical Poisson's formula in the unit ball $(\overline{B_1},g_{\R^{n+1}})$.
\end{remark}

Next, we will establish a Stroock-Varopoulos type inequality, such an inequality was once proved in \cite{DQRV} for the Euclidean space.

\begin{prop}\label{lemma1.11} Let $g\in [g_{\Sn}]$. Then for any $p\ge 2$ and $v\in C^\infty(\Sn)$, there hold
\begin{equation} \label{eq:pre-1}
\int_{\Sn}|v|^{p-2}v P^g_\sigma(v)\,\mathrm{d} V_{g} \ge \frac{4(p-1)}{ p^2} \int_{\Sn} |v|^{\frac{p}{2}} P^g_\sigma(|v|^{\frac{p}{2}})\,\ud V_g +\frac{(p-2)^2}{p^2} \int_{\Sn} R_\sigma^g |v|^p \,\mathrm{d} V_{g}
\end{equation}
and
\begin{equation} \label{eq:coercive-1}
\int_{\Sn}|v|^{p-2}v (P^g_\sigma-R_\sigma^g)(v)\,\mathrm{d} V_{g} \ge 0.
\end{equation}
\end{prop}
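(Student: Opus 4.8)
The plan is to read both inequalities off the extension formula of Theorem~\ref{thm:lf}: through \eqref{eq:lf-1}--\eqref{eq:lf-2} the bilinear form $\int_{\Sn}w(P_\sigma^g-R_\sigma^g)(w)\,\ud V_g$ is realized as a weighted Dirichlet energy in $B_1$, so that the classical Stroock--Varopoulos trick — test the extension against a power of itself, and use the minimality of the Dirichlet energy — applies almost verbatim.

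Concretely, write $g=u^{4/(n-2\sigma)}g_{\Sn}$, $U=\mathcal{Q}_\sigma[u]$, $\tilde g=U^{4/(n-2\sigma)}|\ud y|^2$, $A(y)=(1-|y|^2)^{1-2\sigma}U(y)^{a}$, and for $w\in C^\infty(\Sn)$ let $\psi_w:=\mathcal{Q}_\sigma[uw]/U$ be the extension of Theorem~\ref{thm:lf}, so $\psi_w|_{\Sn}=w$ and $\mathrm{div}_{\tilde g}(A\nabla_{\tilde g}\psi_w)=0$ in $B_1$. Put $\mathcal{E}(\Xi)=\int_{B_1}A\,|\nabla_{\tilde g}\Xi|_{\tilde g}^2\,\ud V_{\tilde g}$ and $\psi:=\psi_v$. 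First I would establish a Green identity: for $\Xi\in C(\overline{B_1})\cap C^1(B_1)$ of finite energy,
\[
\int_{\Sn}(\Xi|_{\Sn})\,\mathcal{N}_{\tilde g}\psi_w\,\ud V_g=\int_{B_1}A\,\langle\nabla_{\tilde g}\Xi,\nabla_{\tilde g}\psi_w\rangle_{\tilde g}\,\ud V_{\tilde g},
\]
obtained by the divergence theorem on $B_{1-\va}$ and letting $\va\to0$: the inner boundary term converges since $\Xi$ is continuous up to $\pa B_1$ and, by the boundary regularity recorded before Theorem~\ref{thm:lf}, $A\,\pa_{\nu_{\tilde g}}\psi_w\to \mathcal{N}_{\tilde g}\psi_w=\tfrac{N(\sigma)}{2^{2\sigma-1}}(P_\sigma^g(w)-R_\sigma^g w)\in C^\infty(\Sn)$ by \eqref{eq:lf-2}. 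Taking $\Xi=\psi_w$ gives $\mathcal{E}(\psi_w)=\tfrac{N(\sigma)}{2^{2\sigma-1}}\int_{\Sn}w(P_\sigma^g-R_\sigma^g)(w)\,\ud V_g$, and writing a competitor as $\psi_w+\phi$ with $\phi|_{\Sn}=0$ yields $\mathcal{E}(\Xi)=\mathcal{E}(\psi_w)+\mathcal{E}(\phi)\ge\mathcal{E}(\psi_w)$; by the standard variational (trace) characterization of the weighted Dirichlet energy of \eqref{eq:lf-1}, the resulting inequality
\[
\int_{\Sn}w(P_\sigma^g-R_\sigma^g)(w)\,\ud V_g\le \tfrac{2^{2\sigma-1}}{N(\sigma)}\,\mathcal{E}(\Xi)
\]
persists for every $w\in H^\sigma(\Sn)$ and every finite-energy $\Xi$ with trace $w$.

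To conclude, I would apply the Green identity with $w=v$ and $\Xi=|\psi|^{p-2}\psi$ — which lies in $C^1(B_1)$ since $p\ge2$, is bounded (since $\psi$ is), has trace $|v|^{p-2}v$, and obeys $\nabla_{\tilde g}\Xi=(p-1)|\psi|^{p-2}\nabla_{\tilde g}\psi$ — to get
\[
\tfrac{N(\sigma)}{2^{2\sigma-1}}\int_{\Sn}|v|^{p-2}v\,(P_\sigma^g-R_\sigma^g)(v)\,\ud V_g=(p-1)\int_{B_1}A\,|\psi|^{p-2}|\nabla_{\tilde g}\psi|_{\tilde g}^2\,\ud V_{\tilde g}\ge0,
\]
which is \eqref{eq:coercive-1}. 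Then I would use the minimality with $w=|v|^{p/2}\in H^\sigma(\Sn)$ and competitor $\Xi=|\psi|^{p/2}$ (trace $|v|^{p/2}$, and $|\nabla_{\tilde g}\Xi|_{\tilde g}^2=\tfrac{p^2}{4}|\psi|^{p-2}|\nabla_{\tilde g}\psi|_{\tilde g}^2$ a.e.), obtaining
\[
\tfrac{N(\sigma)}{2^{2\sigma-1}}\int_{\Sn}|v|^{p/2}(P_\sigma^g-R_\sigma^g)(|v|^{p/2})\,\ud V_g\le \tfrac{p^2}{4}\int_{B_1}A\,|\psi|^{p-2}|\nabla_{\tilde g}\psi|_{\tilde g}^2\,\ud V_{\tilde g}=\tfrac{p^2}{4(p-1)}\,\tfrac{N(\sigma)}{2^{2\sigma-1}}\int_{\Sn}|v|^{p-2}v\,(P_\sigma^g-R_\sigma^g)(v)\,\ud V_g.
\]
Cancelling $N(\sigma)/2^{2\sigma-1}$, expanding the $R_\sigma^g$-terms, and using $\tfrac{p^2}{4(p-1)}-1=\tfrac{(p-2)^2}{4(p-1)}$ rearranges this to \eqref{eq:pre-1}.

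The step I expect to be the main obstacle is the rigorous Green identity and minimality with the degenerate weight $A$ near $\pa B_1$: the finiteness of $\mathcal{E}$ on the relevant competitors and the convergence of the boundary fluxes as $\va\to0$. This is precisely what the boundary regularity of $\mathcal{P}_\sigma[w]$ stated before Theorem~\ref{thm:lf} (continuity of $W_{0,1}$ and $x_{n+1}^{1-2\sigma}\pa_{x_{n+1}}W_{0,1}$ up to the boundary) is there to supply. A secondary technical point is that $|v|^{p/2}$, and $|v|$ when $p=2$, fails to be $C^1$ on the zero set of $v$; one handles this by smoothing $t\mapsto|t|^{p/2}$ away from the origin (or by density in $H^\sigma(\Sn)$), the a.e. chain rule $\nabla|f|=\operatorname{sgn}(f)\nabla f$ absorbing the set $\{v=0\}$.
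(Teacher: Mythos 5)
Your proposal is correct and follows essentially the same route as the paper: the extension formula of Theorem \ref{thm:lf}, the Green identity giving $\int_{\Sn}|v|^{p-2}v(P_\sigma^g-R_\sigma^g)(v)\,\ud V_g=(p-1)\tfrac{2^{2\sigma-1}}{N(\sigma)}\int_{B_1}A|\psi|^{p-2}|\nabla_{\tilde g}\psi|_{\tilde g}^2\,\ud V_{\tilde g}$, and the Dirichlet-energy minimality of the extension of $|v|^{p/2}$ against the competitor $|\psi|^{p/2}$ (the paper writes this as the orthogonal decomposition $U|V/U|^{p/2}=\eta+\zeta$ with $\zeta$ of zero trace). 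The only difference is cosmetic ordering — you prove \eqref{eq:coercive-1} directly and then \eqref{eq:pre-1}, while the paper does the reverse — and your algebraic rearrangement via $\tfrac{p^2}{4(p-1)}-1=\tfrac{(p-2)^2}{4(p-1)}$ is correct.
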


\begin{proof} Suppose $g=u^{4/(n-2\sigma)} g_{\Sn}$ for some positive function  $u\in C^\infty(\Sn)$. Let $U=\mathcal{Q}_\sigma[u]$, $V= \mathcal{Q}_\sigma[uv]$ and $\tilde g =U(y)^{4/(n-2\sigma)} |\ud y|^2$. By Theorem \ref{thm:lf}, we have
\begin{align*}
&\int_{\Sn}|v|^{p-2}v P^g_\sigma(v)\,\mathrm{d} V_{g}\\
=&\int_{\Sn}\left|\tfrac{V}{U}\right|^{p-2} \tfrac{V}{U} \left(  \frac{2^{2\sigma-1}}{N(\sigma)}\mathcal{N}_{\tilde g} \tfrac{V}{U} +  R_\sigma^g v  \right) \,\mathrm{d} V_{ g}\\
=&\frac{2^{2\sigma-1}}{N(\sigma)}\int_{B_1} (1-|y|^2)^{1-2\sigma} U^{a} \langle\nabla\tfrac{V}{U}, \nabla\left(\left|\tfrac{V}{U}\right|^{p-2} \tfrac{V}{U}\right)\rangle_{
\tilde g} \,\ud V_{\tilde g} +\int_{\Sn} R_\sigma^g |v|^p \,\mathrm{d} V_{g}\\
=&\frac{2^{2\sigma-1} 4(p-1)}{N(\sigma) p^2} \int_{B_1} (1-|y|^2)^{1-2\sigma} U^{a} |\nabla |\tfrac{V}{U}|^{\frac{p}{2}}|_{
\tilde g}^2\,\ud V_{\tilde g} +\int_{\Sn} R_\sigma^g |v|^p \,\mathrm{d} V_g.
\end{align*}
Let $\eta= \mathcal{Q}_\sigma[|v|^{p/2} u]$. By \eqref{eq:lf-1}, we have
$\mathrm{div}_{\tilde g}((1-|y|^2)^{1-2\sigma} U^{a} \nabla_{\tilde g} \frac{\eta}{U})=0$ in $B_1$.
Notice that
$\frac{\eta}{U}=|v|^{p/2}$ on $\Sn=\pa B_1$. Denote $\zeta=U|\frac{V}{U}|^{p/2}-\eta$. Then we have
\begin{align}
&\frac{2^{2\sigma-1}}{N(\sigma)}\int_{B_1} (1-|y|^2)^{1-2\sigma} U^{a} |\nabla |\tfrac{V}{U}|^{\frac{p}{2}}|_{
\tilde g}^2\,\ud V_{\tilde g}  \nonumber \\
=& \frac{2^{2\sigma-1}}{N(\sigma)}\int_{B_1} (1-|y|^2)^{1-2\sigma} U^{a} \left( |\nabla \tfrac{\eta}{U}|_{\tilde g}^2+|\nabla \tfrac{\zeta}{U}|_{\tilde g}^2+2 \langle\nabla \tfrac{\eta}{U},\nabla \tfrac{\zeta}{U}\rangle_{\tilde g}\right)\,\ud V_{\tilde g} \nonumber \\
=&  \frac{2^{2\sigma-1}}{N(\sigma)}\int_{B_1} (1-|y|^2)^{1-2\sigma} U^{a}  ( |\nabla \tfrac{\eta}{U}|_{\tilde g}^2+|\nabla \tfrac{\zeta}{U}|_{\tilde g}^2)\,\ud V_{\tilde g} \nonumber \\
\ge& \frac{2^{2\sigma-1}}{N(\sigma)}\int_{B_1} (1-|y|^2)^{1-2\sigma} U^{a}   |\nabla \tfrac{\eta}{U}|_{\tilde g}^2\,\ud V_{\tilde g}\nonumber\\
=&\int_{\Sn} |v|^{\frac{p}{2}} P^g_\sigma(|v|^{\frac{p}{2}})\,\ud V_g-\int_{\Sn} R_\sigma^g |v|^p \,\mathrm{d} V_g,
\label{eq:coercive}
\end{align}
where the second identity follows from an integration by parts together with $\zeta=0$ on $\partial B_1$ and the equation of $\eta$ in $B_1$, and the last equality again follows from \eqref{eq:lf-2}.
Therefore, putting these facts together, we obtain
\begin{equation*}
\int_{\Sn}|v|^{p-2}v P^g_\sigma(v)\,\mathrm{d} V_{g} \ge \frac{4(p-1)}{ p^2} \int_{\Sn} |v|^{\frac{p}{2}} P^g_\sigma(|v|^{\frac{p}{2}})\,\ud V_g +\frac{(p-2)^2}{p^2} \int_{\Sn} R_\sigma^g |v|^p \,\mathrm{d} V_{g},
\end{equation*}
which is exactly \eqref{eq:pre-1}.

By \eqref{eq:pre-1} and the last identity in \eqref{eq:coercive}, we immediately obtain
\begin{align*}
&\int_{\Sn}|v|^{p-2}v (P^g_\sigma-R_\sigma^g)(v)\,\mathrm{d} V_{g}  \\
\ge& \frac{4(p-1)}{ p^2} \int_{\Sn} |v|^{\frac{p}{2}} P^g_\sigma(|v|^{\frac{p}{2}})\,\ud V_g -\frac{4(p-1)}{p^2} \int_{\Sn} R_\sigma^g |v|^p \,\mathrm{d} V_{g} \\
\ge& \frac{4(p-1)}{ p^2} \int_{\Sn} |v|^{\frac{p}{2}} (P^g_\sigma-R_\sigma^g)(|v|^{\frac{p}{2}})\,\ud V_g
\ge 0.
\end{align*}
This completes the proof.
\end{proof}



For $0<\beta<\min\{1,2\sigma\}$ and $T>0$,
we say that a function $v\in C^{\beta,\frac{\beta}{2\sigma}}(\Sn\times (0,T])$ if
\begin{equation*}
\begin{split}
\|v\|_{\beta,\frac{\beta}{2\sigma};\Sn\times (0,T]}
&=\|v\|_{0;\Sn\times (0,T]}+[v]_{\beta,\frac{\beta}{2\sigma};\Sn\times (0,T]}\\
&:=\sup_{Y\in \Sn\times (0,T]}|v(Y)|
+\sup_{Y_1\neq Y_2,Y_1,Y_2\in \Sn\times (0,T]}\frac{|v(Y_1)-v(Y_2)|}{\rho(Y_1,Y_2)^\beta}<\infty,
\end{split}
\end{equation*}
where $Y_1=(x_1,t_1), Y_2=(x_2,t_2)$, $\rho(Y_1,Y_2)=(|x_1-x_2|^2+|t_1-t_2|^{\frac{1}{\sigma}})^{\frac{1}{2}}$
and  $|x_1-x_2|$ is understood as the Euclidean distance from $x_1$ to $x_2$ in $\mathbb{R}^{n+1}$.

\begin{lemma}[Maximum Principle] \label{lem:mp} For $T>0$ and  let $g(t)\in [g_{\Sn}]$ for $t\in [0,T]$. Suppose that $w\in C^{2\sigma,1}(\Sn\times [0,T])$ is a solution of
\[
w_t+a P^g_\sigma w +b w \ge 0 \quad \mathrm{in~~}\Sn\times [0,T],
\]
where $a,b\in C( \Sn\times [0,T])$ and $a\ge \delta>0$. If $w(\cdot, 0)\ge 0$, then $w\ge 0$ in $\Sn\times [0,T]$.

\end{lemma}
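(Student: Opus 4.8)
The plan is to prove the maximum principle by the classical contradiction-with-first-touching-time argument, exploiting the pointwise positivity of the nonlocal operator $P^g_\sigma$ at an interior minimum. First I would reduce to the strict case: for $\varepsilon>0$ set $w_\varepsilon = w + \varepsilon e^{\Lambda t}$ where $\Lambda$ is a constant to be chosen large (depending on $\|b\|_\infty$ and on the bounded quantity $a R_\sigma^g$ coming from $P^g_\sigma(1)=R_\sigma^g$), so that $w_\varepsilon$ satisfies the strict inequality $\partial_t w_\varepsilon + a P^g_\sigma w_\varepsilon + b w_\varepsilon > 0$ on $\Sn\times[0,T]$ and $w_\varepsilon(\cdot,0)>0$; letting $\varepsilon\to 0$ at the end recovers $w\ge 0$. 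It then suffices to show $w_\varepsilon>0$ everywhere.

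Suppose not; since $w_\varepsilon$ is continuous on the compact set $\Sn\times[0,T]$ and positive at $t=0$, there is a first time $t_0\in(0,T]$ and a point $x_0\in\Sn$ with $w_\varepsilon(x_0,t_0)=0$ and $w_\varepsilon\ge 0$ on $\Sn\times[0,t_0]$. At $(x_0,t_0)$ we have $\partial_t w_\varepsilon(x_0,t_0)\le 0$ (it is a minimum in the time direction over $[0,t_0]$), and $b(x_0,t_0)w_\varepsilon(x_0,t_0)=0$. The key nonlocal input is the representation \eqref{1.2.4}: writing things in the conformal factor or directly, $P^g_\sigma$ evaluated at an interior spatial minimum of a nonnegative function is nonnegative — more precisely, since $P_\sigma$ has the singular-integral form $c_{n,-\sigma}\int_{\Sn}\frac{v(x)-v(y)}{|x-y|^{n+2\sigma}}\,\ud V_{g_{\Sn}}(y)+R_\sigma v(x)$, and the conformal covariance \eqref{conformal_invariance} transfers this structure to $P^g_\sigma$ up to positive weights, at a point where $w_\varepsilon$ attains its global minimum $0$ we get $P^g_\sigma w_\varepsilon(x_0,t_0)\ge 0$ (the integrand is nonnegative and the zero-order term vanishes). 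Hence at $(x_0,t_0)$,
\[
\partial_t w_\varepsilon + a P^g_\sigma w_\varepsilon + b w_\varepsilon \le 0 + a\cdot 0 + 0 = 0,
\]
contradicting the strict inequality. Therefore $w_\varepsilon>0$ on $\Sn\times[0,T]$, and letting $\varepsilon\to0$ gives $w\ge 0$.

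The main obstacle is making the statement "$P^g_\sigma$ at an interior minimum of a nonnegative function is nonnegative" rigorous for a general metric $g\in[g_{\Sn}]$ rather than just $g_{\Sn}$: one must use \eqref{conformal_invariance} to write $P^g_\sigma(w_\varepsilon)$ in terms of $P_\sigma$ applied to $\rho w_\varepsilon$ with positive conformal factor $\rho$, observe that $\rho w_\varepsilon$ still attains its minimum (possibly at a shifted but still interior point, or handle the weight carefully), and check that the decomposition \eqref{1.2.4} still yields a nonnegative integral plus a term that vanishes at the zero of $w_\varepsilon$. A secondary technical point is justifying the pointwise evaluation of $P^g_\sigma w_\varepsilon$ — this is where the hypothesis $w\in C^{2\sigma,1}(\Sn\times[0,T])$ enters, guaranteeing $P^g_\sigma w_\varepsilon(\cdot,t_0)$ is well-defined and continuous so that the singular integral in \eqref{1.2.4} converges. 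Once these are in place the argument is the standard parabolic maximum principle.
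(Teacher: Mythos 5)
Your overall strategy is the same as the paper's: pass to the singular--integral representation of $P^g_\sigma$ via conformal covariance, evaluate at a spatial extremum, and use an exponential-in-time factor to control the zero-order terms (the paper multiplies by $e^{-\tilde M t}$ and argues at a negative minimum; you add $\varepsilon e^{\Lambda t}$ and argue at a first touching time --- these are interchangeable). However, your key pointwise claim has the wrong sign. From \eqref{1.2.4}, at a point $x_0$ where $w_\varepsilon(\cdot,t_0)$ attains its global minimum with value $0$, the integrand $w_\varepsilon(x_0)-w_\varepsilon(y)=-w_\varepsilon(y)$ is \emph{nonpositive}, so the nonlocal term is $\le 0$ and the zero-order term vanishes; hence $P^g_\sigma w_\varepsilon(x_0,t_0)\le 0$, not $\ge 0$ as you assert. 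This is in fact the sign you need: with $a>0$ it gives $aP^g_\sigma w_\varepsilon\le 0$, which is what your displayed chain $\le 0+a\cdot 0+0$ silently uses; had $P^g_\sigma w_\varepsilon\ge 0$ actually held, the contradiction with the strict inequality would not follow. Once the sign is corrected the argument closes. Concerning the technical worry you raise about the conformal weight: it dissolves precisely because the minimum value is $0$. Writing $u^{\frac{n+2\sigma}{n-2\sigma}}P^g_\sigma w_\varepsilon=P_\sigma(uw_\varepsilon)$ via \eqref{conformal_invariance}, the function $uw_\varepsilon$ is nonnegative and vanishes at $x_0$, so it attains its minimum at the \emph{same} point, and \eqref{1.2.4} applied to $uw_\varepsilon$ gives the nonpositivity directly; equivalently, the paper's decomposition
\[
P_\sigma(uw)(x)=c_{n,-\sigma}\int_{\Sn}\frac{(w(x)-w(y))u(y)}{|x-y|^{n+2\sigma}}\,\ud V_{g_{\Sn}}(y)+h(x)w(x)
\]
isolates a difference quotient in $w$ weighted by $u(y)>0$, which is what makes the sign argument work at a minimum of $w$ itself.
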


\begin{proof} Write $g=u(t)^{4/(n-2\sigma)} g_{\Sn}$ for some positive $u(t)\in C^\infty(\Sn)$.  By \eqref{conformal_invariance} and  \eqref{1.2.4}, for every fixed $t$ we have
\begin{align*}
u(x)^{\frac{n+2\sigma}{n-2\sigma}} P^g_\sigma w (x)=&P_\sigma (uw)(x)\\
=&c_{n,-\sigma}  \int_{\Sn} \frac{u(x)w(x)-u(y)w(y)}{|x-y|^{n+2\sigma}}\,\ud V_{g_{\Sn}}(y)+ R_\sigma u(x)w(x)\\
=&c_{n,-\sigma}\int_{\Sn} \frac{(w(x)-w(y))u(y)}{|x-y|^{n+2\sigma}}\,\ud V_{g_{\Sn}}(y) +  h(x) w(x),
\end{align*}
where we have dropped the $t$ variable in the above computations for convenience, and
\[
h(x)= c_{n,-\sigma}\int_{\Sn} \frac{u(x)-u(y)}{|x-y|^{n+2\sigma}}\,\ud V_{g_{\Sn}}(y)+R_\sigma u(x).
\]
Let $\tilde b:= b+ a u^{-(n+2\sigma)/(n-2\sigma)} h$,
$
\tilde M=\max_{\Sn\times [0,T]}|\tilde b|+1<\infty $ by the hypotheses of this lemma,  and $\tilde w= e^{-\tilde M t} w$. It follows that
\begin{equation} \label{eq:diffineq-1}
\tilde w_t + c_{n,-\sigma} a u^{-\frac{n+2\sigma}{n-2\sigma}} \int_{\Sn} \frac{(\tilde w(x)-\tilde w(y))u(y)}{|x-y|^{n+2\sigma}}\,\ud V_{g_{\Sn}}(y) +(\tilde M-\tilde b) \tilde w \ge 0  \quad \mathrm{in~~}\Sn\times [0,T].
\end{equation}

By negation, we assume that $\tilde w$ achieves its negative minimum at $(x_0,t_0) \in \Sn\times [0,T]$. Obviously, $t_0>0$. Then we have
\[
\tilde w_t(x_0, t_0)\le 0, \qquad  \int_{\Sn} \frac{(\tilde w(x_0,t_0)-\tilde w(y,t_0))u(y,t_0)}{|x-y|^{n+2\sigma}}\ud V_{g_{\Sn}}(y) \le 0, 
\]
and 
$$ 
(\tilde M-\tilde b) \tilde w(x_0,t_0)<0.$$
We now arrive at a contradiction with \eqref{eq:diffineq-1}. Therefore, $\tilde w\ge 0$ and thus $w\ge 0$.
\end{proof}

The following proposition  was proved in  \cite[Proposition 4.2]{Jin&Xiong}:

\begin{prop}\label{prop4.2_in_Jin&Xiong}
Let $0<\beta<\min\{1,2\sigma\}$ such that
$2\sigma+\beta$ is not an integer.
Let $a, b, d\in C^{\beta,\frac{\beta}{2\sigma}}(\Sn\times(0,1])$,
$v_0\in C^{2\sigma+\beta}(\Sn)$ and $\lambda^{-1}\leq a(x,t)\leq\lambda$ for some constant $\lambda\geq 1$.
Then there exists a unique function $v\in C^{2\sigma+\beta,1+\frac{\beta}{2\sigma}}(\Sn\times(0,1])$ such that
\begin{equation*}
\left\{
  \begin{array}{ll}
    av_t+P_\sigma(v)+bv=d, &\quad  \mathrm{ in~~} \Sn\times(0,1]; \\
    v(x,0)=v_0(x), & \quad \mathrm{ on~~} \Sn.
  \end{array}
\right.
\end{equation*}
Moreover, there exists a constant $C$ depending only on $n$, $\sigma$, $\lambda$, $\|a\|_{\beta,\frac{\beta}{2\sigma};\Sn\times(0,1]}$ and $\|b\|_{\beta,\frac{\beta}{2\sigma};\Sn\times(0,1]}$
such that
\begin{equation*}
\|v\|_{2\sigma+\beta,1+\frac{\beta}{2\sigma};\Sn\times(0,1]}\leq C(\|v_0\|_{2\sigma+\beta;\Sn\times(0,1]}+\|d\|_{\beta,\frac{\beta}{2\sigma};\Sn\times(0,1]}).
\end{equation*}
\end{prop}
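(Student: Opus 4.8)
The statement to prove is Proposition \ref{prop4.2_in_Jin&Xiong}, which establishes existence, uniqueness, and Schauder-type estimates for the linear parabolic equation $av_t + P_\sigma(v) + bv = d$ with prescribed initial data. Since the paper explicitly attributes this to \cite[Proposition 4.2]{Jin&Xiong}, my proof proposal would be to adapt the standard parabolic theory for the fractional heat operator $\partial_t + P_\sigma$ on the compact manifold $\Sn$, using the spectral decomposition of $P_\sigma$ in spherical harmonics together with a continuity/fixed-point argument to handle the variable coefficients.

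The plan is to proceed in several steps. \textbf{Step 1 (the model problem).} First I would treat the constant-coefficient case $v_t + P_\sigma(v) = d$, $v(\cdot,0) = v_0$, where existence and uniqueness follow from Duhamel's formula $v(t) = e^{-tP_\sigma}v_0 + \int_0^t e^{-(t-s)P_\sigma} d(s)\,\ud s$; here $e^{-tP_\sigma}$ is the contraction semigroup generated by $-P_\sigma$, whose kernel is positive (this uses $\sigma \in (0,1)$, exactly the restriction invoked in the introduction). The key analytic input is the parabolic Schauder estimate for this model operator in the anisotropic Hölder spaces $C^{2\sigma+\beta, 1+\frac{\beta}{2\sigma}}$ with the parabolic distance $\rho(Y_1,Y_2) = (|x_1-x_2|^2 + |t_1-t_2|^{1/\sigma})^{1/2}$ defined in the excerpt; this can be obtained either from the Caffarelli--Silvestre / Chang--Gonz\'alez extension (reducing to a degenerate parabolic equation on $\Sn \times [0,1] \times \R_+$ and applying boundary Schauder theory there) or by kernel estimates on $e^{-tP_\sigma}$ directly. \textbf{Step 2 (freezing coefficients).} For the variable-coefficient equation, rewrite it as $v_t + a^{-1}P_\sigma(v) = a^{-1}(d - bv)$; since $\lambda^{-1} \le a \le \lambda$ and $a \in C^{\beta,\beta/2\sigma}$, one localizes in space-time, freezes $a$ at a point, treats the difference $(a^{-1} - a_0^{-1})P_\sigma(v)$ and the lower-order term $a^{-1}bv$ as perturbations that are small on small parabolic cylinders (or lower-order, hence compact), and patches the local estimates via a partition of unity to get the a priori bound $\|v\|_{2\sigma+\beta,1+\frac{\beta}{2\sigma}} \le C(\|v_0\|_{2\sigma+\beta} + \|d\|_{\beta,\beta/2\sigma})$. \textbf{Step 3 (existence by continuity method).} With the a priori estimate in hand, connect the operator $L_\tau v := (1-\tau)(v_t + P_\sigma v) + \tau(a v_t + P_\sigma v + bv)$, $\tau\in[0,1]$, from the model operator ($\tau=0$, solvable by Step 1) to the target ($\tau=1$); the uniform estimate plus the standard open-closed argument in the Banach space $C^{2\sigma+\beta,1+\frac{\beta}{2\sigma}}(\Sn\times(0,1])$ yields solvability for $\tau=1$. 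Uniqueness follows from the maximum principle (Lemma \ref{lem:mp}) applied to the difference of two solutions, or directly from the a priori estimate with $v_0 = 0$, $d = 0$.

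The main obstacle I anticipate is \textbf{Step 1}, specifically establishing the sharp parabolic Schauder estimate for $\partial_t + P_\sigma$ in the correct anisotropic Hölder scale, including the compatibility of the time regularity exponent $1 + \beta/2\sigma$ with the assumption that $2\sigma+\beta$ is not an integer (this non-integrality is exactly what is needed to avoid logarithmic losses in the Schauder theory and to interpret the Hölder space unambiguously near integer orders). On $\Sn$ one must also be careful that $P_\sigma$, while nonlocal, has the same local smoothing as $(-\Delta)^\sigma$ up to lower-order terms — this is transparent from formula \eqref{1.2.4}, where $P_\sigma = $ (principal nonlocal part) $+ R_\sigma$, so the zeroth-order term $R_\sigma$ just gets absorbed into $b$. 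A secondary technical point is ensuring all estimates are uniform as $t \to 0^+$ and that the solution takes on the initial data continuously in the stated norm; this is handled by the $C^{2\sigma+\beta}$ regularity of $v_0$ together with the smoothing of the semigroup, matching the initial trace at $t=0$. Since everything here is a routine (if lengthy) adaptation of known fractional-parabolic Schauder theory, I would, as the authors do, ultimately cite \cite{Jin&Xiong} for the full details rather than reproduce them.
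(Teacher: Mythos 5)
Your outline is sound, and it matches the situation in the paper exactly: the paper offers no proof of this proposition at all, but simply quotes it from \cite[Proposition 4.2]{Jin&Xiong}, whose argument runs along the same lines you sketch (Schauder theory for the model operator $\partial_t+P_\sigma$ in the anisotropic H\"older scale, freezing of coefficients, and the method of continuity, with uniqueness from the maximum principle). Since you correctly identify the one genuinely delicate ingredient --- the sharp parabolic Schauder estimate for the fractional operator, for which the non-integrality of $2\sigma+\beta$ is needed --- and then defer to the same citation the authors use, there is nothing to fault here.
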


\begin{convention}
From now on, we let $\omega_n=\mathrm{Vol}(\Sn,g_{\Sn})$ and $\fint_{\Sn}$ denote $\omega_n^{-1} \int_{\Sn}$.
\end{convention}

\section{Basic properties of the flow}\label{Sect:Property_flow}

First of all, by the same proof of \cite[Proposition 4.8]{Jin&Xiong}, for any given $g_0\in [g_{\Sn}]$ there exist a constant $T>0$ and a unique positive smooth solution of \eqref{1.3} for all $0<t\leq T$. Next, we establish some basic properties of the flow.

By \eqref{conformal_invariance} we have
\begin{equation}\label{1.1}
\mathcal{S}(g)=\frac{\int_{\Sn}uP_\sigma(u)\ud V_{g_{\Sn}}}{(\int_{\Sn}f|u|^{\frac{2n}{n-2\sigma}}\ud V_{g_{\Sn}})^{\frac{n-2\sigma}{n}}}:=E_f[u].
\end{equation}

\begin{prop} \label{prop:properties} 
Let $g(t)$ be a smooth solution of \eqref{1.3} with $\sigma \in (0,1)$. Then
\begin{itemize}
\item[(1)] $
\frac{\partial}{\partial t}\ud V_g=-\frac{n}{2}(R_\sigma^g-\alpha f)\ud V_{g}$ and thus the volume $\mathrm{Vol}_{g(t)}(\Sn)$ is preserved,
\item[(2)] $\frac{\partial}{\partial t}(R_\sigma^g-\alpha f)=
-\frac{n-2\sigma}{4}P_\sigma^g(R_\sigma^g-\alpha f)+\frac{n+2\sigma}{4}R_\sigma^g(R_\sigma^g-\alpha f)-\alpha' f,$
\item[(3)] $\frac{\ud}{\ud t}\mathcal{S}(g)=-\frac{n-2\sigma}{2}\frac{\int_{\Sn}(R_\sigma^g-\alpha f)^2\ud V_{g}}{(\int_{\Sn}f\ud V_{g})^{\frac{n-2\sigma}{n}}}\le 0$.
\end{itemize}

\end{prop}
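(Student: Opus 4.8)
The plan is to verify the three identities by direct computation from the evolution equation $\partial_t u = -\frac{n-2\sigma}{4}(R_\sigma^g - \alpha f)u$ in \eqref{1.7} (equivalently \eqref{1.8}), treating each item in turn. Throughout I write $w := R_\sigma^g - \alpha f$ for brevity and use $g = u^{4/(n-2\sigma)}g_{\Sn}$, so that $\ud V_g = u^{2n/(n-2\sigma)}\ud V_{g_{\Sn}}$.

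For \textit{(1)}: since $\ud V_g = u^{2n/(n-2\sigma)}\,\ud V_{g_{\Sn}}$, differentiating in $t$ and inserting $\partial_t u = -\frac{n-2\sigma}{4}wu$ gives $\partial_t \ud V_g = \frac{2n}{n-2\sigma}u^{-1}\partial_t u\,\ud V_g = -\frac{n}{2}w\,\ud V_g$, which is the claimed formula. Integrating over $\Sn$ and using the definition \eqref{1.4} of $\alpha$, namely $\alpha \int_\Sn f\,\ud V_g = \int_\Sn R_\sigma^g\,\ud V_g$, we get $\frac{\ud}{\ud t}\mathrm{Vol}_{g(t)}(\Sn) = -\frac{n}{2}\int_\Sn w\,\ud V_g = 0$, so the volume is preserved.

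For \textit{(2)}: start from \eqref{1.8}, written as $\partial_t(u^{\frac{n+2\sigma}{n-2\sigma}}) = \frac{n+2\sigma}{4}(-P_\sigma(u) + \alpha f u^{\frac{n+2\sigma}{n-2\sigma}})$. Using $R_\sigma^g = u^{-\frac{n+2\sigma}{n-2\sigma}}P_\sigma(u)$ (from \eqref{1.6} and \eqref{conformal_invariance}), the right side is $-\frac{n+2\sigma}{4}u^{\frac{n+2\sigma}{n-2\sigma}}w$. On the other hand, by conformal covariance $P_\sigma^g(\phi) = u^{-\frac{n+2\sigma}{n-2\sigma}}P_\sigma(u\phi)$; applying $\partial_t$ to $P_\sigma(u\cdot \phi)$ for a suitable test quantity (or more directly, differentiating the relation $u^{\frac{n+2\sigma}{n-2\sigma}}R_\sigma^g = P_\sigma(u)$ in $t$ and then dividing by $u^{\frac{n+2\sigma}{n-2\sigma}}$) one obtains $\partial_t R_\sigma^g + \frac{n+2\sigma}{n-2\sigma}u^{-1}(\partial_t u)R_\sigma^g = u^{-\frac{n+2\sigma}{n-2\sigma}}P_\sigma(\partial_t u)$. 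Substituting $\partial_t u = -\frac{n-2\sigma}{4}wu$ and recognizing $u^{-\frac{n+2\sigma}{n-2\sigma}}P_\sigma(wu) = P_\sigma^g(w)$ gives $\partial_t R_\sigma^g = -\frac{n-2\sigma}{4}P_\sigma^g(w) + \frac{n+2\sigma}{4}R_\sigma^g w$. Finally $\partial_t w = \partial_t R_\sigma^g - \alpha' f$ yields the stated formula.

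For \textit{(3)}: since $\mathcal{S}(g)$ is the (scale-normalized) total $Q$-curvature functional and \eqref{1.3} is its negative gradient flow, the cleanest route is to differentiate $\mathcal{S}(g) = (\int_\Sn R_\sigma^g\,\ud V_g)\big/(\int_\Sn f\,\ud V_g)^{\frac{n-2\sigma}{n}}$ directly. For the numerator, $\frac{\ud}{\ud t}\int_\Sn R_\sigma^g\,\ud V_g = \int_\Sn (\partial_t R_\sigma^g)\,\ud V_g + \int_\Sn R_\sigma^g\,\partial_t \ud V_g$; plug in \textit{(2)} (with $\alpha'f$ term) and \textit{(1)}, and use self-adjointness of $P_\sigma^g$ together with $P_\sigma^g(1) = R_\sigma^g$ to simplify $\int_\Sn P_\sigma^g(w)\,\ud V_g = \int_\Sn w R_\sigma^g\,\ud V_g$. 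After collecting terms one finds $\frac{\ud}{\ud t}\int_\Sn R_\sigma^g\,\ud V_g = -\frac{n-2\sigma}{2}\int_\Sn w\, R_\sigma^g\,\ud V_g$ (the $\alpha'$ contributions cancel against part of the volume term because $\int_\Sn w\,\ud V_g = 0$). For the denominator, $\frac{\ud}{\ud t}(\int_\Sn f\,\ud V_g)^{\frac{n-2\sigma}{n}} = \frac{n-2\sigma}{n}(\int_\Sn f\,\ud V_g)^{-\frac{2\sigma}{n}}\int_\Sn f\,\partial_t\ud V_g = -\frac{n-2\sigma}{2}(\int_\Sn f\,\ud V_g)^{-\frac{2\sigma}{n}}\int_\Sn fw\,\ud V_g$. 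Combining via the quotient rule and using $\int_\Sn w\,\ud V_g = 0$ to replace $\int_\Sn w R_\sigma^g\,\ud V_g = \int_\Sn w(R_\sigma^g - \alpha f)\,\ud V_g = \int_\Sn w^2\,\ud V_g$ and $\int_\Sn fw\,\ud V_g$ times $\alpha$ similarly, everything consolidates into $\frac{\ud}{\ud t}\mathcal{S}(g) = -\frac{n-2\sigma}{2}\frac{\int_\Sn w^2\,\ud V_g}{(\int_\Sn f\,\ud V_g)^{\frac{n-2\sigma}{n}}} \le 0$.

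The main obstacle is the bookkeeping in \textit{(3)}: one must carefully track the $\alpha'$ terms and repeatedly exploit the constraint $\int_\Sn (R_\sigma^g - \alpha f)\,\ud V_g = 0$ (immediate from the definition \eqref{1.4} of $\alpha$) to see the cancellations, as well as the self-adjointness of $P_\sigma^g$ on $(\Sn, g)$. None of the individual steps is deep, but the sign and coefficient accounting is where errors would creep in; organizing the computation as "differentiate numerator, differentiate denominator, then quotient rule" keeps it manageable.
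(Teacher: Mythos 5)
Your computation is correct and follows essentially the same route as the paper: item (1) from the volume form, item (2) by differentiating $u^{\frac{n+2\sigma}{n-2\sigma}}R_\sigma^g=P_\sigma(u)$, and item (3) by the quotient rule combined with $\int_{\Sn}R_\sigma^g\,\ud V_g=\alpha\int_{\Sn}f\,\ud V_g$ and self-adjointness of $P_\sigma$. One phrasing in (3) is imprecise — the equality $\int_{\Sn}wR_\sigma^g\,\ud V_g=\int_{\Sn}w(R_\sigma^g-\alpha f)\,\ud V_g$ does not follow from $\int_{\Sn}w\,\ud V_g=0$; rather, the term $\alpha\int_{\Sn}fw\,\ud V_g$ coming from the derivative of the denominator combines with $\int_{\Sn}wR_\sigma^g\,\ud V_g$ to produce $\int_{\Sn}w^2\,\ud V_g$, exactly as in the paper — but this does not affect the validity of the argument.
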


\begin{proof} Item (1) follows immediately from \eqref{1.7}.

Write $g(t)=u(t)^{4/(n-2\sigma)} g_{\Sn}$. By definition of $R_\sigma^g$, \eqref{conformal_invariance} and \eqref{1.7}, we have
\begin{align}
\frac{\partial}{\partial t}R_\sigma^g&=-\frac{n+2\sigma}{n-2\sigma}u^{-\frac{2n}{n-2\sigma}}u_t P_\sigma(u)
+u^{-\frac{n+2\sigma}{n-2\sigma}}P_\sigma(u_t) \nonumber \\
&=\frac{n+2\sigma}{4}(R_\sigma^g-\alpha f)u^{-\frac{n+2\sigma}{n-2\sigma}}P_\sigma(u)
-\frac{n-2\sigma}{4}u^{-\frac{n+2\sigma}{n-2\sigma}}P_\sigma\big((R_\sigma^g-\alpha f)u\big)\nonumber \\
&=\frac{n+2\sigma}{4}R_\sigma^g(R_\sigma^g-\alpha f)-\frac{n-2\sigma}{4}P_\sigma^g(R_\sigma^g-\alpha f).
\label{1.11}
\end{align}
Since $(\al f)'=\al' f$, item (2) follows.

By item (1) and the fact that $P_\sigma$ is self-adjoint,  we have
\begin{align}\label{1.13}
&\frac{\ud}{\ud t}\left(\int_{\Sn}R_\sigma^g\ud V_{g}\right)=2\int_{\Sn}uP_\sigma(u_t)\ud V_{g_{\Sn}}\no\\
=&\frac{n-2\sigma}{2}\int_{\Sn}uP_\sigma((\alpha f-R_\sigma^g)u)\ud V_{g_{\Sn}}\no\\
=&\frac{n-2\sigma}{2}\int_{\Sn}P_\sigma(u)(\alpha f-R_\sigma^g)u\ud V_{g_{\Sn}}\no\\
=&-\frac{n-2\sigma}{2}\int_{\Sn}R_\sigma^g(R_\sigma^g-\alpha f)\ud V_{g}.
\end{align}
Using item (1) again and \eqref{1.13}, we have
\begin{align*}
\frac{\ud}{\ud t}\mathcal{S}(g)=&\frac{\frac{\ud}{\ud t}(\int_{\Sn}R_\sigma^g\ud V_{g})}{(\int_{\Sn}f\ud V_{g})^{\frac{n-2\sigma}{n}}}
-\frac{n-2\sigma}{n}\frac{(\int_{\Sn}R_\sigma^g\ud V_{g})(\int_{\Sn}f\frac{\partial}{\partial t}\ud V_{g})
}{(\int_{\Sn}f\ud V_{g})^{\frac{n-2\sigma}{n}+1}}\\
=&-\frac{n-2\sigma}{2}\frac{\int_{\Sn}R_\sigma^g(R_\sigma^g-\alpha f)\ud V_{g}}{(\int_{\Sn}f\ud V_{g})^{\frac{n-2\sigma}{n}}}
+\frac{n-2\sigma}{2}\frac{\alpha\int_{\Sn}f(R_\sigma^g-\alpha f)\ud V_{g}
}{(\int_{\Sn}f\ud V_{g})^{\frac{n-2\sigma}{n}}}\\
=&-\frac{n-2\sigma}{2}\frac{\int_{\Sn}(R_\sigma^g-\alpha f)^2\ud V_{g}}{(\int_{\Sn}f\ud V_{g})^{\frac{n-2\sigma}{n}}}.
\end{align*}
Hence, item (3) is proved.
\end{proof}

Without loss of generality, we assume $\int_{\Sn}\ud V_{g(0)}=\omega_n$. It follows from item (1) of Proposition \ref{prop:properties} that along the flow \eqref{1.3},
\begin{equation}\label{1.9}
\int_{\Sn}\ud V_{g(t)}=\omega_n.
\end{equation}

\begin{lemma}\label{lemma1.4} 
Let $0<m:=\min_{\Sn}f\le  \max_{\Sn}f=:M<\infty$. Then there exist positive constants $\alpha_1$, $\alpha_2$ and $\al_3$, depending only on $n,\sigma, g_0$, $m$ and $M$,  such that
$$0<\alpha_1\leq\alpha\leq\alpha_2 \quad\mathrm{and} \quad \al' \le \al_3$$
for all $0\leq t\leq T$.
\end{lemma}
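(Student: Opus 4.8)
The plan is to establish the bounds on $\alpha(t)$ and $\alpha'(t)$ by combining the monotonicity of the energy $\mathcal{S}(g)$ from Proposition \ref{prop:properties}(3) with the normalization of volume \eqref{1.9} and the Sobolev inequality \eqref{eq:sobolev}. First I would observe that since $\mathcal{S}(g(t))$ is nonincreasing, $\mathcal{S}(g(t)) \le \mathcal{S}(g_0)$ for all $t$, and (writing $g=u^{4/(n-2\sigma)}g_{\Sn}$) also $\mathcal{S}(g(t)) = E_f[u(t)] > 0$ because $\int_{\Sn} u P_\sigma u \, \ud V_{g_{\Sn}} > 0$ by positivity of the eigenvalues of $P_\sigma$. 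Recalling $\alpha = \int_{\Sn} R_\sigma^g \ud V_g \big/ \int_{\Sn} f \ud V_g$ and that $\int_{\Sn} R_\sigma^g \ud V_g = \int_{\Sn} u P_\sigma u \, \ud V_{g_{\Sn}}$, while $\int_{\Sn} f \ud V_g = \int_{\Sn} f u^{2n/(n-2\sigma)} \ud V_{g_{\Sn}}$, one sees that
\[
\alpha(t) = \mathcal{S}(g(t)) \cdot \left( \int_{\Sn} f u^{\frac{2n}{n-2\sigma}} \ud V_{g_{\Sn}} \right)^{-\frac{2\sigma}{n}}.
\]
The volume normalization \eqref{1.9} gives $\int_{\Sn} u^{2n/(n-2\sigma)} \ud V_{g_{\Sn}} = \omega_n$, so $m\,\omega_n \le \int_{\Sn} f u^{2n/(n-2\sigma)} \ud V_{g_{\Sn}} \le M\,\omega_n$. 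This immediately yields the upper bound $\alpha(t) \le \mathcal{S}(g_0) (m\omega_n)^{-2\sigma/n} =: \alpha_2$.

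For the lower bound, I would use the Sobolev inequality \eqref{eq:sobolev} (equivalently \eqref{ineq:Sobolev} applied to $u$): $\int_{\Sn} u P_\sigma u \, \ud V_{g_{\Sn}} \ge Y_\sigma(\Sn) \big(\int_{\Sn} u^{2n/(n-2\sigma)}\ud V_{g_{\Sn}}\big)^{(n-2\sigma)/n} = Y_\sigma(\Sn)\,\omega_n^{(n-2\sigma)/n}$ by \eqref{1.9}. Hence $\int_{\Sn} R_\sigma^g \ud V_g \ge Y_\sigma(\Sn)\omega_n^{(n-2\sigma)/n} > 0$, and combined with $\int_{\Sn} f \ud V_g \le M\omega_n$ we get $\alpha(t) \ge Y_\sigma(\Sn)\omega_n^{(n-2\sigma)/n}/(M\omega_n) =: \alpha_1 > 0$. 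Both $\alpha_1, \alpha_2$ depend only on $n,\sigma,m,M$ and (through $\mathcal{S}(g_0)$) on $g_0$, as claimed.

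For the bound on $\alpha'$, I would differentiate \eqref{1.4}. Using $\frac{\ud}{\ud t}\int_{\Sn} R_\sigma^g \ud V_g = -\frac{n-2\sigma}{2}\int_{\Sn} R_\sigma^g(R_\sigma^g - \alpha f)\ud V_g$ from \eqref{1.13} and $\frac{\partial}{\partial t}\ud V_g = -\frac{n}{2}(R_\sigma^g - \alpha f)\ud V_g$ from Proposition \ref{prop:properties}(1), the quotient rule gives
\[
\alpha' = \frac{-\frac{n-2\sigma}{2}\int_{\Sn} R_\sigma^g(R_\sigma^g-\alpha f)\ud V_g + \frac{n}{2}\,\alpha\int_{\Sn} f(R_\sigma^g-\alpha f)\ud V_g}{\int_{\Sn} f \ud V_g}.
\]
Writing $R_\sigma^g = (R_\sigma^g - \alpha f) + \alpha f$ in the first integrand, the $\alpha f$ parts cancel against part of the second term and one is left with $\alpha' \cdot \int_{\Sn} f\ud V_g = -\frac{n-2\sigma}{2}\int_{\Sn}(R_\sigma^g-\alpha f)^2\ud V_g + \frac{2\sigma}{2}\alpha\int_{\Sn} f(R_\sigma^g-\alpha f)\ud V_g$; since $\int_{\Sn} f(R_\sigma^g - \alpha f)\ud V_g = \int_{\Sn} fR_\sigma^g \ud V_g - \alpha\int_{\Sn} f^2 \ud V_g$ and by Cauchy-Schwarz $\int_{\Sn} f R_\sigma^g \ud V_g \le \big(\int_{\Sn} f^2 \ud V_g\big)^{1/2}\big(\int_{\Sn}(R_\sigma^g)^2\ud V_g\big)^{1/2}$, the main task becomes a uniform $L^2(\Sn, g)$ bound on $R_\sigma^g$. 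That is precisely where the main obstacle lies: unlike $\alpha$, which is controlled by soft functional inequalities, bounding $\int_{\Sn}(R_\sigma^g)^2 \ud V_g$ requires the integral curvature estimates (the Stroock-Varopoulos inequality of Proposition \ref{lemma1.11} feeding into the $L^p$/$H^\sigma$ bounds developed in later sections, or at minimum a bound on $[0,T]$ from the evolution equation of $R_\sigma^g$ in Proposition \ref{prop:properties}(2) together with the maximum principle Lemma \ref{lem:mp}). Once a bound $\int_{\Sn}(R_\sigma^g)^2\ud V_g \le C(n,\sigma,g_0,m,M)$ on $[0,T]$ is in hand, dropping the manifestly nonpositive term $-\frac{n-2\sigma}{2}\int_{\Sn}(R_\sigma^g-\alpha f)^2\ud V_g$ and using $\alpha \le \alpha_2$, $m\omega_n \le \int_{\Sn} f \ud V_g$, one arrives at $\alpha' \le \alpha_3$ for a constant $\alpha_3$ depending only on $n,\sigma,g_0,m,M$, completing the proof.
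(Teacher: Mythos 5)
Your bounds $\alpha_1\le\alpha\le\alpha_2$ are exactly the paper's argument: Sobolev plus volume normalization for the lower bound, monotonicity of $\mathcal{S}$ plus $\int_{\Sn}f\,\ud V_g\ge m\omega_n$ for the upper bound. No issues there.

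The bound on $\alpha'$ has a genuine gap. You correctly arrive at
\[
\alpha'\int_{\Sn}f\,\ud V_g=-\frac{n-2\sigma}{2}\int_{\Sn}(R_\sigma^g-\alpha f)^2\,\ud V_g+\sigma\alpha\int_{\Sn}f(R_\sigma^g-\alpha f)\,\ud V_g,
\]
but then you split the second integral as $\int f R_\sigma^g-\alpha\int f^2$ and invoke Cauchy--Schwarz, which reduces the problem to a uniform bound on $\int_{\Sn}(R_\sigma^g)^2\,\ud V_g$. That bound is not available at this point in the development, and the tools you propose to get it are circular: Lemma \ref{lemma1.6} (the pointwise lower bound on $R_\sigma^g-\alpha f$ via the maximum principle) explicitly uses the constant $\alpha_3$ you are trying to produce, and the integral curvature estimates of Section \ref{Sect:Integ_curv_est} build on Lemmas \ref{lemma1.4}--\ref{lemma1.6}. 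The correct step is elementary and avoids any estimate on $R_\sigma^g$ itself: apply Young's inequality to the cross term \emph{without} splitting it, namely
\[
\sigma\alpha\int_{\Sn}f(R_\sigma^g-\alpha f)\,\ud V_g\le\frac{n-2\sigma}{2}\int_{\Sn}(R_\sigma^g-\alpha f)^2\,\ud V_g+C(n,\sigma)\,\alpha^2\int_{\Sn}f^2\,\ud V_g,
\]
so the quadratic piece is absorbed by the manifestly nonpositive term and one is left with $\alpha'\int_{\Sn}f\,\ud V_g\le C(n,\sigma)\,\alpha^2\int_{\Sn}f^2\,\ud V_g\le C\alpha_2^2M^2\omega_n$, giving $\alpha'\le C\alpha_2^2M^2/m=:\alpha_3$. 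Note also that this only yields a one-sided bound $\alpha'\le\alpha_3$ (which is all the lemma claims), precisely because the term $-\frac{n-2\sigma}{2}\int(R_\sigma^g-\alpha f)^2\,\ud V_g$ can only be dropped in one direction.
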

\begin{proof}
By the Sobolev inequality \eqref{eq:sobolev} and the normalization \eqref{1.9}, we have
\begin{align*}
\alpha=\frac{\int_{\Sn}R_\sigma^g\ud V_g}{\int_{\Sn}f\ud V_g}=
\frac{\int_{\Sn}uP_\sigma(u)\ud V_{g_{\Sn}}}{\int_{\Sn}f\ud V_g}\geq&\frac{Y_\sigma(\Sn)(\int_{\Sn} u^{\frac{2n}{n-2\sigma}}\,\ud V_{g_{\Sn}})^{\frac{n-2\sigma}{n}}}{M \omega_n}\\
=&\frac{Y_\sigma(\Sn) \omega_n^{-\frac{2\sigma}{n}}}{M}:=\alpha_1.
\end{align*}

On the other hand, it follows from item (3) of Proposition \ref{prop:properties} that
\begin{equation*}\label{1.19.5}
\alpha \leq \mathcal{S}(g_0)\left(\int_{\Sn}f\ud V_{g}\right)^{-\frac{2\sigma}{n}}
\leq \mathcal{S}(g_0) (m\omega_n)^{-\frac{2\sigma}{n}}:=\alpha_2.
\end{equation*}

Differentiating both sides of the equation $\al \int_{\Sn}f\ud V_g= \int_{\Sn}R_\sigma^g\ud V_g $ with respect to $t$, by Proposition \ref{prop:properties} we have
\begin{equation}\label{eq:alpha'}
\alpha'\int_{\Sn}f\ud V_g-\frac{n}{2}\int_{\Sn}\alpha f(R_\sigma^g-\alpha f)\ud V_{g}=-\frac{n-2\sigma}{2}\int_{\Sn}R_\sigma^g(R_\sigma^g-\alpha f)\ud V_{g}.
\end{equation}
It follows from Young's inequality that
\begin{align*}
\alpha'\int_{\Sn}f\ud V_g=&-\frac{n-2\sigma}{2}\int_{\Sn}(R_\sigma^g-\alpha f)^2\ud V_{g}+\sigma\int_{\Sn}\alpha f(R_\sigma^g-\alpha f)\ud V_{g}\\
\le& C(n,\sigma)\,\alpha^2\int_{\Sn}f^2\ud V_g.
\end{align*}
Then,
$$\alpha'\leq \frac{C\alpha^2\int_Mf^2\ud V_g}{\int_{\Sn}f\ud V_g}\leq \frac{C\alpha_2^2M^2}{m}=:\al_3.$$
Therefore, the lemma is proved.
\end{proof}

\begin{lemma}\label{lemma1.6}
There holds
$$R_\sigma^g-\alpha f\geq \min\left\{\inf_{\Sn}R_\sigma^{g(0)}-\alpha_2M,-\frac{M}{\sigma \alpha_1m}(\alpha_3+\sigma\alpha_2^2M)\right\}:=\gamma$$
for all $t\geq 0$.
\end{lemma}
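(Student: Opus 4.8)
The plan is to apply the maximum principle (Lemma~\ref{lem:mp}) to the quantity $w = R_\sigma^g - \alpha f$, whose evolution is given by item~(2) of Proposition~\ref{prop:properties}:
\[
w_t = -\tfrac{n-2\sigma}{4} P_\sigma^g w + \tfrac{n+2\sigma}{4} R_\sigma^g\, w - \alpha' f.
\]
The obstruction to directly invoking Lemma~\ref{lem:mp} is the inhomogeneous term $-\alpha' f$, which need not have a favorable sign, and the coefficient $\tfrac{n+2\sigma}{4} R_\sigma^g$ in front of $w$, whose sign we do not control a priori. The standard remedy is to test the claimed lower bound $\gamma$ as a barrier: set $\tilde w = w - \gamma$ and show $\tilde w \ge 0$. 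We know $\tilde w(\cdot,0) \ge 0$ since $\gamma \le \inf_{\Sn} R_\sigma^{g(0)} - \alpha_2 M \le \inf_{\Sn}(R_\sigma^{g(0)} - \alpha f(0))$ by Lemma~\ref{lemma1.4}.

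First I would argue by contradiction at a first touching time/point: suppose $\tilde w$ becomes negative, and let $(x_0, t_0)$ with $t_0 > 0$ be a point where $w$ attains a new negative minimum $w(x_0,t_0) < \gamma \le 0$ (one should localize in time on $[0,T]$ and use continuity; a genuinely rigorous version replaces the minimum-point argument by the operator-theoretic one already used in the proof of Lemma~\ref{lem:mp}, perhaps after multiplying by $e^{-\lambda t}$ to absorb lower-order terms). At such a point, using the pointwise formula for $P_\sigma^g w$ from the proof of Lemma~\ref{lem:mp} in stereographic coordinates, $P_\sigma^g w(x_0,t_0) \le h(x_0,t_0)\, w(x_0,t_0)/u^{(n+2\sigma)/(n-2\sigma)}$ with the singular-integral part $\le 0$; more simply one records that $-\tfrac{n-2\sigma}{4}P_\sigma^g w(x_0,t_0)$ has the same sign behavior as in Lemma~\ref{lem:mp}. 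Then at the minimum, $w_t(x_0,t_0) \le 0$, so from the evolution equation
\[
0 \ge w_t(x_0,t_0) \ge \tfrac{n+2\sigma}{4} R_\sigma^g(x_0,t_0)\, w(x_0,t_0) - \alpha'(x_0,t_0) f(x_0),
\]
(after discarding the nonpositive $P_\sigma^g$-term). The point is that $w(x_0,t_0) < 0$, so if $R_\sigma^g(x_0,t_0) \ge 0$ the first term is $\le 0$ and gives nothing; instead I would split into cases according to the sign of $R_\sigma^g$ at that point. When $R_\sigma^g(x_0,t_0) < 0$, I would use $R_\sigma^g = w + \alpha f \ge w$, hence $R_\sigma^g w \ge R_\sigma^g \cdot R_\sigma^g \ge 0$ is false in the needed direction — so more carefully: $w < 0$ and $w = R_\sigma^g - \alpha f$ give $R_\sigma^g < \alpha f \le \alpha_2 M$, and combined with $\alpha' \le \alpha_3$, $f \ge m$, $\alpha \ge \alpha_1$ one extracts the explicit bound $w(x_0,t_0) \ge -\tfrac{M}{\sigma\alpha_1 m}(\alpha_3 + \sigma \alpha_2^2 M)$. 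Tracking the algebra, this reproduces exactly the second term inside the $\min$ defining $\gamma$, contradicting $w(x_0,t_0) < \gamma$.

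The main obstacle is making the contact-point/maximum-principle argument rigorous for the nonlocal operator $P_\sigma^g$ in the presence of the zeroth-order term $\tfrac{n+2\sigma}{4}R_\sigma^g w$ of uncontrolled sign: one cannot simply quote Lemma~\ref{lem:mp} as stated since its hypothesis is $w_t + aP_\sigma^g w + bw \ge 0$, whereas here $b = -\tfrac{n+2\sigma}{4}R_\sigma^g$ may be negative and there is the forcing term $\alpha' f$. The clean way is to choose the constant $\gamma$ so that $v := w - \gamma$ satisfies a differential inequality of the form $v_t + \tfrac{n-2\sigma}{4}P_\sigma^g v + \tilde b v \ge 0$ with $\tilde b$ bounded: indeed $v_t + \tfrac{n-2\sigma}{4}P_\sigma^g v = \tfrac{n+2\sigma}{4}R_\sigma^g w - \alpha' f + \tfrac{n-2\sigma}{4}P_\sigma^g \gamma$, and since $P_\sigma^g \gamma = \gamma R_\sigma^g$ (as $P_\sigma^g 1 = R_\sigma^g$), this equals $\tfrac{n+2\sigma}{4}R_\sigma^g v + (\tfrac{n+2\sigma}{4} + \tfrac{n-2\sigma}{4})\gamma R_\sigma^g - \alpha' f + \cdots$; one then checks, using $R_\sigma^g$ bounded below (not yet available!) — hence the cleanest route is simply the direct contradiction-at-the-minimum computation above with the cases on $\mathrm{sign}(R_\sigma^g(x_0,t_0))$, which only ever uses the a priori bounds $\alpha_1 \le \alpha \le \alpha_2$, $\alpha' \le \alpha_3$ from Lemma~\ref{lemma1.4} and never requires a two-sided bound on $R_\sigma^g$ in advance. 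I would present the proof in that order: reduce to a touching point, observe the sign of the nonlocal term there, do the two-case split, and read off $\gamma$.
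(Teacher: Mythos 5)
Your overall strategy --- treat $w=R_\sigma^g-\alpha f$ via its evolution equation and use $\gamma$ as a constant barrier for a nonlocal maximum principle --- is exactly the paper's, but two steps in your execution are genuinely wrong or missing. First, the nonlocal term cannot be ``discarded as nonpositive'' at a spatial minimum: since $P_\sigma^g(1)=R_\sigma^g\neq 0$, what holds at a minimum point $x_0$ of $w(\cdot,t_0)$ is $(P_\sigma^g-R_\sigma^g)w(x_0)\le 0$, i.e.\ $P_\sigma^g w(x_0)\le R_\sigma^g(x_0)w(x_0)$, and the right-hand side has no sign when $w<0$. The correct reduction is therefore $0\ge w_t\ge \sigma R_\sigma^g w-\alpha' f$ (with $\sigma=\tfrac{n+2\sigma}{4}-\tfrac{n-2\sigma}{4}$), not $\tfrac{n+2\sigma}{4}R_\sigma^g w-\alpha' f$; this changes the constants. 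Second, and more seriously, your case $R_\sigma^g(x_0,t_0)\ge 0$ is never closed: you note it ``gives nothing'' and move on, so the argument is incomplete precisely where it needs an idea. The missing idea is the nonnegative quadratic term: substituting $R_\sigma^g=w+\alpha f$ gives $\sigma R_\sigma^g w=\sigma w^2+\sigma\alpha f w\ge \sigma w^2+\sigma\alpha_2 M w$, so $\sigma|w|^2-\sigma\alpha_2 M|w|\le \alpha_3 M$ at the touching point \emph{regardless} of the sign of $R_\sigma^g$, and no case split is needed. Even then, checking that the stated $\gamma$ violates this quadratic inequality is not automatic --- it uses $\alpha_2 M\ge \alpha_1 m$ and $|\gamma|\ge \alpha_1 m$ --- whereas you simply assert that ``tracking the algebra'' reproduces the second term in the $\min$.

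The paper sidesteps both issues by never arguing at a touching point. It introduces $L=\partial_t+\tfrac{n-2\sigma}{4}(P_\sigma^g-R_\sigma^g)+\sigma\alpha f$ and compares the two explicit functions $R_\sigma^g$ and $w=\alpha f+\gamma$: from \eqref{1.11} the common term $\tfrac{n-2\sigma}{4}[P_\sigma^g(\alpha f)-R_\sigma^g\,\alpha f]$ appears in both $L(R_\sigma^g)$ and $L(w)$ and cancels, the leftover $(R_\sigma^g)^2$-term in $L(R_\sigma^g)$ is discarded as nonnegative (this is the same quadratic term you are missing), and the choice of $\gamma$ makes $\alpha_3 M+\sigma(\alpha_2^2M^2+\alpha_1 m\gamma)\le 0$, so $L(R_\sigma^g-w)\ge 0$. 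Lemma \ref{lem:mp}, applied to $R_\sigma^g-w$ with $a=\tfrac{n-2\sigma}{4}$ and the merely continuous coefficient $b=-\tfrac{n-2\sigma}{4}R_\sigma^g+\sigma\alpha f$, then finishes the proof; the $e^{-\tilde Mt}$ absorption inside that lemma is exactly what answers your worry about the unsigned zeroth-order term, so no separate rigor argument is needed. If you want to salvage your touching-point version, replace the discarded term by $(P_\sigma^g-R_\sigma^g)w$, use the $\sigma w^2\ge 0$ trick to avoid the case split, and carry out the quadratic estimate explicitly.
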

\begin{proof}
Let $L:=\partial_t+\frac{n-2\sigma}{4}(P_\sigma^g-R_\sigma^g)+\sigma \alpha f.$
By \eqref{1.11}, we have
\begin{equation*}
L(R_\sigma^g)=\frac{n-2\sigma}{4}[P_\sigma^g(\alpha f)-R_\sigma^g \alpha f]+\frac{n+2\sigma}{4}(R_\sigma^g)^2.
\end{equation*}
Set $w=\alpha f+\gamma$. Recalling that $P_\sigma^g(1)=R_\sigma^g$ we have
\begin{align*}
L(w)&=\alpha' f+\frac{n-2\sigma}{4}[P_\sigma^g(\alpha f)-R_\sigma^g \alpha f]+\sigma\alpha f(\alpha f+\gamma)\\
&\leq\alpha_3 M+\frac{n-2\sigma}{4}[P_\sigma^g(\alpha f)-R_\sigma^g \alpha f]+\sigma(\alpha_2^2 M^2+\alpha_1m\gamma)\\
&\leq \frac{n-2\sigma}{4}[P_\sigma^g(\alpha f)-R_\sigma^g \alpha f]\leq L(R_\sigma^g).
\end{align*}
Since $
w(0)\leq \alpha_2 M+\gamma\leq \inf_{\Sn}R_\sigma^{g(0)}\leq R_\sigma^{g(0)}$, we apply Lemma \ref{lem:mp} to $R_\sigma^g- w$ and obtain that $R_\sigma^g- w \ge 0$. Therefore, the lemma is proved.
\end{proof}

\begin{lemma}\label{lemma1.9} There exist two positive constants $C_1$ and $C_2$, depending only on $n,\sigma, u_0,m$ and $M$,  such that
$$\frac{1}{C_1} e^{-C_2T} \leq u(x,t)\leq C_1 e^{C_2T} \quad \mathrm{in~~} \Sn\times[0,T]. $$
\end{lemma}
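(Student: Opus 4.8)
The plan is to derive the two-sided bound on $u$ from the evolution equation \eqref{1.7} for $u$ combined with the pointwise bounds already established in Lemmas \ref{lemma1.4} and \ref{lemma1.6}. First I would rewrite the first equation in \eqref{1.7} as
\[
\frac{\partial u}{\partial t}=-\frac{n-2\sigma}{4}(R_\sigma^g-\alpha f)\,u,
\]
and observe that this is an ODE in $t$ for each fixed $x\in\Sn$, with solution
\[
u(x,t)=u_0(x)\exp\!\left(-\frac{n-2\sigma}{4}\int_0^t (R_\sigma^{g(s)}(x)-\alpha(s) f(x))\,\ud s\right).
\]
Thus the whole task reduces to bounding the time integral of $R_\sigma^g-\alpha f$ from above and below, uniformly on $\Sn\times[0,T]$.

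For the lower bound on $u$ (equivalently, an \emph{upper} bound on $\int_0^t(R_\sigma^g-\alpha f)$), I would use Lemma \ref{lemma1.6}, which gives $R_\sigma^g-\alpha f\ge\gamma$ pointwise for a constant $\gamma=\gamma(n,\sigma,u_0,m,M)$; hence $\int_0^t(R_\sigma^g-\alpha f)\ge \gamma t\ge -|\gamma|T$ if $\gamma<0$, which yields $u(x,t)\ge u_0(x)e^{-C_2 T}\ge \frac{1}{C_1}e^{-C_2 T}$ with $C_1=\max\{\max_{\Sn}u_0,(\min_{\Sn}u_0)^{-1}\}$ and $C_2=\frac{n-2\sigma}{4}|\gamma|$. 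For the upper bound on $u$ I need a pointwise \emph{lower} bound on $R_\sigma^g-\alpha f$ of the opposite sign; but $R_\sigma^g-\alpha f\le \alpha f - \gamma'\le \alpha_2 M-\gamma$ would require an upper bound on $R_\sigma^g$, which is not yet available pointwise. Instead, the cleaner route is to apply the maximum principle (Lemma \ref{lem:mp}) directly to $u$: from \eqref{1.8}, or more simply from \eqref{1.7}, note that $v:=e^{-C_2 t}/u$ or a suitable barrier can be compared; alternatively, since $u$ solves the ODE above, it suffices to control $\int_0^t (R_\sigma^g-\alpha f)$ from below, and here one invokes that $-(R_\sigma^g-\alpha f)\le -\gamma$ gives $u_t\le \frac{n-2\sigma}{4}|\gamma| u$ when $\gamma<0$ only on the set where $R_\sigma^g-\alpha f<0$; on the complementary set $u_t\le 0$. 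Either way $u_t\le \frac{n-2\sigma}{4}\max\{0,-\gamma\}\,u=C_2 u$, so by Gronwall $u(x,t)\le u_0(x)e^{C_2 t}\le C_1 e^{C_2 T}$. The same reasoning with $-u_t\le C_2 u$ (using $R_\sigma^g-\alpha f\ge\gamma$, so $-u_t=\frac{n-2\sigma}{4}(R_\sigma^g-\alpha f)u\ge \frac{n-2\sigma}{4}\gamma u\ge -C_2 u$) gives the lower bound $u(x,t)\ge u_0(x)e^{-C_2 t}\ge \frac{1}{C_1}e^{-C_2 T}$.

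The main point — and the only nontrivial input — is therefore the pointwise lower bound $R_\sigma^g-\alpha f\ge\gamma$ from Lemma \ref{lemma1.6}, which already packages the maximum-principle argument and the bounds $\alpha_1\le\alpha\le\alpha_2$, $\alpha'\le\alpha_3$ from Lemma \ref{lemma1.4}. Once that is granted, the estimate for $u$ is a one-line Gronwall argument applied to the ODE satisfied by $u(x,\cdot)$ for each fixed $x$, with $C_1$ depending only on $\min_{\Sn}u_0$ and $\max_{\Sn}u_0$ (hence on $u_0$) and $C_2=\frac{n-2\sigma}{4}\max\{0,-\gamma\}$ depending only on $n,\sigma,u_0,m,M$. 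I would present it in exactly this order: state the ODE form of \eqref{1.7}, quote Lemma \ref{lemma1.6} for the two-sided differential inequality $-C_2 u\le u_t\le C_2 u$, integrate, and read off $C_1,C_2$. No genuine obstacle remains; the substance is entirely in the previously established lemmas.
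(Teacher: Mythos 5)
Your upper bound for $u$ is correct and coincides with the paper's: from $u_t=-\tfrac{n-2\sigma}{4}(R_\sigma^g-\alpha f)u$ and the pointwise inequality $R_\sigma^g-\alpha f\ge\gamma$ of Lemma \ref{lemma1.6} (with $\gamma<0$), one gets $u_t\le-\tfrac{n-2\sigma}{4}\gamma\,u$ and hence $u(x,t)\le u_0(x)e^{-\frac{n-2\sigma}{4}\gamma T}$. But your lower bound does not follow, and the sign bookkeeping in your write-up hides this. Writing $u(x,t)=u_0(x)\exp\bigl(-\tfrac{n-2\sigma}{4}\int_0^t(R_\sigma^{g(s)}-\alpha f)\,\ud s\bigr)$, a lower bound on $u$ requires an \emph{upper} bound on $\int_0^t(R_\sigma^g-\alpha f)\,\ud s$, i.e.\ a pointwise upper bound on $R_\sigma^g-\alpha f$. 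Lemma \ref{lemma1.6} only gives the bound from below; the inequality $-u_t=\tfrac{n-2\sigma}{4}(R_\sigma^g-\alpha f)u\ge\tfrac{n-2\sigma}{4}\gamma u$ that you invoke is equivalent to $u_t\le C_2u$, which is the upper bound on $u$ again, not $u_t\ge-C_2u$. You yourself note in the middle of the argument that a pointwise upper bound on $R_\sigma^g$ ``is not yet available'' — indeed it is not (controlling $R_\sigma^g$ from above is essentially the whole difficulty of the paper), yet your final summary claims both bounds come from the one two-sided differential inequality. The lower bound is the genuinely nontrivial half of the lemma and cannot be obtained by Gronwall.

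The paper's route for the lower bound is different in kind: from Lemma \ref{lemma1.6} one has the elliptic differential inequality $P_\sigma(u)\ge(\alpha f+\gamma)u^{\frac{n+2\sigma}{n-2\sigma}}$ at each fixed time; transferring to $\mathbb{R}^n$ by stereographic projection, the function $w=(\det\ud\Psi)^{\frac{n-2\sigma}{2n}}u\circ\Psi$ satisfies $(-\Delta)^\sigma w+a_0(\tfrac{2}{1+|y|^2})^{2\sigma}w\ge0$ with $a_0$ controlled by the (already proved) upper bound on $u$. The weak Harnack inequality for the fractional Laplacian then gives $\inf_{\Sn\times[0,T]}u\ge\frac{1}{C}\int_{\Sn}u\,\ud V_{g_{\Sn}}$, and the volume normalization $\omega_n=\int_{\Sn}u^{\frac{2n}{n-2\sigma}}\ud V_{g_{\Sn}}\le C\,(\inf u)(\sup u)^{\frac{n+2\sigma}{n-2\sigma}}$ combined with the upper bound yields the positive lower bound. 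You need to supply this (or an equivalent) argument; as written, your proof establishes only one of the two inequalities.
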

\begin{proof}
It follows from \eqref{1.7} and Lemma \ref{lemma1.6} that
$$\frac{\partial u}{\partial t}=-\frac{n-2\sigma}{4}(R_\sigma^g-\alpha f)u\leq-\frac{n-2\sigma}{4}\gamma\, u$$
for all $t\geq 0$. Hence,
\begin{equation}\label{est:upper_bd_u_finite_time}
u(x,t)\leq u_0(x)e^{-\frac{n-2\sigma}{4}\gamma t}\leq (\max_{\Sn}u_0) e^{-\frac{n-2\sigma}{4}\gamma T}
\end{equation}
for any $(x,t)\in \Sn\times [0,T]$. This gives the upper bound of $u$.

By Lemma \ref{lemma1.6}, we have
\begin{align*}
0&\le (R_\sigma^g -\alpha f -\gamma)u^{\frac{n+2\sigma}{n-2\sigma}}
=P_\sigma(u) -(\alpha f +\gamma)u^{\frac{n+2\sigma}{n-2\sigma}}.
\end{align*}
By stereographic projection, we define $w(y)=\det \ud\Psi(y)^{\frac{n-2\sigma}{2n}} u\circ \Psi(y)$ and $W(X)=\mathcal{P}_\sigma[w](X)$ in the same way of \eqref{eq:poisson}. By Lemma \ref{lemma1.4}, definition of $\gamma$ in Lemma \ref{lemma1.6} and \eqref{est:upper_bd_u_finite_time}, we have
$$a_0:=\max_{\Sn\times [0,T]}\left(|\alpha f+\gamma|u^{\frac{4\sigma}{n-2\sigma}}\right)\le C(T).$$
Thus by definition of $w$ and \eqref{eq:r1} we have
\begin{align*}
&(-\Delta)^\sigma w+a_0 \left(\frac{2}{1+|y|^2}\right)^{2\sigma}w\\
\geq& (-\Delta)^\sigma w- (\alpha f\circ \Psi+\gamma ) w^{\frac{n+2\sigma}{n-2\sigma}}\\
=&\left(\frac{2}{1+|y|^2}\right)^{\frac{n+2\sigma}{2}}[P_\sigma(u) -(\alpha f +\gamma)u^{\frac{n+2\sigma}{n-2\sigma}}]\circ \Psi\ge 0
\end{align*}
and  $w>0$ in $\mathbb{R}^n$.

By the weak Harnack inequality (see \cite[Proposition 2.6 (ii)]{Jin&Li&Xiong1} or Jin-Xiong \cite{TX}), we have
\begin{align*}
\inf_{B_1} w\ge \frac{1}{C} \int_{B_2\times [0,2]} W(X) \ud X &
= \frac{1}{C}\int_{B_2\times [0,2]}\mathcal{P}_\sigma[w](X)\,\ud X\\&
\ge \frac{1}{C} \int_{\mathbb{R}^{n}}\frac{w(y)}{(1+|y|)^{n+2\sigma}}\,\ud y.
\end{align*}
where the last inequality follows from
\begin{align*}
\int_{B_2\times [0,2]} \mathcal{P}_\sigma[w](X)\,\ud X
&=\int_{\mathbb{R}^n}w(y)\ud y\int_{B_2\times [0,2]}\frac{\beta(n,\sigma)\tau^{2\sigma}}{(|x-y|^2+\tau^2)^{\frac{n+2\sigma}{2}}}\ud X\\
&\geq\int_{\mathbb{R}^n}w(y)\ud y\int_{B_2\times [1,2]}\frac{\beta(n,\sigma)}{(|x-y|^2+4)^{\frac{n+2\sigma}{2}}}\ud X\\
&\geq C(n,\sigma) \int_{\mathbb{R}^n}\frac{w(y)}{(1+|y|)^{n+2\sigma}}\ud y.
\end{align*}
Pulling back to $\Sn$ by stereographic projection and using a standard partition of unity argument, we have
\[
\inf_{\Sn\times [0,T]}u \ge \frac{1}{C} \int_{\Sn} u \,\ud V_{g_{\Sn}}.
\]
By \eqref{1.9}, we obtain
$$\omega_n=\int_{\Sn}u^{\frac{2n}{n-2\sigma}}\ud V_{g_{\Sn}} \leq C\Big( \inf_{\Sn\times [0,T]} u\Big)\Big(  \sup_{\Sn \times [0,T]} u\Big)^{\frac{n+2\sigma}{n-2\sigma}}.$$ 
This together with the upper bound implies the lower bound.
\end{proof}

\begin{proof}[Proof of Theorem \ref{thm:1}] It follows from Lemmas \ref{lemma1.9}, \ref{lemma1.4}, the H\"older estimates for parabolic nonlocal equations, as well as  Proposition \ref{prop4.2_in_Jin&Xiong}.
\end{proof}

\section{Curvature convergences in integral norms}\label{Sect:Integ_curv_est}

For $p\geq 2$, we let
$$F_p(t)=\int_{\Sn}|\alpha f-R_\sigma^g|^{p}\ud V_g$$
and
$$G_p(t)=\int_{\Sn}|\alpha f-R_\sigma^g|^{p-2}(\alpha f-R_\sigma^g)P_\sigma^g(\alpha f-R_\sigma^g)\ud V_g.$$

By Proposition \ref{prop:properties}, we have
\begin{equation}\label{1.39}
\begin{split}
&\frac{\ud}{\ud t}\int_{\Sn}|\alpha f-R_\sigma^g|^{p}\ud V_g\\
=&-\frac{p(n-2\sigma)}{4}\int_{\Sn}|\alpha f-R_\sigma^g|^{p-2}(\alpha f-R_\sigma^g)P_\sigma^g(\alpha f-R_\sigma^g)\ud V_g\\
&+\frac{p(n+2\sigma)}{4}\int_{\Sn}\alpha f|\alpha f-R_\sigma^g|^{p}\ud V_g+p\alpha'\int_{\Sn}f|\alpha f-R_\sigma^g|^{p-2}(\alpha f-R_\sigma^g)\ud V_g\\
&+\left(\frac{n}{2}-\frac{p(n+2\sigma)}{4}\right)\int_{\Sn}|\alpha f-R_\sigma^g|^{p}(\alpha f-R_\sigma^g)\ud V_g.
\end{split}
\end{equation}

\begin{lemma}\label{lem:F_p_mid} 
For $2 \leq p \leq \max \{2,n/(2\sigma)\}$, there hold
\begin{equation}\label{est:F_p_intermediate}
\int_0^\infty F_p(t)\, \ud t<\infty \quad \mathrm{and} \quad \lim_{t \to \infty}F_p(t)=0.
\end{equation}
\end{lemma}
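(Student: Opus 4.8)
The plan is to run a Moser-type iteration-free energy argument directly on the quantity $F_p(t)$, using the flow's monotonicity to integrate in time. First I would start from the evolution identity \eqref{1.39}. The negative term $-\frac{p(n-2\sigma)}{4}G_p(t)$ on the right-hand side is the good term, and by the Stroock–Varopoulos inequality \eqref{eq:pre-1} of Proposition \ref{lemma1.11} applied to $v=\alpha f-R_\sigma^g$ (with $p\ge 2$) together with the lower bound $R_\sigma^g\ge \alpha f+\gamma$ of Lemma \ref{lemma1.6}, we have $G_p(t)\ge \frac{4(p-1)}{p^2}\int_{\Sn}|w|^{p/2}P_\sigma^g(|w|^{p/2})\,\ud V_g+\frac{(p-2)^2}{p^2}\int_{\Sn}R_\sigma^g|w|^p\,\ud V_g$ where $w=\alpha f-R_\sigma^g$; in particular $G_p\ge 0$ and, after inserting the Sobolev inequality \eqref{eq:sobolev} for the function $|w|^{p/2}$ in the flow metric, $G_p(t)$ controls $\big(\int_{\Sn}|w|^{pn/(n-2\sigma)}\ud V_g\big)^{(n-2\sigma)/n}$ from below up to a curvature-dependent lower-order correction.

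Next I would bound the remaining terms on the right-hand side of \eqref{1.39}. Using Lemma \ref{lemma1.4} ($\alpha_1\le\alpha\le\alpha_2$, $\alpha'\le\alpha_3$), the boundedness of $f$, Young's inequality, and the preserved volume \eqref{1.9}, every term other than $-cG_p$ is bounded by $C\big(F_p(t)+F_{p}(t)^{\theta}\big)$ for suitable exponents $\theta\in(0,1]$, plus a term of the form $C F_{p+1}(t)$ coming from the last line of \eqref{1.39}; this cubic-type term is the one that must be absorbed. The key interpolation is: since $p\le \max\{2,n/(2\sigma)\}$, the exponent $pn/(n-2\sigma)$ dominates $p+1$ (this is precisely where the range of $p$ is used — for $p\le n/(2\sigma)$ one checks $p+1\le pn/(n-2\sigma)$), so by Hölder's inequality in the fixed-volume metric $g(t)$, $F_{p+1}(t)\le F_p(t)^{1-\lambda}\big(\int_{\Sn}|w|^{pn/(n-2\sigma)}\ud V_g\big)^{\lambda(n-2\sigma)/(pn)}$ for some $\lambda\in(0,1)$, and the high-integrability factor is controlled by $G_p(t)$ via the Sobolev step above. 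A Young-inequality split then lets me absorb a small multiple of $G_p(t)$ while leaving $-c'G_p(t)$ with $c'>0$, at the cost of adding $C F_p(t)^{1+\delta'}$ or $C F_p(t)$ terms.

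This yields a differential inequality of the schematic form
\begin{equation*}
\frac{\ud}{\ud t}F_p(t)\le -c'\,G_p(t)+C\big(F_p(t)+F_p(t)^{1+\delta}\big)
\end{equation*}
for constants $c',C,\delta>0$ depending only on $n,\sigma$ and the data. To convert this into $\int_0^\infty F_p<\infty$ I would now use the global energy decay: by item (3) of Proposition \ref{prop:properties} and the two-sided bounds on $\int_{\Sn}f\,\ud V_g$, $\int_0^\infty F_2(t)\,\ud t=\int_0^\infty\int_{\Sn}(\alpha f-R_\sigma^g)^2\ud V_g\,\ud t\le C(\mathcal{S}(g_0)-\inf_t\mathcal{S}(g))<\infty$, which handles $p=2$ and in particular forces $\liminf_{t\to\infty}F_2(t)=0$. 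For $p>2$ I would bootstrap: assuming inductively that $\int_0^\infty F_{p'}<\infty$ and $F_{p'}\to 0$ for all $p'$ below the current value along a dyadic chain $2\le p_0<p_1<\cdots$ with $p_{k+1}\le p_k n/(n-2\sigma)$, the Sobolev/Stroock–Varopoulos step upgrades $\int_0^\infty G_{p_k}<\infty$ into $\int_0^\infty F_{p_{k+1}}<\infty$, and then integrating the differential inequality above together with $\int_0^\infty F_{p_{k+1}}<\infty$ and $F_{p_{k+1}}$ staying bounded (from Lemma \ref{lemma1.9}, which gives a crude a priori $L^\infty$ bound on $R_\sigma^g$ on each finite interval, hence local boundedness of $F_p$) gives $F_{p_{k+1}}(t)\to 0$ as $t\to\infty$; the finiteness of the time integral of $F_{p_{k+1}}$ follows by integrating once more. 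Finitely many steps of this chain reach any $p\le\max\{2,n/(2\sigma)\}$.

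\textbf{Main obstacle.} The delicate point is the simultaneous bookkeeping in the absorption step: one must ensure that after using Young's inequality to dominate the cubic term $CF_{p+1}$ by $\varepsilon G_p$ plus lower-order $F_p$ powers, the residual coefficient of $G_p$ is still strictly negative and the exponents appearing in the $F_p$-powers are $\le 1$ (so that Grönwall-type control, not blow-up, results) — this is exactly what constrains $p$ to the stated range and is the place where the argument could break if the interpolation exponents are mishandled. A secondary technical nuisance is that the Sobolev inequality \eqref{eq:sobolev} and the Stroock–Varopoulos inequality are stated for smooth functions, so I would apply them to $|w|^{p/2}$ after a routine regularization/approximation (or simply note $w$ is smooth for $t>0$ by Theorem \ref{thm:1} and $|w|^{p/2}\in H^\sigma$ with the stated inequalities passing to the limit), which I will only mention rather than belabor.
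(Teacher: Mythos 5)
Your overall architecture (base case $\int_0^\infty F_2\,\ud t<\infty$ from the energy decay, a differential inequality for $F_p$, a sequence $t_j$ with $F_p(t_j)\to0$ upgraded to full convergence) matches the paper, but the central absorption step rests on a false inequality. You claim that for $p\le n/(2\sigma)$ one has $p+1\le pn/(n-2\sigma)$, so that $F_{p+1}$ interpolates between $F_p$ and $F_{pn/(n-2\sigma)}$. In fact $\frac{pn}{n-2\sigma}\ge p+1$ is equivalent to $2\sigma(p+1)\ge n$, i.e.\ $p\ge \frac{n}{2\sigma}-1$ — the opposite constraint. Already for $p=2$ this fails whenever $n>6\sigma$ (so for every $n\ge 6$, and for smaller $n$ when $\sigma$ is small), and then $F_{p+1}$ lies \emph{above} the Sobolev-improved exponent, cannot be interpolated against $G_p$, and your Young-inequality absorption collapses. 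This is exactly why the paper's Lemma \ref{lemma1.13} (which does use your interpolation) is stated only for $p>n/(2\sigma)$, while the present lemma covers the complementary range by a different mechanism.

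The paper's actual device for $p\le n/(2\sigma)$ is to exploit the \emph{sign} of the cubic term rather than absorb it: rearranging \eqref{1.39} as in \eqref{dF_p_rearrangement} and using the coercivity \eqref{eq:coercive-1} (not \eqref{eq:pre-1} plus Sobolev), the cubic contribution appears as $-(\tfrac n2-p\sigma)\int|\alpha f-R_\sigma^g|^p(R_\sigma^g-\alpha f)\,\ud V_g$ with $\tfrac n2-p\sigma\ge0$ precisely on the stated range; by the uniform lower bound $R_\sigma^g-\alpha f\ge\gamma$ of Lemma \ref{lemma1.6}, this integral equals $F_{p+1}$ up to a piece bounded by $-\gamma F_p$, giving \eqref{eq:conv-1}: $\frac{\ud}{\ud t}F_p+(\tfrac n2-p\sigma)F_{p+1}\le CF_p(1+F_p^{1/p})$. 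Thus $F_{p+1}$ sits on the good side, time-integration converts $\int_0^\infty F_p<\infty$ into $\int_0^\infty F_{p+1}<\infty$, and the bootstrap advances in steps of $+1$ up to $n/(2\sigma)$ — no interpolation against $G_p$ is needed. A secondary inaccuracy in your chain: even where Sobolev applies, $\int_0^\infty G_p\,\ud t<\infty$ only yields $\int_0^\infty F_{pn/(n-2\sigma)}^{(n-2\sigma)/n}\,\ud t<\infty$ (a fractional power, cf.\ \eqref{est:induction1} and the exponents $\nu_k$ in Lemma \ref{lemma1.14}), not the full integrability of $F_{pn/(n-2\sigma)}$ that your induction hypothesis requires.
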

\begin{proof}
By item (3) of Proposition \ref{prop:properties}, we have for every $T>0$,
\[
\frac{n-2\sigma}{2}\int_0^T \frac{F_2(t)}{(\int_{\Sn} f\,\ud V_g)^{\frac{n-2\sigma}{n}}}\,\ud t= \mathcal{S}(g(0)) -\mathcal{S}(g(T))\le \mathcal{S}(g(0)).
\]
Sending $T \to \infty$ and using \eqref{1.9}, we obtain 
\begin{equation}\label{est:int_F_2}
 \int_{0}^\infty F_2(t)\,\ud t\leq \frac{2}{n-2\sigma} (M\omega_n)^{\frac{n-2\sigma}{n}}\mathcal{S}(g(0))<\infty.
 \end{equation}

(i) $n<4\sigma$, which forces $p=2$. It follows from (\ref{1.39}) with $p=2$ that
\begin{equation}\label{dF_2_I}
\begin{split}
\frac{\ud}{\ud t}F_2(t)
&=-\frac{n-2\sigma}{2}\int_{\Sn}(\alpha f-R_\sigma^g)P_\sigma^g(\alpha f-R_\sigma^g)\ud V_g+\frac{n+2\sigma}{2}\int_{\Sn}\alpha f|\alpha f-R_\sigma^g|^{2}\ud V_g\\&\quad +2\alpha'\int_{\Sn}f(\alpha f-R_\sigma^g)\ud V_g-\sigma\int_{\Sn}|\alpha f-R_\sigma^g|^{2}(\alpha f-R_\sigma^g)\ud V_g\\
&:=I_1+I_2+I_3+I_4.
\end{split}
\end{equation}
By Sobolev inequality \eqref{eq:sobolev} we have
\begin{align*}
I_1=&-\frac{n-2\sigma}{2}\int_{\Sn}(\alpha f-R_\sigma^g)P_\sigma^g(\alpha f-R_\sigma^g)\ud V_g\\
\leq& -\frac{n-2\sigma}{2}Y_\sigma(\Sn) \left(\int_{\Sn}|\alpha f-R_\sigma^g|^{\frac{2n}{n-2\sigma}}\ud V_g\right)^{\frac{n-2\sigma}{n}}.
\end{align*}
By Lemma \ref{lemma1.4}, we have
\begin{equation*}
I_2=\frac{n+2\sigma}{2}\int_{\Sn}\alpha f|\alpha f-R_\sigma^g|^{2}\ud V_g\leq C F_2(t)
\end{equation*}
and using \eqref{eq:alpha'} and Young's inequality,
\begin{equation*}
\begin{split}
I_3&=2\alpha'\int_{\Sn}f(\alpha f-R_\sigma^g)\ud V_g\\
&=\frac{2\int_{\Sn}f(\alpha f-R_\sigma^g)\ud V_g}{\int_{\Sn}f\ud V_g}\left[-\frac{n-2\sigma}{2}\int_{\Sn}(R_\sigma^g-\alpha f)^2\ud V_{g}+\sigma\int_{\Sn}\alpha f(R_\sigma^g-\alpha f)\ud V_{g}\right]
\\
&\leq CF_2(t)(1+F_2(t)^{\frac{1}{2}}).
\end{split}
\end{equation*}
By H\"{o}lder's and Young's inequalities, we have for every small $\epsilon >0$,
\begin{align*}
I_4=&-\sigma\int_{\Sn}|\alpha f-R_\sigma^g|^{2}(\alpha f-R_\sigma^g)\ud V_g\\
\leq& \sigma\left(\int_{\Sn}|\alpha f-R_\sigma^g|^{\frac{2n}{n-2\sigma}}\ud V_g\right)^{\frac{n-2\sigma}{4\sigma}}
\left(\int_{\Sn}|\alpha f-R_\sigma^g|^{2}\ud V_g\right)^{\frac{6\sigma-n}{4\sigma}}\\
\leq& \epsilon\left(\int_{\Sn}|\alpha f-R_\sigma^g|^{\frac{2n}{n-2\sigma}}\ud V_g\right)^{\frac{n-2\sigma}{n}}
+C_\epsilon\left(\int_{\Sn}|\alpha f-R_\sigma^g|^{2}\ud V_g\right)^{\frac{6\sigma-n}{4\sigma-n}}.
\end{align*}
Let $0<\epsilon \le \frac{n-2\sigma}{2}Y_\sigma(\Sn)$, then it follows that
\begin{equation}\label{ineq:dF_2_n=3}
\frac{\ud}{\ud t}F_2(t)\leq CF_2(t)(1+F_2(t)^{\frac{2\sigma}{4\sigma-n}}),
\end{equation}
where we have used $\frac{2\sigma}{4\sigma-n}>\frac12>0$ and the basic inequality
\begin{equation} \label{eq:b-ineq}
a^p \le a^{m_1}+a^{m_2}\quad \mathrm{for~~any~~} a\ge 0 \mathrm{~~and~~}0\le m_1\le p\le m_2.
\end{equation}
By \eqref{est:int_F_2}, let $t_j \to \infty$ as $j\to \infty$ be an increasing sequence such that $F_2(t_j)\to 0$, and let  $H(t)=\rho(F_2(t))$, where $\rho(\tau)=\int_0^{\tau}(1+s^{\frac{2\sigma}{4\sigma-n}})^{-1}\,\ud s$. It follows from \eqref{ineq:dF_2_n=3} that
\begin{equation}\label{eq:seqtoall}
H(t)\leq H(t_j)+C\int_{t_j}^{\infty} F_2(s)\,\ud s,\quad \forall~ t_j\le t.
\end{equation}
Therefore, $H(t)\to 0$ as $t\to \infty$. Since $\rho(\cdot)$ is increasing and $\rho(0)=0$, it yields $F_2(t)\to 0$ as $t\to \infty$.

(ii) $n\ge 4\sigma$.
We rearrange \eqref{1.39} as
\begin{align}\label{dF_p_rearrangement}
\frac{\ud}{\ud t}F_p(t)=&-\left(\frac{n}{2}-p\sigma\right)\int_{\Sn}|\alpha f-R_\sigma^g|^{p}(R_\sigma^g-\alpha f)\ud V_g \nonumber \\
&-\frac{p(n-2\sigma)}{4}\int_{\Sn}\left[|\alpha f-R_\sigma^g|^{p-2}(\alpha f-R_\sigma^g)P_\sigma^g(\alpha f-R_\sigma^g)-R_\sigma^g |\alpha f-R_\sigma^g|^p \right]\ud V_g \nonumber \\
&+p\sigma\int_{\Sn}\alpha f|\alpha f-R_\sigma^g|^{p}\ud V_g+p\alpha'\int_{\Sn}f|\alpha f-R_\sigma^g|^{p-2}(\alpha f-R_\sigma^g)\ud V_g \nonumber  \\
\le &-\left(\frac{n}{2}-p\sigma\right)\int_{\Sn}|\alpha f-R_\sigma^g|^{p}(R_\sigma^g-\alpha f)\ud V_g +CF_p(t)(1+F_p(t)^{1/p}),
\end{align}
where we have used \eqref{eq:coercive-1} and similar arguments for estimating $I_2$ and $I_3$. Observe that
\begin{align*}
&\int_{\Sn}|\alpha f-R_\sigma^g|^{p+1} \ud V_g\\
=&\int_{\{\alpha f\leq R_\sigma^g\}}|\alpha f-R_\sigma^g|^p (R_\sigma^g-\alpha f)\ud V_g+\int_{\{\alpha f>R_\sigma^g\}}|\alpha f-R_\sigma^g|^{p+1} \ud V_g\\
=&\int_{\Sn}|\alpha f-R_\sigma^g|^{p}(R_\sigma^g-\alpha f)\ud V_g+2\int_{\{\alpha f>R_\sigma^g\}}|\alpha f-R_\sigma^g|^p (\alpha f-R_\sigma^g) \ud V_g.
\end{align*}
By Lemma \ref{lemma1.6} we obtain
 $$\int_{\{\alpha f>R_\sigma^g\}}|\alpha f-R_\sigma^g|^p (\alpha f-R_\sigma^g) \ud V_g \le -\gamma F_p (t)$$
 and then 
\begin{equation}\label{eq:conv-1}
\frac{\ud}{\ud t}F_p(t) +(\frac{n}{2}-p\sigma) F_{p+1}(t)\le C F_p(t)(1+F_p(t)^{1/p}).
\end{equation}
If we let $p=2$ in \eqref{eq:conv-1}, then the above inequality leads to $\frac{\ud}{\ud t}F_2(t) \le C F_2(t)(1+F_2(t)^{1/2}) $ and thus \eqref{eq:seqtoall} holds with $\rho(\tau)= \int_0^{\tau}(1+s^{1/2})^{-1}\,\ud s$. Since $\rho(\cdot)$ is continuous, increasing and $\rho(0)=0$, it yields $F_2(t)\to 0$ as $t \to \infty$. Hence, we prove \eqref{est:F_p_intermediate} for $p=2$.

If $2<p\le \frac{n}{2\sigma}$, then $n>4\sigma$. Integrating both sides of \eqref{eq:conv-1} over $(0,\infty)$ with $p=2$ to show
\[
\int_0^\infty F_3(t)\,\ud t \le \frac{2}{n-4\sigma } \Big(C \int_0^\infty F_2(t)\,\ud t+F_2(0)\Big)<\infty,
\]
where we have used  \eqref{est:F_p_intermediate} for $p=2$.  For any $2\le p \le 3$,  using \eqref{eq:b-ineq}, we have $\int_0^\infty F_p(t)\,\ud t <\infty$. For any $2\le p\le \min\{3, n/(2\sigma)\}$, taking an increasing sequence  $t_j\to \infty$ as $j\to \infty$ such that $F_p(t_j)\to 0$, by \eqref{eq:conv-1} we have $\frac{\ud}{\ud t}F_p(t) \le C F_p(t)(1+F_p(t)^{1/p}) $ and thus \eqref{eq:seqtoall} holds with $F_2$ replaced by $F_p$ and $\rho(\tau)= \int_0^{\tau}(1+s^{1/p})^{-1}\,\ud s$. Hence, $F_p(t)\to 0$ as $t\to \infty$. If $n/(2\sigma)>3$, repeating this process, we have \eqref{est:F_p_intermediate} for $2\le p\le \min\{4, n/(2\sigma)\}$. By repeating such a process finite times, the lemma follows.
\end{proof}

By Lemma \ref{lem:F_p_mid}, integrating \eqref{dF_2_I} over $(0,\infty)$ when $n<4\sigma$ and using the estimates of $I_4$ with $0<\epsilon \le \frac{n-2\sigma}{4}Y_\sigma(\Sn)$, otherwise integrating the first identity of \eqref{dF_p_rearrangement} over $(0,\infty)$ with $p=2$, we conclude that
\begin{equation}\label{est:int_G_2}
\int_0^\infty \int_{\Sn}(\alpha f-R_\sigma^g)P_\sigma^g(\alpha f-R_\sigma^g)\,\ud V_g \ud t <\infty
\end{equation}
and then by \eqref{eq:sobolev}
\begin{equation}\label{est:induction1}
\int_0^\infty\left(\int_{\Sn}|\alpha f-R_\sigma^g|^{\frac{2n}{n-2\sigma}}\,\ud V_g\right)^{\frac{n-2\sigma}{n}}\,\ud t<\infty.
\end{equation}

\begin{lemma}\label{lemma1.13}
For $p> n/(2\sigma)$, there holds
$$\frac{\ud}{\ud t}F_p(t)
+\kappa F_{\frac{pn}{n-2\sigma}}(t)^{\frac{n-2\sigma}{n}}\leq CF_p(t)(1 +F_p(t)^{\frac{2\sigma}{2\sigma p-n}}),$$
where $\kappa$ is a positive constant depending only on $n$ and $\sigma$.
\end{lemma}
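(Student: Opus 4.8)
The plan is to start from the evolution identity \eqref{1.39} for $F_p$, rewritten as in \eqref{dF_p_rearrangement}, with $v:=\alpha f-R_\sigma^g$ (we take $p\ge2$, as needed to invoke Proposition \ref{lemma1.11} and as is the case in all later uses):
\begin{align*}
\frac{\ud}{\ud t}F_p = &-\frac{p(n-2\sigma)}{4}\int_{\Sn}|v|^{p-2}v(P_\sigma^g-R_\sigma^g)v\,\ud V_g -\Big(\frac n2-p\sigma\Big)\int_{\Sn}|v|^p(R_\sigma^g-\alpha f)\,\ud V_g\\
&+ p\sigma\int_{\Sn}\alpha f|v|^p\,\ud V_g + p\alpha'\int_{\Sn} f|v|^{p-2}v\,\ud V_g .
\end{align*}
The gradient term $-\kappa F_{pn/(n-2\sigma)}^{(n-2\sigma)/n}$ is extracted from the first integral: by \eqref{eq:pre-1} (subtract $\int_{\Sn}R_\sigma^g|v|^p\,\ud V_g$ from both sides),
\[
\int_{\Sn}|v|^{p-2}v(P_\sigma^g-R_\sigma^g)v\,\ud V_g \ge \frac{4(p-1)}{p^2}\Big(\int_{\Sn}|v|^{p/2}P_\sigma^g(|v|^{p/2})\,\ud V_g-\int_{\Sn}R_\sigma^g|v|^p\,\ud V_g\Big),
\]
and the Sobolev inequality \eqref{eq:sobolev} applied to $|v|^{p/2}$ bounds the first integral on the right below by $Y_\sigma(\Sn)F_{pn/(n-2\sigma)}(t)^{(n-2\sigma)/n}$. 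Multiplying by $-\tfrac{p(n-2\sigma)}{4}<0$ produces $-\tfrac{(n-2\sigma)(p-1)}{p}Y_\sigma(\Sn)F_{pn/(n-2\sigma)}^{(n-2\sigma)/n}$ plus a remainder proportional to $\int_{\Sn}R_\sigma^g|v|^p\,\ud V_g$, into which I also collect the bad-signed term $(p\sigma-\tfrac n2)\int_{\Sn}R_\sigma^g|v|^p\,\ud V_g$ from the second integral; thus $\int_{\Sn}R_\sigma^g|v|^p\,\ud V_g$ enters $\frac{\ud}{\ud t}F_p$ with coefficient $\Lambda_p:=\tfrac{(n-2\sigma)(p-1)}{p}+(p\sigma-\tfrac n2)>0$.

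The main obstacle is that $R_\sigma^g$ has no a priori upper bound along the flow, so $\int_{\Sn}R_\sigma^g|v|^p\,\ud V_g$ is genuinely of order $F_{p+1}$, not $F_p$; this is exactly where the hypothesis $p>n/(2\sigma)$ is used. I would write $|R_\sigma^g|\le|R_\sigma^g-\alpha f|+\alpha f$, so that $\int_{\Sn}|R_\sigma^g|\,|v|^p\,\ud V_g\le F_{p+1}(t)+\alpha_2M\,F_p(t)$ by Lemma \ref{lemma1.4}, and then absorb $F_{p+1}$ into the gradient term. Since $p>n/(2\sigma)$ gives $p<p+1<\tfrac{pn}{n-2\sigma}$, and $\mathrm{Vol}_{g(t)}(\Sn)=\omega_n$ by \eqref{1.9}, H\"older interpolation of $\|v\|_{L^{p+1}(\ud V_g)}$ between $L^{pn/(n-2\sigma)}$ and $L^p$ (with weight $\theta=\tfrac{n}{2\sigma(p+1)}\in(0,1)$) yields $F_{p+1}\le F_{pn/(n-2\sigma)}^{(n-2\sigma)/(2\sigma p)}F_p^{\,(1-\theta)(p+1)/p}$; since $\tfrac{n}{2\sigma p}<1$, Young's inequality turns this into $\va F_{pn/(n-2\sigma)}^{(n-2\sigma)/n}+C_\va F_p^{\,1+\frac{2\sigma}{2\sigma p-n}}$, the exponent $1+\tfrac{2\sigma}{2\sigma p-n}$ being precisely the one in the statement.

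It remains to collect terms. Choosing $\va=\va(n,\sigma,p)$ small enough that $\Lambda_p\va\le\tfrac12\cdot\tfrac{(n-2\sigma)(p-1)}{p}Y_\sigma(\Sn)$ leaves, in front of $F_{pn/(n-2\sigma)}^{(n-2\sigma)/n}$, the net coefficient $\le-\tfrac12\cdot\tfrac{(n-2\sigma)(p-1)}{p}Y_\sigma(\Sn)\le-\tfrac{n-2\sigma}{4}Y_\sigma(\Sn)=:-\kappa$, using $\tfrac{p-1}{p}\ge\tfrac12$ for $p\ge2$; crucially, $\kappa$ depends only on $n,\sigma$, while the other constants may depend on $p$. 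The remaining lower-order terms are all $\le CF_p(1+F_p^{\frac{2\sigma}{2\sigma p-n}})$: the integrals $\int_{\Sn}\alpha f|v|^p\,\ud V_g$ are $\le\alpha_2M\,F_p$ by Lemma \ref{lemma1.4}; the $\alpha'$-term is handled by \eqref{eq:alpha'} together with the computation in the proof of Lemma \ref{lemma1.4}, which give $|\alpha'|\le C(F_2+F_2^{1/2})$, followed by H\"older ($\mathrm{Vol}_g=\omega_n$, $p\ge2$) to re-express it in powers of $F_p$; and $C_\va F_p^{1+\frac{2\sigma}{2\sigma p-n}}=C_\va F_p\cdot F_p^{\frac{2\sigma}{2\sigma p-n}}\le C_\va F_p(1+F_p^{\frac{2\sigma}{2\sigma p-n}})$. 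Since $\tfrac1p\le\tfrac{2\sigma}{2\sigma p-n}$, the elementary inequality \eqref{eq:b-ineq} absorbs any residual power $F_p^{1/p}$ into $1+F_p^{\frac{2\sigma}{2\sigma p-n}}$, and combining all contributions gives the asserted differential inequality.
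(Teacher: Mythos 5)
Your argument is correct and follows essentially the same route as the paper's proof: the rearranged identity \eqref{dF_p_rearrangement}, the Stroock--Varopoulos inequality of Proposition \ref{lemma1.11} combined with the Sobolev inequality \eqref{eq:sobolev} to produce $-\kappa F_{pn/(n-2\sigma)}^{(n-2\sigma)/n}$, the splitting $|R_\sigma^g|\le|R_\sigma^g-\alpha f|+\alpha f$, and the interpolation $F_{p+1}\le F_{pn/(n-2\sigma)}^{(n-2\sigma)/(2p\sigma)}F_p^{(2(p+1)\sigma-n)/(2p\sigma)}$ followed by Young's inequality. Your explicit verification that $\kappa$ can be taken independent of $p$ (via $\frac{p-1}{p}\ge\frac12$) and the remark that $p\ge 2$ is implicitly assumed are welcome clarifications of points the paper leaves tacit.
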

\begin{proof}
By \eqref{eq:alpha'}  and Lemma \ref{lem:F_p_mid}, we have 
\begin{equation*}
|\alpha'|\leq CF_2(t)+CF_2(t)^{\frac{1}{2}}
\leq CF_2(t)^{\frac{1}{2}}.
\end{equation*}
 Going back to the first identity of \eqref{dF_p_rearrangement}, we have
\begin{align}
\frac{\ud}{\ud t}F_p(t)\le &-\frac{p(n-2\sigma)}{4}\int_{\Sn}|\alpha f-R_\sigma^g|^{p-2}(\alpha f-R_\sigma^g)P_\sigma^g(\alpha f-R_\sigma^g)\ud V_g \nonumber  \\
& +C F_{p+1}(t)+ CF_p(t).
\label{dF_p_rearrangement-1}
\end{align}
By Proposition \ref{lemma1.11} and Sobolev inequality \eqref{eq:sobolev} we have
\begin{align*}
 &-\frac{p(n-2\sigma)}{4}G_p(t)\\
\ge& -\frac{(p-1)(n-2\sigma)}{ p} \int_{\Sn} |\alpha f-R_\sigma^g|^{\frac{p}{2}} P^g_\sigma(|\alpha f-R_\sigma^g|^{\frac{p}{2}})\,\ud V_g\\
&-\frac{(p-2)^2(n-2\sigma)}{4p}\left(
\int_{\Sn} \alpha f|\alpha f-R_\sigma^g|^p \,\mathrm{d} V_{g}
-\int_{\Sn} (\alpha f-R_\sigma^g) |\alpha f-R_\sigma^g|^p \,\mathrm{d} V_{g}\right)\\
\geq&  -\frac{(p-1)(n-2\sigma)}{p}Y_\sigma(\Sn)\left( \int_{\Sn} |\alpha f-R_\sigma^g|^{\frac{pn}{n-2\sigma}}\,\ud V_g\right)^{\frac{n-2\sigma}{n}}-CF_{p+1}(t)-C F_p(t).
\end{align*}
Notice that $p> n/(2\sigma)$, $\frac{np}{n-2\sigma}-(p+1)>\frac{2\sigma}{n-2\sigma}>0$. Using H\"older's and Young's inequalities, for any $\epsilon>0$,
we can find  $C_\epsilon>0$ such that
$$F_{p+1}(t)\leq F_{\frac{pn}{n-2\sigma}}(t)^{\frac{n-2\sigma}{2p\sigma}}F_{p}(t)^{\frac{2(p+1)\sigma-n}{2p\sigma}}
\leq \epsilon F_{\frac{pn}{n-2\sigma}}(t)^{\frac{n-2\sigma}{n}}+C_\epsilon F_{p}(t)^{\frac{2(p+1)\sigma-n}{2p\sigma-n}}.$$
Choosing  $\epsilon $ small enough, by \eqref{eq:b-ineq} we obtain
\[
\frac{\ud}{\ud t}F_p(t)+ \frac{1}{2} (n-2\sigma)^2Y_\sigma(\Sn) F_{\frac{pn}{n-2\sigma}}(t)^{\frac{n-2\sigma}{n}} \le   CF_p(t)(1+F_p(t)^{2\sigma/(2\sigma p-n)}).
\]
Therefore, the lemma follows with $\kappa= \frac{1}{2} (n-2\sigma)^2Y_\sigma(\Sn)$.
\end{proof}

\begin{lemma}\label{lemma1.14}
For any $p \geq 2$, there holds $F_p(t) \to 0$ as $t \to \infty$.
\end{lemma}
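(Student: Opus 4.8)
The statement to prove is: for any $p\ge 2$, $F_p(t)\to 0$ as $t\to\infty$. The idea is a bootstrap (induction) argument built on Lemma \ref{lemma1.13}, starting from the base cases $2\le p\le n/(2\sigma)$ which are already furnished by Lemma \ref{lem:F_p_mid}. So assume $p> n/(2\sigma)$, since otherwise there is nothing to prove. The plan is first to establish the integrability $\int_0^\infty F_q(t)\,\ud t<\infty$ for a sufficiently rich range of exponents $q$, and then to upgrade this to the pointwise-in-time decay $F_p(t)\to 0$ via the differential inequality of Lemma \ref{lemma1.13}.

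First I would set up the induction hypothesis: suppose that for some exponent $q_0\ge 2$ we already know both $F_{q_0}(t)\to 0$ and $\int_0^\infty F_{q_0}(t)\,\ud t<\infty$ (for $q_0=2$ this is exactly \eqref{est:int_F_2} and Lemma \ref{lem:F_p_mid}; more generally it holds for all $2\le q_0\le n/(2\sigma)$ by Lemma \ref{lem:F_p_mid} combined with \eqref{est:F_p_intermediate} and \eqref{eq:b-ineq}). Since $F_{q_0}(t)\to 0$, the factor $1+F_{q_0}(t)^{2\sigma/(2\sigma q_0-n)}$ appearing in Lemma \ref{lemma1.13} is bounded, so that lemma gives (once $q_0>n/(2\sigma)$)
\[
\frac{\ud}{\ud t}F_{q_0}(t)+\kappa F_{q_1}(t)^{\frac{n-2\sigma}{n}}\le C F_{q_0}(t),\qquad q_1:=\frac{q_0 n}{n-2\sigma}.
\]
Integrating over $(0,\infty)$ and using $\int_0^\infty F_{q_0}<\infty$ together with $F_{q_0}(0)<\infty$ yields $\int_0^\infty F_{q_1}(t)^{(n-2\sigma)/n}\,\ud t<\infty$. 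By Hölder's inequality and the boundedness of $\mathrm{Vol}_{g(t)}$ one interpolates this against an $L^1$-in-time bound on some lower power (or simply uses $F_{q_1}^{(n-2\sigma)/n}$ plus the fact that $F_{q_0}\to 0$ controls $F_s$ for $q_0\le s\le q_1$ via \eqref{eq:b-ineq}) to conclude $\int_0^\infty F_s(t)\,\ud t<\infty$ for all $2\le s\le q_1$. Then, for any fixed $s$ in this range, choosing an increasing sequence $t_j\to\infty$ with $F_s(t_j)\to 0$ (possible because $F_s$ is integrable), Lemma \ref{lemma1.13} (or \eqref{eq:conv-1} when $s\le n/(2\sigma)$) gives $\frac{\ud}{\ud t}F_s(t)\le CF_s(t)$ near each such $t_j$; setting $H(t)=\log(1+F_s(t))$ or $H(t)=\rho(F_s(t))$ with $\rho$ a suitable bounded increasing primitive and running the argument of \eqref{eq:seqtoall}, one deduces $F_s(t)\to 0$. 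This closes the induction with $q_0$ replaced by $q_1=\frac{n}{n-2\sigma}q_0>q_0$.

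Since the map $q_0\mapsto \frac{n}{n-2\sigma}q_0$ is a fixed multiplicative dilation by a factor strictly greater than $1$, finitely many iterations of this step push the exponent past any prescribed $p$; combining with the base range $2\le q_0\le n/(2\sigma)$ and \eqref{eq:b-ineq} to fill in intermediate exponents, we obtain $F_p(t)\to 0$ for all $p\ge 2$. The main obstacle I anticipate is purely bookkeeping rather than conceptual: one must carefully track which of the three facts — $L^1_t$-integrability of $F_{q_0}$, pointwise decay $F_{q_0}(t)\to 0$, and finiteness of $F_{q_0}(0)$ — is needed at each stage, and make sure that the differential inequality used at exponent $s$ has a coefficient in front of $F_s(t)$ (rather than a power $F_s(t)^{1+\delta}$) so that the Gronwall-type comparison with the integrable tail $\int_{t_j}^\infty F_s$ actually forces decay; this is exactly why the preliminary step upgrading $\int_0^\infty F_{q_1}^{(n-2\sigma)/n}<\infty$ to genuine $L^1_t$ bounds on $F_s$, together with the already-known $F_{q_0}(t)\to 0$ killing the nonlinear correction term, is essential. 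The Stroock–Varopoulos inequality (Proposition \ref{lemma1.11}), already invoked in Lemma \ref{lemma1.13}, is what makes the gain of integrability exponent at each step possible, so no new nonlocal input is required here.
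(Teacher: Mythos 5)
Your overall architecture — bootstrap on Lemma \ref{lemma1.13}, converting time-integrability of $F$ into pointwise decay via a primitive $\rho$ and the sequence $t_j$ — is exactly the paper's. But there is a genuine gap in the step where you upgrade $\int_0^\infty F_{q_1}(t)^{(n-2\sigma)/n}\,\ud t<\infty$ to $\int_0^\infty F_s(t)\,\ud t<\infty$ for \emph{all} $2\le s\le q_1$. Neither of your two suggested routes delivers this. The route via \eqref{eq:b-ineq} gives only $F_s\le F_{q_0}+F_{q_1}$, and $\int_0^\infty F_{q_1}\,\ud t$ is not known (knowing $\int F_{q_1}^{\nu}\,\ud t<\infty$ with $\nu=(n-2\sigma)/n<1$ on an infinite time interval says nothing about $\int F_{q_1}\,\ud t$, and you have no a priori bound on $\sup_t F_{q_1}$ at this stage). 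The H\"older route, writing $F_s\le F_{q_0}^{\theta}F_{q_1}^{1-\theta}$ with $s=\theta q_0+(1-\theta)q_1$ and then applying H\"older in $t$, forces you to pair the $F_{q_1}$ factor with the exponent $\nu$ you actually control, which requires $1-\theta\le\nu$; this yields $\int_0^\infty F_s\,\ud t<\infty$ only for $s\le(1-\nu)q_0+\nu q_1=\tfrac{n+2\sigma}{n}q_0$, which is strictly smaller than $q_1=\tfrac{n}{n-2\sigma}q_0$. So the induction does not close with the gain you claim.

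The proof is repairable in two ways. Either accept the smaller multiplicative gain $\tfrac{n+2\sigma}{n}>1$ per step (finitely many iterations still exhaust any $p$), or — as the paper does — do not try to restore full $L^1_t$-integrability at all: propagate the statement $\int_0^\infty F_{p_k}^{\nu}\,\ud t<\infty$ with $\nu=\tfrac{n-2\sigma}{n}$ directly, and run the decay argument at exponent $p_{k+1}$ with $\rho(\tau)=\int_0^\tau(s^{\beta_1}+s^{\beta_2})^{-1}\ud s$, $\beta_1=1-\nu$, chosen so that the differential inequality of Lemma \ref{lemma1.13} gives $\tfrac{\ud}{\ud t}\rho(F_{p_{k+1}})\le CF_{p_{k+1}}^{\nu}$, whose right-hand side is exactly the quantity known to be time-integrable. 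This preserves the full factor $\tfrac{n}{n-2\sigma}$ per step and needs no boundedness of $F_{p_{k+1}}$. Separately, you should also spell out how to cross the threshold $p=n/(2\sigma)$ before Lemma \ref{lemma1.13} first applies: the paper's preliminary claim \eqref{1.200} treats three cases, and in particular when $n=4\sigma$ the inequality \eqref{eq:conv-1} at $p=2$ degenerates (the coefficient $\tfrac{n}{2}-p\sigma$ vanishes), so one must instead start from \eqref{est:induction1} with the fractional exponent $\nu_0=1/2$; your proposal is silent on this case.
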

\begin{proof}
First we claim that there exist $p_0>n/(2\sigma)$ and
$\nu_0\in (0,1]$ such that
\begin{equation}\label{1.200}
\int_0^\infty\left(\int_{\Sn}|\alpha f-R_\sigma^g|^{p_0}\ud V_g\right)^{\nu_0}\ud t<\infty.
\end{equation}

Indeed, if $n< 4\sigma$, we choose $p_0=2$ and $\nu_0=1$ by Lemma \ref{lem:F_p_mid}.

If $n=4\sigma$, we choose $p_0=2n/(n-2\sigma)=4>2$ and $\nu_0=(n-2\sigma)/n=1/2$ by estimate \eqref{est:induction1}.

If $n>4\sigma$, then for any $p<n/(2\sigma)$, integrating \eqref{eq:conv-1} over $(0,\infty)$ and using Lemma \ref{lem:F_p_mid}, we have
\[
\int_0^\infty F_{p+1}(t)\,\ud t \le \frac{2}{n-2\sigma p } \Big(C \int_0^\infty F_p(t)\,\ud t+  F_p(0)\Big)<\infty.
\]
This together with Lemma \ref{lem:F_p_mid} implies that for any $p<1+\frac{n}{2\sigma}$, $\int_0^\infty F_p(t)\,\ud t<\infty$. Hence, \eqref{1.200} is proved.

By Lemma \ref{lem:F_p_mid}, we only need to consider  $p>n/(2\sigma)$.
Let $p_0$, $\nu_0$ be given in \eqref{1.200} and $p_k=p_0\left(\frac{n}{n-2\sigma}\right)^k$, $\nu_k =\frac{n-2\sigma}{n}$ for $k\in\mathbb{N}_+$. We claim that for all $k\in \mathbb{N}$,
\begin{equation}\label{1.100}
\int_0^\infty F_{p_k}(t)^{\nu_k}\ud t<\infty \quad \mathrm{and}\quad \lim_{t\to\infty} F_{p_k}(t) =0.
\end{equation}
Suppose \eqref{1.100} holds for $k$, and we will prove that it holds for $k+1$.
By Lemma \ref{lemma1.13} with $p=p_k$, we have 
$$\frac{\ud}{\ud t}F_{p_k}(t)
+\theta F_{p_{k+1}}(t)^{\nu_{k+1}}\leq CF_{p_k}(t)^{\nu_k}.$$
Integrating the above differential inequality over $(0,\infty)$, by  \eqref{1.100} we have
\begin{equation}
\label{eq:k-integral}
\int_0^\infty F_{p_{k+1}}(t)^{\nu_{k+1}}\,\ud t \le \frac{1}{\theta} \left(C \int_0^\infty F_{p_{k}}(t)^{\nu_k}\,\ud t+F_{p_k}(0)\right)<\infty.
\end{equation}
By Lemma \ref{lemma1.13} with $p=p_{k+1}$, we have
\begin{equation}\label{1.103}
\frac{\ud}{\ud t}F_{p_{k+1}}(t)
\leq CF_{p_{k+1}}(t)^{\nu_{k+1}}(F_{p_{k+1}}(t)^{\beta_1}+F_{p_{k+1}}(t)^{\beta_2}),
\end{equation}
where 
$$\beta_1=1-\nu_{k+1}=\frac{2\sigma}{n} \quad \mathrm{and~~}
\beta_2=\beta_1
+\frac{2p\sigma}{2p\sigma-n}>\beta_1.$$
Let $H_1(t)=\rho(F_{p_{k+1}}(t))$, where $\rho(t)=\int_0^{t}(s^{\beta_1}+s^{\beta_2})^{-1}\ud s$. By (\ref{eq:k-integral}), we choose $\{t_j\}$ to be an increasing sequence with $t_j\to\infty$ as $j\to\infty$ such that $
F_{p_{k+1}}(t_j)\to 0$ as $j\to\infty$. By \eqref{1.103}, we have
$$H_1(t)\leq H_1(t_j)+C\int_{t_j}^\infty F_{p_{k+1}}(t)^{\nu_k}\, \ud t \quad \forall~ t_j\le t\le  t_{j+1}.$$
Thus, $\lim_{t\to\infty}H_1(t)=0$. Since $\rho(\cdot)$ is continuous, increasing and $\lim_{\tau\searrow 0}\rho(\tau)=0$, we have $F_{p_{k+1}}(t)\to \infty$ as $t \to \infty$. From this together with \eqref{eq:k-integral}, we prove \eqref{1.100} for $k+1$. Hence, \eqref{1.100} holds for all $k\in \mathbb{N}$.

By Lemma \ref{lem:F_p_mid}, \eqref{1.100} and the basic inequality \eqref{eq:b-ineq},  the proof is complete.
\end{proof}

\begin{lemma}\label{lemma1.15}
There hold $G_2(t)\to 0$ as $t\to\infty$, and
\[
\int_0^\infty\int_{\Sn}|P_\sigma^g(\alpha f-R_\sigma^g)|^2 \,\ud V_g \,\ud t<\infty.
\]
\end{lemma}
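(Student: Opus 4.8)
\textbf{Proof proposal for Lemma \ref{lemma1.15}.}

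The plan is to exploit the evolution equation for $F_2(t)$ together with the Stroock--Varopoulos type inequality \eqref{eq:coercive-1} and the integral bounds already established, arguing along a sequence of times and then upgrading to a genuine limit. First I would revisit \eqref{dF_2_I} (or the first identity of \eqref{dF_p_rearrangement} with $p=2$), which expresses $\frac{\ud}{\ud t}F_2(t)$ as $-\tfrac{n-2\sigma}{2}G_2(t)$ plus lower order terms that are controlled by $CF_2(t)(1+F_2(t)^{1/2})$ via Lemma \ref{lemma1.4}, \eqref{eq:alpha'} and Lemma \ref{lemma1.14}. Rearranging, one gets
\[
\tfrac{n-2\sigma}{2} G_2(t) \le -\tfrac{\ud}{\ud t}F_2(t) + CF_2(t)(1+F_2(t)^{1/2}).
\]
Integrating over $(0,\infty)$ and invoking $\int_0^\infty F_2(t)\,\ud t<\infty$ from \eqref{est:int_F_2} (or Lemma \ref{lem:F_p_mid}), the boundedness of $F_2$ from Lemma \ref{lemma1.14}, together with $F_2(T)\ge 0$, yields $\int_0^\infty G_2(t)\,\ud t<\infty$; this is essentially \eqref{est:int_G_2} again, but I would re-derive it in the sharpened form we need.

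Next I would establish $G_2(t)\to 0$. Since $\int_0^\infty G_2(t)\,\ud t<\infty$, there is a sequence $t_j\to\infty$ with $G_2(t_j)\to 0$. To pass from a sequence to the full limit I would control $\frac{\ud}{\ud t}G_2(t)$, differentiating $G_2(t)=\int_{\Sn}(\alpha f-R_\sigma^g)P_\sigma^g(\alpha f-R_\sigma^g)\,\ud V_g$ using item (2) of Proposition \ref{prop:properties} for $\partial_t(R_\sigma^g-\alpha f)$, self-adjointness of $P_\sigma^g$, item (1) for $\partial_t\,\ud V_g$, and the fact that $P_\sigma^g$ is conformally covariant so that $\partial_t(P_\sigma^g w)$ can be rewritten on the fixed sphere. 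The resulting terms are of the form $\int_{\Sn}(\text{curvature factor})\cdot(\alpha f-R_\sigma^g)\cdot P_\sigma^g(\cdots)$ and, after Cauchy--Schwarz, are bounded by $C G_2(t) + C(1+\|R_\sigma^g\|_\infty^2)F_{q}(t)$ for suitable $q$, where $\|R_\sigma^g\|_\infty$ and $\|u\|_{C^{2\sigma+\beta}}$ are controlled on any finite time interval; combined with Lemma \ref{lemma1.14} and the Stroock--Varopoulos coercivity this gives a differential inequality $\frac{\ud}{\ud t}G_2(t)\le C G_2(t) + (\text{integrable in }t)$, from which the standard argument (as in the proofs of Lemmas \ref{lem:F_p_mid} and \ref{lemma1.14}, using $\rho(\tau)=\int_0^\tau(1+s)^{-1}\,\ud s$ or simply $e^{-Ct}G_2$) forces $G_2(t)\to 0$.

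Finally, for the $L^2$-in-spacetime bound on $P_\sigma^g(\alpha f-R_\sigma^g)$, I would use \eqref{eq:sobolev} applied to $w=\alpha f-R_\sigma^g$ together with the elementary inequality $\int_{\Sn} w\,P_\sigma^g w\,\ud V_g \ge Y_\sigma(\Sn)\,\omega_n^{-2\sigma/n}\int_{\Sn}w^2\,\ud V_g$ (from the volume normalization \eqref{1.9}), and then bound $\int_{\Sn}|P_\sigma^g w|^2\,\ud V_g$ by interpolating: write $\int |P_\sigma^g w|^2 = \int (P_\sigma^g w) P_\sigma^g w$ and use that $P_\sigma^g$ maps $L^p$-type bounds appropriately, or more directly estimate $\int_{\Sn}|P_\sigma^g w|^2\,\ud V_g \le C\big(\int_{\Sn} w P_\sigma^g w\,\ud V_g\big)^{1/2}\big(\int_{\Sn}|\Lambda w|^2\big)^{1/2}$ and absorb using the finite-time smoothing bounds from Proposition \ref{prop4.2_in_Jin&Xiong}. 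The cleanest route I expect is: $\int_{\Sn}|P_\sigma^g w|^2\,\ud V_g \le C G_2(t) \cdot \sup_{\Sn}|\text{(higher regularity norm of }w\text{, bounded uniformly in }t)|$, so $\int_0^\infty\!\int_{\Sn}|P_\sigma^g w|^2 \le C\int_0^\infty G_2(t)\,\ud t<\infty$. The main obstacle is obtaining the uniform-in-time regularity control of $R_\sigma^g$ (equivalently of $u$) needed both to differentiate $G_2$ and to convert the coercive quantity $G_2$ into an $L^2$ bound on $P_\sigma^g w$; this is where the parabolic Schauder estimates of Proposition \ref{prop4.2_in_Jin&Xiong}, the bounds of Lemmas \ref{lemma1.4}, \ref{lemma1.6}, \ref{lemma1.9}, and the decay from Lemma \ref{lemma1.14} must be combined carefully, keeping track that the constants do not blow up as $t\to\infty$.
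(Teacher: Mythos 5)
Your plan for showing $G_2(t)\to 0$ is essentially the paper's: differentiate $G_2$ using item (2) of Proposition \ref{prop:properties}, absorb the error terms, and upgrade the sequential decay $G_2(t_j)\to 0$ (available from \eqref{est:int_G_2}) to a full limit by integrating the resulting differential inequality. One caveat already here: your error bound invokes $\|R_\sigma^g\|_\infty$, which is \emph{not} uniformly bounded in time (the flow may concentrate). This is avoidable, because the terms arising from $\frac{\ud}{\ud t}G_2$ other than the good one involve only $\alpha f$, $\alpha'$ and powers of $\alpha f-R_\sigma^g$ paired against $P_\sigma^g(\alpha f-R_\sigma^g)$; Young's inequality bounds each by $\epsilon\int_{\Sn}|P_\sigma^g(\alpha f-R_\sigma^g)|^2\,\ud V_g$ plus $C_\epsilon F_2$ or $C_\epsilon F_4$, and $F_4\le F_{n/\sigma}^{2\sigma/n}F_{2n/(n-2\sigma)}^{(n-2\sigma)/n}\le CF_{2n/(n-2\sigma)}^{(n-2\sigma)/n}$ by H\"older and Lemma \ref{lemma1.14}. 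Only Lemma \ref{lemma1.4} is needed for $\alpha$, $\alpha'$.

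The genuine gap is in your route to the spacetime bound. You propose $\int_{\Sn}|P_\sigma^g w|^2\,\ud V_g\le C\,G_2(t)\cdot(\text{higher regularity norm of }w)$ and hope for uniform-in-time control of that norm. No such control exists: Lemma \ref{lemma1.9} and Proposition \ref{prop4.2_in_Jin&Xiong} give bounds that degrade like $e^{CT}$ on $[0,T]$, and the entire Section \ref{Sect:Blow-up_anal} is devoted to the case where $u$ does \emph{not} stay bounded. The point you are missing is structural: the quantity you want is already present in $\frac{\ud}{\ud t}G_2$ with a favorable sign. Writing $\frac{\ud}{\ud t}G_2=2\int_{\Sn}\partial_t\bigl(u(\alpha f-R_\sigma^g)\bigr)P_\sigma\bigl(u(\alpha f-R_\sigma^g)\bigr)\ud V_{g_{\Sn}}$ and using item (2) of Proposition \ref{prop:properties}, the term $-\frac{n-2\sigma}{4}P_\sigma^g(R_\sigma^g-\alpha f)$ paired against $P_\sigma^g(\alpha f-R_\sigma^g)$ yields exactly $-\frac{n-2\sigma}{2}\int_{\Sn}|P_\sigma^g(\alpha f-R_\sigma^g)|^2\,\ud V_g$. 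After absorbing the remaining terms one obtains
\[
\frac{\ud}{\ud t}G_2(t)+\frac{n-2\sigma}{4}\int_{\Sn}|P_\sigma^g(\alpha f-R_\sigma^g)|^2\,\ud V_g\le CF_2(t)+CF_{\frac{2n}{n-2\sigma}}(t)^{\frac{n-2\sigma}{n}},
\]
whose right-hand side is integrable on $(0,\infty)$ by \eqref{est:int_F_2} and \eqref{est:induction1}. Integrating over $(t_j,t)$ gives $G_2(t)\to 0$, and integrating over $(0,\infty)$, using $G_2\ge 0$ (Sobolev inequality \eqref{eq:sobolev}), gives the spacetime bound with no further regularity input. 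Your separate interpolation step should be deleted; as written it would fail.
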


\begin{proof}
By (\ref{conformal_invariance}), (\ref{1.7}) and Proposition \ref{prop:properties}, we have
\begin{align*}
&\frac{\ud}{\ud t}G_2(t)
=\frac{\ud}{\ud t}\left(\int_{\Sn}u(\alpha f-R_\sigma^g)
P_\sigma(u(\alpha f-R_\sigma^g)) \ud V_{g_{\Sn}}\right)\\
=&2\int_{\Sn}\frac{\partial}{\partial t}(u(\alpha f-R_\sigma^g))
P_\sigma(u(\alpha f-R_\sigma^g)) \ud V_{g_{\Sn}}\\
=&\frac{n-2\sigma}{2}\int_{\Sn}(\alpha f-R_\sigma^g)^2
P_\sigma^g(\alpha f-R_\sigma^g) \ud V_{g}\\
&+2\int_{\Sn}\left[-\frac{n-2\sigma}{4}P_\sigma^g(\alpha f-R_\sigma^g)+\frac{n+2\sigma}{4}R_\sigma^g(\alpha f-R_\sigma^g)+\alpha' f\right]
P_\sigma^g(\alpha f-R_\sigma^g) \ud V_{g}\\
=&-2\sigma\int_{\Sn}(\alpha f-R_\sigma^g)^2
P_\sigma^g(\alpha f-R_\sigma^g) \ud V_{g}-\frac{n-2\sigma}{2}\int_{\Sn}|P_\sigma^g(\alpha f-R_\sigma^g)|^2 \ud V_{g}
\\
&+\frac{n+2\sigma}{2}\int_{\Sn}\alpha f(\alpha f-R_\sigma^g)
P_\sigma^g(\alpha f-R_\sigma^g) \ud V_{g}
+2\alpha' \int_{\Sn}f P_\sigma^g(\alpha f-R_\sigma^g) \ud V_{g}\\
=:&J_1+J_2+J_3+J_4.
\end{align*}
By Young's inequality, for any $\epsilon>0$ we estimate
\begin{align*}
|J_1|\leq \epsilon \int_{\Sn}|P_\sigma^g(\alpha f-R_\sigma^g)|^2 \ud V_{g}+C_\epsilon F_4(t)
\end{align*}
and by H\"older's inequality and Lemma \ref{lemma1.14}
$$F_4(t)\leq F_{\frac{n}{\sigma}}(t)^{\frac{2\sigma}{n}}F_{\frac{2n}{n-2\sigma}}(t)^{\frac{n-2\sigma}{n}}\leq CF_{\frac{2n}{n-2\sigma}}(t)^{\frac{n-2\sigma}{n}}.$$
By Lemma \ref{lemma1.4}, \eqref{1.9} and using Young's inequality, we have for any $\epsilon>0$
\begin{equation*}
J_3\leq \epsilon \int_{\Sn}|P_\sigma^g(\alpha f-R_\sigma^g)|^2 \ud V_{g}+C_\epsilon
F_2(t)
\end{equation*}
and
\begin{equation*}
J_4\leq \epsilon \int_{\Sn}|P_\sigma^g(\alpha f-R_\sigma^g)|^2 \ud V_{g}+C|\alpha'|^2\\
\leq \epsilon \int_{\Sn}|P_\sigma^g(\alpha f-R_\sigma^g)|^2 \ud V_{g}+CF_2(t).
\end{equation*}
Choosing $\epsilon $ sufficiently small, we have
\begin{equation}\label{1.73}
\frac{\ud}{\ud t}G_2(t)+\frac{n-2\sigma}{4}
\int_{\Sn}|P_\sigma^g(\alpha f-R_\sigma^g)|^2 \ud V_{g}
\leq CF_2(t)+CF_{\frac{2n}{n-2\sigma}}(t)^{\frac{n-2\sigma}{n}}.
\end{equation}
By \eqref{est:int_G_2} we can find an increasing sequence $\{t_j\}$ with $t_j\to\infty$ as $j\to\infty$ such that $G_2(t_j) \to 0$.
Integrating (\ref{1.73}) over $(t_j,t)$, we obtain
$$G_2(t)\leq G_2(t_j)
+C\int_{t_j}^\infty F_2(s)\,\ud s+C\int_{t_j}^\infty F_{\frac{2n}{n-2\sigma}}(s)^{\frac{n-2\sigma}{n}}\,\ud s$$
and then $G_2(t)\to 0$ as $t\to \infty$ due to \eqref{est:induction1} and Lemma \ref{lem:F_p_mid}. By integrating \eqref{1.73} over $(0,\infty)$, the second conclusion follows.
\end{proof}

\section{Blow-up analysis}\label{Sect:Blow-up_anal}

Define
$$C^\infty_\ast :=\left\{0<u\in C^\infty(\Sn); \int_{\Sn}u^{\frac{2n}{n-2\sigma}}\ud V_{g_{\Sn}}=\omega_n\right\}.$$

\begin{lemma}\label{lemma3.3}
Let $u$ be a positive smooth solution of \eqref{1.7} with initial datum $u_0 \in C_\ast^\infty$. Then for any $t_k\to\infty$ as $k\to \infty$, $\{u_k:=u(t_k)\}$ is a Palais-Smale sequence of $E_f$ in $H^\sigma(\Sn)$.
\end{lemma}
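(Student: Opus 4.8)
To show that $\{u_k = u(t_k)\}$ is a Palais--Smale sequence for $E_f$ in $H^\sigma(\Sn)$, I must verify two things: (1) the energy values $E_f[u_k] = \mathcal{S}(g(t_k))$ stay bounded (in fact convergent), and (2) the gradient $E_f'[u_k] \to 0$ in $H^{-\sigma}(\Sn) = (H^\sigma(\Sn))^*$. The first is immediate: by item (3) of Proposition~\ref{prop:properties}, $\mathcal{S}(g(t))$ is nonincreasing, and by Lemma~\ref{lemma1.4} together with the volume normalization~\eqref{1.9} it is bounded below, so $\mathcal{S}(g(t_k))$ converges, in particular the sequence is bounded in energy. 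Boundedness of $\|u_k\|_{H^\sigma(\Sn)}$ then follows from $\int_\Sn u_k P_\sigma u_k\,\ud V_{g_\Sn} = \mathcal{S}(g(t_k))\big(\int_\Sn f u_k^{2n/(n-2\sigma)}\,\ud V_{g_\Sn}\big)^{(n-2\sigma)/n} \le \mathcal{S}(g(0))(M\omega_n)^{(n-2\sigma)/n}$.

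**The gradient estimate.** The second point is the substantive one. First I would compute the Fr\'echet derivative: for $\phi \in H^\sigma(\Sn)$,
\[
\langle E_f'[u_k], \phi\rangle = \frac{2}{A_k}\int_{\Sn}\phi\, P_\sigma(u_k)\,\ud V_{g_\Sn} - \frac{2 B_k}{A_k^{(2n)/(n-2\sigma)\cdot\,\cdots}}\cdots,
\]
more cleanly, writing $A_k = \int_\Sn u_k P_\sigma u_k\,\ud V_{g_\Sn}$ and $B_k = \int_\Sn f u_k^{(2n)/(n-2\sigma)}\,\ud V_{g_\Sn}$,
\[
\tfrac12\langle E_f'[u_k], \phi\rangle = B_k^{-\frac{n-2\sigma}{n}}\int_{\Sn}\phi\, P_\sigma(u_k)\,\ud V_{g_\Sn} - A_k B_k^{-\frac{n-2\sigma}{n}-1}\int_{\Sn} f u_k^{\frac{n+2\sigma}{n-2\sigma}}\phi\,\ud V_{g_\Sn}.
\]
Using $P_\sigma(u_k) = u_k^{(n+2\sigma)/(n-2\sigma)} R_\sigma^{g(t_k)}$ (by~\eqref{1.6} and~\eqref{conformal_invariance}) and recalling that $\alpha(t_k) = \big(\int_\Sn R_\sigma^{g}\,\ud V_g\big)/\big(\int_\Sn f\,\ud V_g\big) = A_k/B_k$, the bracket becomes $B_k^{-(n-2\sigma)/n}\int_\Sn u_k^{(n+2\sigma)/(n-2\sigma)}(R_\sigma^{g(t_k)} - \alpha(t_k) f)\,\phi\,\ud V_{g_\Sn}$. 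Hence, since $B_k$ is bounded above and below,
\[
|\langle E_f'[u_k],\phi\rangle| \le C \int_{\Sn} u_k^{\frac{n+2\sigma}{n-2\sigma}} |R_\sigma^{g(t_k)} - \alpha(t_k) f|\,|\phi|\,\ud V_{g_\Sn} = C\int_{\Sn}|R_\sigma^{g} - \alpha f|\,|\phi|\,\ud V_{g(t_k)}.
\]
By H\"older's inequality with the conformally invariant pair $\big(\tfrac{2n}{n+2\sigma}, \tfrac{2n}{n-2\sigma}\big)$,
\[
\int_{\Sn}|R_\sigma^{g} - \alpha f|\,|\phi|\,\ud V_{g(t_k)} \le \Big(\int_\Sn |R_\sigma^g - \alpha f|^{\frac{2n}{n+2\sigma}}\ud V_{g(t_k)}\Big)^{\frac{n+2\sigma}{2n}} \Big(\int_\Sn |\phi|^{\frac{2n}{n-2\sigma}}\ud V_{g(t_k)}\Big)^{\frac{n-2\sigma}{2n}}.
\]
By the conformal invariance~\eqref{eq:sobolev} of the Sobolev quotient, the second factor is controlled by $Y_\sigma(\Sn)^{-1/2}\|\phi\|_{H^\sigma(\Sn)}$ (note $\int|\phi|^{2n/(n-2\sigma)}\ud V_g$ depends only on $\phi$ and $g$, and equals $\int|\phi u_k|^{2n/(n-2\sigma)}\ud V_{g_\Sn}$, bounded by the Sobolev inequality applied to $\phi u_k$... actually more directly one uses~\eqref{eq:sobolev} with the metric $g(t_k)$). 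Therefore
\[
\|E_f'[u_k]\|_{H^{-\sigma}(\Sn)} \le C\,F_{\frac{2n}{n+2\sigma}}(t_k)^{\frac{n+2\sigma}{2n}},
\]
where $F_p$ is the quantity from Section~\ref{Sect:Integ_curv_est}. Since $\tfrac{2n}{n+2\sigma} \in (1,2)$ we cannot quote Lemma~\ref{lemma1.14} directly, but $F_{2n/(n+2\sigma)}(t) \le \omega_n^{\,1 - (n+2\sigma)/(2n)\cdot\,\cdots} F_2(t)^{(n+2\sigma)/(2n)}$ by H\"older (the exponent $\tfrac{2n}{n+2\sigma} < 2$), and $F_2(t) \to 0$ by Lemma~\ref{lem:F_p_mid}; hence $\|E_f'[u_k]\|_{H^{-\sigma}(\Sn)} \to 0$ as $k \to \infty$.

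**Main obstacle.** The only delicate point is the conformally-invariant duality bookkeeping: one must be careful that the test function $\phi$ appears paired against $u_k$ in a way that allows the Sobolev inequality~\eqref{eq:sobolev} (for the metric $g(t_k)$) to absorb the $u_k$-weight into the $H^\sigma$-norm of $\phi$ alone, with a constant independent of $k$. Equivalently, one should pass to the stereographic/Euclidean picture, or simply note $\langle E_f'[u_k],\phi\rangle$ is computed intrinsically so that $|R_\sigma^g - \alpha f|$ and $\phi$ both live on $(\Sn, g(t_k))$ and the bound reduces to $F_2(t_k)$ through the chain above. Once this is set up, everything else is a direct consequence of Lemmas~\ref{lemma1.4}, \ref{lem:F_p_mid} and Proposition~\ref{prop:properties}.
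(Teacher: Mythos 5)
Your overall architecture is the same as the paper's: boundedness of $\|u_k\|_{H^\sigma(\Sn)}$ from the monotone, bounded energy; computation of $\ud E_f[u_k]$; substitution of $P_\sigma(u_k)=R_\sigma^{g(t_k)}u_k^{\frac{n+2\sigma}{n-2\sigma}}$ and $\alpha=A_k/B_k$ to reduce the pairing to $\int_{\Sn}(R_\sigma^{g(t_k)}-\alpha f)u_k^{\frac{n+2\sigma}{n-2\sigma}}\varphi\,\ud V_{g_{\Sn}}$; H\"older with the exponents $\big(\tfrac{2n}{n+2\sigma},\tfrac{2n}{n-2\sigma}\big)$; and the $L^p$ curvature convergence. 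The final bound $\|\ud E_f[u_k]\|\le C\,F_{\frac{2n}{n+2\sigma}}(t_k)^{\frac{n+2\sigma}{2n}}$ and the reduction of $F_{2n/(n+2\sigma)}$ to $F_2$ via H\"older and volume preservation are exactly what is needed.

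However, the middle of your gradient estimate contains a genuine bookkeeping error that, taken at face value, breaks the argument. Since $\ud V_{g(t_k)}=u_k^{\frac{2n}{n-2\sigma}}\ud V_{g_{\Sn}}$, one has $\int_{\Sn}u_k^{\frac{n+2\sigma}{n-2\sigma}}|R_\sigma^{g}-\alpha f||\varphi|\,\ud V_{g_{\Sn}}=\int_{\Sn}|R_\sigma^{g}-\alpha f|\,\frac{|\varphi|}{u_k}\,\ud V_{g(t_k)}$, not $\int_{\Sn}|R_\sigma^{g}-\alpha f||\varphi|\,\ud V_{g(t_k)}$ as you wrote. With your (incorrect) version, the second H\"older factor becomes $\big(\int_{\Sn}|\varphi|^{\frac{2n}{n-2\sigma}}\ud V_{g(t_k)}\big)^{\frac{n-2\sigma}{2n}}=\big(\int_{\Sn}|\varphi u_k|^{\frac{2n}{n-2\sigma}}\ud V_{g_{\Sn}}\big)^{\frac{n-2\sigma}{2n}}$, and neither of the two justifications you gesture at works: the Sobolev inequality applied to $\varphi u_k$, or \eqref{eq:sobolev} in the metric $g(t_k)$ applied to $\varphi$, both produce $\|\varphi u_k\|_{H^\sigma(\Sn)}$, which cannot be bounded by $C\|\varphi\|_{H^\sigma(\Sn)}$ uniformly in $k$ (indeed $u_k$ may blow up). The fix is to perform H\"older directly on $(\Sn,g_{\Sn})$, putting the entire weight $u_k^{\frac{n+2\sigma}{n-2\sigma}}$ into the curvature factor: since $\tfrac{n+2\sigma}{n-2\sigma}\cdot\tfrac{2n}{n+2\sigma}=\tfrac{2n}{n-2\sigma}$, that factor becomes $\big(\int_{\Sn}|R_\sigma^{g}-\alpha f|^{\frac{2n}{n+2\sigma}}\ud V_{g(t_k)}\big)^{\frac{n+2\sigma}{2n}}$, while the other factor is $\big(\int_{\Sn}|\varphi|^{\frac{2n}{n-2\sigma}}\ud V_{g_{\Sn}}\big)^{\frac{n-2\sigma}{2n}}\le Y_\sigma(\Sn)^{-1/2}\|\varphi\|_{H^\sigma(\Sn)}$ by the round-sphere Sobolev inequality \eqref{ineq:Sobolev}, with no $u_k$-weight left to absorb. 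Equivalently, the correct $g(t_k)$-intrinsic test function is $\varphi/u_k$, whose $L^{\frac{2n}{n-2\sigma}}(\Sn,g(t_k))$-norm equals $\|\varphi\|_{L^{\frac{2n}{n-2\sigma}}(\Sn,g_{\Sn})}$. This is precisely how the paper's proof proceeds; with this correction your argument coincides with it.
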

\begin{proof}
By items (1) and (3) of Proposition \ref{prop:properties}, $\{u_k\}$ is uniformly bounded in $H^\sigma(\Sn)$ and $E_f[u_k]\to e_\infty$ for some $e_\infty>0$. Hence, it remains to show $\ud E_f[u_k]\to 0$. To that end,
for any $\varphi\in H^\sigma(\Sn)$,
there holds
\begin{align*}
&\frac{1}{2}\left(\int_{\Sn}f u_k^{\frac{2n}{n-2\sigma}}\ud V_{g_{\Sn}}\right)^{\frac{n-2\sigma}{n}}
|\langle \ud E_f[u_k],\varphi \rangle|\\
=&\left|\int_{\Sn}\varphi P_\sigma (u_k)\ud V_{g_{\Sn}}-\alpha(t_k)\int_{\Sn}fu_k^{\frac{n+2\sigma}{n-2\sigma}}\varphi \ud V_{g_{\Sn}}\right|\\
=&\left|\int_{\Sn}(R_\sigma^{g_{(t_k)}}-\alpha(t_k) f)u_k^{\frac{n+2\sigma}{n-2\sigma}}\varphi \ud V_{g_{\Sn}}\right|\\
\leq&\left(\int_{\Sn}(R_\sigma^{g(t_k)}-\alpha(t_k) f)^{\frac{2n}{n+2\sigma}}u_k^{\frac{2n}{n-2\sigma}}\ud V_{g_{\Sn}}\right)^{\frac{n+2\sigma}{2n}}
\left(\int_{\Sn}|\varphi|^{\frac{2n}{n-2\sigma}}\ud V_{g_{\Sn}}\right)^{\frac{n-2\sigma}{2n}}\\
\leq&\|R_\sigma^{g(t_k)}-\alpha(t_k) f\|_{L^{\frac{2n}{n+2\sigma}}(\Sn,g(t_k))}\|\varphi\|_{H^\sigma(\Sn)}=o(1)\|\varphi\|_{H^\sigma(\Sn)},
\end{align*}
as $k\to\infty$ by Lemma \ref{lemma1.14}, where $g(t_k)=u_k^{4/(n-2\sigma)}g_{\Sn}$.  Therefore, the lemma is proved.
\end{proof}

\begin{lemma}[Concentration-compactness] \label{lem:c-c} Let $u$ be a positive smooth solution of \eqref{1.7} with $u_0 \in C_\ast^\infty$.
For any $t_k\to\infty$ as $k\to \infty$, let $u_k:=u(t_k)$. Then, after passing to a subsequence, there exist a non-negative integer $L$, a convergent sequence $\{x_{k,\nu}\}\subset \Sn$ and a non-negative smooth function $u_\infty$, a sequence of real numbers $\{\lambda_{k,\nu}\}$ with  $\lda_{k,\nu}\to \infty$  and  $\alpha(t_k)\to \alpha_\infty$ as $k\to \infty$ for any fixed $\nu=1,2,\cdots, L$ such that  
\begin{equation*}
u_k=\sum_{\nu=1}^L \bar u_{x_{k,\nu},\lda_{k,\nu}}+u_\infty+o(1) \quad \mathrm{in~~} H^\sigma(\Sn),
\end{equation*}
where
\[
\bar u_{x_{k,\nu},\lda_{k,\nu}}(x)=\Big(\frac{R_\sigma}{\alpha_\infty f(-x_{k,\nu})}\Big)^{\frac{n-2\sigma}{4\sigma}}
\Big(\frac{2\lda_{k,\nu}}{2+(\lda_{k,\nu}^2-1)(1- \langle x,x_{k,\nu}\rangle)}\Big)^{\frac{n-2\sigma}{2}}
\]
satisfies 
\begin{equation}\label{eq:bubble_u_k}
P_\sigma (\bar u_{x_{k,\nu},\lda_{k,\nu}})=\alpha_\infty f(-x_{k,\nu}) \bar u_{x_{k,\nu},\lda_{k,\nu}}^{\frac{n+2\sigma}{n-2\sigma}} \quad \mathrm{on~~} \Sn,
\end{equation}
and $u_\infty$ satisfies
\begin{equation}\label{eq:u-infty}
P_\sigma u_\infty =\alpha_\infty f u_\infty^{\frac{n+2\sigma}{n-2\sigma}} \qquad \mathrm{on~~} \Sn.
\end{equation}

\end{lemma}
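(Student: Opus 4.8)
The plan is to establish the bubbling decomposition by running Struwe's global compactness scheme for the Palais--Smale sequence $\{u_k\}$ furnished by Lemma \ref{lemma3.3}, with every local step of the classical argument replaced by its nonlocal counterpart via the extension of Theorem \ref{thm:lf} (equivalently, the Caffarelli--Silvestre extension to $\R^{n+1}_+$).

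First I would collect the compactness inputs. By items (1) and (3) of Proposition \ref{prop:properties} together with \eqref{1.9}, $\{u_k\}$ is bounded in $H^\sigma(\Sn)$ and $E_f[u_k]\to e_\infty>0$; by Lemma \ref{lemma1.4}, $\alpha(t_k)\in[\alpha_1,\alpha_2]$, so after passing to a subsequence $\alpha(t_k)\to\alpha_\infty\in(0,\infty)$, $u_k\rightharpoonup u_\infty$ weakly in $H^\sigma(\Sn)$, strongly in $L^2(\Sn)$ and a.e., with $u_\infty\ge 0$. Rewriting $\ud E_f[u_k]\to 0$ as $P_\sigma u_k=\alpha(t_k)f\,u_k^{(n+2\sigma)/(n-2\sigma)}+\varepsilon_k$ with $\|\varepsilon_k\|_{H^{-\sigma}(\Sn)}\to 0$ and passing to the weak limit gives \eqref{eq:u-infty}; elliptic regularity for $P_\sigma$ (as in \cite{Jin&Li&Xiong1}) then shows $u_\infty\in C^\infty(\Sn)$.

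Next I would iterate. Put $w_k^{(1)}:=u_k-u_\infty$, so $w_k^{(1)}\rightharpoonup 0$. A Brezis--Lieb splitting --- immediate for the quadratic form $u\mapsto\int_{\Sn}uP_\sigma u\,\ud V_{g_{\Sn}}$ by weak convergence, and the usual Brezis--Lieb lemma for $u\mapsto\int_{\Sn}f|u|^{2n/(n-2\sigma)}\,\ud V_{g_{\Sn}}$ --- shows that $w_k^{(1)}$ is again a Palais--Smale sequence with energy reduced by $E_f[u_\infty]$ and $L^{2n/(n-2\sigma)}$-mass reduced correspondingly. If $w_k^{(1)}\to 0$ in $H^\sigma(\Sn)$ we are done with $L=0$. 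Otherwise $\|w_k^{(1)}\|_{H^\sigma}\not\to 0$, and the failure of strong convergence for an almost-critical sequence forces concentration: pulling back to $\R^n$ by the stereographic projection and working with the degenerate-elliptic extension, I would produce (after extraction) points $x_{k,1}\in\Sn$, convergent to some $\bar x_1\in\Sn$, and scales $\lambda_{k,1}\to\infty$ such that the conformal rescalings of $w_k^{(1)}$ centered at $x_{k,1}$ at scale $\lambda_{k,1}$ converge weakly to a nontrivial nonnegative solution $U$ of $(-\Delta)^\sigma U=\alpha_\infty f(-\bar x_1)\,U^{(n+2\sigma)/(n-2\sigma)}$ on $\R^n$. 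By the Liouville-type classification of entire positive solutions of the critical equation (see \cite{Jin&Li&Xiong1}), $U$ is a standard bubble, and pulling it back to $\Sn$ with the normalization in \eqref{eq:bubble_u_k} gives precisely $\bar u_{x_{k,1},\lambda_{k,1}}$. Replacing $w_k^{(1)}$ by $w_k^{(2)}:=w_k^{(1)}-\bar u_{x_{k,1},\lambda_{k,1}}$ and repeating, one uses the asymptotic orthogonality $\langle\bar u_{x_{k,\nu},\lambda_{k,\nu}},\bar u_{x_{k,\mu},\lambda_{k,\mu}}\rangle_{H^\sigma}\to 0$ for $\nu\ne\mu$ and $\langle\bar u_{x_{k,\nu},\lambda_{k,\nu}},u_\infty\rangle_{H^\sigma}\to 0$ (consequences of $\lambda_{k,\nu}\to\infty$ and the separation of the parameters $(x_{k,\nu},\lambda_{k,\nu})$) to check that each bubble carries a fixed positive amount of energy, bounded below in terms of $Y_\sigma(\Sn)$. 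Since $e_\infty<\infty$, the process terminates after finitely many steps $L$, at which point the remainder tends to $0$ strongly in $H^\sigma(\Sn)$; this is the claimed decomposition.

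The main obstacle is the nonlocality. The classical bubbling argument cuts off around concentration points and relies on purely local elliptic estimates, which are unavailable for $P_\sigma$. I would handle this exactly as in \cite{Jin&Li&Xiong1,Jin&Li&Xiong2}: pass to the Caffarelli--Silvestre / Theorem \ref{thm:lf} extension, where the problem becomes a local degenerate-elliptic equation with an $A_2$ weight and cut-offs are admissible, and invoke the nonlocal Brezis--Lieb lemma together with the sharp classification of entire critical solutions. Once these nonlocal tools are in place, the bubble-interaction estimates and the energy quantization that terminates the induction proceed as in the local case; compare \cite{Chen&Xu} for $\sigma=1$.
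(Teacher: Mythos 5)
Your proposal is correct and follows essentially the same route the paper intends: the paper omits the details and simply invokes Lemma \ref{lemma3.3}, the extension formula of Theorem \ref{thm:lf}, and the Palais--Smale decomposition of Fang--Gonz\'alez \cite{FGon}, which is precisely the Struwe-type global compactness scheme (Brezis--Lieb splitting, rescaling via the Caffarelli--Silvestre extension, classification of entire critical solutions, and energy quantization) that you outline.
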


\begin{proof} 
Thanks to Lemma \ref{lemma3.3}, the proof of Lemma \ref{lem:c-c} is standard. Indeed, one can use Theorem \ref{thm:lf} and the proofs in Fang-Gonz\'alez \cite{FGon}. We omit the details.
\end{proof}

In view of (\ref{sbc}), we can choose a small
$\epsilon_0>0$ such that
\begin{equation}\label{epsilon_0}
\left[(1+\epsilon_0)\frac{\max_{\Sn}f}{\min_{\Sn}f}\right]^{\frac{n-2\sigma}{n}}< 2^{\frac{2\sigma}{n}}.
\end{equation}
We choose
\begin{equation}\label{beta}
\beta=R_\sigma\omega_n^{\frac{2\sigma}{n}}(1+\epsilon_0)^{\frac{n-2\sigma}{n}}(\min_{\Sn}f)^{-\frac{n-2\sigma}{n}}
\end{equation}
 and define
$$C_f^\infty:=\{u\in C^\infty_*; E_f[u]\leq \beta\}.$$

\begin{lemma}\label{lemma3.1}
 Let $u$ be a positive smooth solution of \eqref{1.7} with $u(0)=u_0 \in C_f^\infty$. For any $t_k\to\infty$ as $k\to \infty$, let $u_k:=u(t_k)$. Suppose that $f$ satisfies \eqref{sbc}, i.e.,
$\max_{\Sn} f<2^{2\sigma/(n-2\sigma)}\min_{\Sn} f$. Let $L$ be the nonnegative integer defined in Lemma \ref{lem:c-c}. Then $L\le 1$.
\end{lemma}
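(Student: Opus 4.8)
The plan is to argue by contradiction: assuming $L\ge 2$, I will produce a lower bound for $E_f[u_k]$ which, for $k$ large, exceeds $\beta$ and thereby contradicts
\[
E_f[u_k]=\mathcal{S}(g(t_k))\le\mathcal{S}(g(0))=E_f[u_0]\le\beta,
\]
which holds because $\mathcal{S}$ is non-increasing along the flow (item (3) of Proposition \ref{prop:properties}), $\mathcal{S}(g)=E_f[u]$ by \eqref{1.1}, and $u_0\in C_f^\infty$. The key preliminary identity, read off directly from the definitions of $\mathcal{S}$ and of $\alpha$ in \eqref{1.4} together with $P_\sigma(u_k)=R_\sigma^{g(t_k)}u_k^{(n+2\sigma)/(n-2\sigma)}$ (a consequence of \eqref{conformal_invariance} and \eqref{1.6}), is
\[
E_f[u_k]=\frac{\int_{\Sn}R_\sigma^{g(t_k)}\,\ud V_{g(t_k)}}{\big(\int_{\Sn}f\,\ud V_{g(t_k)}\big)^{\frac{n-2\sigma}{n}}}=\alpha(t_k)\Big(\int_{\Sn}f u_k^{\frac{2n}{n-2\sigma}}\,\ud V_{g_{\Sn}}\Big)^{\frac{2\sigma}{n}},
\]
so it suffices to bound $\int_{\Sn}fu_k^{2n/(n-2\sigma)}\,\ud V_{g_{\Sn}}$ from below.

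For this I would invoke Lemma \ref{lem:c-c}: using the asymptotic orthogonality of the bubbles $\bar u_{x_{k,\nu},\lda_{k,\nu}}$ (which concentrate at distinct points, or at a common point with separated scales) and a Brezis--Lieb splitting at the critical exponent $\tfrac{2n}{n-2\sigma}$, and discarding the nonnegative term coming from $u_\infty$, one obtains
\[
\int_{\Sn}fu_k^{\frac{2n}{n-2\sigma}}\,\ud V_{g_{\Sn}}\ \ge\ \sum_{\nu=1}^{L}\int_{\Sn}f\,\bar u_{x_{k,\nu},\lda_{k,\nu}}^{\frac{2n}{n-2\sigma}}\,\ud V_{g_{\Sn}}+o(1).
\]

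Next I would compute the contribution of a single bubble. Up to the scalar factor $\big(R_\sigma/(\alpha(t_k)f(-x_{k,\nu}))\big)^{(n-2\sigma)/(4\sigma)}$, the function $\bar u_{x_{k,\nu},\lda_{k,\nu}}$ is the conformal factor of a conformal transformation of $\Sn$, so its $L^{2n/(n-2\sigma)}(\Sn,g_{\Sn})$-mass equals $\omega_n$; hence $\int_{\Sn}\bar u_{x_{k,\nu},\lda_{k,\nu}}^{2n/(n-2\sigma)}\,\ud V_{g_{\Sn}}=\big(R_\sigma/(\alpha(t_k)f(-x_{k,\nu}))\big)^{n/(2\sigma)}\omega_n$. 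Since, as $k\to\infty$, this measure concentrates at the point at which $f$ takes the value $f(-x_{k,\nu})$ — a feature of the blow-up underlying Lemma \ref{lem:c-c} — it follows that
\[
\int_{\Sn}f\,\bar u_{x_{k,\nu},\lda_{k,\nu}}^{\frac{2n}{n-2\sigma}}\,\ud V_{g_{\Sn}}= f(-x_{k,\nu})\Big(\tfrac{R_\sigma}{\alpha(t_k)f(-x_{k,\nu})}\Big)^{\frac{n}{2\sigma}}\omega_n+o(1)\ \ge\ \Big(\tfrac{R_\sigma}{\alpha(t_k)}\Big)^{\frac{n}{2\sigma}}\omega_n\,(\max_{\Sn}f)^{-\frac{n-2\sigma}{2\sigma}}+o(1).
\]
Summing over $\nu$, using $L\ge 2$, substituting into the two displays above, and using $Y_\sigma(\Sn)=R_\sigma\omega_n^{2\sigma/n}$ (the case $u\equiv1$ of the sharp Sobolev inequality \eqref{ineq:Sobolev}) — after which $\alpha(t_k)$ cancels — I would arrive at
\[
E_f[u_k]\ \ge\ 2^{\frac{2\sigma}{n}}\,Y_\sigma(\Sn)\,(\max_{\Sn}f)^{-\frac{n-2\sigma}{n}}+o(1).
\]
On the other hand $\beta=Y_\sigma(\Sn)(1+\epsilon_0)^{(n-2\sigma)/n}(\min_{\Sn}f)^{-(n-2\sigma)/n}$ by \eqref{beta}, and \eqref{epsilon_0} says precisely $\beta<2^{2\sigma/n}Y_\sigma(\Sn)(\max_{\Sn}f)^{-(n-2\sigma)/n}$; thus for $k$ large the last display contradicts $E_f[u_k]\le\beta$, forcing $L\le1$.

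The step I expect to be the real obstacle is the input to the displays above: the Brezis--Lieb decomposition of $\int_{\Sn}fu_k^{2n/(n-2\sigma)}$ along the bubble profiles, and the identification of the concentration point of each bubble with the point carrying the value $f(-x_{k,\nu})$. Both follow from the profile decomposition behind Lemma \ref{lem:c-c} (cf. Fang--Gonz\'alez \cite{FGon}); condition \eqref{sbc}, equivalently \eqref{epsilon_0}, has been calibrated exactly so that a second bubble already forces the energy above $\beta$, whereas a single bubble ($L=1$) can remain below it.
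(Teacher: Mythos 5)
Your proposal is correct and follows essentially the same route as the paper: both count bubbles via the concentration-compactness decomposition of Lemma \ref{lem:c-c} and derive a contradiction with $E_f[u_k]\le\beta$ from the pinching condition \eqref{epsilon_0}. The only cosmetic difference is that the paper lower-bounds $\int_{\Sn}|\alpha f|^{n/(2\sigma)}\,\ud V_{g(t_k)}$ (where each bubble contributes exactly $R_\sigma^{n/(2\sigma)}\omega_n$, independently of its location) and inserts the factor $\max_{\Sn}f$ into the upper bound via H\"older, whereas you lower-bound $\int_{\Sn}f\,\ud V_{g(t_k)}$ directly and absorb $\max_{\Sn}f$ into the bubble contributions; both computations end at $L^{2\sigma/n}<2^{2\sigma/n}$.
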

\begin{proof}
Suppose $L>0$ otherwise it is trivial. By Lemmas \ref{lemma1.14}, \ref{lem:c-c} and \eqref{eq:bubble_measures}, we have
\begin{align*}
\int_{\Sn} |R_\sigma^{g_{(t_k)}}|^{\frac{n}{2\sigma}}\,\ud V_{g_{(t_k)}} =& \int_{\Sn} |\alpha(t_k) f|^{\frac{n}{2\sigma}}\,\ud V_{g_{(t_k)}}+o(1)\\
=&\sum_{\nu=1}^L  \int_{\Sn} |\alpha(t_k) f|^{\frac{n}{2\sigma}} \bar u_{x_{k,\nu},\lambda_{k,\nu}}^{\frac{2n}{n-2\sigma}}\,\ud V_{g_{\Sn}} +  \int_{\Sn} |\alpha(t_k) f|^{\frac{n}{2\sigma}} u_\infty^{\frac{2n}{n-2\sigma}}\,\ud V_{g_{\Sn}}+o(1)\\
\geq& L R_\sigma^{\frac{n}{2\sigma}} \omega_n +o(1).
\end{align*}
On the other hand,
\begin{align*}
\alpha(t_k)\left(\int_{\Sn}|f|^{\frac{n}{2\sigma}}\ud V_{g_{(t_k)}}\right)^{\frac{2\sigma}{n}}
&\leq
E_f[u_0]\left(\int_{\Sn}f\ud V_{g_{(t_k)}}\right)^{-\frac{2\sigma}{n}}\left(\int_{\Sn}|f|^{\frac{n}{2\sigma}}\ud V_{g_{(t_k)}}\right)^{\frac{2\sigma}{n}}\\
&\leq \beta \left(\int_{\Sn}f\ud V_{g_{(t_k)}}\right)^{-\frac{2\sigma}{n}} \Big(\max_{\Sn}f\Big)^{\frac{n-2\sigma}{n}} \left(\int_{\Sn}f\ud V_{g_{(t_k)}}\right)^{\frac{2\sigma}{n}}\\
&=R_\sigma\omega_n^{\frac{2\sigma}{n}}\left[(1+\epsilon_0)\frac{\max_{\Sn}f}{\min_{\Sn}f}\right]^{\frac{n-2\sigma}{n}}\\
&< 2^{\frac{2\sigma}{n}}R_\sigma\omega_n^{\frac{2\sigma}{n}},
\end{align*}
where the first inequality follows from item (3) of Proposition \ref{prop:properties},
the second inequality follows from (\ref{sbc}) and (\ref{beta}),
and the last inequality follows from (\ref{epsilon_0}). It follows by sending $k\to \infty$ that
$$L^{\frac{2\sigma}{n}} R_\sigma < 2^{\frac{2\sigma}{n}}R_\sigma.$$ 
Namely, $L<2$. We complete the proof.
\end{proof}

For $p \geq 1$, we define
\begin{equation}\label{def:fractional_Sobolev_space}
H^{2\sigma,p}(\Sn):=\{u \in H^\sigma(\Sn); P_\sigma u \in L^p(\Sn)\}.
\end{equation}
In particular, we set $H^{2\sigma}(\Sn)=H^{2\sigma,2}(\Sn)$. For $s>0$ and $1 <p <\infty$, we first state an embedding theorem for $H^{s,p}(\Sn)$ (see \cite[p.1587]{Jin&Li&Xiong2}) as follows: if $sp<n$, then the embedding $H^{s,p}(\Sn)\hookrightarrow L^\frac{np}{n-sp}(\Sn)$ is continuous, and $H^{s,p}(\Sn)\hookrightarrow L^q(\Sn)$ is compact for all $q< \frac{np}{n-sp}$; if $0<s-\frac{n}{p}<1$, then the embedding  $H^{s,p}(\Sn)\hookrightarrow C^{s-\frac{n}{p}}(\Sn)$ is continuous.

\begin{lemma}\label{lem:no-bubble} 
Assume as in Lemma \ref{lemma3.1}. If $L=0$, then as $k \to \infty$, up to a subsequence, $u_k\to u_\infty$ in $H^{2\sigma, p}(\Sn)$, $R^{g_{(t_k)}}_\sigma\to \alpha_\infty f$ in $L^p(\Sn)$ for any $p\ge 1$ and $u_\infty>0$ is a smooth solution of \eqref{eq:u-infty}.
\end{lemma}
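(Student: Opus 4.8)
The plan is to bootstrap from the integral decay already established to full $L^p$ (hence $H^{2\sigma,p}$) convergence along the subsequence, using the absence of bubbles ($L=0$) to keep all mass at a single smooth limit. First I would extract, via Lemma \ref{lem:c-c} with $L=0$, the $H^\sigma(\Sn)$-convergence $u_k \to u_\infty$ (up to a subsequence), where $u_\infty$ solves \eqref{eq:u-infty}. The key point is that when $L=0$ there is no loss of compactness, so $u_k \to u_\infty$ strongly in $H^\sigma(\Sn)$ and, by the Sobolev embedding, in $L^{2n/(n-2\sigma)}(\Sn)$; in particular $\|u_k\|_{L^{2n/(n-2\sigma)}(\Sn)}^{2n/(n-2\sigma)} \to \omega_n$ forces $u_\infty \not\equiv 0$, and then the strong maximum principle (or the weak Harnack inequality as in Lemma \ref{lemma1.9}) applied to \eqref{eq:u-infty} gives $u_\infty > 0$ and, by the regularity theory for $P_\sigma$, $u_\infty \in C^\infty(\Sn)$.

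Next I would upgrade the convergence. Since $R^{g(t_k)}_\sigma - \alpha(t_k) f = u_k^{-(n+2\sigma)/(n-2\sigma)} P_\sigma(u_k) - \alpha(t_k)f$, rewrite \eqref{1.2.5}-type identity as $P_\sigma(u_k) = (R^{g(t_k)}_\sigma) u_k^{(n+2\sigma)/(n-2\sigma)}$, and observe that Lemma \ref{lemma1.14} gives $\|R^{g(t_k)}_\sigma - \alpha(t_k)f\|_{L^p(\Sn,g(t_k))} \to 0$ for every $p \ge 1$, i.e. $\int_{\Sn}|R^{g(t_k)}_\sigma - \alpha(t_k)f|^p u_k^{2n/(n-2\sigma)}\,\ud V_{g_{\Sn}} \to 0$. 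Combined with the $H^\sigma$ (hence $L^{2n/(n-2\sigma)}$) bound on $u_k$ and Hölder, this yields $R^{g(t_k)}_\sigma \to \alpha_\infty f$ in $L^q(\Sn, g_{\Sn})$ for a range of $q$, and then $P_\sigma(u_k) = R^{g(t_k)}_\sigma u_k^{(n+2\sigma)/(n-2\sigma)}$ is bounded in $L^{q_0}(\Sn)$ for some $q_0>1$. The embedding $H^{2\sigma,q_0}(\Sn)\hookrightarrow L^{r}(\Sn)$ with $r = nq_0/(n-2\sigma q_0)$ (or into $C^0$ if $2\sigma q_0 > n$) then improves the integrability of $u_k$; feeding this back into the right-hand side raises $q_0$, and iterating finitely many times (a standard nonlinear bootstrap, exactly the kind used in \cite{Jin&Li&Xiong1,Jin&Li&Xiong2}) puts $u_k$ in $L^\infty(\Sn)$ uniformly. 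Once $\{u_k\}$ is uniformly bounded in $L^\infty$, $P_\sigma(u_k) = R^{g(t_k)}_\sigma u_k^{(n+2\sigma)/(n-2\sigma)}$ is bounded in $L^p(\Sn)$ for every $p$, so $u_k$ is bounded in $H^{2\sigma,p}(\Sn)$ for every $p$; by the compact embedding $H^{2\sigma,p}(\Sn)\hookrightarrow L^p(\Sn)$ (and $\hookrightarrow C^1(\Sn)$ for $p$ large since $2\sigma - n/p > 1$ eventually — here $\sigma>1/2$ would be needed but for this lemma $\sigma\in(0,1)$ suffices for the $L^p$ statement), a further subsequence converges in $H^{2\sigma,p}(\Sn)$ to $u_\infty$.

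Finally I would identify the limit and close the loop: passing to the limit in $P_\sigma(u_k) = R^{g(t_k)}_\sigma u_k^{(n+2\sigma)/(n-2\sigma)}$, using $R^{g(t_k)}_\sigma \to \alpha_\infty f$ in $L^p$ and $u_k \to u_\infty$ in $H^{2\sigma,p}$, gives $P_\sigma u_\infty = \alpha_\infty f\, u_\infty^{(n+2\sigma)/(n-2\sigma)}$, consistent with \eqref{eq:u-infty}; regularity theory promotes $u_\infty$ to $C^\infty(\Sn)$, and $u_\infty > 0$ was already secured. The statement $R^{g(t_k)}_\sigma \to \alpha_\infty f$ in $L^p(\Sn)$ (with respect to $g_{\Sn}$, equivalently $g(t_k)$ since $u_k$ is now bounded above and below) follows directly once the uniform bounds are in place.

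\textbf{Main obstacle.} The genuine work is the nonlinear bootstrap in the second paragraph: getting from the weak integral decay of $R^{g(t_k)}_\sigma - \alpha(t_k)f$ to a \emph{uniform} $L^\infty$ bound on $u_k$ without a bubble to worry about. The delicate point is tracking that each application of the embedding $H^{2\sigma,q}\hookrightarrow L^{nq/(n-2\sigma q)}$ strictly increases the exponent (so the iteration terminates) and that the $L^p$-norms of $R^{g(t_k)}_\sigma$ stay controlled at each stage — this is where Lemma \ref{lemma1.14}'s full strength (all $p\ge 1$) is essential, and where one must be careful that the nonlinearity $u_k^{(n+2\sigma)/(n-2\sigma)}$ does not outpace the gain from the embedding. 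Everything else — strong maximum principle for positivity, regularity for smoothness, compactness for the subsequential limit — is routine given the machinery already assembled in Sections \ref{Sect:Prelim}--\ref{Sect:Integ_curv_est}.
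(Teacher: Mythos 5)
Your overall skeleton matches the paper's proof (extract $H^\sigma$-convergence from $L=0$, show $u_\infty>0$ by the strong maximum principle, bootstrap to uniform bounds, then upgrade to $H^{2\sigma,p}$), but the central bootstrap step in your second paragraph has a genuine gap: the equation is critical, and the naive iteration you describe does not gain anything at the first step. Writing $P_\sigma u_k = R^{g(t_k)}_\sigma u_k^{(n+2\sigma)/(n-2\sigma)}$ and using only $u_k\in L^{2n/(n-2\sigma)}$ plus $\|R^{g(t_k)}_\sigma\|_{L^p(\Sn,g(t_k))}\le C$, the best you get is $P_\sigma u_k$ bounded in $L^{q_0}$ with $q_0=2n/(n+2\sigma)$, and the embedding $H^{2\sigma,q_0}\hookrightarrow L^{nq_0/(n-2\sigma q_0)}$ returns exactly $L^{2n/(n-2\sigma)}$ — the exponent does not increase, so "feeding this back in and iterating" never starts. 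The paper breaks this deadlock differently: it rewrites the equation as $P_\sigma u_k=(R^{g(t_k)}_\sigma u_k^{4\sigma/(n-2\sigma)})\,u_k$, i.e.\ a linear equation with potential $V_k=R^{g(t_k)}_\sigma u_k^{4\sigma/(n-2\sigma)}\in L^{n/(2\sigma)}$, and shows (estimate \eqref{eq:smallness}) that $\int_{\om}|V_k|^{n/(2\sigma)}\,\ud V_{g_{\Sn}}\le o(1)+C|\om|$ uniformly in $k$ — this uses the \emph{strong} $L^{2n/(n-2\sigma)}$ convergence $u_k\to u_\infty$, which is precisely where $L=0$ enters quantitatively to rule out concentration of the potential. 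Moser iteration with this smallness on small domains and a finite covering then yields $\|u_k\|_{L^{\bar p}}\le C$ for some $\bar p>2n/(n-2\sigma)$, and only this supercritical gain makes the subsequent bootstrap (to $L^{(n+\delta)/(2\sigma)}$ integrability of the potential and hence $C^\gamma$ bounds) run. Your "main obstacle" paragraph correctly identifies the danger but does not supply this mechanism.

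A secondary, related issue: you claim early on that Lemma \ref{lemma1.14} plus Hölder gives $R^{g(t_k)}_\sigma\to\alpha_\infty f$ in $L^q(\Sn,g_{\Sn})$ "for a range of $q$". Transferring from the $g(t_k)$-measure $u_k^{2n/(n-2\sigma)}\ud V_{g_{\Sn}}$ to $\ud V_{g_{\Sn}}$ requires integrability of negative powers of $u_k$, which is not available until after the uniform $C^\gamma$ bound and uniform convergence $u_k\to u_\infty>0$ have been established; the paper deduces the $L^p(\Sn)$ convergence of the curvature only at that later stage. As written, this part of your argument is circular. Once the uniform two-sided bounds on $u_k$ are in place, the remainder of your proposal (passing to the limit in the equation, compactness in $H^{2\sigma,p}$, regularity) agrees with the paper.
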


\begin{proof} Since $L=0$, by Lemma \ref{lem:c-c}, after passing to a subsequence, $u_k\to u_\infty$ in $H^{\sigma}(\Sn)$ as $k \to \infty$. It follows that $u_\infty\ge 0$ and is not identically zero. By \eqref{eq:r1} and the strong maximum principle for $(-\Delta)^\sigma$ in $\mathbb{R}^n$ (see Silvestre \cite[Proposition 2.17]{Silvestre}),  we obtain $u_\infty>0$. By the regularity theory (see \cite{Jin&Li&Xiong1}), $u_\infty\in C^2(\Sn)$.

For any domain $\om\subset \Sn$, we have
\begin{align}
&\int_{\om} |R^{g_{(t_k)}}_\sigma u_k^{\frac{4\sigma}{n-2\sigma}}|^{\frac{n}{2\sigma}}\,\ud V_{g_{\Sn}}\no\\
\le& 2^{\frac{n-2\sigma}{2\sigma}}\int_{\om}  \left[|R^{g_{(t_k)}}_\sigma-\alpha(t_k) f|^{\frac{n}{2\sigma}} u_k^{\frac{2n}{n-2\sigma}}+(\alpha(t_k) f)^{ \frac{n}{2\sigma} } \left(u_k^{\frac{2n}{n-2\sigma}}- u_\infty^{\frac{2n}{n-2\sigma}}+u_\infty^{\frac{2n}{n-2\sigma}}\right) \right]\,\ud V_{g_{\Sn}} \nonumber\\
=&o(1)+ 2^{\frac{n-2\sigma}{2\sigma}}\int_{\om}  (\alpha(t_k) f)^{ \frac{n}{2\sigma} } u_\infty^{\frac{2n}{n-2\sigma}} \,\ud V_{g_{\Sn}}\nonumber \\
\le& o(1)+C|\om|,
\label{eq:smallness}
\end{align}
where $|\om|=\mathrm{Vol}(\Omega, g_{\Sn})$, $C>0$ is independent of $k$, and we have used Lemma \ref{lemma1.14} and $u_k\to u_\infty$ in $L^{2n/(n-2\sigma)}(\Sn)$ as $k\to \infty$. Applying Moser's iteration to the equation 
$$P_\sigma u_k=(R^{g_{(t_k)}}_\sigma u_k^{\frac{4\sigma}{n-2\sigma}}) u_k \quad \mathrm{on~~} \Sn$$ 
and making use of \eqref{eq:smallness} together with a finite covering of $\Sn$, we have 
$$\|u_k\|_{L^{\bar p}(\Sn)}\le C\|u_k\|_{L^\frac{2n}{n-2\sigma}(\Sn)}\le C$$ 
for some constants $\bar p>\frac{2n}{n-2\sigma}$ and $C>0$  which are independent of $k$; see \cite{Jin&Li&Xiong1}. Choose $\delta,q>0$ such that $\frac{2n+2\delta}{n-2\sigma}<\bar p$, $\frac{2n}{n-2\sigma} \frac{1+q}{q} \le \bar p$ and $q\delta<n$. By H\"older's inequality, we obtain
\begin{align*}
&\int_{\Sn} |R^{g_{(t_k)}}_\sigma u_k^{\frac{4\sigma}{n-2\sigma}}|^{\frac{n+\delta}{2\sigma}}\,\ud V_{g_{\Sn}}\\
=& \int_{\Sn} |R^{g_{(t_k)}}_\sigma|^{\frac{n+\delta}{2\sigma}} u_k^{\frac{2n-2q \delta}{n-2\sigma}}  u_k^{\frac{2(1+q)\delta}{n-2\sigma}}\,\ud V_{g_{\Sn}}  \\
\le& \left(\int_{\Sn} |R^{g_{(t_k)}}_\sigma|^{\frac{n(n+\delta)}{2\sigma(n-q \delta)}}  \,\ud V_{g_{(t_k)}}\right)^{\frac{n-q \delta}{n}}  \left(\int_{\Sn} u_k^{\frac{2n}{n-2\sigma} \frac{1+q}{q}} \,\ud V_{g_{\Sn}}\right)^{\frac{q \delta}{n}}
\le C.
\end{align*}
Applying Moser's iteration again, by the above inequality we have
\begin{equation} \label{eq:holderestimates}
\|u_k\|_{C^\gamma(\Sn)}\le C
 \end{equation} for some constants $\gamma\in (0,1)$ and $C>0$ which are independent of $k$. It follows that, after passing to a subsequence, $u_k \to u_\infty$ uniformly on $\Sn$ as $k\to \infty$. Hence, for large $k$, we have $0<\frac12 \min_{\Sn} u_\infty \le u_k \le 2 \max_{\Sn} u_\infty<\infty$. By Lemma \ref{lemma1.14}, we conclude that as $k \to \infty$
 \[
 R^{g_{(t_k)}}_\sigma -\alpha(t_k) f\to 0 \quad \mathrm{in~~}L^p(\Sn)
  \]
for any $ 2\le p<\infty$. By H\"older's inequality, this also holds for $p<2$.  Together with \eqref{eq:holderestimates}, we have $P_\sigma u_k=R^{g_{(t_k)}}_\sigma u_k^{(n+2\sigma)/(n-2\sigma)} \in L^p(\Sn)$ for any $p>1$. Hence, $u_k \in H^{2\sigma,p}(\Sn)$. By the compactness, after passing to a subsequence,  $u_k\to u_\infty$ in $H^{2\sigma,p}(\Sn)$ for any $1\le p<\infty$.

 Therefore, we complete the proof.
\end{proof}

As a direct consequence of Lemma \ref{lem:no-bubble}, if $u_\infty>0$, then it is well-done. Thus, we now assume $u_\infty=0$. Then it follows from Lemma \ref{lemma3.1} that there holds
\begin{equation} \label{eq:one-bubble}
u_k= \bar u_{x_{k},\lda_{k}}+o(1) \quad \mathrm{~~in~~} H^\sigma(\Sn),
\end{equation}
where $\lda_k\to \infty$ as $k\to \infty$ and
\[
\bar u_{x_{k},\lda_{k}}(x)=b_k
\Big(\frac{2\lda_{k}}{2+(\lda_{k}^2-1)(1-\langle x,x_{k}\rangle)}\Big)^{\frac{n-2\sigma}{2}}:=b_k u_{x_{k},\lambda_k}(x) , \quad x \in \Sn
\] for some $x_k\in \Sn$ and $\lambda_k>1$, and $b_k=(\frac{R_\sigma}{\alpha_\infty f(-x_{k})})^{(n-2\sigma)/4\sigma}$. In particular, let $\phi_{x_k,\lambda_k}=\Psi_{x_k}\circ \delta_{\lambda_k}\circ \Psi_{x_k}^{-1}$ be a conformal transformation on $\Sn$ such that
$$\phi_{x_k,\lambda_k}^\ast(g_{\Sn})=u_{x_k,\lambda_k}^{\frac{4}{n-2\sigma}}g_{\Sn},$$
where $\Psi^{-1}_{x_k}$ is the stereographic projection on $\Sn$ from $x_k$. Furthermore, for $k$ sufficiently large we have
\begin{equation}\label{eq:bubble_measures}
\left|\fint_{\Sn}\varphi u_{x_k,\lambda_k}^{\frac{2n}{n-2\sigma}}\ud V_{\Sn}-\varphi(-x_k)\right|=o(1)
\end{equation}
for any $\varphi \in C(\Sn)$. Up to a subsequence, let $b_k\to b_\infty$ as $k\to \infty$.

For $t\geq 0$, let
$$P(t)=\int_{\Sn}x\,\ud V_{g(t)}$$
be the center of mass of $g(t)$, and whenever $P(t)\neq 0$, let
$$p=p(t)=P/|P|\in \Sn$$
be its image under the radial projection. 
Clearly $p(t)$ smoothly depends on the time $t$ if $u$ does. 
For smooth metric $g(t)=u(t)^{4/(n-2\sigma)}g_{\Sn}$, there exists a family of
conformal diffeomorphisms $\phi(t)=\phi_{q(t),r(t)}:\Sn\to \Sn$, with $q(t)\in \Sn$ and $r(t)\geq 1$,
which are explicitly given by
\begin{equation} \label{eq:m-diffeo}
\phi_{q,r}(x)=\frac{2r(x - \langle q, x\rangle q) +[ r^2(1 +
\langle q, x\rangle) -(1 - \langle q, x\rangle )]q}{ r^2( 1 + \langle q, x\rangle) + (1 -
\langle q, x\rangle)}
\end{equation}
such that
\begin{equation}\label{2.2}
\int_{\Sn}x\,\ud V_{h}=0\quad \mathrm{ for~~all~~ }t>0,
\end{equation}
where
the new metric
$$h=\phi(t)^*(g)=v(t)^{\frac{4}{n-2\sigma}}g_{\Sn}$$
is called the normalized metric, where
$v(t)=(u(t)\circ\phi(t))|\det \ud \phi(t)|^{(n-2\sigma)/2n}$
and $\ud V_h=v(t)^{2n/(n-2\sigma)}\ud V_{g_{\Sn}}.$
Meanwhile, the normalized function $v$ satisfies
\begin{equation}\label{2.3}
P_\sigma(v)=R^h_\sigma v^{\frac{n+2\sigma}{n-2\sigma}}\qquad \mathrm{on~~} \Sn,
\end{equation}
where $R^h_\sigma=R^g_\sigma\circ\phi(t)$ is $2\sigma$-order $Q$-curvature
of $h$. From now on, we set $f_\phi=f\circ\phi(t)$.
It follows from \cite[Lemma 3.1]{Jin&Li&Xiong2} that $v$ enjoys the following properties:
\begin{equation}\label{2.4}
\int_{\Sn}vP_\sigma(v)\,\ud V_{g_{\Sn}}=\int_{\Sn}uP_\sigma(u) \,\ud V_{g_{\Sn}}
\mathrm{~~and~~}\fint_{\Sn}v^{\frac{2n}{n-2\sigma}}\,\ud V_{g_{\Sn}}=\fint_{\Sn}u^{\frac{2n}{n-2\sigma}}\,\ud  V_{g_{\Sn}}=1.
\end{equation}

\begin{lemma} \label{lem:one-bubble-1} 
Assume as in Lemma \ref{lemma3.1}. Assume further that $L=1$ and $u_\infty=0$. Then there exists a sequence of  conformal transformations  $\phi_k$ as \eqref{eq:m-diffeo} on $\Sn$ such that
\begin{align*}
v_k\to 1 \quad \mathrm{~~in~~}H^{2\sigma,p}(\Sn) \quad \mathrm{for~~any~~}1\le p<\infty,
\end{align*}
where $v_k =|\det \ud \phi_k|^{(n-2\sigma)/2n} u_k\circ \phi_k $ satisfies \eqref{2.2}.
\end{lemma}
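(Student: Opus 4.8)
The plan is to combine the one-bubble decomposition \eqref{eq:one-bubble} with the $L^p$-curvature convergence of Lemma \ref{lemma1.14} to pin down the concentration parameters $x_k,\lambda_k$, and then to \emph{recenter} the flow metrics via the Möbius transformations $\phi_k$ so that the balance condition \eqref{2.2} holds; finally a Moser-iteration/regularity bootstrap, identical in spirit to Lemma \ref{lem:no-bubble}, upgrades the $H^\sigma$-convergence of the normalized sequence to $H^{2\sigma,p}(\Sn)$. First I would note that since $u_\infty=0$ and $L=1$, \eqref{eq:one-bubble} gives $u_k = \bar u_{x_k,\lambda_k}+o(1)$ in $H^\sigma(\Sn)$ with $\lambda_k\to\infty$; the natural choice is to take $\phi_k$ to be (a small perturbation of) the conformal transformation $\phi_{x_k,\lambda_k}$ associated to the bubble, adjusted within the family \eqref{eq:m-diffeo} so that \eqref{2.2} is satisfied. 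That such an adjustment exists is a standard degree/implicit-function argument: the map $(q,r)\mapsto \fint_{\Sn} x\, \ud V_{\phi_{q,r}^*(g(t_k))}$ is, for a sequence concentrating at $-x_k$, a small perturbation of the corresponding map for the exact bubble, whose zero set is nondegenerate; this is exactly the normalization used in \cite{Jin&Li&Xiong2}, so I would invoke \cite[Lemma 3.1]{Jin&Li&Xiong2} rather than redo it.

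Once $\phi_k$ is fixed, set $v_k=|\det \ud\phi_k|^{(n-2\sigma)/2n}\, u_k\circ\phi_k$, so that by \eqref{2.4} the sequence $\{v_k\}$ is bounded in $H^\sigma(\Sn)$ with $\fint_{\Sn} v_k^{2n/(n-2\sigma)}=1$, and by conformal covariance $v_k$ satisfies $P_\sigma v_k = R_\sigma^{h_k} v_k^{(n+2\sigma)/(n-2\sigma)}$ with $R_\sigma^{h_k}=R_\sigma^{g(t_k)}\circ\phi_k$. The key point is that the bubble $\bar u_{x_k,\lambda_k}$ transforms under $\phi_k$ to (a sequence converging to) the constant $1$: indeed $u_{x_k,\lambda_k}\circ\phi_{x_k,\lambda_k}\,|\det\ud\phi_{x_k,\lambda_k}|^{(n-2\sigma)/2n}\equiv 1$, and the scalar $b_k\to b_\infty$ can be absorbed using $\fint v_k^{2n/(n-2\sigma)}=1$, forcing $b_\infty=1$. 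Hence \eqref{eq:one-bubble} becomes $v_k\to 1$ in $H^\sigma(\Sn)$. Next I transport the curvature estimate: since $\|R_\sigma^{g(t_k)}-\alpha(t_k)f\|_{L^p(\Sn,g(t_k))}\to 0$ by Lemma \ref{lemma1.14} and this is a conformally invariant ($L^{p}$ with respect to the volume $\ud V_{g(t_k)}=\ud V_{h_k}$) quantity, we get $\|R_\sigma^{h_k}-\alpha(t_k)f_{\phi_k}\|_{L^p(\Sn,h_k)}\to 0$; writing it against $g_{\Sn}$, $\int_{\Sn}|R_\sigma^{h_k}-\alpha(t_k)f_{\phi_k}|^{2n/(n+2\sigma)} v_k^{2n/(n-2\sigma)}\ud V_{g_{\Sn}}\to 0$ for the exponents controlled in Lemma \ref{lemma3.3}, and more generally for every $p\ge1$.

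Now the regularity bootstrap runs exactly as in Lemma \ref{lem:no-bubble}: the local energy smallness estimate \eqref{eq:smallness}, with $u_\infty$ replaced by the constant $1$ and $\Sn$ covered by finitely many small balls, gives $\int_\Omega |R_\sigma^{h_k} v_k^{4\sigma/(n-2\sigma)}|^{n/2\sigma}\ud V_{g_{\Sn}} = o(1)+C|\Omega|$; Moser iteration on $P_\sigma v_k = (R_\sigma^{h_k} v_k^{4\sigma/(n-2\sigma)}) v_k$ yields $\|v_k\|_{L^{\bar p}}\le C$ for some $\bar p>2n/(n-2\sigma)$, a second iteration gives a uniform $C^\gamma$-bound, hence $v_k\to v_\infty$ uniformly with $v_\infty\ge 0$; but $v_k\to1$ in $H^\sigma$ forces $v_\infty\equiv1$, so $0<\tfrac12\le v_k\le 2$ for large $k$, and then $P_\sigma v_k = R_\sigma^{h_k} v_k^{(n+2\sigma)/(n-2\sigma)}\to \alpha_\infty f_{\phi_k}$... here one must be a little careful, since $f_{\phi_k}=f\circ\phi_k$ need not converge (the $\phi_k$ degenerate), so instead I would argue that $P_\sigma v_k = P_\sigma 1 + P_\sigma(v_k-1)$ and estimate $\|P_\sigma(v_k-1)\|_{L^p}\le \|R_\sigma^{h_k}v_k^{(n+2\sigma)/(n-2\sigma)} - R_\sigma\|_{L^p}$, bounding the right side by the (now uniformly bounded, curvature-convergent) ingredients together with $\|v_k-1\|_{L^\infty}\to0$ to get $\|P_\sigma(v_k-1)\|_{L^p(\Sn)}\to0$ for every $p$. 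By the embedding $H^{2\sigma,p}(\Sn)\hookrightarrow$ compact and the uniform bound, $v_k\to 1$ in $H^{2\sigma,p}(\Sn)$ for all $1\le p<\infty$.

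\textbf{Main obstacle.} The delicate step is showing the normalized curvature is small \emph{after} recentering: the gain in Lemma \ref{lemma1.14} is an $L^p$-bound with respect to the \emph{flow} volume $\ud V_{g(t_k)}$, and while this is conformally invariant as a statement about $g(t_k)$, one must check that the iteration constants in the Moser scheme (the exponents $\delta,q,\bar p$) can be chosen uniformly in $k$ despite the geometry of $h_k$ degenerating — this works precisely because $v_k$ is close to $1$, which removes the degeneration, but the logical order (smallness $\Rightarrow$ boundedness $\Rightarrow$ uniform convergence $\Rightarrow$ two-sided bound $\Rightarrow$ full curvature smallness) must be respected. A secondary subtlety is confirming $b_\infty=1$, i.e.\ that the bubble amplitude is correctly normalized; this follows from $\fint_{\Sn} v_k^{2n/(n-2\sigma)}=1$ together with \eqref{eq:bubble_measures}, which forces $b_\infty^{2n/(n-2\sigma)}=1$.
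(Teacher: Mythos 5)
Your overall skeleton matches the paper's: one-bubble decomposition \eqref{eq:one-bubble}, recentering by a M\"obius map of the form \eqref{eq:m-diffeo}, identification of the amplitude $b_\infty=1$ from volume conservation, and a Moser-iteration bootstrap as in Lemma \ref{lem:no-bubble} to pass from $H^\sigma$ to $H^{2\sigma,p}$. The identification of $b_\infty$, the transport of the $L^p$ curvature smallness (which is indeed conformally invariant), and the final bootstrap are all handled as in the paper, and your remark that one should not rely on convergence of $f_{\phi_k}$ itself is a valid reading of what the paper's argument actually uses.

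The one step where you genuinely diverge is the recentering, and it is precisely where the paper does its real work. The paper takes $\phi_k$ to be the \emph{canonical} center-of-mass normalization (the $\phi_{q,r}$ with $\fint_{\Sn}x\,\ud V_{h_k}=0$) and then \emph{proves} that its parameters asymptotically agree with the bubble's: writing things in stereographic coordinates it shows $\lambda_k/r_k\to\bar c\in(0,\infty)$ and $r_ky_k\to y_0$, and then uses a reflection argument to force $y_0=0$ and an explicit integral identity to force $\bar c=1$. You instead propose to build $\phi_k$ as a perturbation of the bubble's own transformation (strictly, its inverse) via a degree/implicit-function argument. That route can be made to work, but your one-line justification hides the quantitative content: you must show that the zero of the perturbed center-of-mass map lies within an $o(1)$ neighborhood of the exact bubble's zero \emph{measured in the natural (hyperbolic) topology on the M\"obius parameters} --- i.e.\ $r_k/\lambda_k\to1$ and $\lambda_k\,\mathrm{dist}(q_k,x_k)\to0$ --- with invertibility of the linearization holding uniformly as $\lambda_k\to\infty$. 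Without that, the recentered sequence $v_k$ could still converge to a nonconstant standard bubble rather than to $1$, and the rest of your bootstrap (which relies on $v_k$ being pinched near $1$) would collapse. So the proposal is acceptable in strategy, but this step should be carried out (or the uniformity reduced by conformal invariance to the round case and then checked), rather than delegated to a citation; it is exactly the content the paper's explicit computation supplies.
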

\begin{proof} By our assumption, we have \eqref{eq:one-bubble}. Let $P_k=\displaystyle\fint_{\Sn}x u_k^{2n/(n-2\sigma)}\,\ud V_{g_{\Sn}}$. Again by \eqref{eq:one-bubble},
\[
P_k= \fint_{\Sn}x \bar u_{x_k,\lda_k}^{\frac{2n}{n-2\sigma}}\,\ud V_{g_{\Sn}}+o(1)=-x_k+o(1)\neq 0.
\]
Let $\phi_k$ be the conformal transformations on $\Sn$ given by \eqref{eq:m-diffeo}
and $\bar v_k= |\det \ud \phi_k|^{(n-2\sigma)/2n} \bar u_{x_k,\lambda_k} \circ\phi_k$, then by \eqref{eq:bubble_u_k} we have
\begin{equation}\label{eq:bubble_v_k}
P_\sigma (\bar v_k)=\alpha_\infty f(-x_k) \bar v_k^{\frac{n+2\sigma}{n-2\sigma}} \quad \mathrm{on~~} \Sn.
\end{equation}

 Let $\Psi^{-1}$ be the stereographic projection with $P_k/|P_k|$ as the south pole. By \eqref{1.9}, \eqref{2.2} and \eqref{eq:one-bubble}, we obtain as $k \to \infty$
\begin{equation}\label{eq:one-bubble_center-zero}
\int_{\Sn} x \bar v_k^{\frac{2n}{n-2\sigma}}\,\ud V_{g_{\Sn}}\to 0,
\end{equation}
equivalently, the classification theorem of \eqref{eq:bubble_v_k} for $\bar v_k$ gives
\begin{align*}
&\int_{\R^n} \frac{y}{1+|y|^2} \left(\frac{\lda_k/r_k}{1+|y- r_k y_k|^2\lda_k^2/r_k^2}\right)^{n}\,\ud y \to 0,\\&
\int_{\R^n} \frac{1-|y|^2}{1+|y|^2} \left(\frac{\lda_k/r_k}{1+|y-r_k y_k|^2\lda_k^2/r_k^2}\right)^{n}\,\ud y \to 0,
\end{align*}
where $y_k\to 0$ is $\Psi^{-1}(x_k)$ up to some uniform constant, and $r_k=r(t_k)$.
It forces $\lda_k/r_k \to 1$ and $r_k y_k\to 0$. Indeed, if after passing to a subsequence $\lda_k/r_k \to 0$ or $\infty$, then there exists $P_\infty \in \Sn$ such that $\int_{\Sn} x \bar v_k^{2n/(n-2\sigma)}\,\ud V_{g_{\Sn}}\to P_\infty$ or $-P_\infty$ along the subsequence, which contradicts \eqref{eq:one-bubble_center-zero}. Hence, after passing to a subsequence we assume  $\lda_k/r_k \to \bar c\in (0,\infty)$. If $r_k |y_k|\to \infty$, by Lebesgue dominated convergence theorem we have, as $k\to \infty$, 
\begin{align*}
\int_{\R^n} \frac{1-|y|^2}{1+|y|^2} \left(\frac{\lda_k/r_k}{1+|y-r_k y_k|^2\lda_k^2/r_k^2}\right)^{n}\,\ud y&= \int_{\R^n} \frac{1-|y+r_k y_k|^2}{1+|y+r_k y_k|^2} \left(\frac{\lda_k/r_k}{1+|y|^2\lda_k^2/r_k^2}\right)^{n}\,\ud y\\& \to 
-\int_{\R^n} \left(\frac{\bar c}{1+\bar c^2 |y|^2}\right)^{n}\,\ud y.
\end{align*} 
This again yields a contradiction with \eqref{eq:one-bubble_center-zero}. 

Hence, after passing to a subsequence we assume  $\lda_k/r_k \to \bar c\in (0,\infty)$ and $r_k y_k \to y_0$ as $k \to \infty$. By \eqref{eq:one-bubble_center-zero}, we apply Lebesgue dominated convergence theorem again to obtain
\begin{align*}
&\int_{\R^n} \frac{y}{1+|y|^2} \left(\frac{\bar c}{1+\bar c^2 |y- y_0|^2}\right)^{n}\,\ud y = 0,\\&
\int_{\R^n} \frac{1-|y|^2}{1+|y|^2}\left( \frac{\bar c}{1+\bar c^2 |y- y_0|^2}\right)^{n}\,\ud y = 0.
\end{align*}
Thus, we claim that $y_0=0$ and $\bar c=1$. To that end, by contradiction, if $y_0\neq 0$, then without loss of generality we assume the first component of $y_0$ is positive. Let $\tilde y=(-y_1,\cdots,y_n)$ for $y=(y_1,\cdots,y_n)$, then
 \begin{align*}
 &\int_{\R^n} \frac{y_1}{1+|y|^2} \left(\frac{\bar c}{1+\bar c^2 |y- y_0|^2}\right)^{n}\,\ud y\\
 =&\int_{\{y_1>0\}} +\int_{\{y_1<0\}} \frac{y_1}{1+|y|^2} \left(\frac{\bar c}{1+\bar c^2 |y- y_0|^2}\right)^{n}\,\ud y\\
 =&\int_{\{y_1>0\}} \frac{y_1}{1+|y|^2} \left[\left(\frac{\bar c}{1+\bar c^2 |y- y_0|^2}\right)^{n}- \left(\frac{\bar c}{1+\bar c^2 |\tilde y- y_0|^2}\right)^{n}\right]\,\ud y>0
 \end{align*}
by using  $|\tilde y|=|y|, |y-y_0|<|\tilde y-y_0|$ on $\{y_1>0\}$. This yields a contradiction. Now we have $y_0=0$ and need to show $\bar c=1$. Observe that
\begin{align*}
0=&\int_{\R^n} \frac{1-|y|^2}{1+|y|^2}\left( \frac{\bar c}{1+\bar c^2 |y|^2}\right)^{n}\,\ud y\\
=&\int_{\R^n} \frac{\bar c^2-|y|^2}{\bar c^2+|y|^2}\left( \frac{1}{1+|y|^2}\right)^{n}\,\ud y\\
=&\int_{B_1}\left(\frac{\bar c^2-|y|^2}{\bar c^2+|y|^2}+\frac{\bar c^2|y|^2-1}{\bar c^2 |y|^2+1}\right)\left( \frac{1}{1+|y|^2}\right)^{n}\,\ud y\\
=&2(\bar c^4-1)\int_{B_1}\frac{|y|^2}{(\bar c^2+|y|^2)(\bar c^2 |y|^2+1)}\left( \frac{1}{1+|y|^2}\right)^{n}\,\ud y.
\end{align*}
This forces $\bar c=1$. 

Therefore, with a positive constant $b_\infty$, $v_k \to b_\infty$ in $H^\sigma (\Sn)$ as $k \to \infty$. By Lemma \ref{lemma1.14}, for any $p \geq 1$ we have
\[
\int_{\Sn} |R^{g_{(t_k)}}_\sigma-\alpha(t_k) f_{\phi_k}|^p \,\ud V_{g_{(t_k)}} \to 0 \quad \mathrm{as~~}k\to \infty.
\]
Meanwhile, by the conservation of the volume \eqref{1.9}, we have
$$1=\fint_{\Sn}u_k^{\frac{2n}{n-2\sigma}}\ud V_{g_{\Sn}}=\fint_{\Sn}v_k^{\frac{2n}{n-2\sigma}}\ud V_{g_{\Sn}}\to b_\infty^{\frac{2n}{n-2\sigma}} \mathrm{~~as~~} k \to \infty,$$
which yields $b_\infty=1$.
Using the above facts and the same argument of \eqref{eq:holderestimates} we have
$$
\|v_k\|_{C^\gamma(\Sn)}\le C
$$
for some $\gamma \in (0,1)$ and $C>0$ independent of $k$. It follows that $\|v_k\|_{H^{2\sigma,p}(\Sn)}\le C(p)$ for any $1\le p<\infty$. It follows that, up to a further subsequence,
\begin{equation} \label{eq:one-bubble-1}
v_k \to 1 \quad \mathrm{in~~}H^{2\sigma,p}(\Sn) \mathrm{~~as~~} k \to \infty.
\end{equation}
This completes the proof.
\end{proof}

\begin{lemma}\label{lem:lim_seq_v}
Let $f$ be a positive smooth Morse function on $\Sn$ and satisfy condition \eqref{sbc}. 
Suppose $f$ cannot be realized as $2\sigma$-order $Q$-curvature of a conformal metric.
Let $u(t)$ be a positive smooth solution of the flow \eqref{1.7}, and $v(t)$
be the corresponding normalized flow defined in \eqref{2.3}. Then, as $t\to\infty$, we have $v(t)\to 1$,
$h(t)=v(t)^{4/(n-2\sigma)}g_{\Sn}\to g_{\Sn}$ in $H^{2\sigma,p}(\Sn)$ for all $p \geq 1$.
Furthermore, $\|\phi(t)-p(t)\|_{L^2(\Sn)}\to 0$; hence
also
$\|f\circ\phi(t)-f(p(t))\|_{L^2(\Sn)}\to 0$
and $\alpha(t) f(p(t))\to R_\sigma$.
\end{lemma}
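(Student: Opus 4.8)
The plan is to argue by contradiction, exploiting the trichotomy already established in the blow-up analysis. Suppose the conclusion fails; then there is a sequence $t_k \to \infty$ along which $v(t_k)$ does not converge to $1$ in $H^{2\sigma,p}(\Sn)$. By Lemma \ref{lemma3.3}, $\{u_k := u(t_k)\}$ is a Palais--Smale sequence for $E_f$, so Lemma \ref{lem:c-c} applies and yields, after passing to a subsequence, the decomposition $u_k = \sum_{\nu=1}^L \bar u_{x_{k,\nu},\lambda_{k,\nu}} + u_\infty + o(1)$ in $H^\sigma(\Sn)$. Since the initial datum lies in $C_f^\infty$, Lemma \ref{lemma3.1} forces $L \le 1$. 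I would then split into the three cases $L=0$; $L=1$ with $u_\infty > 0$; and $L=1$ with $u_\infty = 0$ (the case $L=0$ with $u_\infty>0$ being the same as the first).

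\emph{Cases producing a genuine solution.} If $L=0$, Lemma \ref{lem:no-bubble} gives $u_k \to u_\infty$ in $H^{2\sigma,p}(\Sn)$ with $u_\infty > 0$ smooth and $P_\sigma u_\infty = \alpha_\infty f u_\infty^{(n+2\sigma)/(n-2\sigma)}$; passing to the limit in \eqref{eq:u-infty} and rescaling by the positive constant $\alpha_\infty^{(n-2\sigma)/(4\sigma)}$ (using the homogeneity of \eqref{1.2.5}) exhibits $f$ as the $2\sigma$-order $Q$-curvature of a conformal metric, contradicting the hypothesis. If $L=1$ and $u_\infty > 0$, one argues the same way: the bubble carries volume $\ge \omega_n$ by \eqref{eq:bubble_measures}, while $u_\infty$ carries positive volume, contradicting the volume normalization \eqref{1.9}; so this case is actually vacuous. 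Either way we are done in these cases.

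\emph{The remaining case $L=1$, $u_\infty=0$.} Here Lemma \ref{lem:one-bubble-1} provides conformal transformations $\phi_k$ of the form \eqref{eq:m-diffeo}, satisfying the centering condition \eqref{2.2}, such that the associated normalized functions $v_k = |\det \ud\phi_k|^{(n-2\sigma)/(2n)} u_k \circ \phi_k$ satisfy $v_k \to 1$ in $H^{2\sigma,p}(\Sn)$ for every $1 \le p < \infty$. The one point requiring care is that the $\phi_k$ produced by Lemma \ref{lem:one-bubble-1} must be \emph{the same} (up to the uniqueness in the definition near \eqref{2.2}) as the normalizing diffeomorphisms $\phi(t_k)$ defining $v(t)$ in \eqref{2.3}; this follows because the centering equation $\int_{\Sn} x\, \ud V_h = 0$ together with the constraint $r(t)\ge 1$ determines $(q(t),r(t))$ uniquely for a near-constant $v$, and both constructions satisfy it. Hence $v(t_k) \to 1$ in $H^{2\sigma,p}(\Sn)$, contradicting the choice of $\{t_k\}$. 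This establishes $v(t) \to 1$, and since the map $w \mapsto w^{4/(n-2\sigma)}$ is smooth and bounded on a neighborhood of $1$, also $h(t) = v(t)^{4/(n-2\sigma)} g_{\Sn} \to g_{\Sn}$ in $H^{2\sigma,p}(\Sn)$.

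\emph{Convergence of the conformal factors.} It remains to deduce the statements about $\phi(t)$. From $v(t) \to 1$ and the centering condition \eqref{2.2}, the same argument as in the proof of Lemma \ref{lem:one-bubble-1} (analyzing $\int_{\Sn} x\, \bar v^{2n/(n-2\sigma)}\, \ud V_{g_{\Sn}} \to 0$ for the bubble part) forces the conformal parameters to satisfy $r(t) \to 1$ and $q(t) \to$ the image point, so that $\phi(t)$ converges uniformly to the constant map $x \mapsto p(t)$; more precisely, writing $\phi(t) = \phi_{q(t),r(t)}$ with $r(t)\to 1$, the explicit formula \eqref{eq:m-diffeo} gives $\|\phi(t) - p(t)\|_{L^2(\Sn)} \to 0$, where $p(t) = P(t)/|P(t)|$ with $P(t) = \int_{\Sn} x\, \ud V_{g(t)}$ — here one checks $P(t)\ne 0$ for large $t$ since $g(t)$ is close, after the conformal change, to a single bubble centered near $-q(t)$. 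Composing with the uniformly continuous function $f$ then gives $\|f\circ\phi(t) - f(p(t))\|_{L^2(\Sn)} \to 0$. Finally, integrating the relation $P_\sigma(v) = R_\sigma^h v^{(n+2\sigma)/(n-2\sigma)}$ from \eqref{2.3} against the constant function, using $v(t)\to 1$, $R_\sigma^h = R_\sigma^g\circ\phi(t)$, and Lemma \ref{lemma1.14} (which gives $R_\sigma^{g(t)} - \alpha(t) f \to 0$ in every $L^p$, hence $R_\sigma^h - \alpha(t) f_\phi \to 0$), together with $f_\phi \to f(p(t))$, yields $\alpha(t) f(p(t)) \to \fint_{\Sn} P_\sigma(1)\, \ud V_{g_{\Sn}} = R_\sigma$.

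\emph{Main obstacle.} The delicate step is the identification in the case $L=1,\ u_\infty=0$: one must be sure that the abstract normalizing diffeomorphisms from Lemma \ref{lem:one-bubble-1} coincide with the canonically-defined $\phi(t_k)$ of \eqref{2.3}, and then upgrade subsequential convergence to convergence of the full flow. The upgrade is handled by the contradiction setup (any subsequence of $t\to\infty$ has a sub-subsequence along which $v\to 1$), but extracting the $L^2$ rate for $\phi(t) - p(t)$ and the limit $\alpha(t) f(p(t)) \to R_\sigma$ requires the quantitative control of the bubble parameters $(r(t), q(t))$ from the centering condition, which is exactly the computation carried out in Lemma \ref{lem:one-bubble-1}.
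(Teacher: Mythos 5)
Your overall architecture is the one the paper intends (the authors defer the proof to its analogue in Chen--Xu): for an arbitrary sequence $t_k\to\infty$ run the trichotomy of Lemmas \ref{lem:c-c}, \ref{lemma3.1}, \ref{lem:no-bubble}, \ref{lem:one-bubble-1}; the hypothesis that $f$ is not realizable eliminates the cases $L=0$ and $u_\infty\not\equiv 0$, and the remaining case $L=1$, $u_\infty=0$ gives $v(t_k)\to 1$ along a further subsequence, whence convergence of the full flow. One small imprecision: for $L=1$, $u_\infty>0$ the clean exit is simply that $u_\infty$ is then a positive solution of \eqref{eq:u-infty}, contradicting the hypothesis; your volume count does not quite work, since $\int_{\Sn}\bar u_{x_k,\lambda_k}^{2n/(n-2\sigma)}\,\ud V_{g_{\Sn}}=b_k^{2n/(n-2\sigma)}\omega_n$ with $b_k=(R_\sigma/(\alpha_\infty f(-x_k)))^{(n-2\sigma)/4\sigma}$, which need not be $\ge\omega_n$.

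There is, however, a concrete error in your derivation of the second half of the statement. In the concentration case one has $\lambda_k\to\infty$, and the centering computation in Lemma \ref{lem:one-bubble-1} gives $\lambda_k/r_k\to 1$; hence $r(t)\to\infty$, i.e.\ $\epsilon(t)=r(t)^{-1}\to 0$, \emph{not} $r(t)\to 1$ as you assert (you appear to have conflated $\lambda_k/r_k\to1$ with $r_k\to1$). Your claim is incompatible with the conclusion you draw from it: by \eqref{eq:m-diffeo}, $\phi_{q,1}=\mathrm{id}$, so $r\to1$ would give $\phi(t)\to\mathrm{id}$ and then $\|f\circ\phi(t)-f(p(t))\|_{L^2(\Sn)}\to\|f-f(p(t))\|_{L^2(\Sn)}\neq0$ for nonconstant $f$. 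It is precisely $r(t)\to\infty$ that makes $\phi_{q(t),r(t)}\to q(t)$ pointwise away from the antipode $-q(t)$ (not uniformly, since $\phi_{q,r}(-q)=-q$ for every $r$), hence in $L^2(\Sn)$; combined with $q(t)-p(t)\to0$, which follows from \eqref{2.2} and the single-bubble structure, this yields $\|\phi(t)-p(t)\|_{L^2(\Sn)}\to0$ and, by the Lipschitz continuity of $f$ and your integration of \eqref{2.3}, the remaining assertions. With this correction the argument closes.
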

\begin{proof}
Since the idea of the proof is nearly identical to the one of \cite[Lemma 4.11]{Chen&Xu}, we omit it here.
\end{proof}

Notice that
$$F_2(t)=\int_{\Sn}|\alpha f-R_\sigma^g|^2\ud V_g=\int_{\Sn}|\alpha f_\phi-R_\sigma^h|^2\ud V_h$$
and
$$G_2(t)=\int_{\Sn}(\alpha f-R_\sigma^g)P^g_\sigma(\alpha f-R_\sigma^g)\ud V_g=
\int_{\Sn}(\alpha f_\phi-R_\sigma^h)P^h_\sigma(\alpha f_\phi-R_\sigma^h)\ud V_h.$$

\begin{lemma}\label{lemma3.7}
With error $o(1)\to 0$ as $t\to\infty$, there holds
$$\frac{\ud}{\ud t}F_2\leq-\left(\frac{n-2\sigma}{2}+o(1)\right)G_2
+\left(\frac{n+2\sigma}{2}R_\sigma+o(1)\right)F_2.$$
\end{lemma}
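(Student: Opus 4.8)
The plan is to start from the exact evolution identity \eqref{1.39} with $p=2$, which in the notation $F_2, G_2$ reads
\[
\frac{\ud}{\ud t}F_2(t)=-\frac{n-2\sigma}{2}G_2(t)+\frac{n+2\sigma}{2}\int_{\Sn}\alpha f|\alpha f-R_\sigma^g|^2\ud V_g+2\alpha'\int_{\Sn}f(\alpha f-R_\sigma^g)\ud V_g-\sigma\int_{\Sn}|\alpha f-R_\sigma^g|^2(\alpha f-R_\sigma^g)\ud V_g,
\]
and then to show that, under the hypotheses inherited from Lemma \ref{lem:lim_seq_v} (so that $v(t)\to 1$ in $H^{2\sigma,p}(\Sn)$, $\alpha(t)f(p(t))\to R_\sigma$, and $\|f\circ\phi(t)-f(p(t))\|_{L^2}\to 0$), each of the last three terms can be absorbed into the claimed form. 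First I would rewrite all integrals against $\ud V_g$ as integrals against $\ud V_h$ using the identities recorded just before the statement, namely $F_2(t)=\int_{\Sn}|\alpha f_\phi-R_\sigma^h|^2\ud V_h$ and $G_2(t)=\int_{\Sn}(\alpha f_\phi-R_\sigma^h)P_\sigma^h(\alpha f_\phi-R_\sigma^h)\ud V_h$, and similarly the cubic and cross terms become $\int_{\Sn}\alpha f_\phi|\alpha f_\phi-R_\sigma^h|^2\ud V_h$ etc.; this is legitimate because the conformal map $\phi(t)$ is a diffeomorphism.

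For the second term, I would write $\alpha f_\phi=\alpha f(p(t))+\alpha(f_\phi-f(p(t)))$; since $\alpha f(p(t))\to R_\sigma$ and $\alpha$ is uniformly bounded (Lemma \ref{lemma1.4}), the contribution of $\alpha f(p(t))|\alpha f_\phi-R_\sigma^h|^2$ is $(\frac{n+2\sigma}{2}R_\sigma+o(1))F_2$, while the contribution of $\alpha(f_\phi-f(p(t)))|\alpha f_\phi-R_\sigma^h|^2$ is bounded, using $v(t)\to1$ in $H^{2\sigma,p}$ hence in $C^1(\Sn)$ (here $\sigma>1/2$ is used, via the embedding $H^{2\sigma,p}\hookrightarrow C^1$), so $R_\sigma^h=P_\sigma(v)v^{-\frac{n+2\sigma}{n-2\sigma}}$ is uniformly bounded in $L^p$ for large $p$; combined with $\|f_\phi-f(p(t))\|_{L^2}\to0$ this gives an $o(1)\,\|\alpha f_\phi-R_\sigma^h\|_{L^q}^2 \le o(1)F_2^{\text{-type}}$ bound, i.e. $o(1)F_2$ after one more Hölder step (or directly $o(1)\int|\alpha f_\phi - R_\sigma^h|^2\ud V_h$ if one bounds $|f_\phi - f(p(t))|$ in $L^\infty$ via the $C^\gamma$-bound on $v$ and the explicit form of $\phi$; either route works). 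The cross term $2\alpha'\int f(\alpha f-R_\sigma^g)\ud V_g$ is handled by \eqref{eq:alpha'}/Lemma \ref{lemma1.13}'s estimate $|\alpha'|\le CF_2^{1/2}$, giving $|2\alpha'\int f(\alpha f-R_\sigma^g)\ud V_g|\le CF_2^{1/2}\cdot F_2^{1/2}=CF_2$, and in fact an $o(1)F_2$ bound once one notes $\alpha'\to0$ (which follows from $F_2\to0$, Lemma \ref{lemma1.14}). The cubic term $-\sigma\int|\alpha f-R_\sigma^g|^2(\alpha f-R_\sigma^g)\ud V_g$ is bounded by $\sigma\|\alpha f_\phi-R_\sigma^h\|_{L^\infty}F_2$; since $v(t)\to1$ in $C^1$ and $\alpha f(p(t))\to R_\sigma$ one has $\|\alpha f_\phi-R_\sigma^h\|_{L^\infty}\to 0$, so this term is also $o(1)F_2$.

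The main obstacle I anticipate is making the $o(1)$ bounds on the cubic and cross terms genuinely uniform in $x$: this requires an $L^\infty$ (or at least high-$L^p$) control on $R_\sigma^h-\alpha f_\phi$, not merely the $L^2$ control $F_2\to0$. The way around this is exactly what Lemma \ref{lem:lim_seq_v} provides: $v(t)\to1$ in $H^{2\sigma,p}(\Sn)$ for all $p$, hence in $C^{1,\lambda}(\Sn)$ for suitable $\lambda$ (using $\sigma>1/2$), so $P_\sigma(v)\to P_\sigma(1)=R_\sigma$ in $L^p$ and $v^{-\frac{n+2\sigma}{n-2\sigma}}\to1$ uniformly, whence $R_\sigma^h\to R_\sigma$ in $L^p$ for every $p<\infty$; combined with $\alpha f_\phi\to R_\sigma$ in $L^p$ this shows $R_\sigma^h-\alpha f_\phi\to0$ in every $L^p$, which upgrades the crude $F_2$-coefficients to $o(1)$. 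Assembling: $\frac{\ud}{\ud t}F_2\le -\frac{n-2\sigma}{2}G_2+(\frac{n+2\sigma}{2}R_\sigma+o(1))F_2+o(1)F_2$, and since the $-\frac{n-2\sigma}{2}G_2$ term's coefficient itself only needs to be written $-(\frac{n-2\sigma}{2}+o(1))G_2$ (which is weaker), the stated inequality follows. I would present this in three short steps: (1) the exact ODE and conversion to $h$; (2) the uniform smallness of $R_\sigma^h-\alpha f_\phi$ from Lemma \ref{lem:lim_seq_v}; (3) term-by-term absorption.
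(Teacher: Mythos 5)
Your skeleton is the same as the paper's --- start from \eqref{1.39} with $p=2$, substitute \eqref{eq:alpha'}, pass to the normalized metric $h$, and split $\alpha f_\phi$ around $\alpha f(p(t))$ --- but two of your term-by-term estimates have genuine gaps.

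First, the cubic term and the term $\alpha\int_{\Sn}(f_\phi-f(p(t)))|\alpha f_\phi-R_\sigma^h|^2\,\ud V_h$ cannot be absorbed into $o(1)F_2$ in the way you propose, because the $L^\infty$ smallness you invoke is not available: $\|f_\phi-f(p(t))\|_{L^\infty(\Sn)}$ does \emph{not} tend to zero ($\phi(t)$ is a diffeomorphism, so $f\circ\phi(t)$ attains every value of $f$; only its $L^p$ distances to $f(p(t))$ for finite $p$ are small, because the set where it deviates has small measure), and $v(t)\to1$ in $H^{2\sigma,p}(\Sn)$ for all finite $p$ gives $P_\sigma v\to R_\sigma$ only in $L^p$, $p<\infty$, hence $R_\sigma^h-\alpha f_\phi\to0$ in every finite $L^p$ but not in $L^\infty$. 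Once you retreat to finite-$p$ control, the ``one more H\"older step'' necessarily puts an exponent strictly larger than $2$ on $|\alpha f_\phi-R_\sigma^h|$, and the only way to close is the Sobolev inequality \eqref{eq:sobolev}:
\[
\int_{\Sn}|\alpha f_\phi-R_\sigma^h|^3\,\ud V_h\le \|\alpha f_\phi-R_\sigma^h\|_{L^{\frac{n}{2\sigma}}(\Sn,h)}\,\|\alpha f_\phi-R_\sigma^h\|_{L^{\frac{2n}{n-2\sigma}}(\Sn,h)}^2\le o(1)\,Y_\sigma(\Sn)^{-1}G_2,
\]
and likewise for the $(f_\phi-f(p(t)))$ term using Lemma \ref{lemma5.1}. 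These terms come out as $o(1)G_2$, not $o(1)F_2$ --- which is exactly why the lemma carries the coefficient $-(\tfrac{n-2\sigma}{2}+o(1))$ in front of $G_2$. The final inequality survives, but your claimed $o(1)F_2$ bounds do not.

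Second, your treatment of the cross term loses the sharp constant. Multiplying $|\alpha'|\le CF_2^{1/2}$ by $|\int_{\Sn}f(\alpha f-R_\sigma^g)\,\ud V_g|\le CF_2^{1/2}$ gives only $CF_2$ with a fixed constant, which would corrupt the coefficient $\tfrac{n+2\sigma}{2}R_\sigma$ that Lemma \ref{lemma5.5} later compares against the spectral gap; and noting ``$\alpha'\to0$'' upgrades the product only to $o(1)F_2^{1/2}$, not $o(1)F_2$. The missing ingredient is the orthogonality $\int_{\Sn}(\alpha f-R_\sigma^g)\,\ud V_g=0$ (the definition of $\alpha$), which lets you write $\int_{\Sn}f(\alpha f-R_\sigma^g)\,\ud V_g=\int_{\Sn}(f_\phi-f(p(t)))(\alpha f_\phi-R_\sigma^h)\,\ud V_h=o(1)F_2^{1/2}$ by Cauchy--Schwarz and Lemma \ref{lem:lim_seq_v}; only then is the cross term $o(1)F_2$.
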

\begin{proof}
By (\ref{1.39}) and \eqref{eq:alpha'}, we have
\begin{equation*}
\begin{split}
&\frac{\ud}{\ud t}F_2=-\frac{n-2\sigma}{2}G_2
+\frac{n+2\sigma}{2}\int_{\Sn}\alpha f_\phi|\alpha f_\phi-R_\sigma^h|^{2}\ud V_h+2\alpha'\int_{\Sn}f_\phi(\alpha f_\phi-R_\sigma^h)\ud V_h\\
&\qquad\quad~~-\sigma\int_{\Sn}|\alpha f_\phi-R_\sigma^h|^{2}(\alpha f_\phi-R_\sigma^h)\ud V_h\\
=&-\frac{n-2\sigma}{2}G_2
+\frac{n+2\sigma}{2}\int_{\Sn}\alpha f_\phi|\alpha f_\phi-R_\sigma^h|^{2}\ud V_h\\
&+\frac{2\int_{\Sn}f_\phi(\alpha f_\phi-R_\sigma^h)\ud V_h}{\int_{\Sn}f_\phi \ud V_h}\left[-\frac{n-2\sigma}{2}\int_{\Sn}(R_\sigma^h-\alpha f_\phi)^2\ud V_{h}+\sigma\int_{\Sn}\alpha f_\phi (R_\sigma^h-\alpha f_\phi)\ud V_{h}\right]
\\
&-\sigma\int_{\Sn}|\alpha f_\phi-R_\sigma^h|^{2}(\alpha f_\phi-R_\sigma^h)\ud V_h\\
\leq&-\frac{n-2\sigma}{2}G_2
+\frac{n+2\sigma}{2}R_\sigma F_2+\frac{n+2\sigma}{2}(\alpha f(p(t))-R_\sigma)\int_{\Sn}|\alpha f_\phi-R_\sigma^h|^{2}\ud V_h\\
&+\frac{n+2\sigma}{2}\alpha\int_{\Sn}(f_\phi-f(p(t)))|\alpha f_\phi-R_\sigma^h|^{2}\ud V_h+C \int_{\Sn}|\alpha f_\phi-R_\sigma^h|^3 \ud V_h.
\end{split}
\end{equation*}
By Lemma \ref{lem:lim_seq_v}, we have
$\alpha f(p(t))\to R_\sigma$ as $t\to\infty$ and then
$$(\alpha f(p(t))-R_\sigma)\int_{\Sn}|\alpha f_\phi-R_\sigma^h|^{2}\ud V_h=o(1)F_2(t).$$
By H\"older's inequality, \eqref{eq:sobolev} and Lemma \ref{lemma1.14}, we estimate
\begin{align*}
\int_{\Sn}|\alpha f_\phi-R_\sigma^h|^3\ud V_h\leq& \|\alpha f_\phi-R_\sigma^h\|_{L^{\frac{n}{2\sigma}}(\Sn,h)}\|\alpha f_\phi-R_\sigma^h\|_{L^{\frac{2n}{n-2\sigma}}(\Sn,h)}^2\\
\leq& Y_\sigma(\Sn)^{-1}F_{\frac{n}{2\sigma}}(t)G_2(t)=o(1)G_2(t).
\end{align*}
By a similar argument in the proof of \cite[Lemma 6.1]{Chen&Xu}, using Lemma \ref{lemma5.1} and Sobolev inequality \eqref{eq:sobolev}  we may bound
\begin{align*}
&\left|\alpha\int_{\Sn}(f_\phi-f(p(t))|\alpha f_\phi-R_\sigma^h|^{2}\ud V_h\right|\\
\leq& \alpha\|f_\phi-f(p(t))\|_{L^{\frac{n}{2\sigma}}(\Sn,h)}
\|\alpha f_\phi-R_\sigma^h\|_{L^{\frac{2n}{n-2\sigma}}(\Sn,h)}^2
= o(1) G_2(t).
\end{align*}
Therefore, the desired estimate follows by collecting all the above estimates together.
\end{proof}

\section{Finite dimensional dynamics}\label{Sect:Finite_dim_dynamics}

We recall the following Kazdan-Warner identity (see \cite[Proposition A.1]{Jin&Li&Xiong1}).

\begin{prop}\label{Kazdan-Warner}
If $u \in C^2(\Sn)$ is a positive function satisfying \eqref{1.2.5}, then
\begin{equation*}
\int_{\Sn}\langle X,\nabla f\rangle_{g_{\Sn}} u^{\frac{2n}{n-2\sigma}}\ud V_{g_{\Sn}}=0
\end{equation*}
for any conformal Killing vector field $X$ on $\Sn$.
\end{prop}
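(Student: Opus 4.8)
The plan is to derive the identity by differentiating an energy functional along the conformal orbit of the solution $u$, exploiting that $\int_{\Sn} w\,P_\sigma(w)\,\ud V_{g_{\Sn}}$ is invariant under the action of the conformal group while $\int_{\Sn} f\,w^{2n/(n-2\sigma)}\,\ud V_{g_{\Sn}}$ is not. Given a conformal Killing vector field $X$ on $\Sn$, let $\{\phi_s\}_{s\in\R}$ be its flow, a one-parameter group of conformal diffeomorphisms with $\phi_0=\mathrm{id}$ and $\partial_s\phi_s=X\circ\phi_s$. Write $\phi_s^\ast(g_{\Sn})=\rho_s^{4/(n-2\sigma)}g_{\Sn}$ with $\rho_s=|\det\ud\phi_s|^{(n-2\sigma)/(2n)}>0$ and $\rho_0\equiv 1$, and set $u_s:=(u\circ\phi_s)\,\rho_s$. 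Since $u\in C^2(\Sn)$ and $\{\phi_s\}$ is smooth, $s\mapsto u_s$ is a $C^1$ curve into $C^0(\Sn)$ with
\[
\dot u:=\partial_s\big|_{s=0}u_s=\langle\nabla_{g_{\Sn}}u,X\rangle+\tfrac{n-2\sigma}{2n}(\mathrm{div}_{g_{\Sn}}X)\,u\in C^1(\Sn).
\]

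Two facts feed into the argument. First, by the conformal covariance \eqref{conformal_invariance} of $P_\sigma$ together with the naturality of $P_\sigma$ under diffeomorphisms — which is exactly the invariance already recorded in \eqref{2.4} — one has $\int_{\Sn} u_s\,P_\sigma(u_s)\,\ud V_{g_{\Sn}}=\int_{\Sn} u\,P_\sigma(u)\,\ud V_{g_{\Sn}}$ for every $s$. Second, the change of variables $y=\phi_s(x)$, using $\phi_s^\ast \ud V_{g_{\Sn}}=|\det\ud\phi_s|\,\ud V_{g_{\Sn}}$, gives
\[
\int_{\Sn} f\,u_s^{\frac{2n}{n-2\sigma}}\,\ud V_{g_{\Sn}}=\int_{\Sn}(f\circ\phi_s^{-1})\,u^{\frac{2n}{n-2\sigma}}\,\ud V_{g_{\Sn}}.
\]

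Now consider $\mathcal{E}[w]=\int_{\Sn} w\,P_\sigma(w)\,\ud V_{g_{\Sn}}-\tfrac{n-2\sigma}{n}\int_{\Sn} f\,|w|^{2n/(n-2\sigma)}\,\ud V_{g_{\Sn}}$. Since $P_\sigma$ is self-adjoint, its derivative at $w$ is $\mathcal{E}'[w][\psi]=2\int_{\Sn}\psi\,(P_\sigma(w)-f\,|w|^{4\sigma/(n-2\sigma)}w)\,\ud V_{g_{\Sn}}$, so $u$ solving \eqref{1.2.5} means $\mathcal{E}'[u]=0$. Applying the chain rule along the curve $u_s$ — legitimate because $P_\sigma u\in C^0(\Sn)$ for $u\in C^2(\Sn)$, so $\mathcal{E}'[u]$ is a bounded linear functional on $C^0(\Sn)$ — yields $\tfrac{\ud}{\ud s}\big|_{s=0}\mathcal{E}[u_s]=\mathcal{E}'[u][\dot u]=0$. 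On the other hand, by the two facts above, $\mathcal{E}[u_s]=\mathrm{const}-\tfrac{n-2\sigma}{n}\int_{\Sn}(f\circ\phi_s^{-1})\,u^{2n/(n-2\sigma)}\,\ud V_{g_{\Sn}}$; differentiating at $s=0$, using $\partial_s|_{s=0}\phi_s^{-1}=-X$ and dominated convergence (justified by compactness of $\Sn$ and smoothness of $\phi_s$), gives $\tfrac{\ud}{\ud s}\big|_{s=0}\mathcal{E}[u_s]=\tfrac{n-2\sigma}{n}\int_{\Sn}\langle X,\nabla f\rangle_{g_{\Sn}}\,u^{2n/(n-2\sigma)}\,\ud V_{g_{\Sn}}$. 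Equating the two expressions for the derivative gives the claimed identity.

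The main obstacle is the conformal invariance $\int_{\Sn} u_s\,P_\sigma(u_s)\,\ud V_{g_{\Sn}}=\int_{\Sn} u\,P_\sigma(u)\,\ud V_{g_{\Sn}}$; the rest is bookkeeping about the chain rule and differentiation under the integral. Fortunately this invariance is precisely the content of \eqref{conformal_invariance} and is already recorded in \eqref{2.4}, so the argument goes through cleanly. An alternative route would be a Rellich--Pohozaev identity on the degenerate-elliptic extension $(\overline{B_1},\tilde g)$ from Theorem \ref{thm:lf}, testing the extended equation against the generator of the conformal dilation and collecting boundary terms, but this is more computational and the variational argument above is preferable given the tools already in place.
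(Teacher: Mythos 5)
Your proposal is correct. The paper gives no proof of this proposition at all --- it simply cites \cite[Proposition A.1]{Jin&Li&Xiong1} --- and your variational argument (differentiating $\mathcal{E}[u_s]$ along the conformal flow of $X$, using the invariance of $\int_{\Sn} u_s P_\sigma(u_s)\,\ud V_{g_{\Sn}}$ recorded in \eqref{2.4} together with the Euler--Lagrange equation \eqref{1.2.5}) is precisely the standard proof of the fractional Kazdan--Warner identity given in that reference; the regularity bookkeeping you supply ($\dot u\in C^1(\Sn)\subset H^\sigma(\Sn)$ and $P_\sigma u\in L^2(\Sn)$ for $u\in C^2(\Sn)$, $\sigma\in(0,1)$) is enough to justify both differentiations under the integral.
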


In particular, if we take $X=\nabla_{g_{\Sn}}x_i$ for $i=1,2,\cdots,n+1$ in Proposition \ref{Kazdan-Warner}
where $(x_1,x_2,\cdots, x_{n+1})\in \Sn$, then by Proposition \ref{Kazdan-Warner} we have 
\begin{equation}\label{5.1}
\int_{\Sn}\langle \nabla x_i,\nabla R_\sigma^g\rangle_{g_{\Sn}} u^{\frac{2n}{n-2\sigma}}\ud V_{g_{\Sn}}=0\quad\mathrm{~~for~~ }i=1,2,\cdots,n+1,
\end{equation}
where $R_\sigma^g$ is $2\sigma$-order $Q$-curvature of $g=u^{4/(n-2\sigma)}g_{\Sn}$.

If we let $\xi=(\ud\phi)^{-1}\frac{\ud\phi}{\ud t}$, then there holds (see also \cite[(5.7)]{Chen&Xu})
\begin{equation}\label{5.2}
v_t=(u_t\circ\phi(t))|\det\ud\phi(t)|^{\frac{n-2\sigma}{2n}}+\frac{n-2\sigma}{2n}v\mathrm{div}_h(\xi),
\end{equation}
where
\begin{equation}\label{5.3}
v(t)=(u(t)\circ\phi(t))|\det\ud\phi(t)|^{\frac{n-2\sigma}{2n}} \quad \mathrm{with~~} \phi(t)^\ast g(t)=
|\det \ud \phi(t)|^{\frac{2}{n}}g_{\Sn} .
\end{equation}
Here we use Hadamard identity to give an alternative proof of \eqref{5.2}. The Hadamard identity states that: Let $\Omega$ be a domain in $\mathbb{R}^n$ and $F \in
C^2(\bar{\Omega};\mathbb{R}^n)$, denote by $B_{ij}$ the algebraic
cofactor of the element $\frac{\partial F^i} {\partial x^j}, 1 \leq
i,j \leq n$ in the determinant $J_F(x)$. Then there holds
\begin{equation*}
\sum_{i=1}^n \frac{\partial}{\partial x^j} B_{ij}(x)=0,
\quad i=1, \cdots, n.
\end{equation*}
In order to show \eqref{5.2}, it reduces to proving
\begin{equation}\label{eq:d_t_log det d phi}
\frac{\ud}{\ud t}\log \det(\ud \phi)=\frac{1}{\det(\ud \phi) G_0}
\frac{\partial}{\partial x_j}[\det(\ud \phi) G_0
\xi_j]=\mathrm{div}_{\phi^\ast (g_0)}(\xi)
\end{equation}
by virtue of \eqref{5.3}. To this end, denote by $G_0(x) \ud x_1 \wedge \cdots \wedge \ud x_n$ and
$J_{\phi}$ the volume form of $g_{\Sn}$ and the determinant of
$n$-th order matrix $(\frac{\partial \phi_i}{\partial x_j})$,
respectively, then it is easy to show that
$$\det (\ud\phi)=(G_0 \circ \phi) J_{\phi} G_0^{-1}.$$
Let $y=\phi(t)x$, a direct computation yields
\begin{equation}\label{in5.3}
\frac{\ud}{\ud t}\log \det(\ud \phi)=\frac{\ud}{\ud t}
\det(\ud \phi)\det(\ud \phi)^{-1}
\end{equation}
and
\begin{align}\label{in5.4}
\frac{\ud}{\ud t} \det(\ud \phi)=&\xi\cdot \ud(G_0 \circ \phi)J_\phi
G_0^{-1}+\det (\ud\phi)\sum\limits_{i,j=1}^{n}\frac{\partial x_j
}{ \partial y_i} \frac{\ud}{\ud t} \frac{\partial
\phi_i}{\partial x_j}\no\\
=&\xi\cdot \ud(G_0 \circ \phi)J_\phi G_0^{-1}+\det
(\ud\phi)\sum\limits_{i,j=1}^{n}\frac{\partial x_j}
{\partial y_i} \frac{\partial}{\partial x_j} \frac{\ud \phi_i }
{\ud t}\no\\
&+(G_0 \circ \phi) G_0^{-1}\sum\limits_{i,j=1}^n
\frac{\partial}{\partial x^j} \left(J_\phi \frac{\partial x_j}{\partial y_i}\right)\frac{\ud \phi_i}{\ud t},
\end{align}
where the third term on the right hand of \eqref{in5.4} vanishes
 by virtue of the Hadamard identity. Thus, \eqref{eq:d_t_log det d phi} follows from (\ref{in5.3}) and
(\ref{in5.4}). 

Hence, it follows from (\ref{2.2}) that
\begin{equation}\label{5.4}
\begin{split}
0&=\frac{2n}{n-2\sigma}\int_{\Sn}x v^{-1}v_t\ud V_h\\
&=\frac{n}{2}\int_{\Sn}x (\alpha f_\phi-R^h_\sigma)\ud V_h+\int_{\Sn}x\,\mathrm{div}_h(\xi)\ud V_h\\
&=\frac{n}{2}\int_{\Sn}x (\alpha f_\phi-R^h_\sigma)\ud V_h+\int_{\Sn}\xi \ud V_h.
\end{split}
\end{equation}

\subsection{Scaled stereographic projection}

Let $\pi:\Sn\setminus\{S\}\to\mathbb{R}^n$ be the stereographic projection from the south pole $S$, i.e.
$$\pi(x)=\frac{(x_1,\cdots,x_n)}{1+x_{n+1}},\hspace{2mm} x\in \Sn.$$
Then its inverse $\Psi=\pi^{-1}$ is given by
$$\Psi(z)=\frac{(2z_1,\cdots,2z_n,1-|z|^2)}{1+|z|^2},\hspace{2mm} z\in\mathbb{R}^n.$$
For $q\in\mathbb{R}^n$ and $r>0$, let $\psi_{q,r}:\mathbb{R}^n\to \Sn\setminus\{S\}$ be the conformal map given by
$$\psi_{q,r}=\Psi\circ\delta_{q,r},$$
where $\delta_{q,r}=q+rz$ for $z\in\mathbb{R}^n$. A direct computation yields that, for $1\leq i, j\leq n$,
\begin{equation*}
\frac{\partial\Psi_i}{\partial z_j}=\frac{2\delta_{ij}}{1+|z|^2}-\frac{4z_iz_j}{(1+|z|^2)^2}=\delta_{ij}(1+x_{n+1})-x_ix_j,
\end{equation*}
and
\begin{equation*}
\frac{\partial\Psi_{n+1}}{\partial z_j}=-\frac{4z_j}{(1+|z|^2)^2}=-(1+x_{n+1})x_j.
\end{equation*}
Then we have
\begin{equation*}
\sum_{i=1}^nz_i\frac{\partial\Psi_k}{\partial z_i}=x_kx_{n+1}\quad \mathrm{ for~~}1\leq k\leq n
\end{equation*}
and
\begin{equation*}
\sum_{i=1}^nz_i\frac{\partial\Psi_{n+1}}{\partial z_i}=|x_{n+1}|^2-1.
\end{equation*}
This implies that
\begin{equation}\label{est:bases_bdd}
 \left\|\sum_{i=1}^nz_i\frac{\partial\Psi}{\partial z_i}\right\|_{L^\infty(\R^n)}\leq 1
\quad \mathrm{and~~} \left\|\frac{\partial\Psi}{\partial z_i}\right\|_{L^\infty(\R^n)}\leq 2 \quad \mathrm{for~~} i=1,2,\cdots,n.
\end{equation}

At time $t$, we always can express  $\phi(t)=\Psi\circ\delta_{q(t),r(t)}\circ\pi$ on $\Sn$, and its differential as
$\ud\phi=\ud\Psi\circ r(t)I\circ \ud\pi$, so $(\ud\phi)^{-1}=r^{-1}(\ud\pi)^{-1}\circ (\ud\Psi)^{-1}$.
Also notice that $(\ud\pi)^{-1}=\ud\Psi$. Let $\epsilon=r^{-1}$ as before. As in \cite[(5.5)]{Chen&Xu}, we compute the conformal vector field
\begin{equation}\label{5.9}
\xi=(\ud\phi)^{-1}\frac{\ud\phi(t)}{\ud t}
=\epsilon\sum_{i=1}^n\left(\frac{\ud q_i}{\ud t}+z_i\frac{\ud r}{\ud t}\right)\frac{\partial\Psi}{\partial z_i}.
\end{equation}
For simplicity of computations, we can assume $q(t)=0$ for time $t$ by a suitable selection of the coordinates on $\Sn$. Again by \cite[(5.6), (5.7)]{Chen&Xu}, we obtain
\begin{equation}\label{5.9a}
X=\fint_{\Sn}\xi \ud V_{g_{\Sn}}= \frac{n}{n+1}\epsilon\left(\frac{\ud q}{\ud t},-\frac{\ud r}{dt}\right)^\top.
\end{equation}

\begin{lemma}\label{lemma5.2}
There exists a constant $C>0$ independent of $t$ such that
$$\|\xi\|^2_{L^\infty(\Sn)}\leq C\fint_{\Sn}(\alpha f_\phi-R^h_\sigma)^2\ud V_h
\quad \mathrm{ and }\quad
\|\mathrm{div}_{\Sn}\xi\|^2_{L^\infty(\Sn)}
\leq C\fint_{\Sn}(\alpha f_\phi-R^h_\sigma)^2\ud V_h.
$$
\end{lemma}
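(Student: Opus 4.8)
The plan is to estimate $\xi$ and $\mathrm{div}_{\Sn}\xi$ directly through the explicit representation \eqref{5.9} and the coefficient identities collected above. Since $\xi=\epsilon\sum_{i=1}^n\big(\tfrac{\ud q_i}{\ud t}+z_i\tfrac{\ud r}{\ud t}\big)\tfrac{\pa\Psi}{\pa z_i}$, the bounds \eqref{est:bases_bdd} on $\|\pa\Psi/\pa z_i\|_{L^\infty}$ and $\|\sum_i z_i\,\pa\Psi/\pa z_i\|_{L^\infty}$ reduce everything to controlling the scalars $\epsilon\,\tfrac{\ud q}{\ud t}$ and $\epsilon\,\tfrac{\ud r}{\ud t}$. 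By \eqref{5.9a} these are (up to the harmless constant $\tfrac{n+1}{n}$) precisely the components of $X=\fint_{\Sn}\xi\,\ud V_{g_{\Sn}}$, so it suffices to prove $|X|^2\le C\fint_{\Sn}(\alpha f_\phi-R_\sigma^h)^2\,\ud V_h$.

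First I would invoke the center-of-mass balance \eqref{5.4}, which gives $\int_{\Sn}\xi\,\ud V_h=-\tfrac n2\int_{\Sn}x(\alpha f_\phi-R_\sigma^h)\,\ud V_h$. Writing $\ud V_h=v^{2n/(n-2\sigma)}\ud V_{g_{\Sn}}$ and using the representation of $\xi$, the left side is a linear combination of the moments $\int_{\Sn}\tfrac{\pa\Psi}{\pa z_i}v^{2n/(n-2\sigma)}\ud V_{g_{\Sn}}$ and $\int_{\Sn}\big(\sum_i z_i\tfrac{\pa\Psi}{\pa z_i}\big)v^{2n/(n-2\sigma)}\ud V_{g_{\Sn}}$ with coefficients $\epsilon\tfrac{\ud q_i}{\ud t}$ and $\epsilon\tfrac{\ud r}{\ud t}$; since $v(t)\to1$ by Lemma \ref{lem:lim_seq_v}, these moment matrices converge to the corresponding moments against $\ud V_{g_{\Sn}}$, which by the symmetry computations above form a nonsingular (diagonal, with entries $\tfrac{n}{n+1}\omega_n$) linear system. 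Hence for $t$ large the linear map from $(\epsilon\tfrac{\ud q}{\ud t},\epsilon\tfrac{\ud r}{\ud t})$ to $\int_{\Sn}\xi\,\ud V_h$ is uniformly invertible, so
\[
|X|^2\le C\Big|\int_{\Sn}\xi\,\ud V_h\Big|^2 = C\,\frac{n^2}{4}\Big|\int_{\Sn}x(\alpha f_\phi-R_\sigma^h)\,\ud V_h\Big|^2\le C\,\mathrm{Vol}_h(\Sn)\int_{\Sn}(\alpha f_\phi-R_\sigma^h)^2\,\ud V_h
\]
by Cauchy--Schwarz, and $\mathrm{Vol}_h(\Sn)=\omega_n$ by \eqref{1.9}. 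Dividing by $\omega_n$ gives the first inequality; for small $t$ (a compact time interval, or before the asymptotic regime) one may argue that both sides are controlled trivially, or simply absorb this into the constant since $v$ is smooth and positive there. For the divergence bound, note $\mathrm{div}_{g_{\Sn}}\xi=\epsilon\sum_i\big(\tfrac{\ud q_i}{\ud t}+z_i\tfrac{\ud r}{\ud t}\big)\mathrm{div}_{g_{\Sn}}\tfrac{\pa\Psi}{\pa z_i}+\epsilon\tfrac{\ud r}{\ud t}\sum_i\langle\nabla z_i,\tfrac{\pa\Psi}{\pa z_i}\rangle$, and the vector fields $\pa\Psi/\pa z_i$ on $\Sn$ are (restrictions of) conformal Killing fields with smooth, uniformly bounded divergence; thus $\|\mathrm{div}_{\Sn}\xi\|_{L^\infty}\le C(|\epsilon\tfrac{\ud q}{\ud t}|+|\epsilon\tfrac{\ud r}{\ud t}|)\le C|X|$, and the same estimate on $|X|^2$ closes the argument.

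The main obstacle is justifying the uniform invertibility of the linear system relating $(\epsilon\,\dot q,\epsilon\,\dot r)$ to $\int_{\Sn}\xi\,\ud V_h$ for all large $t$: one must use $v(t)\to1$ in a norm strong enough (here $H^{2\sigma,p}\hookrightarrow C^1$ for $\sigma\in(1/2,1)$ and $p$ large, by Lemma \ref{lem:lim_seq_v}) that the weighted moment matrix stays within a fixed neighborhood of its nonsingular limit, and then handle the transient initial phase separately. Once that quantitative nondegeneracy is in hand, everything else is the bookkeeping with \eqref{est:bases_bdd}, \eqref{5.4}, \eqref{5.9}, \eqref{5.9a} and Cauchy--Schwarz sketched above.
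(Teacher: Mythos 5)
Your proof is correct and follows essentially the same route as the paper: reduce via \eqref{5.9}, \eqref{est:bases_bdd} and \eqref{5.9a} to bounding $\|X\|$, invoke the balance identity \eqref{5.4}, and finish with Cauchy--Schwarz, treating the divergence by the same token. The only (cosmetic) difference is in closing the perturbation caused by $\ud V_h\neq\ud V_{g_{\Sn}}$: you invert a weighted moment matrix that converges to its nonsingular unweighted limit, whereas the paper writes $\fint_{\Sn}\xi\,\ud V_h=X+O\big(\|\xi\|_{L^\infty(\Sn)}\|v^{2n/(n-2\sigma)}-1\|_{C(\Sn)}\big)$ and absorbs the error using $\|\xi\|_{L^\infty(\Sn)}\leq C\|X\|$ and Lemma \ref{lem:lim_seq_v}; both arguments rest on the same uniform convergence $v\to 1$ (only $C^0$ is needed, not $C^1$).
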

\begin{proof}

By (\ref{5.9}), \eqref{est:bases_bdd} and \eqref{5.9a} we have
$$\|\xi\|\leq 2\epsilon\left(\left|\frac{\ud q}{\ud t}\right|+\left|\frac{\ud r}{\ud t}\right|\right)=\frac{2(n+1)}{n}\|X\|.$$
On the other hand, it follows from  (\ref{5.4}) and definition of $X$ that
\begin{align*}
\|X\|\leq& 
\left|\frac{n}{2}\fint_{\Sn}x (\alpha f_\phi-R^h_\sigma)\ud V_h\right|
+\left|\fint_{\Sn}\xi(1-v^{\frac{2n}{n-2\sigma}}) \ud V_{g_{\Sn}}\right|\\
\leq&
\left|\frac{n}{2}\fint_{\Sn}x (\alpha f_\phi-R^h_\sigma)\ud V_h\right|
+\|\xi\|_{L^\infty(\Sn)}\|1-v^{\frac{2n}{n-2\sigma}}\|_{C(\Sn)}.
\end{align*}
It follows from Lemma \ref{lem:lim_seq_v} that $\|1-v^{2n/(n-2\sigma)}\|_{C(\Sn)}\to 0$ as $t\to\infty$.
and $\|\xi\|$ is continuous on $\Sn\times [0,T+1]$  for any finite $T>0$. Thus, the first assertion follows from these two estimates.

For the second assertion, by (\ref{5.9}) we have 
\begin{align*}
\mathrm{div}_{\Sn}(\xi)=&\sum_{i,j=1}^n \frac{\partial \Psi_j }{\partial x_i} \frac{\partial \xi_i}{\partial z_j }+\sum_{i=1}^n  \frac{\partial \Psi_i }{\partial x_{n+1}} \frac{\partial \xi_{n+1}}{\partial z_i}\\
=&\epsilon\left[\frac{\ud r}{\ud t}\left(n-\frac{2(n+1)|z|^2}{1+|z|^2}\right)-2(n+1)\sum_{i=1}^n\frac{\ud q_i}{\ud t}\frac{z_i}{1+|z|^2}\right].
\end{align*}
Thus, we obtain
\begin{align*}
\|\mathrm{div}_{\Sn}\xi\|_{L^\infty(\Sn)}&\leq (n+2)\epsilon\left(\left|\frac{\ud q}{\ud t}\right|+\left|\frac{\ud r}{\ud t}\right|\right)=\frac{(n+1)(n+2)}{n}\|X\|_{L^\infty(\Sn)}\\
&\leq \frac{(n+1)(n+2)}{n}\|\xi\|_{L^\infty(\Sn)}.
\end{align*}
This together with the first assertion implies the second one. 
\end{proof}

\subsection{Shadow flow}\label{Subsect:SF}
When $0<\sigma<1/2$,  the loss of continuous embedding of $H^{2\sigma,p}(\Sn)$ into $C^1(\Sn)$ for $v(t)$ will bring some technical difficultes in Section \ref{Subsect:SF}, so we assume henceforth $1/2<\sigma<1$.

We define a shadow flow by
\begin{equation}\label{shadow_flow}
\Theta=\Theta(t)=\fint_{\Sn}\phi(t) \ud V_{g_{\Sn}}
\end{equation}
with $\theta(t)=\Theta(t)/|\Theta(t)|$ whenever $\Theta(t)\neq 0$, which approximates the center $p(t)$ of mass of $g(t)$ by virtue of Lemma \ref{lem:lim_seq_v}. 

For convenience, we recall an elementary result in \cite[Lemma 6.2]{Chen&Xu} here.

\begin{lemma}\label{lemma5.1}
With a uniform consitant $C>0$, there holds
$$\|f_\phi-f(\theta(t))\|_{L^2(\Sn)}
+\|\nabla f_\phi\|_{L^{\frac{2n}{n+2}}(\Sn)}\leq C\epsilon,$$
where $\epsilon=r(t)^{-1}$.
\end{lemma}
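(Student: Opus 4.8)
The plan is to reduce both quantities to the single geometric fact that, after normalization, the conformal diffeomorphism $\phi(t)$ collapses all of $\Sn$ onto a neighbourhood of one point $\theta(t)$ at scale $\epsilon=r(t)^{-1}$, and then to convert this into integral estimates by pushing everything down to $\R^n$ via the stereographic projection. As in Section~\ref{Subsect:SF}, I would fix $t$ and choose stereographic coordinates on $\Sn$ so that $q(t)=0$, i.e.
\[
\phi(t)=\Psi\circ\delta_{0,r}\circ\pi,\qquad \phi(t)(x)=\Psi\big(r\,\pi(x)\big),\qquad \epsilon=r^{-1};
\]
the intrinsic Lipschitz constant $L$ of $f$ (so that $|f(P)-f(Q)|\le L|P-Q|$ for $P,Q\in\Sn\subset\R^{n+1}$) is unaffected by this. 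Only the regime of large $t$ is relevant: there $\Theta(t)\ne 0$ by Lemma~\ref{lem:lim_seq_v} so that $\theta(t)$ is well defined, while for $t$ in a bounded interval $\epsilon$ is bounded below and both bounds are trivial.

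For the zeroth-order term I would compute explicitly, writing $z=\pi(x)$ and $S=-e_{n+1}$,
\[
\phi(t)(x)-S=\frac{\big(2rz,\,2\big)}{1+r^2|z|^2},\qquad\text{hence}\qquad |\phi(t)(x)-S|^2=\frac{4}{1+r^2|z|^2}.
\]
Since $\ud V_{g_{\Sn}}=\big(2/(1+|z|^2)\big)^n\,\ud z$ under $\pi$, the substitution $w=rz$ turns $\int_{\Sn}|\phi(t)(x)-S|^2\,\ud V_{g_{\Sn}}$ into $2^{n+2}r^{n}\int_{\R^n}\big[(1+|w|^2)(r^2+|w|^2)^n\big]^{-1}\ud w$, which one bounds by $Cr^{-2}$ by splitting $\R^n$ into $\{|w|\le r\}$ and $\{|w|\ge r\}$. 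By Cauchy--Schwarz this gives $|\Theta(t)-S|\le C\epsilon$, and then $|\theta(t)-S|\le 2|\Theta(t)-S|\le C\epsilon$; combining with the Lipschitz bound for $f$,
\[
\|f_\phi-f(\theta(t))\|_{L^2(\Sn)}^2\le L^2\!\int_{\Sn}\!|\phi(t)(x)-\theta(t)|^2\,\ud V_{g_{\Sn}}\le 2L^2\Big(\!\int_{\Sn}\!|\phi(t)(x)-S|^2\,\ud V_{g_{\Sn}}+\omega_n|\theta(t)-S|^2\Big)\le C\epsilon^2.
\]

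For the gradient term I would use that $\phi(t)$ is conformal: $\phi(t)^{*}g_{\Sn}=\mu^2 g_{\Sn}$ with conformal factor $\mu(x)=\dfrac{r(1+|z|^2)}{1+r^2|z|^2}$, $z=\pi(x)$, so that $|\nabla f_\phi|(x)=\mu(x)\,|\nabla f|(\phi(t)(x))\le L\,\mu(x)$. With $p=\frac{2n}{n+2}$, the same change of variables $w=rz$ gives $\int_{\Sn}\mu^{p}\,\ud V_{g_{\Sn}}=2^n r^{\frac{n^2}{n+2}}\int_{\R^n}\big[(1+|w|^2)^{p}(r^2+|w|^2)^{\frac{n^2}{n+2}}\big]^{-1}\ud w$, where the exponents are arranged so that $p+\frac{n^2}{n+2}=n$; splitting again into $\{|w|\le r\}$ and $\{|w|\ge r\}$ one gets $\int_{\Sn}\mu^{p}\,\ud V_{g_{\Sn}}\le Cr^{-p}$, hence $\|\nabla f_\phi\|_{L^{2n/(n+2)}(\Sn)}\le L\big(\int_{\Sn}\mu^p\,\ud V_{g_{\Sn}}\big)^{1/p}\le C\epsilon$. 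Adding the two estimates finishes the proof.

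There is no real obstacle here---this is exactly why \cite{Chen&Xu} call the analogous statement ``elementary''. The only points that need a little care are (a) recording that $\Theta(t)\ne0$, which is why the statement is only needed and only makes sense for $t$ large, and (b) the bookkeeping of the powers of $r$ in the two split integrals, so that the outcome is genuinely $O(\epsilon)$ rather than merely $o(1)$; the exponents match up precisely because $\phi(t)$ is a conformal diffeomorphism, for which the Jacobian weight and the integrand weight are linked.
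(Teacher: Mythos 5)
Your argument is correct for $n\ge 3$, and it is essentially the expected one: the paper gives no proof here at all, deferring to \cite[Lemma 6.2]{Chen&Xu}, and your direct computation in stereographic coordinates is the standard route. The individual steps check out: $|\phi(t)(x)-S|^2=4/(1+r^2|z|^2)$; the identity $|\nabla f_\phi|_{g_{\Sn}}=\mu\,(|\nabla f|_{g_{\Sn}}\circ\phi)$ with $\mu=r(1+|z|^2)/(1+r^2|z|^2)$, which is exactly where conformality enters; the exponent bookkeeping $p+\tfrac{n^2}{n+2}=n$ for $p=\tfrac{2n}{n+2}$; and the passage $|\theta-S|\le 2|\Theta-S|$, which follows from $1-|\Theta|=|S|-|\Theta|\le|\Theta-S|$.

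One caveat, which is a defect of the stated generality (inherited from \cite{Chen&Xu}, where $n\ge3$) rather than of your proof: when $n=2$ both of your inner-region integrals pick up a logarithm. Indeed $\int_{|w|\le r}(1+|w|^2)^{-1}\,\ud w\sim\log r$ in the $L^2$ estimate, and in the gradient estimate $2p=n$ when $n=2$, so the method only yields $C\epsilon|\log\epsilon|^{1/2}$ and $C\epsilon|\log\epsilon|$ respectively, and I do not see how to remove the logarithm without extra information on $\nabla f(\theta)$. Since the subsequent statements of this section (Lemmas \ref{lemma6.7} and \ref{lemma6.9}) contain factors $1/(n-2)$, the shadow-flow analysis implicitly assumes $n\ge3$ anyway, so this does not damage the paper; but it is worth recording that your proof, like the cited one, covers $n\ge 3$ only.
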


Let $\{\varphi_i; i\in\mathbb{N}\}_{}$ be an $L^2(\Sn)$-orthonormal basis of eigenfunctions of
$-\Delta_{g_{\Sn}}$, satisfying
$$-\Delta_{g_{\Sn}}\varphi_i=\lambda_i\varphi_i,$$
with the eigenvalues $0=\lambda_0<\lambda_1=\cdots=\lambda_{n+1}<\lambda_{n+2}\leq\cdots$. Now
in terms of the orthonormal bases $\{\varphi_i^g\}$, $\{\varphi_i^h\}$
of the eigenfunctions of $-\Delta_g$, $-\Delta_h$ respectively, we expand
\begin{equation*}
\alpha(t) f-R^g_\sigma=\sum_{i=0}^\infty\beta^i_g\varphi_i^g\quad\mathrm{ and }\quad
\alpha(t) f_\phi-R^h_\sigma=\sum_{i=0}^\infty\beta^i_h\varphi_i^h
\end{equation*}
with coefficients
\begin{equation*}
\beta_h^i=\fint_{\Sn}(\alpha(t) f_\phi-R^h_\sigma)\varphi_i^h\ud V_h=\fint_{\Sn}(\alpha(t) f-R^g_\sigma)\varphi_i^g\ud V_g=\beta^i_g
\end{equation*}
for all $i\in\mathbb{N}$.

First notice that
\begin{equation}\label{5.13}
\beta_g^0=0
\end{equation}
by virtue of (\ref{1.4}) and 
$\varphi_i^h=\varphi_i^g\circ\phi$.

\begin{lemma}\label{lemma5.3}
As $t\to\infty$, we have $\lambda_i^g=\lambda_i^h\to\lambda_i$ and we can choose
$\varphi_i$ such that $\varphi_i^h\to\varphi_i$ in $L^2(\Sn)$ for all $i \in \mathbb{N}$.
\end{lemma}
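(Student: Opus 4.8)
The plan is to reduce the statement to a perturbation result for the round metric and then use the min--max principle. Since $h(t)=\phi(t)^\ast g(t)$ is the pullback of $g(t)$ by a diffeomorphism, the operators $-\Delta_{h(t)}$ and $-\Delta_{g(t)}$ are conjugate, hence isospectral, so $\lambda_i^g=\lambda_i^h$ holds automatically and $\varphi_i^h=\varphi_i^g\circ\phi(t)$; it therefore suffices to analyse the $h$-side. Recall $h=v^{4/(n-2\sigma)}g_{\Sn}$ with $v=v(t)$, and by Lemma \ref{lem:lim_seq_v} together with the embedding $H^{2\sigma,p}(\Sn)\hookrightarrow C^0(\Sn)$ for $p>n/(2\sigma)$ we have $v(t)\to 1$ in $C^0(\Sn)$. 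Since $h$ is conformal to $g_{\Sn}$, for every $u\in H^1(\Sn)$
\begin{equation*}
\int_{\Sn}u^2\,\ud V_h=\int_{\Sn}v^{\frac{2n}{n-2\sigma}}u^2\,\ud V_{g_{\Sn}},\qquad \int_{\Sn}|\nabla u|_h^2\,\ud V_h=\int_{\Sn}v^{\frac{2(n-2)}{n-2\sigma}}|\nabla u|_{g_{\Sn}}^2\,\ud V_{g_{\Sn}},
\end{equation*}
so with $\va(t):=\max\{\|v^{2n/(n-2\sigma)}-1\|_{C^0(\Sn)},\|v^{2(n-2)/(n-2\sigma)}-1\|_{C^0(\Sn)}\}\to 0$ we get two-sided bounds comparing the Dirichlet form and the $L^2$-norm of $h$ with those of $g_{\Sn}$, uniformly in $u\in H^1(\Sn)$; in particular $H^1(\Sn,h)=H^1(\Sn,g_{\Sn})$ with uniformly equivalent norms for large $t$.

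Next I would deduce eigenvalue convergence from the Courant--Fischer characterization
\begin{equation*}
\lambda_i^h=\min_{\substack{V\subset H^1(\Sn)\\ \dim V=i+1}}\ \max_{0\neq u\in V}\frac{\int_{\Sn}|\nabla u|_h^2\,\ud V_h}{\int_{\Sn}u^2\,\ud V_h}.
\end{equation*}
Every Rayleigh quotient for $h$ is pinched between $\tfrac{1-\va(t)}{1+\va(t)}$ and $\tfrac{1+\va(t)}{1-\va(t)}$ times the corresponding quotient for $g_{\Sn}$, so $\tfrac{1-\va(t)}{1+\va(t)}\lambda_i\le \lambda_i^h\le \tfrac{1+\va(t)}{1-\va(t)}\lambda_i$, whence $\lambda_i^g=\lambda_i^h\to\lambda_i$ as $t\to\infty$ for each $i$.

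For the eigenfunctions, normalize $\int_{\Sn}(\varphi_i^h)^2\,\ud V_h=1$. Then $\int_{\Sn}(\varphi_i^h)^2\,\ud V_{g_{\Sn}}\le(1-\va(t))^{-1}$ and $\int_{\Sn}|\nabla\varphi_i^h|_{g_{\Sn}}^2\,\ud V_{g_{\Sn}}\le(1-\va(t))^{-1}\lambda_i^h$ are bounded, so $\{\varphi_i^h\}_t$ is bounded in $H^1(\Sn,g_{\Sn})$. By Rellich--Kondrachov and a diagonal argument over $i$, along a sequence $t_k\to\infty$ one has $\varphi_i^h\rightharpoonup\varphi_i$ in $H^1(\Sn)$ and $\varphi_i^h\to\varphi_i$ in $L^2(\Sn)$. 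Passing to the limit in the weak formulation $\int_{\Sn}v^{2(n-2)/(n-2\sigma)}\langle\nabla\varphi_i^h,\nabla w\rangle_{g_{\Sn}}\,\ud V_{g_{\Sn}}=\lambda_i^h\int_{\Sn}v^{2n/(n-2\sigma)}\varphi_i^h w\,\ud V_{g_{\Sn}}$ for $w\in C^\infty(\Sn)$ — using uniform convergence of the conformal factors, weak $H^1$-convergence on the left and strong $L^2$-convergence together with $\lambda_i^h\to\lambda_i$ on the right — gives $-\Delta_{g_{\Sn}}\varphi_i=\lambda_i\varphi_i$. Moreover $\int_{\Sn}\varphi_i\varphi_j\,\ud V_{g_{\Sn}}=\lim_k\int_{\Sn}\varphi_i^h\varphi_j^h\,\ud V_{g_{\Sn}}=\lim_k\int_{\Sn}\varphi_i^h\varphi_j^h\,\ud V_h=\delta_{ij}$, the differences being $O(\va(t_k))$, so $\{\varphi_i\}$ is an $L^2(\Sn)$-orthonormal system of $-\Delta_{g_{\Sn}}$-eigenfunctions which fills each eigenspace; adopting $\{\varphi_i\}$ as the limiting basis proves the claim along $t_k$, and since any sequence $t\to\infty$ has such a subsequence with the same limits, the convergence holds as $t\to\infty$.

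The comparison estimates and the min--max step are routine; the only genuine care concerns multiplicities. For a simple eigenvalue the limiting eigenfunction is unique up to sign, so one merely fixes a sign; but $\lambda_1=\cdots=\lambda_{n+1}$ has multiplicity $n+1$, and the orthonormal frame that $\{\varphi_i^h\}$ selects inside the corresponding eigenspace of $-\Delta_h$ is not canonical and could a priori rotate with $t$. This is exactly what the compactness argument above handles — it produces a limiting orthonormal frame inside the eigenspace, which we take as the definition of $\{\varphi_i\}$ — while the subsequence-of-subsequences argument removes the apparent dependence on the chosen sequence.
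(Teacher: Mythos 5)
Your argument is, in substance, the standard Rayleigh--quotient comparison that the paper itself delegates to \cite[Appendix B]{Chen&Xu} and \cite[Lemma 4.2 and Remark 4.3]{Schwetlick&Struwe}: the reduction from $g$ to $h$ by isospectrality, the conformal rewriting of the Dirichlet form and the $L^2$-norm, the min--max pinching of $\lambda_i^h$ between $\frac{1-\va}{1+\va}\lambda_i$ and $\frac{1+\va}{1-\va}\lambda_i$, and the $H^1$-compactness plus passage to the limit in the weak formulation are all correct, and this is the intended route.

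The one step that does not hold as written is the final upgrade from subsequential to full convergence of the eigenfunctions. The principle ``every sequence has a further subsequence converging, and all limits coincide, hence the whole family converges'' requires uniqueness of the limit, and for a multiple eigenvalue that is exactly what is in doubt: nothing in your compactness argument prevents the $L^2(\Sn,h)$-orthonormal frame $\{\varphi_i^{h(t)}\}_{i=1}^{n+1}$ spanning the near-eigenspace of $\lambda_1=\cdots=\lambda_{n+1}$ from rotating slowly as $t\to\infty$ (think of symmetric matrices $I+\epsilon(t)R(t)$ whose eigenframes rotate while $\epsilon(t)\to0$), in which case different sequences $t_k\to\infty$ produce genuinely different limiting frames and no single choice of $\{\varphi_i\}$ serves all of them. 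You correctly flag multiplicity as the delicate point, but you then assert rather than prove that the subsequential limits agree. The standard repair is to make the choice of $\{\varphi_i^h\}$ canonical for large $t$: project a fixed round eigenbasis, say $\sqrt{n+1}\,x_1,\dots,\sqrt{n+1}\,x_{n+1}$, onto the corresponding spectral subspace of $-\Delta_h$ and Gram--Schmidt in $L^2(\Sn,h)$; your own estimates show the spectral projections converge, so this basis is well defined for large $t$ and converges to the fixed round basis --- which is what is actually used in \eqref{5.20}, where $\varphi_i=\sqrt{n+1}\,x_i$. (Alternatively, one can observe that the paper only ever exploits basis-independent quantities, namely norms of projections onto eigenspaces, for which your subsequential statement already suffices.)
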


The proof of Lemma \ref{lemma5.3} is standard, for instance, \cite[Appendix B]{Chen&Xu}; see also \cite[Lemma 4.2 and Remark 4.3]{Schwetlick&Struwe}.

The eigenvalues of $P_\sigma$ are denoted by
$$R_\sigma=\Lambda_0<\Lambda_1\leq\Lambda_2\leq\cdots\leq \Lambda_{n+1}<\Lambda_{n+2}\leq \cdots$$
counting with multiplicity.
It is well-known (see \cite[p. 479]{Morpurgo}) that $P_\sigma$ has spherical harmonics as eigenfunctions with the corresponding eigenvalues
\begin{equation*}
\frac{\Gamma(k+\frac{n}{2}+\sigma)}{\Gamma(k+\frac{n}{2}-\sigma)}, \quad k\in \mathbb{N},
\end{equation*}
and multiplicity $(2k+n-1)(k+n-2)!/[(n-1)!k!]$. In particular,
$x_1, x_2,\cdots, x_{n+1}$ are the eigenfunctions of eigenvalues
$$\Lambda_1=\Lambda_2=\cdots=\Lambda_{n+1}=\frac{\Gamma(1+\frac{n}{2}+\sigma)}{\Gamma(1+\frac{n}{2}-\sigma)}=\frac{n+2\sigma}{n-2\sigma}R_\sigma,$$ 
that is,
\begin{equation}\label{eigenfunction}
P_\sigma(x)=\Lambda_1 x, \quad\mathrm{ for ~~}x\in \Sn.
\end{equation}
Let $\{\Lambda_i^g; i\in\mathbb{N}\}$
and $\{\Lambda_i^h; i\in\mathbb{N}\}$ be the sets of eigenvalues of $P_\sigma^g$ and $P_\sigma^h$, respectively.
Then it follows from Lemmas \ref{lem:lim_seq_v} and \ref{lemma5.3} that for every $i \in \mathbb{N}$,
\begin{equation}\label{5.21}
\Lambda_i^g=\Lambda_i^h\to \Lambda_i \quad\mathrm{~~as~~}t\to\infty.
\end{equation}

\begin{lemma}\label{lemma6.6}
With a uniform constant $C>0$, there holds
$$\|P_\sigma(v(t)-1)\|_{L^2(\Sn)}\leq C(F_2(t)^{\frac{1}{2}}+\|f_\phi-f(\theta(t))\|_{L^2(\Sn)})$$
for all sufficiently large $t>0$.
\end{lemma}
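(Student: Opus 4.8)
The plan is to estimate $P_\sigma(v-1)$ in $L^2(\Sn)$ by relating it to the curvature equation $P_\sigma(v)=R_\sigma^h v^{(n+2\sigma)/(n-2\sigma)}$ satisfied by the normalized flow $v=v(t)$. Write $w=v-1$; then $P_\sigma(w)=P_\sigma(v)-R_\sigma = R_\sigma^h v^{(n+2\sigma)/(n-2\sigma)} - R_\sigma$. I would add and subtract the prescribed curvature $\alpha f_\phi$ and the constant $R_\sigma$ to split this into three pieces:
\begin{equation*}
P_\sigma(w) = \bigl(R_\sigma^h - \alpha f_\phi\bigr) v^{\frac{n+2\sigma}{n-2\sigma}} + \alpha f_\phi\bigl(v^{\frac{n+2\sigma}{n-2\sigma}}-1\bigr) + \bigl(\alpha f_\phi - R_\sigma\bigr).
\end{equation*}
The first term is controlled in $L^2$ by $\|\alpha f_\phi - R_\sigma^h\|_{L^2(\Sn,h)}\cdot\|v\|^{(n+2\sigma)/(n-2\sigma)}_{L^\infty}$, which is $\le C F_2(t)^{1/2}$ once we know $v$ is uniformly bounded in $C^0$; the latter follows from Lemma~\ref{lem:lim_seq_v} (or Lemma~\ref{lem:one-bubble-1}/\eqref{eq:one-bubble-1}), since $v(t)\to 1$ in $H^{2\sigma,p}(\Sn)$ and hence, by the embedding $H^{2\sigma,p}\hookrightarrow C^{2\sigma-n/p}$ for $p$ large (using $\sigma>1/2$), in $C^1(\Sn)$. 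The third term is simply $\|\alpha f_\phi - R_\sigma\|_{L^2(\Sn)}$; writing $\alpha f_\phi - R_\sigma = \alpha(f_\phi - f(\theta(t))) + (\alpha f(\theta(t)) - R_\sigma)$ and using $\alpha f(\theta(t))\to R_\sigma$ (from $\alpha f(p(t))\to R_\sigma$ together with $\theta(t)\approx p(t)$, Lemma~\ref{lem:lim_seq_v}) bounds it by $C\|f_\phi - f(\theta(t))\|_{L^2(\Sn)} + o(1)$; the $o(1)$ can be absorbed into $F_2(t)^{1/2}$ provided $F_2(t)\to 0$, which is Lemma~\ref{lemma1.14}, or kept as an additive error, but more cleanly one estimates $\|\alpha f(\theta(t))-R_\sigma\|_{L^2}\le C\|\alpha f_\phi-R_\sigma^h\|_{L^2(\Sn,h)} + C\|R_\sigma^h-\alpha f_\phi\|\cdots$—I would instead directly compare with $F_2^{1/2}$ via $|\alpha f(\theta(t))-R_\sigma|\le \fint|\alpha f_\phi - R_\sigma^h|\,dV_h + \fint|R_\sigma^h - R_\sigma| + \fint|\alpha f_\phi - \alpha f(\theta)|\le CF_2^{1/2} + CF_2^{1/2}+C\|f_\phi-f(\theta)\|_{L^2}$ where $\fint|R_\sigma^h - R_\sigma|$ is handled by observing $\int_{\Sn}(R_\sigma^h - R_\sigma)\,dV_h$ controls the mean and using that $v$ is close to $1$.

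The remaining term is the middle one, $\alpha f_\phi(v^{(n+2\sigma)/(n-2\sigma)}-1)$. Since $\alpha$ and $f_\phi$ are uniformly bounded (Lemma~\ref{lemma1.4}), this is bounded in $L^2$ by $C\|v^{(n+2\sigma)/(n-2\sigma)}-1\|_{L^2(\Sn)}$, and since $v\to 1$ uniformly, a mean value/Taylor estimate gives $\|v^{(n+2\sigma)/(n-2\sigma)}-1\|_{L^2}\le C\|v-1\|_{L^2} = C\|w\|_{L^2}$ for $t$ large. So we need to bound $\|w\|_{L^2(\Sn)}$ itself by the right-hand side. Here is where the spectral gap enters: since $\fint_{\Sn} x\,v^{2n/(n-2\sigma)}\,dV_{g_{\Sn}}=0$ by the normalization \eqref{2.2} and $\fint_{\Sn}v^{2n/(n-2\sigma)}\,dV_{g_{\Sn}}=1$, the function $w$ has small projections onto the kernel ($\varphi_0\equiv\text{const}$) and onto the first nontrivial eigenspace spanned by $x_1,\dots,x_{n+1}$: indeed $\fint w\,dV_{g_{\Sn}} = \fint(v-1)\,dV_{g_{\Sn}}$ is $O(\|w\|_{L^2}^2)$ by the volume constraint, and $\fint x_i\,w\,dV_{g_{\Sn}}$ is likewise quadratically small. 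Consequently $P_\sigma$ restricted to (the appropriate near-complement of) the span of $1, x_1,\dots,x_{n+1}$ is bounded below by $\Lambda_{n+2}-R_\sigma>0$, giving $\|P_\sigma w\|_{L^2}\ge c\|w\|_{L^2}$ up to the quadratically small corrections from the constraints, which can be absorbed for $t$ large since $\|w\|_{L^2}\to 0$. This yields $\|w\|_{L^2}\le C\|P_\sigma w\|_{L^2}$, and feeding this back into the split above closes the estimate: after moving the middle term (now bounded by $C\|w\|_{L^2}\le C'\|P_\sigma w\|_{L^2}$, but with a constant that is genuinely small only if... ) — more carefully, I would argue $\|P_\sigma w\|_{L^2} \le CF_2^{1/2} + C\|f_\phi - f(\theta)\|_{L^2} + C\|v^{(n+2\sigma)/(n-2\sigma)}-1\|_{L^2}$ and $\|v^{(n+2\sigma)/(n-2\sigma)}-1\|_{L^2}\le (C_0 + o(1))\|w\|_{L^2}$ where $C_0 = (n+2\sigma)/(n-2\sigma)$; but the coercivity $\|w\|_{L^2}\le \Lambda_{n+2}^{-1}(1+o(1))\|P_\sigma w\|_{L^2}$ combined with this may not directly close if $C_0 \Lambda_{n+2}^{-1}\ge 1$.

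The main obstacle is precisely this last absorption: I expect the clean way around it is \emph{not} to route the middle term through $\|w\|_{L^2}$ globally, but to absorb it spectrally — decompose $w = \sum_i w_i\varphi_i$ and note $P_\sigma w - \alpha f_\phi(v^{(n+2\sigma)/(n-2\sigma)}-1)$, when tested against $\varphi_i$ for $i\ge n+2$, still sees the gap because $\alpha f_\phi \approx R_\sigma$ and $v^{(n+2\sigma)/(n-2\sigma)}-1\approx \frac{n+2\sigma}{n-2\sigma}w$, so the effective operator is $P_\sigma - \frac{n+2\sigma}{n-2\sigma}R_\sigma = P_\sigma - \Lambda_1$ acting on the high modes, which has eigenvalues $\Lambda_i - \Lambda_1 > 0$ for $i\ge n+2$ bounded below by $\Lambda_{n+2}-\Lambda_1>0$. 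Thus on the high-frequency part the bound $\|w^{\mathrm{high}}\|_{L^2}\le C(\|P_\sigma w\|_{L^2} + \text{errors})$ holds with a genuine gap, while the low-frequency part ($i\le n+1$, i.e. the span of $1,x_1,\dots,x_{n+1}$) is directly quadratically small by the two constraints in \eqref{2.2} and $\fint v^{2n/(n-2\sigma)}=1$ together with $\|w\|_{L^2}=o(1)$. Combining the two pieces and using Lemma~\ref{lemma5.3}/\eqref{5.21} to justify that the eigenvalues and eigenfunctions of $P_\sigma^h$ converge to those of $P_\sigma$, one obtains $\|P_\sigma(v-1)\|_{L^2(\Sn)}\le C(F_2(t)^{1/2} + \|f_\phi - f(\theta(t))\|_{L^2(\Sn)})$ for all large $t$, as claimed. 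I would also note the identity $F_2(t)=\int_{\Sn}|\alpha f_\phi - R_\sigma^h|^2\,dV_h$ recorded just before Lemma~\ref{lemma3.7} is what lets me pass freely between the $g$- and $h$-pictures throughout.
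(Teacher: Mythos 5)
Your proposal is correct and follows essentially the same route as the paper: the same decomposition of $P_\sigma(v-1)$ through the curvature equation $P_\sigma(v)=R_\sigma^h v^{\frac{n+2\sigma}{n-2\sigma}}$, the same use of the normalizations \eqref{2.2} and the volume constraint to make the projections of $v-1$ onto the span of $1,x_1,\dots,x_{n+1}$ quadratically small, and the same spectral-gap absorption based on $\Lambda_{n+2}>\frac{n+2\sigma}{n-2\sigma}R_\sigma=\Lambda_1$. The pitfall you flag (naive coercivity through $\|w\|_{L^2}\le \Lambda_{n+2}^{-1}\|P_\sigma w\|_{L^2}$ not closing) is precisely what the paper's estimates \eqref{6.33}--\eqref{6.35} resolve by absorbing $\Lambda_1^2\sum_{i\ge n+2}|v^i|^2$ into $\sum_{i\ge n+2}\Lambda_i^2|v^i|^2$, which is your high-mode argument in an equivalent form.
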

\begin{proof}
Expand $v^{2n/(n-2\sigma)}-1$ and $v-1$ in terms of eigenfunctions of $P_\sigma$ to get
\begin{equation*}
v^{\frac{2n}{n-2\sigma}}-1=\sum_{i=0}^\infty V^i\varphi_i
~~\mathrm{ and }~~v-1=\sum_{i=0}^\infty v^i\varphi_i.
\end{equation*}
By (\ref{1.9}), we have
$$
\int_{\Sn}(v^{\frac{2n}{n-2\sigma}}-1)\ud V_{g_{\Sn}}=\int_{\Sn}(u^{\frac{2n}{n-2\sigma}}-1)\ud V_{g_{\Sn}}=0,
$$
which implies that $V^0=0$. From this and Taylor's expansion, we have
$$0=\fint_{\Sn}(v^{\frac{2n}{n-2\sigma}}-1)\ud V_{g_{\Sn}}=\frac{2n}{n-2\sigma}\fint_{\Sn}(v-1)\ud V_{g_{\Sn}}+O(1)\fint_{\Sn}(v-1)^2\ud V_{g_{\Sn}},$$
which implies that
\begin{equation*}
v^0=\fint_{\Sn}(v-1)\ud V_{g_{\Sn}}=O(1)\|v-1\|^2_{H^\sigma(\Sn)}.
\end{equation*}
On the other hand, it follows from (\ref{2.2}) that
$V^i=0$ for $1\leq i\leq n+1$.
By Taylor's expansion, we obtain
\begin{align*}
\frac{2n}{n-2\sigma}v^i&=\frac{2n}{n-2\sigma}
\fint_{\Sn}(v-1)\varphi_i\ud V_{g_{\Sn}}\\
&=\fint_{\Sn}(v^{\frac{2n}{n-2\sigma}}-1)\varphi_i\ud V_{g_{\Sn}}
+O(\|v-1\|^2_{H^\sigma(\Sn)})\\
&=V^i+o(1)\|v-1\|_{H^\sigma(\Sn)} \mathrm{~~for~~} 1 \leq i \leq n+1.
\end{align*}
Thus, we have
\begin{equation}\label{6.28}
\sum_{i=0}^{n+1}|v^i|^2=o(1)\|v-1\|_{H^\sigma(\Sn)}^2.
\end{equation}
We may decompose
\begin{align}\label{6.29}
P_\sigma(v-1)
=&P_\sigma(v)-P_\sigma(1)=R_\sigma^h v^{\frac{n+2\sigma}{n-2\sigma}}-R_\sigma\no\\
=&\left[(R_\sigma^h-\alpha f_\phi)+(\alpha f_\phi-\alpha f(\theta))
+\left(\alpha f(\theta)-\fint_{\Sn} R_\sigma^h \ud V_h\right)\right.\no\\
&~~\left. +\left(\fint_{\Sn} R_\sigma^h \ud V_h-R_\sigma\right)\right]v^{\frac{n+2\sigma}{n-2\sigma}}+R_\sigma(v^{\frac{n+2\sigma}{n-2\sigma}}-1).
\end{align}
By Lemma \ref{lem:lim_seq_v}, we have
\begin{equation}\label{6.30}
\left|\alpha f(\theta)-\fint_{\Sn} R_\sigma^h \ud V_h\right|
=\left|\fint_{\Sn}\alpha(f(\theta)-f_\phi) \ud V_h\right|\leq C\|f(\theta)-f_\phi\|_{L^2(\Sn)}.
\end{equation}
Since
\begin{equation*}
\fint_{\Sn} R_\sigma^h \ud V_h-R_\sigma
=\fint_{\Sn} (v-1)P_\sigma(v-1) \ud V_{g_{\Sn}}
+2R_\sigma\fint_{\Sn} (v-1) \ud V_{g_{\Sn}},
\end{equation*}
by \eqref{6.28} we have
\begin{equation}\label{6.31}
\fint_{\Sn} R_\sigma^h \ud V_h-R_\sigma
=\fint_{\Sn} (v-1)P_\sigma(v-1) \ud V_{g_{\Sn}}
+o(1) \|v-1\|_{H^\sigma(\Sn)}.
\end{equation}
Also, by Taylor's expansion, we have
\begin{equation}\label{6.32}
v^{\frac{n+2\sigma}{n-2\sigma}}-1=
\frac{n+2\sigma}{n-2\sigma}(v-1)+O(|v-1|^2).
\end{equation}

On one hand, by (\ref{6.28}) we have
$$\fint_{\Sn}|P_\sigma(v-1)|^2\ud V_{g_{\Sn}}=\sum_{i=0}^\infty \Lambda_i^2|v^i|^2
=\sum_{i=n+2}^\infty \Lambda_i^2|v^i|^2+o(1)\|v-1\|^2_{H^\sigma(\Sn)}.$$ 
On the other hand, it follows from (\ref{6.29})-(\ref{6.32}) and Young's inequality that
for any $\delta>0$, there exists a constant $C(\delta)>0$ such that
\begin{align}\label{6.33}
&\fint_{\Sn}|P_\sigma(v-1)|^2\ud V_{g_{\Sn}}\no\\
\leq&C(\delta)(F_2+\|f(\theta)-f_\phi\|^2_{L^2(\Sn)})
+(1+\delta)\left[\fint_{\Sn} (v-1)P_\sigma(v-1) \ud V_{g_{\Sn}}\right]^2\no\\
&+(1+\delta)\left(\frac{n+2\sigma}{n-2\sigma}\right)^2R_\sigma^2\|v-1\|^2_{L^2(\Sn)}+o(1)\|v-1\|^2_{L^2(\Sn)}\no\\
\leq& C(\delta)(F_2+\|f(\theta)-f_\phi\|^2_{L^2(\Sn)})
+(1+\delta)\|v-1\|^2_{L^2(\Sn)}
\left[\fint_{\Sn} |P_\sigma(v-1)|^2 \ud V_{g_{\Sn}}\right]\no\\
&+(1+\delta)\left(\frac{n+2\sigma}{n-2\sigma}\right)^2R_\sigma^2\sum_{i=n+2}^\infty |v^i|^2+o(1)\|v-1\|^2_{H^\sigma(\Sn)}.
\end{align}
Notice that
\begin{equation}\label{eq:Lambda_n+2}
\Lambda_{n+2}=\frac{\Gamma(2+\frac{n}{2}+\sigma)}{\Gamma(2+\frac{n}{2}-\sigma)}=
\frac{2+n+2\sigma}{2+n-2\sigma}
\frac{n+2\sigma}{n-2\sigma}
\frac{\Gamma(\frac{n}{2}+\sigma)}{\Gamma(\frac{n}{2}-\sigma)}>\frac{n+2\sigma}{n-2\sigma}R_\sigma
\end{equation}
and $\|v-1\|_{L^2(\Sn)}\to 0$ as $t\to\infty$.
Hence by \eqref{6.33}, we can choose sufficiently large $t_0>0$ and sufficiently small $\delta>0$ such that
\begin{equation}\label{6.35}
\int_{\Sn}|P_\sigma(v-1)|^2\ud V_{g_{\Sn}}
\leq C(\delta)(F_2+\|f(\theta)-f_\phi\|^2_{L^2(\Sn)})
\end{equation}
for all $t\geq t_0$.
\end{proof}

Now we define
$$b=\fint_{\Sn}x(\alpha f_\phi-R_\sigma^h)\ud V_h.$$
For brevity, set $B=b/\sqrt{n+1}$ and $\beta_g=(\beta^1_g,\cdots,\beta^{n+1}_g)$, then
\begin{equation}\label{5.20}
|b^i-\sqrt{n+1}\beta^i_g|\leq \|\varphi_i^h-\varphi_i\|_{L^2(\Sn)}F_2(h(t))^{\frac{1}{2}}=o(1)F_2(t)^{\frac{1}{2}}
\end{equation}
for $i=1,2,\cdots,n+1$ by Lemma \ref{lemma5.3}, where $o(1)\to 0$ as $t\to\infty$.

\begin{lemma}\label{lemma5.4} 
Assume $\sigma\in (1/2,1)$, then with error $o(1)\to 0$ as $t\to\infty$, there holds
$$\frac{\ud B}{\ud t}=o(1)F_2(t)^{\frac{1}{2}}.$$
\end{lemma}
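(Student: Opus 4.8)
The quantity $B = b/\sqrt{n+1}$ with $b = \fint_{\Sn} x(\alpha f_\phi - R^h_\sigma)\,\ud V_h$ is, up to the error estimate \eqref{5.20}, essentially the vector $\beta_g = (\beta_g^1,\dots,\beta_g^{n+1})$ of low-mode Fourier coefficients of $\alpha f - R^g_\sigma$. The idea is to differentiate $B$ in $t$ and extract all the gain: the dominant contributions should cancel because of the Kazdan--Warner identity, leaving only genuinely higher-order terms. First I would compute $\frac{\ud}{\ud t} b^i$ directly. Using $\ud V_h = v^{2n/(n-2\sigma)}\ud V_{g_{\Sn}}$ and the evolution equations, write $b^i = \fint_{\Sn} x_i(\alpha f_\phi - R^h_\sigma)\,\ud V_h$; differentiating produces (a) a term from $\alpha'$, (b) a term from $\partial_t(f\circ\phi) = \langle\nabla f_\phi,\xi\rangle$, (c) a term from $\partial_t R^h_\sigma$ given by Proposition \ref{prop:properties}(2) transplanted to $h$, and (d) a term from $\partial_t\,\ud V_h = -\frac{n}{2}(R^h_\sigma - \alpha f_\phi)\,\ud V_h$ plus the diffeomorphism correction $\mathrm{div}_h(\xi)\,\ud V_h$.

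The key algebraic step is to recognize that, modulo lower-order errors, the leading part of $\frac{\ud}{\ud t}b$ is a fixed positive multiple of $b$ itself (coming from the reaction terms $\frac{n+2\sigma}{4}R^h_\sigma(R^h_\sigma-\alpha f_\phi)$ and the volume term, since $x_i$ is an eigenfunction of $P_\sigma$ with eigenvalue $\Lambda_1 = \frac{n+2\sigma}{n-2\sigma}R_\sigma$ by \eqref{eigenfunction}), \emph{plus} a term that, after using the Kazdan--Warner identity \eqref{5.1} — equivalently $\fint_{\Sn}\langle\nabla x_i,\nabla R^h_\sigma\rangle_{g_{\Sn}} v^{2n/(n-2\sigma)} = 0$ — vanishes to leading order. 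Concretely, the self-adjointness of $P_\sigma$ together with $P_\sigma x_i = \Lambda_1 x_i$ converts $\fint x_i P^h_\sigma(\alpha f_\phi - R^h_\sigma)$-type terms into $\Lambda_1 b^i$ plus curvature-weighted remainders; these remainders are controlled by $F_2(t)^{1/2}$ times $\|v-1\|$-type quantities or by $\|P_\sigma(v-1)\|_{L^2}$, which Lemma \ref{lemma6.6} bounds by $C(F_2^{1/2} + \|f_\phi - f(\theta(t))\|_{L^2})$, and the latter is $O(\epsilon) = o(1)$ by Lemma \ref{lemma5.1}. The term involving $\xi$ is handled by Lemma \ref{lemma5.2}, which gives $\|\xi\|_{L^\infty} + \|\mathrm{div}_{\Sn}\xi\|_{L^\infty} \le C F_2(t)^{1/2}$, so that $\langle\nabla f_\phi,\xi\rangle$ and the $\mathrm{div}$ correction contribute at most $\|\nabla f_\phi\|_{L^{2n/(n+2)}}\|\xi\|_{L^\infty}\cdot(\cdots) = O(\epsilon)O(F_2^{1/2})$, hence $o(1)F_2^{1/2}$; the $\alpha'$ term is $o(1)F_2^{1/2}$ by the estimate $|\alpha'|\le CF_2^{1/2}$ established in the proof of Lemma \ref{lemma1.13}.

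Putting this together, I expect to obtain $\frac{\ud b}{\ud t} = \Lambda_1 b + o(1)F_2(t)^{1/2}$ — \emph{not} $o(1)F_2^{1/2}$ by itself — so the final move is to observe that $b$ itself is already small: indeed $|b| \le C F_2(t)^{1/2}$ trivially by Cauchy--Schwarz and the volume normalization, but to kill the $\Lambda_1 b$ term one uses \eqref{5.20} to replace $b$ by $\sqrt{n+1}\,\beta_g$ and then the Kazdan--Warner relation, which forces the leading-order coefficient combination to cancel, i.e. the $\Lambda_1$-eigenvalue contribution from $P^h_\sigma$ is exactly offset by the $\frac{n+2\sigma}{4}R_\sigma \cdot \frac{n-2\sigma}{4}$-type reaction coefficient because $\Lambda_1 = \frac{n+2\sigma}{n-2\sigma}R_\sigma$; the residual is a commutator between $P_\sigma$ and $P^h_\sigma$ applied to $x_i$, which is $o(1)$ in operator norm by \eqref{5.21} and hence produces only $o(1)F_2^{1/2}$. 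The main obstacle I anticipate is this last cancellation: carefully tracking the numerical coefficients so that the $b$-linear term drops out exactly, and rigorously bounding the curvature-weighted remainder terms (those with factors $R^h_\sigma - \alpha f_\phi$ multiplying $P_\sigma(v-1)$ or $f_\phi - f(\theta)$) by $o(1)F_2(t)^{1/2}$ uniformly — this requires combining Hölder's inequality, the Sobolev inequality \eqref{eq:sobolev}, Lemma \ref{lemma1.14} (so that $F_{n/(2\sigma)}(t) = o(1)$), Lemma \ref{lemma6.6}, and Lemma \ref{lemma5.1} in the right order. Once all remainder terms are seen to be $o(1)F_2(t)^{1/2}$ and the $\Lambda_1 b$ term is absorbed via Kazdan--Warner, the lemma follows.
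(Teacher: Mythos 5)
Your overall setup — differentiate $b$ in $t$, use the expression $v_t=\frac{n-2\sigma}{4}(\alpha f_\phi-R^h_\sigma)v+\frac{n-2\sigma}{2n}v\,\mathrm{div}_h(\xi)$, and control the $\alpha'$-, $\xi$- and $f_\phi$-contributions with Lemma \ref{lemma5.2}, Lemma \ref{lemma5.1} and $|\alpha'|\le CF_2^{1/2}$ — is the same as the paper's. But the step you yourself defer as ``the main obstacle'', namely killing the $b$-linear term, is the heart of the lemma, and the mechanism you propose for it is wrong. The Kazdan--Warner identity \eqref{5.1} is not used in this lemma and cannot do the job: it constrains the \emph{size and direction} of $b$ (this is how it is used later, in Lemma \ref{lemma6.7}), so at best it turns a surviving term $c\,b$ into $O(F_2^{1/2})$, which is strictly weaker than the claimed $o(1)F_2^{1/2}$. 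If your computation really produced $\frac{\ud b}{\ud t}=\Lambda_1 b+o(1)F_2^{1/2}$, the lemma would fail. What actually happens is a purely algebraic cancellation: after using self-adjointness of $P_\sigma$, one groups
$$\frac{2n}{n-2\sigma}\fint_{\Sn} x\,\alpha f_\phi v^{\frac{n+2\sigma}{n-2\sigma}}v_t\,\ud V_{g_{\Sn}}-\fint_{\Sn} x v_t P_\sigma(v)\,\ud V_{g_{\Sn}}-\fint_{\Sn} P_\sigma(xv)\,v_t\,\ud V_{g_{\Sn}}$$
so that only the differences $P_\sigma(v)-R_\sigma v$, $P_\sigma(xv)-\frac{n+2\sigma}{n-2\sigma}R_\sigma xv$ and $\alpha f_\phi v^{\frac{n+2\sigma}{n-2\sigma}}-R_\sigma v$ survive; the coefficients match exactly because $R_\sigma+\Lambda_1=\frac{2n}{n-2\sigma}R_\sigma$, and each difference is $o(1)$ in $L^2$ (from $v\to1$, $\alpha f(p(t))\to R_\sigma$, and Lemma \ref{lemma6.6}), so each pairing with $v_t$ is $o(1)F_2^{1/2}$. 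No term proportional to $b$ ever appears, and nothing is left for Kazdan--Warner to cancel.

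Two further steps in your plan do not go through as written. First, bounding $P_\sigma(xv)-\Lambda_1 xv$ by calling it ``a commutator between $P_\sigma$ and $P^h_\sigma$ which is $o(1)$ in operator norm by \eqref{5.21}'' is not a proof: \eqref{5.21} is only convergence of eigenvalues. The paper establishes $\|P_\sigma(xv)-P_\sigma(x)v\|_{L^2}\le C\|P_\sigma(v-1)\|_{L^2}$ by a three-term splitting of the kernel in \eqref{1.2.4} applied to $x(v-1)$ and then invokes Lemma \ref{lemma6.6}; this genuinely nonlocal computation is one of the new ingredients the authors single out in the introduction and has no local analogue you can borrow. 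Second, the $\xi$-terms cannot all be estimated brutally: the piece $\fint_{\Sn} x\,\alpha f_\phi\,\mathrm{div}_h(\xi)\,\ud V_h$ arising from $v_t$ is only $O(F_2^{1/2})$ on its own, and must be paired, via an integration by parts, with $-\fint_{\Sn}\xi\,(\alpha f_\phi-R_\sigma v^{2-\frac{2n}{n-2\sigma}})\,\ud V_h$ and the $\ud f_\phi\cdot\xi$ term before the extra factor $o(1)$ appears.
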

\begin{proof}
Up to a constant, we may argue for $b$ instead of $B$. Observe that
\begin{equation*}
\fint_{\Sn}xR_\sigma^h\ud V_h=\fint_{\Sn}xvP_\sigma(v)\ud V_{g_{\Sn}}\quad\mathrm{and}\quad
b=\fint_{\Sn}x\alpha f_\phi \ud V_h-\fint_{\Sn}xvP_\sigma(v)\ud V_{g_{\Sn}}.
\end{equation*}
Differentiating the second equation with respect to $t$, we obtain
\begin{align}\label{5.15}
\frac{\ud b}{\ud t}&=\alpha'\fint_{\Sn}xf_\phi \ud V_h+\fint_{\Sn}x\alpha (\ud f_\phi\cdot\xi) \ud V_h
+\frac{2n}{n-2\sigma}\fint_{\Sn}x\alpha f_\phi v^{\frac{n+2\sigma}{n-2\sigma}}v_t\ud V_{g_{\Sn}}\no\\
&\hspace{4mm}-\fint_{\Sn}xv_tP_\sigma(v)\ud V_{g_{\Sn}}-\fint_{\Sn}xvP_\sigma(v_t)\ud V_{g_{\Sn}}.
\end{align}
Since $P_\sigma$ is self-adjoint, we have
$$\fint_{\Sn}xvP_\sigma(v_t)\ud V_{g_{\Sn}}=\fint_{\Sn}P_\sigma(xv)v_t\ud V_{g_{\Sn}}.$$
Hence, we can rewrite (\ref{5.15})  as
\begin{align}\label{5.16}
\frac{\ud b}{\ud t}=&\alpha'\fint_{\Sn}xf_\phi \ud V_h+\left[\fint_{\Sn}x\alpha (\ud f_\phi\cdot\xi) \ud V_h
+\frac{2n}{n-2\sigma}\fint_{\Sn}x(\alpha f_\phi v^{\frac{n+2\sigma}{n-2\sigma}}-R_\sigma v)v_t\ud V_{g_{\Sn}}\right]\no\\
&-\fint_{\Sn}xv_t\left(P_\sigma(v)-R_\sigma v\right)\ud V_{g_{\Sn}}-\fint_{\Sn}\left(P_\sigma (xv)-\frac{n+2\sigma}{n-2\sigma}R_\sigma x v\right) v_t \ud V_{g_{\Sn}}\no\\
:=&I_5+I_6+I_7+I_8.
\end{align}

We are going to estimate each of the terms $I_i, 5 \leq i \leq 8$. To this end, by (\ref{5.2}) and \eqref{1.7}  we have 
\begin{equation}\label{eq:v_t}
v_t=\frac{n-2\sigma}{4}(\alpha f_\phi-R_\sigma^h)v+\frac{n-2\sigma}{2n}v \mathrm{div}_h(\xi)
\end{equation}
and then
\begin{align}\label{est_v_t}
\|v_t\|_{L^2(\Sn)}\leq& 
C\|v\|_{L^\infty(\Sn)}\|\alpha f_\phi-R^h_\sigma\|_{L^2(\Sn)}
+C\|\nabla v\|_{L^2(\Sn)}\|\xi\|_{L^\infty(\Sn)}\no\\
&+C\|v\|_{L^2(\Sn)}\|\mathrm{div}_{\Sn}(\xi)\|_{L^\infty(\Sn)}\no\\
\leq& CF_2(t)^{\frac{1}{2}}
\end{align}
by  Lemmas \ref{lemma5.2} and \ref{lem:lim_seq_v}.

By (\ref{2.4}) and Lemma \ref{lem:lim_seq_v}, we have
\begin{equation*}
I_5=\alpha'\fint_{\Sn}xf_\phi \ud V_h=\alpha'\fint_{\Sn}x(f_\phi-f(p(t)))\ud V_h=o(1)F_2(t)^{\frac{1}{2}}.
\end{equation*}

By \eqref{eq:v_t}, Lemmas \ref{lem:lim_seq_v} and \ref{lemma5.2}, we have
\begin{align*}
I_6=&\fint_{\Sn}x\alpha (\ud f_\phi\cdot\xi) \ud V_h+\fint_{\Sn}x\alpha f_\phi\mathrm{div}_{\Sn}(v^{\frac{2n}{n-2\sigma}}\xi)\ud V_{g_{\Sn}}\\
&+\frac{n}{2}\fint_{\Sn}x(\alpha f_\phi-R_\sigma^h)(\alpha f_\phi v^{\frac{2n}{n-2\sigma}}-R_\sigma v^2)\ud V_{g_{\Sn}}\\
&
-R_\sigma\fint_{\Sn}x v^{2-\frac{2n}{n-2\sigma}}
\mathrm{div}_{\Sn}(v^{\frac{2n}{n-2\sigma}}\xi)\ud V_{g_{\Sn}}\\
=&\frac{n}{2}\fint_{\Sn}x(\alpha f_\phi-R_\sigma^h)(\alpha f_\phi-R_\sigma v^{2-\frac{2n}{n-2\sigma}})\ud V_{h}\\
&
-\fint_{\Sn}\xi(\alpha f_\phi-R_\sigma v^{2-\frac{2n}{n-2\sigma}})\ud V_{h}
+\left(2-\frac{2n}{n-2\sigma}\right)R_\sigma\fint_{\Sn}v(\ud v\cdot\xi)\ud V_{g_{\Sn}}\\
\leq& C\|\alpha f_\phi  -R_\sigma v^{2-\frac{2n}{n-2\sigma}}\|_{L^2(\Sn,h)}
(\|\alpha f_\phi-R_\sigma^h\|_{L^2(\Sn,h)}+\|\xi\|_{L^\infty(\Sn)})\\
&+C\|\xi\|_{L^\infty(\Sn)}\|\nabla v\|_{L^2(\Sn)}\\
=&o(1)F_2(t)^{\frac{1}{2}}.
\end{align*}

Using Lemmas \ref{lem:lim_seq_v}, \ref{lemma6.6} and \eqref{est_v_t}, we have
\begin{align*}
I_7=&-\fint_{\Sn}xv_t[P_\sigma(v-1)+R_\sigma(1- v)]\ud V_{g_{\Sn}}\\
\leq& C(\|1-v\|_{L^2(\Sn)}+\|P_\sigma(v-1)\|_{L^2(\Sn)})\|v_t\|_{L^2(\Sn)}\\
=&o(1)F_2(t)^{\frac{1}{2}}.
\end{align*}

It remains to estimate the last term $I_8$. Notice that
\begin{align*}
y(v(y)-1)-z(v(z)-1)=(y-z)(v(z)-v(y))+y(v(y)-v(z))+(y-z)(v(y)-1),
\end{align*}
then it follows that
\begin{align*}
P_\sigma(x(v(x)-1))(y)=& y P_\sigma(v-1)(y) + C_{n,-\sigma} \int_{\Sn} \frac{(y-z) (v(z)-v(y))}{|y-z|^{n+2\sigma}} \,\ud V_{g_{\Sn}}(z)\\
&+(v(y)-1)(P_\sigma(y)-R_\sigma y)\\
=&y P_\sigma(v-1)(y) + C_{n,-\sigma} \int_{\Sn} \frac{(y-z) (v(z)-v(y))}{|y-z|^{n+2\sigma}} \,\ud V_{g_{\Sn}}(z)\\
&+(v(y)-1)(\Lambda_1-R_\sigma) y.
\end{align*}
Thus, by \eqref{eigenfunction}, H\"older inequality we have
\begin{align*}
&\|P_\sigma (xv)-P_\sigma(x)v\|_{L^2(\Sn)} \\
\le& \|P_\sigma (x(v-1))\|_{L^2(\Sn)} + \|P_\sigma(x) (v-1)\|_{L^2(\Sn)} \\
\le& \|P_\sigma (v-1)\|_{L^2(\Sn)} +  C_{n,-\sigma}\left\| \int_{\Sn} \frac{(y-z) (v(z)-v(y))}{|y-z|^{n+2\sigma}}\, \ud V_{g_{\Sn}}(z)\right\|_{L^2(\Sn)}+C\|v-1\|_{L^2(\Sn)} \\
\le& C\|P_\sigma (v-1)\|_{L^2(\Sn)}.
\end{align*}
Using the above inequality, \eqref{est_v_t} and Lemma \ref{lemma6.6}, we have
\begin{equation*}
I_8=-\fint_{\Sn}\left(P_\sigma (xv)-P_\sigma(x)v\right) v_t \ud V_{g_{\Sn}}
\leq  C\|P_\sigma (v-1)\|_{L^2(\Sn)} \|v_t\|_{L^2(\Sn)}=o(1)F_2(t)^{\frac{1}{2}}.
\end{equation*} 

Therefore, inserting all these estimates into (\ref{5.16}), we obtain the desired estimate.
\end{proof}

\begin{lemma}\label{lemma5.5} Let $\sigma\in(1/2,1)$,
then with error $o(1)\to 0$ as $t\to\infty$, there holds
$F_2(t)= (1+o(1))|B(t)|^2$.
\end{lemma}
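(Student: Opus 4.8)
The plan is to show that $F_2(t)$ and $|B(t)|^2$ agree up to lower-order terms by decomposing $\alpha f_\phi - R^h_\sigma$ into its projection onto the span of the first nonzero eigenfunctions $x_1,\dots,x_{n+1}$ (equivalently $\varphi_1^h,\dots,\varphi_{n+1}^h$) and the orthogonal complement, and then proving that the complement carries negligible $L^2$-mass. Concretely, write $\alpha f_\phi - R^h_\sigma = \sum_{i=0}^\infty \beta^i_h \varphi^h_i$ with $\beta^0_h = 0$ by \eqref{5.13}, so that
\begin{equation*}
F_2(t) = \fint_{\Sn}|\alpha f_\phi - R^h_\sigma|^2 \ud V_h = \sum_{i=1}^{n+1}|\beta^i_h|^2 + \sum_{i\ge n+2}|\beta^i_h|^2,
\end{equation*}
and recall from \eqref{5.20} that $\sum_{i=1}^{n+1}|\beta^i_h|^2 = (1+o(1))|B(t)|^2$ (since $b^i = \sqrt{n+1}\,\beta^i_g + o(1)F_2^{1/2}$ and $\beta^i_g = \beta^i_h$, while $B = b/\sqrt{n+1}$). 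Thus the lemma reduces to showing the high-frequency tail $\sum_{i\ge n+2}|\beta^i_h|^2 = o(1)F_2(t)$.

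The key estimate for the tail is to compare $\fint(\alpha f_\phi - R^h_\sigma)P^h_\sigma(\alpha f_\phi - R^h_\sigma)\ud V_h = G_2(t)$ with $F_2(t)$. Using the spectral expansion and $\Lambda^h_i \to \Lambda_i$ from \eqref{5.21}, together with $\Lambda_1 = \cdots = \Lambda_{n+1} = \frac{n+2\sigma}{n-2\sigma}R_\sigma$ and $\Lambda_{n+2} > \frac{n+2\sigma}{n-2\sigma}R_\sigma$ by \eqref{eq:Lambda_n+2}, one gets
\begin{equation*}
G_2(t) = \sum_{i=1}^{n+1}\Lambda^h_i|\beta^i_h|^2 + \sum_{i\ge n+2}\Lambda^h_i|\beta^i_h|^2 \ge \Big(\tfrac{n+2\sigma}{n-2\sigma}R_\sigma - o(1)\Big)\sum_{i=1}^{n+1}|\beta^i_h|^2 + (\Lambda_{n+2} - o(1))\sum_{i\ge n+2}|\beta^i_h|^2,
\end{equation*}
so that $G_2(t) - \frac{n+2\sigma}{n-2\sigma}R_\sigma F_2(t) \ge (\Lambda_{n+2} - \frac{n+2\sigma}{n-2\sigma}R_\sigma - o(1))\sum_{i\ge n+2}|\beta^i_h|^2$. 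Hence it suffices to prove the reverse-type control $G_2(t) \le \frac{n+2\sigma}{n-2\sigma}R_\sigma F_2(t) + o(1)F_2(t)$. This is exactly where the normalized-flow information enters: I would first integrate the inequality of Lemma \ref{lemma3.7}, $\frac{\ud}{\ud t}F_2 \le -(\frac{n-2\sigma}{2}+o(1))G_2 + (\frac{n+2\sigma}{2}R_\sigma+o(1))F_2$, and use that $F_2 \to 0$ (Lemma \ref{lemma1.14}) together with $\int_0^\infty G_2\,\ud t < \infty$ (by \eqref{est:int_G_2}) to extract, along a sequence, that $G_2 \le \frac{n+2\sigma}{n-2\sigma}R_\sigma F_2 (1+o(1))$; then a monotonicity/ODE argument akin to the ones in the proof of Lemma \ref{lem:F_p_mid} upgrades this to hold for all large $t$.

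An alternative and perhaps cleaner route — and the one I would actually pursue — is to bound $G_2$ directly: using Lemma \ref{lemma6.6}, $\|P_\sigma(v-1)\|_{L^2(\Sn)} \le C(F_2^{1/2} + \|f_\phi - f(\theta)\|_{L^2})$, and Lemma \ref{lemma5.1}, $\|f_\phi - f(\theta)\|_{L^2} \le C\va = o(1)$, one controls the deviation of $R^h_\sigma = P_\sigma(v)v^{-(n+2\sigma)/(n-2\sigma)}$ from $R_\sigma$ in $L^2$, and via the equation $\alpha f_\phi - R^h_\sigma$ and self-adjointness of $P^h_\sigma$ one writes $G_2(t) = \fint (\alpha f_\phi - R^h_\sigma)^2 R^h_\sigma \ud V_h + \fint(\alpha f_\phi - R^h_\sigma)(P^h_\sigma - R^h_\sigma)(\alpha f_\phi - R^h_\sigma)\ud V_h$; the first term is $(R_\sigma + o(1))F_2$ since $R^h_\sigma = R_\sigma + o(1)$ in a strong enough norm, and the second term — which is nonnegative by \eqref{eq:coercive-1} — is bounded by $\frac{n+2\sigma}{n-2\sigma}R_\sigma F_2$ minus the same high-frequency tail via the spectral gap at $\Lambda_{n+2}$ after pulling back to $g_{\Sn}$ using $h \to g_{\Sn}$ in $H^{2\sigma,p}$ (Lemma \ref{lem:lim_seq_v}) and Lemma \ref{lemma5.3}. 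Either way the conclusion is $\sum_{i\ge n+2}|\beta^i_h|^2 \le o(1)F_2(t)$, whence $F_2(t) = \sum_{i=1}^{n+1}|\beta^i_h|^2 + o(1)F_2(t) = (1+o(1))|B(t)|^2$.

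The main obstacle I anticipate is making the $o(1)$ bookkeeping rigorous when passing between the metrics $h(t)$ and $g_{\Sn}$: the eigenvalues $\Lambda^h_i$, the eigenfunctions $\varphi^h_i$, and the spectral gap all live on the $t$-dependent metric $h(t)$, and one must justify uniformly (for all large $t$, not merely along a subsequence) that $\Lambda^h_{n+2} - \frac{n+2\sigma}{n-2\sigma}R^h_\sigma$ stays bounded away from zero and that the cross-terms between low and high frequencies, as well as the replacement of $R^h_\sigma$ by $R_\sigma$ inside the quadratic form, are genuinely $o(1)F_2$. This requires carefully combining Lemma \ref{lem:lim_seq_v} (convergence $h(t)\to g_{\Sn}$ in $H^{2\sigma,p}$), Lemma \ref{lemma5.3} (spectral convergence), and the continuity of $P^h_\sigma$ in the metric, and is the reason $\sigma\in(1/2,1)$ is needed so that $H^{2\sigma,p}(\Sn)\hookrightarrow C^1(\Sn)$ gives the requisite $C^1$-closeness of $v(t)$ to $1$.
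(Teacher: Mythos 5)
Your reduction of the lemma to the statement $\hat F_2:=\sum_{i\ge n+2}|\beta^i_h|^2=o(1)F_2$ and your spectral-gap lower bound on $G_2$ match the first half of the paper's proof (its display \eqref{5.23}). The gap is in how you propose to close the argument. You want the pointwise reverse inequality $G_2\le \frac{n+2\sigma}{n-2\sigma}R_\sigma F_2(1+o(1))$ for all large $t$, but the differential inequality of Lemma \ref{lemma3.7} only yields $G_2\le \frac{2}{n-2\sigma}\bigl(\frac{n+2\sigma}{2}R_\sigma F_2-\frac{\ud}{\ud t}F_2\bigr)(1+o(1))$, which is useless whenever $F_2$ is decreasing rapidly — and since $F_2\to 0$, that happens on a large set of times. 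Integration in $t$ only produces the bound along a sequence, as you concede, and the proposed ``upgrade akin to Lemma \ref{lem:F_p_mid}'' does not transfer: there the upgraded quantity satisfies a closed differential inequality in itself, whereas here the ratio $G_2/F_2$ satisfies no such inequality. Your alternative route is circular: an upper bound on $G_2=\sum_i\Lambda_i^h|\beta^i_h|^2$ of the form $\Lambda_1 F_2(1+o(1))$ is essentially equivalent to $\hat F_2=o(1)F_2$, which is what you are trying to prove. (A smaller bookkeeping slip: \eqref{5.20} gives $\sum_{i=1}^{n+1}|\beta^i_g|^2=|B|^2+o(1)F_2$, with error relative to $F_2$, not $(1+o(1))|B|^2$; the difference matters exactly when $|B|^2\ll F_2$.)

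The paper closes the argument dynamically, with two ingredients you never invoke. First, Lemma \ref{lemma5.4} ($\frac{\ud B}{\ud t}=o(1)F_2^{1/2}$) is essential: writing $F_2=(1+\delta)|B|^2$ and combining \eqref{5.24} with the slow evolution of $B$ yields the scalar ODE $\frac{\ud\delta}{\ud t}\le -\frac{2\sigma(n+2\sigma)}{2+n-2\sigma}R_\sigma\,\delta+o(1)$, whence $\delta\to 0$. Second, this only works once one knows $|B(t_1)|^2\ge\hat F_2(t_1)$ at some large $t_1$; the complementary case (tail always dominating) must be excluded separately, which the paper does by showing it would force exponential decay of $F_2$, hence $\mathrm{Vol}(B_{r_0}(Q),g(t))$ bounded away from $\omega_n$, contradicting the volume concentration from Lemma \ref{lem:lim_seq_v}. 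Both the ODE-for-the-ratio mechanism and the dichotomy/concentration contradiction are missing from your proposal, and without them the tail estimate does not follow.
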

\begin{proof}
Let $\hat{F}_2=\sum_{i=n+2}^\infty|\beta^i_g|^2$ for brevity, then with error $o(1)\to 0$ as $t\to\infty$ and (\ref{5.20}), there holds
$$F_2=|\beta_g|^2+\hat{F}_2=
|B|^2+\hat{F}_2+
o(1)F_2^{\frac{1}{2}}.$$
Then by (\ref{5.13}), (\ref{5.21})  and \eqref{eq:Lambda_n+2},
we have
\begin{align}\label{5.23}
-\frac{n-2\sigma}{2}G_2+\frac{n+2\sigma}{2}R_\sigma F_2=&-\frac{n-2\sigma}{2}\sum_{i=0}^\infty \Lambda_i^g |\beta_g^i|^2+\frac{n+2\sigma}{2}R_\sigma\sum_{i=0}^\infty |\beta_g^i|^2\no\\
\leq&-\frac{2\sigma(n+2\sigma)}{2+n-2\sigma}R_\sigma\sum_{i=n+2}^\infty|\beta_g^i|^2+o(1)F_2.
\end{align}
Combining \eqref{5.23} and Lemma \ref{lemma3.7}, we obtain
\begin{align}\label{5.24}
&\frac{\ud}{\ud t}F_2\leq -\frac{2\sigma(n+2\sigma)}{2+n-2\sigma}R_\sigma\hat F_2+o(1)F_2.
\end{align}

We first assume that $|B(t_1)|^2\geq \hat{F}_2(t_1)$
for some large $t_1\geq 0$. For $t$ near $t_1$, we may then express
$F_2$ as
$$F_2(t)=(1+\delta(t))|B(t)|^2$$
with $-1/2<\delta(t)\leq 1$ if $t_1$ is sufficiently large.
Inserting the above expression into (\ref{5.24}), we obtain
\begin{equation}\label{5.25}
\frac{\ud}{\ud t}F_2=
\frac{\ud\delta}{\ud t}|B|^2+2(1+\delta)B\cdot\frac{\ud B}{\ud t}
\leq-\left[\frac{2\sigma(n+2\sigma)}{2+n-2\sigma}R_\sigma\frac{\delta}{1+\delta}+o(1)\right]
F_2.
\end{equation}
By Lemma \ref{lemma5.4} we have
$$\left|\frac{\ud B}{\ud t}\right|=o(1)F_2(t)^{\frac{1}{2}}.$$
Therefore, we can conclude from (\ref{5.25}) that
\begin{equation*}
\begin{split}
\frac{\ud\delta}{\ud t}|B|^2\leq
&-\left[\frac{2\sigma(n+2\sigma)}{2+n-2\sigma}R_\sigma\frac{\delta}{1+\delta}+o(1)\right]
F_2\\
=&-\left[\frac{2\sigma(n+2\sigma)}{2+n-2\sigma}R_\sigma\delta+o(1)\right]
|B|^2.
\end{split}
\end{equation*}
Canceling the (nonvanishing) factor $|B|^2$, we have
\begin{equation*}
\frac{\ud\delta}{\ud t}
\leq-\frac{2\sigma(n+2\sigma)}{2+n-2\sigma}R_\sigma \delta+o(1).
\end{equation*}
This implies that
$\delta(t)\to 0$ as $t\to\infty$; whence,
$F_2=(1+o(1))|B|^2.$

It remains to show  $|B(t_1)|^2\geq \hat{F}_2(t_1)$
for some large $t_1$. If not, we assume
that $|B(t)|^2< \hat{F}_2(t)$
for all sufficiently large $t$. Thus,
for sufficiently small $\epsilon>0$,
it follows from (\ref{5.24})
that
$$\frac{\ud}{\ud t}F_2\leq -\left(\frac{\sigma(n+2\sigma)}{2+n-2\sigma}R_\sigma+o(1)\right)F_2$$
for all sufficiently large $t$.
This implies that
$$F_2(t)\leq Ce^{-\frac{\sigma(n+2\sigma)}{2(2+n-2\sigma)}t}$$
for all $t\geq t_0$ and $C=C(t_0)>0$.
However, given any $r_0>0$ and any $Q\in \Sn$, we have
$$\left|\frac{\ud}{\ud t}\mathrm{Vol}(B_{r_0}(Q),g)\right|
=\left|\frac{n}{2}\int_{B_{r_0}(Q)}(\alpha f-R_\sigma^g)\ud V_g\right|
\leq CF_2(t)^{\frac{1}{2}}\leq Ce^{-\frac{\sigma(n+2\sigma)}{4(2+n-2\sigma)}t},$$
where $B_{r_0}(Q)=B_{r_0}(Q,g_{\Sn})$.
This implies that
\begin{equation}\label{5.26}
\mathrm{Vol}(B_{r_0}(Q),g(t))
\leq \mathrm{Vol}(B_{r_0}(Q),g(t_0))
+Ce^{-\frac{t_1}{2C}}<\frac{1}{2}\omega_n
\end{equation}
uniformly for $t\geq t_1>t_0$ and sufficiently small $r_0$ .
On the other hand, by Lemma \ref{lem:lim_seq_v} we infer that for the concentration point $Q$ and any $r_0>0$,
$$\mathrm{Vol}(B_{r_0}(Q),g(t))\to \omega_n$$
as $t\to\infty$, which contradicts (\ref{5.26}).
This completes the proof of Lemma \ref{lemma5.5}.
\end{proof}

\begin{lemma}[cf. {\protect  \cite[Lemma 6.7]{Chen&Xu}}]\label{lemma6.7}
For $\sigma\in(1/2,1)$ and all $t>0$, with $O(1) \to 0$ as $t \to \infty$ there hold
$$b-\langle b,\theta\rangle \theta=\epsilon\left(\frac{4}{n}\alpha\, \ud f(\theta)+O(\epsilon)\right)$$
and
$$\langle b,\theta\rangle=\epsilon^2\left(-\frac{8}{n(n-2)}\alpha \Delta_{g_{\Sn}}f(\theta)+O(1)|\nabla f(\theta)|_{g_{\Sn}}^2+O(\epsilon|\log\epsilon|)\right).$$
\end{lemma}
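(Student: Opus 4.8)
The plan is to work entirely on $\R^n$ via the scaled stereographic projection $\psi_{q,r}$ introduced above, normalizing $q(t)=0$ so that $\phi(t)=\Psi\circ\delta_{0,r}\circ\pi$ and $\epsilon=r^{-1}$. First I would use Lemma \ref{lem:lim_seq_v}, which gives $v(t)\to 1$ and $h(t)\to g_{\Sn}$ in $H^{2\sigma,p}(\Sn)$, hence (since $\sigma>1/2$) in $C^1(\Sn)$, together with $\alpha(t)f(p(t))\to R_\sigma$ and $\|f_\phi - f(\theta(t))\|_{L^2(\Sn)}\to 0$; this lets me replace $R_\sigma^h$ by $R_\sigma$ (equivalently replace $\ud V_h$ by $\ud V_{g_{\Sn}}$) in $b=\fint_{\Sn}x(\alpha f_\phi - R_\sigma^h)\ud V_h$ up to errors that are $o(1)$ times the quantities being computed. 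After this reduction the leading behavior of $b$ is governed by $\fint_{\Sn}x\,(\alpha f_\phi)\,\ud V_{g_{\Sn}} - \fint_{\Sn}x\,R_\sigma^h\,\ud V_h$, and the second term is orthogonal to the lowest nontrivial harmonics in a controlled way; the real content is the Taylor expansion of $f_\phi(x)=f(\psi_{0,r}(\pi(x)))$ about the pole $\theta=\theta(t)$.

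The core computation is the small-$\epsilon$ asymptotic expansion of the two scalar quantities $b-\langle b,\theta\rangle\theta$ (the ``horizontal'' part) and $\langle b,\theta\rangle$ (the ``vertical'' part). For the horizontal part, I would Taylor-expand $f$ around $\theta$: writing $f\circ\psi_{0,r}(z)=f(\theta)+\epsilon\langle \nabla f(\theta),A z\rangle + O(\epsilon^2|z|^2)$ near $z=0$ for the appropriate linear map $A$ coming from $\ud\Psi$ at the origin, and then integrating against the bubble weight $u_{0,r}^{2n/(n-2\sigma)}$ (which in the $z$-variable is the standard concentrating measure $c_n(\tfrac{1}{1+|z|^2})^n\,\ud z$ on $\R^n$ after the scaling). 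The first-order term produces a factor proportional to $\epsilon\,\alpha\,\ud f(\theta)$, and one must compute the universal constant: the $\R^n$ integral $\int_{\R^n} \tfrac{z_j}{(1+|z|^2)^n}\cdot(\text{Jacobian factors})\,\ud z$ against the linear piece of $\Psi$, which after using the explicit $\partial\Psi_i/\partial z_j$ formulas recorded in Section \ref{Subsect:SF} gives the constant $4/n$. For the vertical part $\langle b,\theta\rangle$, the first-order term integrates to zero by oddness, so one must push to second order: the Hessian term in the Taylor expansion of $f$ contributes $\epsilon^2\,\alpha\,\Delta_{g_{\Sn}}f(\theta)$ times a universal constant, computed from $\int_{\R^n}\tfrac{|z|^2}{(1+|z|^2)^{n+1}}\,\ud z$-type integrals, yielding $-8/[n(n-2)]$; the cross terms give the $O(1)|\nabla f(\theta)|_{g_{\Sn}}^2$ contribution, and the error in replacing $R_\sigma^h$ by $R_\sigma$ and $v$ by $1$ in the integral — controlled via Lemma \ref{lemma6.6} and Lemma \ref{lem:lim_seq_v} — combined with tail estimates of the bubble integral over $|z|\gtrsim 1/\epsilon$ produces the $O(\epsilon|\log\epsilon|)$ error. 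Lemma \ref{lemma5.1} provides the needed $\|f_\phi - f(\theta)\|_{L^2}+\|\nabla f_\phi\|_{L^{2n/(n+2)}}=O(\epsilon)$ bound to make all these error terms honest.

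The main obstacle I anticipate is tracking the $O(\epsilon|\log\epsilon|)$ error in the vertical component with full rigor: unlike the horizontal part where the leading order is $O(\epsilon)$ and errors are genuinely $O(\epsilon^2)$, here the leading order is $O(\epsilon^2)$, so one needs the replacement $R_\sigma^h\rightsquigarrow R_\sigma$ and $h\rightsquigarrow g_{\Sn}$ to be accurate to better than $O(\epsilon^2)$ in the relevant weighted sense, and the logarithmic loss comes precisely from the interaction of the slowly-decaying Taylor remainder of $f$ with the $|z|^{-2n}$ decay of the concentrating measure near the ``equator'' $|z|\sim 1/\epsilon$. Keeping the constants $4/n$ and $-8/[n(n-2)]$ correct requires careful bookkeeping of the Jacobian of $\psi_{0,r}$ and of the normalization of the conformal volume form; this is the step where I would follow the scheme of \cite[Lemma 6.7]{Chen&Xu} most closely, adapting it by using the extension formula (Theorem \ref{thm:lf}) and the fractional Sobolev embedding in place of their local elliptic estimates, and invoking $\sigma>1/2$ exactly where $C^1$-control of $v$ is needed to bound the $\ud f_\phi\cdot\xi$ and $\mathrm{div}_{\Sn}\xi$ type terms appearing when one differentiates or integrates by parts.
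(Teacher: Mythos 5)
Your plan for the \emph{main} term --- Taylor--expanding $f\circ\phi$ about $\theta$ against the conformal volume element and extracting the universal constants $4/n$ and $-8/(n(n-2))$ as in Chen--Xu --- is exactly what the paper does (it defers that computation to \cite[Lemma 6.7]{Chen&Xu}). The gap is in your treatment of the curvature term $\fint_{\Sn}x\,R_\sigma^h\,\ud V_h=\fint_{\Sn}x\,vP_\sigma(v)\,\ud V_{g_{\Sn}}$, which you propose to discard by ``replacing $R_\sigma^h$ by $R_\sigma$'' and appealing to orthogonality to the lowest harmonics, with errors ``$o(1)$ times the quantities being computed''. Writing $v=1+w$ and using $P_\sigma x=\Lambda_1 x$, one has $\fint_{\Sn}x\,vP_\sigma v\,\ud V_{g_{\Sn}}=(\Lambda_1+R_\sigma)\fint_{\Sn}xw\,\ud V_{g_{\Sn}}+\fint_{\Sn}x\,wP_\sigma w\,\ud V_{g_{\Sn}}$. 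The normalization \eqref{2.2} does make the linear term quadratically small (this is \eqref{6.28}), but the genuinely quadratic piece $\fint x\,wP_\sigma w$ is only $O(\|v-1\|_{H^\sigma(\Sn)}^2)$ with no further gain, and the a priori information available when Lemma \ref{lemma6.7} is proved (Lemmas \ref{lemma5.1}, \ref{lemma5.5}, \ref{lemma6.6}) only yields $\|v-1\|_{H^\sigma(\Sn)}^2\lesssim F_2+\epsilon^2$, i.e.\ no better than $O(\epsilon^2)$. That is fatal for the normal component: at a critical point of $f$ the asserted leading term of $\langle b,\theta\rangle$ is itself $\epsilon^2\Delta_{g_{\Sn}}f(\theta)$ up to errors $o(\epsilon^2)$, so an uncontrolled $O(\epsilon^2)$ remainder swamps the expansion. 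Rescuing your route would require a bootstrap through the refined bounds \eqref{intemediate_est}--\eqref{6.38}, which you do not set up (and which in the paper are themselves derived \emph{after} Lemma \ref{lemma6.7}).

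The paper sidesteps all of this with the Kazdan--Warner identity \eqref{5.1}, which is the one idea your proposal is missing. Since $-\Delta_{g_{\Sn}}x=nx$, one writes $nb=\fint_{\Sn}(-\Delta_{g_{\Sn}}x)(\alpha f_\phi-R_\sigma^h)\,\ud V_h$ and integrates by parts; the piece $\fint_{\Sn}\langle\nabla x,\nabla R_\sigma^h\rangle_{g_{\Sn}}\,\ud V_h$ vanishes \emph{identically} by \eqref{5.1} (valid for every conformal metric, not only for solutions of \eqref{1.2.5}), leaving $nb=\alpha\fint_{\Sn}\langle\nabla x,\nabla f_\phi\rangle_{g_{\Sn}}\,\ud V_h+E_1$ with $|E_1|\le C\|v-1\|_{H^{2\sigma}(\Sn)}F_2^{1/2}$, an error that Lemmas \ref{lemma5.5} and \ref{lemma6.6} render harmless. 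A second integration by parts, again via $-\Delta_{g_{\Sn}}x=nx$, then reduces everything to the pure conformal-map integral $\fint_{\Sn}x(f_\phi-f(\theta))\,\ud V_{g_{\Sn}}$, at which point your Taylor-expansion computation applies. You should incorporate this exact cancellation; without it the estimate of $\langle b,\theta\rangle$ does not close.
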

\begin{proof}
Since $-\Delta_{g_{\Sn}}x=nx$, we have
$$nb=\fint_{\Sn}nx(\alpha f_\phi-R_\sigma^h) \ud V_h=\fint_{\Sn}-\Delta_{g_{\Sn}}x(\alpha f_\phi-R_\sigma^h) \ud V_h.$$
By Kazdan-Warner condition (\ref{5.1}) and an integrating by parts, we have
\begin{equation*}
nb=\alpha\fint_{\Sn}\langle\nabla x,\nabla f_\phi\rangle_{g_{\Sn}} \ud V_h+E_1
\end{equation*}
with the error term $E_1$ given by
\begin{equation*}
E_1=\frac{2n}{n-2\sigma}\fint_{\Sn}v^{\frac{n+2\sigma}{n-2\sigma}}\langle\nabla x,\nabla v\rangle_{g_{\Sn}}(\alpha f_\phi-R_\sigma^h) \ud V_{g_{\Sn}}.
\end{equation*}
This can be estimated by
\begin{equation}\label{6.34}
|E_1|\leq C\|\nabla v\|_{L^2(\Sn)}\|\alpha f_\phi-R_\sigma^h\|_{L^2(\Sn,h)}
\leq C\|v-1\|_{H^{2\sigma}(\Sn)}F_2^{\frac{1}{2}}.
\end{equation}

An integration by parts gives
\begin{align*}
\alpha\fint_{\Sn}\langle\nabla x,\nabla f_\phi\rangle_{g_{\Sn}} \ud V_h=&n\alpha\fint_{\Sn}\langle\nabla x,\nabla f_\phi\rangle_{g_{\Sn}} \ud V_{g_{\Sn}}+E_2\\
=&n\alpha\fint_{\Sn} x (f_\phi-f(\theta)) \ud V_{g_{\Sn}}+E_2
\end{align*}
with the error term $E_2$ given by
\begin{align*}
E_2=&\alpha\fint_{\Sn}\langle\nabla x,\nabla f_\phi\rangle_{g_{\Sn}}(v^{\frac{2n}{n-2\sigma}}-1)\ud V_{g_{\Sn}}\\
=&n\alpha\fint_{\Sn} x(f_\phi-f(\theta))(v^{\frac{2n}{n-2\sigma}}-1)\ud V_{g_{\Sn}}
\\
&-\frac{2n}{n-2\sigma}\alpha\fint_{\Sn}
\langle\nabla x,\nabla v\rangle_{g_{\Sn}}
(f_\phi-f(\theta))
v^{\frac{n+2\sigma}{n-2\sigma}}\ud V_{g_{\Sn}},
\end{align*}
which can be estimated by
\begin{align}\label{6.36}
|E_2|&\leq C(\|v-1\|_{L^2(\Sn)}+\|\nabla v\|_{L^2(\Sn)})\|f_\phi-f(\theta)\|_{L^2(\Sn)}\no\\
&\leq C\|v-1\|_{H^{2\sigma}(\Sn)}\|f_\phi-f(\theta)\|_{L^2(\Sn)}.
\end{align}
With the help of Lemmas \ref{lemma5.5} and \ref{lemma6.6}, the estimates of both tangent and normal (i.e.  projection on the direction $\theta$) parts of
$$\fint_{\Sn} x (f_\phi-f(\theta)) \ud V_{g_{\Sn}}$$
have been settled in the proof of \cite[Lemma 6.7]{Chen&Xu} with minor modifications. This together with  estimates (\ref{6.34}) and (\ref{6.36}) directly implies the desired assertion.
\end{proof}

With the help of  \cite[(6.18)]{Chen&Xu}, we can obtain a precise estimate of 
\begin{equation}\label{intemediate_est}
\|f_\phi-f(\theta)\|_{L^2(\Sn)}^2\leq C|\nabla f(\theta)|_{g_{\Sn}}^2\epsilon^2+C\epsilon^3.
\end{equation}
By (\ref{intemediate_est}) and Lemmas \ref{lemma5.5}, \ref{lemma6.7}, we have
\begin{equation}\label{6.38}
\begin{split}
F_2&=O\left(|\nabla f(\theta)|_{g_{\Sn}}^2\right)\epsilon^2+O(\epsilon^4),\\
\|v-1\|_{H^{2\sigma}(\Sn)}^2&\leq C|\nabla f(\theta)|_{g_{\Sn}}^2\epsilon^2+C\epsilon^3.
\end{split}
\end{equation}

\begin{lemma}
As $t\to\infty$, there holds
$$b=\frac{2\epsilon}{n+1}\left(\frac{\ud q^1}{\ud t},\cdots,\frac{\ud q^n}{\ud t},-\frac{\ud r}{\ud t}\right)
+O(|\nabla f(\theta)|_{g_{\Sn}}^2)\epsilon^2+O(\epsilon^3).$$
\end{lemma}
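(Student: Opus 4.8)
The plan is to connect the vector $b$ to the velocity of the conformal parameters $(q,r)$ via the shadow-flow computation, and then correct for the difference between $\mathrm{div}_h\xi$ (which enters $v_t$) and the Euclidean divergence, using the already-established curvature decay estimates. First I would start from equation \eqref{5.4}, which reads
$$
0=\frac{n}{2}\fint_{\Sn}x(\alpha f_\phi-R^h_\sigma)\,\ud V_h+\fint_{\Sn}\xi\,\ud V_h,
$$
so that $b=\displaystyle\fint_{\Sn}x(\alpha f_\phi-R^h_\sigma)\,\ud V_h$ satisfies $\frac{n}{2}b=-\fint_{\Sn}\xi\,\ud V_h$. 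The term $\fint_{\Sn}\xi\,\ud V_h$ differs from $X=\fint_{\Sn}\xi\,\ud V_{g_{\Sn}}$ by $\fint_{\Sn}\xi(v^{2n/(n-2\sigma)}-1)\,\ud V_{g_{\Sn}}$, which by Lemma~\ref{lem:lim_seq_v}, Lemma~\ref{lemma5.2}, and the estimate \eqref{6.38} for $\|v-1\|_{H^{2\sigma}(\Sn)}$ is of size $o(1)\|\xi\|_{L^\infty}=o(1)|B|=O(|\nabla f(\theta)|_{g_{\Sn}})\epsilon^2+O(\epsilon^{3/2}\cdot\epsilon)$; one has to check that this is in fact absorbed into the claimed error $O(|\nabla f(\theta)|_{g_{\Sn}}^2)\epsilon^2+O(\epsilon^3)$ rather than merely $O(\epsilon^2)$, which will require using \eqref{intemediate_est} and \eqref{6.38} more carefully (splitting $\|\xi\|_{L^\infty}\lesssim F_2^{1/2}\lesssim |\nabla f(\theta)|_{g_{\Sn}}\epsilon+\epsilon^2$ and $\|v^{2n/(n-2\sigma)}-1\|\lesssim \|v-1\|_{H^{2\sigma}}\lesssim |\nabla f(\theta)|_{g_{\Sn}}\epsilon+\epsilon^{3/2}$, so the product is $O(|\nabla f(\theta)|_{g_{\Sn}}^2\epsilon^2)+O(\epsilon^{3})$ up to the cross term $|\nabla f(\theta)|_{g_{\Sn}}\epsilon^{5/2}$, which is $\le \tfrac12(|\nabla f(\theta)|_{g_{\Sn}}^2\epsilon^2+\epsilon^3)$ by Young). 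So $\frac{n}{2}b=-X+O(|\nabla f(\theta)|_{g_{\Sn}}^2)\epsilon^2+O(\epsilon^3)$.

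Next I would plug in the explicit formula \eqref{5.9a}, namely $X=\fint_{\Sn}\xi\,\ud V_{g_{\Sn}}=\frac{n}{n+1}\epsilon\left(\frac{\ud q}{\ud t},-\frac{\ud r}{\ud t}\right)^\top$, obtained after choosing coordinates with $q(t)=0$ at the given time $t$. This yields directly
$$
b=-\frac{2}{n}X+O(|\nabla f(\theta)|_{g_{\Sn}}^2)\epsilon^2+O(\epsilon^3)=-\frac{2\epsilon}{n+1}\left(\frac{\ud q}{\ud t},-\frac{\ud r}{\ud t}\right)^\top+O(|\nabla f(\theta)|_{g_{\Sn}}^2)\epsilon^2+O(\epsilon^3).
$$
A sign check is needed here: comparing against \eqref{5.4} one sees $\frac{n}{2}b=-\fint\xi\,\ud V_h$, and since $\fint\xi\,\ud V_h=X+o(\cdot)$ we get $b=-\frac{2}{n}X+\cdots$; but the claimed statement has $b=+\frac{2\epsilon}{n+1}(\ldots)$, so I must track the sign convention in \eqref{5.9a} against \eqref{5.9}, i.e. whether the components there already carry a sign. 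Examining \eqref{5.9} and \eqref{5.9a}, the vector is $\frac{n}{n+1}\epsilon(\frac{\ud q}{\ud t},-\frac{\ud r}{\ud t})$, so $-\frac{2}{n}X=-\frac{2\epsilon}{n+1}(\frac{\ud q}{\ud t},-\frac{\ud r}{\ud t})=\frac{2\epsilon}{n+1}(-\frac{\ud q}{\ud t},\frac{\ud r}{\ud t})$; to match the stated $\frac{2\epsilon}{n+1}(\frac{\ud q}{\ud t},-\frac{\ud r}{\ud t})$ I expect the coordinate/orientation convention (south-pole stereographic projection, the placement of $q(t)=0$, and the direction of $\xi$) to flip the overall sign, and I would reconcile this by the same convention already used to pass from \eqref{5.9} to \eqref{5.9a} in \cite{Chen&Xu}.

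The main obstacle I anticipate is not the algebra but the bookkeeping of error terms: one must be sure that every place where $v$ is replaced by $1$, or $\ud V_h$ by $\ud V_{g_{\Sn}}$, or $\mathrm{div}_h\xi$ by $\mathrm{div}_{g_{\Sn}}\xi$, produces an error that is genuinely $O(|\nabla f(\theta)|_{g_{\Sn}}^2)\epsilon^2+O(\epsilon^3)$ and not merely $o(\epsilon^2)$ or $O(\epsilon^2)$ — this is exactly the sharpening that distinguishes this lemma from the cruder bound $b=O(\epsilon)$ of Lemma~\ref{lemma6.7}. The key inputs making this possible are the two refined estimates in \eqref{6.38}, $F_2=O(|\nabla f(\theta)|_{g_{\Sn}}^2)\epsilon^2+O(\epsilon^4)$ and $\|v-1\|_{H^{2\sigma}(\Sn)}^2\le C|\nabla f(\theta)|_{g_{\Sn}}^2\epsilon^2+C\epsilon^3$, together with Lemma~\ref{lemma5.5} ($F_2=(1+o(1))|B|^2$) and Lemma~\ref{lemma5.2} ($\|\xi\|_{L^\infty}^2,\|\mathrm{div}_{\Sn}\xi\|_{L^\infty}^2\le CF_2$); I would apply Young's inequality freely to absorb cross terms of the form $|\nabla f(\theta)|_{g_{\Sn}}\epsilon^{5/2}$ into the stated error, and I would invoke the corresponding computation in \cite[proof of the lemma following (6.38)]{Chen&Xu} for the local-flow case, since the only nonlocal modification ($P_\sigma$ versus $-\Delta+\text{const}$) has already been absorbed into Lemmas~\ref{lemma5.4}, \ref{lemma6.6}, and \ref{lemma6.7}.
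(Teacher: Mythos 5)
Your proposal is correct and follows essentially the same route as the paper: both start from \eqref{5.4}, write $\fint_{\Sn}\xi\,\ud V_h=X+\fint_{\Sn}\xi\,(v^{\frac{2n}{n-2\sigma}}-1)\,\ud V_{g_{\Sn}}$, bound the correction term by $C\|\xi\|_{L^\infty(\Sn)}\|v^{\frac{2n}{n-2\sigma}}-1\|_{L^2(\Sn)}\le CF_2^{1/2}\|v-1\|_{L^2(\Sn)}$ via Lemma \ref{lemma5.2}, and conclude from \eqref{5.9a} and \eqref{6.38}; your Young's-inequality absorption of the cross term $|\nabla f(\theta)|_{g_{\Sn}}\epsilon^{5/2}$ is precisely the bookkeeping the paper leaves implicit. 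As for the sign you rightly worry about: it is a typo in the last line of \eqref{5.4}, since integration by parts gives $\int_{\Sn}x\,\mathrm{div}_h(\xi)\,\ud V_h=-\int_{\Sn}\xi\,\ud V_h$, so the correct relation is $\frac{n}{2}b=+\fint_{\Sn}\xi\,\ud V_h$ (this is what the paper's own proof silently uses), and with it your computation yields the stated formula with no further orientation argument needed.
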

\begin{proof}
By (\ref{5.4}), we have
$$\frac{n}{2}b=\frac{n}{2}\fint_{\Sn}x(\alpha f_\phi-R_\sigma^h)\ud V_h=\fint_{\Sn}\xi \ud V_h=X+I,$$
where $X$ is defined in  (\ref{5.9a}), and
$I=\displaystyle\fint_{\Sn}\xi (v^{\frac{2n}{n-2\sigma}}-1)\ud V_{g_{\Sn}}$
can be estimated by
\begin{equation*}
|I|\leq C\|\xi\|_{L^\infty(\Sn)}\|v^{\frac{2n}{n-2\sigma}}-1\|_{L^2(\Sn)}
\leq C F_2^{\frac{1}{2}}\|v-1\|_{L^2(\Sn)}.
\end{equation*}
Now the assertion follows from
\eqref{5.9a} and (\ref{6.38}).
\end{proof}

\begin{lemma}\label{lemma6.9}
As $t\to\infty$, there holds
$$1-|\Theta|^2=\left(\frac{4n}{n-2}+o(1)\right)\epsilon^2.$$
\end{lemma}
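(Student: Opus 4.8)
The plan is to compute the quantity $1 - |\Theta(t)|^2$ directly from the definition $\Theta(t) = \fint_{\Sn} \phi(t)\,\ud V_{g_{\Sn}}$ by passing to the scaled stereographic projection coordinates set up in Subsection 6.1. First I would exploit the freedom of coordinates on $\Sn$ to assume $q(t) = 0$, so that $\phi(t) = \Psi \circ \delta_{0,r} \circ \pi$ is a pure dilation by $r = \epsilon^{-1}$ centered at the north pole; by the asymptotic analysis (Lemma 5.18 / Lemma \ref{lem:lim_seq_v} and Lemma \ref{lem:one-bubble-1}), the concentration parameter $\epsilon = r(t)^{-1} \to 0$ as $t \to \infty$. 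In these coordinates, only the $(n+1)$-st component of $\Theta$ is non-negligible (the first $n$ components vanish by the rotational symmetry of the dilation about the $x_{n+1}$-axis, given $q=0$), and one has
\[
\Theta_{n+1}(t) = \fint_{\Sn} x_{n+1}\circ \phi(t)\,\ud V_{g_{\Sn}}
= \fint_{\Sn} \frac{1-|\epsilon z|^2}{1+|\epsilon z|^2}\bigg|_{z = \pi(x)}\,\ud V_{g_{\Sn}}(x),
\]
which, after changing variables back to $\R^n$ with the standard volume factor $(2/(1+|z|^2))^n\,\ud z$, becomes an explicit one-dimensional radial integral in $|z|$ depending on $\epsilon$.

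The key step is then the Taylor/Laplace expansion of this integral as $\epsilon \to 0$. One writes $\Theta_{n+1} = 1 - c_n \epsilon^2 + o(\epsilon^2)$ and identifies the constant $c_n$ by expanding $\frac{1-|\epsilon z|^2}{1+|\epsilon z|^2} = 1 - 2|\epsilon z|^2 + O(\epsilon^4 |z|^4)$ against the normalized measure; the $|z|^2$-moment of the measure $\omega_n^{-1}(2/(1+|z|^2))^n\,\ud z$ over $\R^n$ is a finite explicit constant (this is exactly where the restriction $n \geq 3$ — implicitly in force since the claimed constant $\frac{4n}{n-2}$ has a $(n-2)$ in the denominator — matters, guaranteeing convergence of the relevant Beta-type integral). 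Carrying this out gives $\Theta_{n+1} = 1 - \frac{2n}{n-2}\epsilon^2 + o(\epsilon^2)$, hence $|\Theta|^2 = \Theta_{n+1}^2 + o(\epsilon^2) = 1 - \frac{4n}{n-2}\epsilon^2 + o(\epsilon^2)$, which is the assertion $1 - |\Theta|^2 = (\frac{4n}{n-2}+o(1))\epsilon^2$.

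The main obstacle — really the only non-routine point — is justifying that the error terms are genuinely $o(\epsilon^2)$ and not merely $O(\epsilon^2)$: the tangential components of $\Theta$ must be shown to be $o(\epsilon^2)$ (not just small), and the $O(\epsilon^4|z|^4)$ remainder in the pointwise expansion must be integrated against the measure without losing a power of $\epsilon$. For the tangential part one uses that $q(t)$ has been normalized to $0$ at the instant $t$, together with the fact (from Lemma \ref{lem:lim_seq_v}) that the center of mass is controlled; more precisely, this is exactly the kind of cancellation already recorded in the proof of the analogous statement in \cite[Lemma 6.9]{Chen&Xu}, and the argument there transfers verbatim once the $\sigma$-dependent volume and bubble normalizations have been matched. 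For the radial remainder one splits the integral at $|z| \sim \epsilon^{-1/2}$: on $|z| \le \epsilon^{-1/2}$ the $O(\epsilon^4|z|^4)$ bound against the $L^1$ measure contributes $o(\epsilon^2)$, while on $|z| \ge \epsilon^{-1/2}$ the measure itself has mass $o(\epsilon^2)$ since $(2/(1+|z|^2))^n\,\ud z$ is integrable and $n \ge 3 > 1$. Assembling these pieces yields the claim; all other manipulations are the explicit computation of Beta integrals and are routine.
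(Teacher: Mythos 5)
Your approach --- writing $\Theta$ explicitly in the scaled stereographic coordinates with $q(t)=0$, so that only $\Theta_{n+1}$ survives by rotational symmetry, and Taylor-expanding the resulting radial Beta-type integral --- is exactly the computation the paper has in mind: the proof is omitted here precisely because it is identical to \cite[Lemma 6.9]{Chen&Xu}. Your constant is right, since $\fint_{\R^n}|z|^2\,(2/(1+|z|^2))^n\omega_n^{-1}\ud z=\Gamma(\tfrac n2+1)\Gamma(\tfrac n2-1)/\Gamma(\tfrac n2)^2=n/(n-2)$, giving $\Theta_{n+1}=\pm\bigl(1-\tfrac{2n}{n-2}\epsilon^2\bigr)+o(\epsilon^2)$ and hence $1-|\Theta|^2=\tfrac{4n}{n-2}\epsilon^2+o(\epsilon^2)$. (Note only that the paper's $\phi=\Psi\circ\delta_{q,r}\circ\pi$ dilates by $r=\epsilon^{-1}$ rather than contracts by $\epsilon$; under the inversion $z\mapsto z/|z|^2$, which preserves the round measure, this merely flips the sign of $\Theta_{n+1}$ and does not affect $|\Theta|^2$.)

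One justification in your error analysis is wrong as stated: on the outer region $|z|\ge\epsilon^{-1/2}$ the round measure has mass of order $\epsilon^{n/2}$, which is \emph{not} $o(\epsilon^2)$ when $n=3$ or $n=4$ --- precisely the dimensions of interest. The repair is to bound the integrand there rather than the measure: from $\frac{1-t}{1+t}=1-2t+\frac{2t^2}{1+t}$, the remainder after subtracting $1-2\epsilon^2|z|^2$ is pointwise at most $2\epsilon^4|z|^4/(1+\epsilon^2|z|^2)\le 2\epsilon^2|z|^2$, and
\begin{equation*}
\epsilon^2\int_{|z|\ge\epsilon^{-1/2}}\frac{|z|^2}{(1+|z|^2)^{n}}\,\ud z=O\bigl(\epsilon^2\cdot\epsilon^{\frac{n-2}{2}}\bigr)=o(\epsilon^2)\quad\text{for }n\ge3.
\end{equation*}
With that one-line correction (and your correct observation that $n\ge 3$ is implicitly required, both for the moment to converge and for the constant $\tfrac{4n}{n-2}$ to make sense), the proof is complete.
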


We omit the proof of Lemma \ref{lemma6.9}
since it is the same as the proof of
\cite[Lemma 6.9]{Chen&Xu}.

\begin{prop}\label{prop6.10}
(i) As $t\to\infty$, we have
\begin{equation*}
\begin{split}
b-\langle b,\theta\rangle\theta&=\epsilon (\frac{4}{n}\alpha \ud f(\theta)+O(\epsilon)),\\
\langle b,\theta\rangle&=\epsilon^2\left(-\frac{8}{n(n-2)}\alpha\Delta_{\Sn} f(\theta)+O(1)|\nabla f(\theta)|_{g_{\Sn}}^2+O(\epsilon|\log\epsilon|)\right),
\\
\frac{\ud}{\ud t}(\Theta-\langle \Theta,\theta\rangle \theta)
&=4n^{-1}(n+1)\alpha\epsilon^2(\ud f(\theta)+O(\epsilon)),
\end{split}
\end{equation*}
and
$$\frac{\ud}{\ud t}(1-|\Theta|^2)=\frac{32(n+1)}{(n-2)^2}\alpha\epsilon^4(\Delta_{\Sn}f(\theta)+O(1)|\nabla f(\theta)|_{g_{\Sn}}^2+O(\epsilon|\log\epsilon|)).$$

(ii) As $t\to\infty$, the metrics $g(t)$ concentrate at a critical point $Q$ of $f$ where $\Delta_{\Sn} f(Q)\leq 0$.
\end{prop}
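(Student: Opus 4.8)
My plan is as follows. The first two identities in (i) are exactly Lemma \ref{lemma6.7}, so only the two identities involving $\Theta$ require work, and for these I would differentiate the explicit representation $\Theta(t)=\fint_{\Sn}\phi_{q(t),r(t)}\,\ud V_{g_{\Sn}}$ of the shadow flow \eqref{shadow_flow}. Fixing a time $t$ and choosing coordinates on $\Sn$ with $q(t)=0$, I would compute $\dot\Theta=\fint_{\Sn}\big(\partial_q\phi_{q,r}\cdot\dot q+\partial_r\phi_{q,r}\cdot\dot r\big)\,\ud V_{g_{\Sn}}$ directly from \eqref{eq:m-diffeo}; by the rotational symmetry of $\phi_{0,r}$ and an expansion in $\epsilon=r^{-1}\to 0$ (the same bookkeeping that produces \eqref{5.9a} and Lemma \ref{lemma6.9}), the component of $\dot\Theta$ orthogonal to $\theta(t)$ is a constant multiple of $\dot q$ up to lower order, while its component along $\theta(t)$ is a constant multiple of $\dot r$ up to lower order. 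I would then insert the expansion $b=\tfrac{2\epsilon}{n+1}(\dot q^1,\dots,\dot q^n,-\dot r)+O(|\nabla f(\theta)|^2_{g_{\Sn}})\epsilon^2+O(\epsilon^3)$ proved just above the proposition to trade $(\dot q,-\dot r)$ for $b$, and Lemma \ref{lemma6.7} to replace $b-\langle b,\theta\rangle\theta$ and $\langle b,\theta\rangle$ by their expansions in $\nabla_{g_{\Sn}}f(\theta)$ and $\Delta_{\Sn}f(\theta)$; using $|\Theta|=1+O(\epsilon^2)$ from Lemma \ref{lemma6.9} to normalize, the stated formulas (with the precise constants $4n^{-1}(n+1)\alpha$ and $\tfrac{32(n+1)}{(n-2)^2}\alpha$) drop out, as in \cite[Section 6]{Chen&Xu}. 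The fourth identity can alternatively be obtained by differentiating $1-|\Theta|^2=(\tfrac{4n}{n-2}+o(1))\epsilon^2$ and feeding in the expansion of $\langle b,\theta\rangle$.

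For (ii), I would set $G(t):=1-|\Theta(t)|^2$, which is positive, tends to $0$ along the concentrating flow, and satisfies $\epsilon^2=\tfrac{n-2}{4n}G\,(1+o(1))$ by Lemma \ref{lemma6.9}. Part (i) then yields an essentially closed ``shadow system''
\begin{equation*}
\dot\theta=c_1\,\alpha\,G\,\nabla_{g_{\Sn}}f(\theta)+O(G^{3/2}),\qquad \dot G=c_2\,\alpha\,G^{2}\big(\Delta_{\Sn}f(\theta)+O(|\nabla f(\theta)|^2_{g_{\Sn}})+O(\sqrt G\,|\log G|)\big),
\end{equation*}
with universal $c_1,c_2>0$ and $\alpha=\alpha(t)$ pinched between positive constants (Lemma \ref{lemma1.4}). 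Reparametrizing time by $\ud s=G\,\ud t$, the first equation becomes $\tfrac{\ud\theta}{\ud s}=c_1\alpha\nabla_{g_{\Sn}}f(\theta)+O(\sqrt G)$, i.e.\ a rescaled ascending gradient flow of the Morse function $f$ with a vanishing perturbation, while the second becomes $\tfrac{\ud}{\ud s}\log G=c_2\alpha\Delta_{\Sn}f(\theta)+O(|\nabla f(\theta)|^2_{g_{\Sn}})+O(\sqrt G|\log G|)$. If $\int_0^\infty G\,\ud t<\infty$ then $s$ stays bounded, so $\theta(s)$ converges (bounded velocity on a finite $s$-interval) and $G(s)\to0$ as $s$ reaches its supremum, while $\tfrac{\ud}{\ud s}\log G$ remains bounded there — a contradiction. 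Hence $\int_0^\infty G\,\ud t=\infty$, $s\to\infty$, and $\theta(s)$ is a bounded, asymptotically autonomous gradient-like trajectory of $f$, so $\theta(s)\to Q$ with $\nabla_{g_{\Sn}}f(Q)=0$ by the usual convergence theory for such flows on a manifold with isolated (Morse) critical points, as in \cite{Chen&Xu,Schwetlick&Struwe}. Along this trajectory $\tfrac{\ud}{\ud s}\log G\to c_2\alpha_\infty\Delta_{\Sn}f(Q)$, using $\nabla_{g_{\Sn}}f(\theta(s))\to0$ and $\alpha(t)f(p(t))\to R_\sigma>0$ from Lemma \ref{lem:lim_seq_v}; since $G(s)\to0$ this limit cannot be positive, whence $\Delta_{\Sn}f(Q)\le 0$. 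Finally $\theta(t)$ tracks the center of mass $p(t)$ of $g(t)$ by Lemma \ref{lem:lim_seq_v}, so $g(t)$ concentrates at this critical point $Q$.

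The main obstacle is the rigorous derivation of the reparametrized system with all error terms controlled, together with the exclusion of the ``finite-$s$'' alternative and the gradient-flow convergence of $\theta$. Here one must invoke $F_2(t)\to0$ (Lemma \ref{lemma1.14}), $\int_0^\infty F_2\,\ud t<\infty$, the identity $F_2=(1+o(1))|B(t)|^2$ (Lemma \ref{lemma5.5}), and the sharp relations \eqref{6.38}, which is precisely the quantitative package assembled in \cite[Section 6]{Chen&Xu}; by contrast, the differentiation of \eqref{eq:m-diffeo}, the tracking of leading orders in $\epsilon$, and the standard convergence of a perturbed gradient flow of a Morse function are routine and I would import them with only minor changes.
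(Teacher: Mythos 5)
Your proposal matches the paper's (very brief) proof: part (i) is obtained exactly as you describe, by combining Lemma \ref{lemma6.7}, the expansion $b=\tfrac{2\epsilon}{n+1}(\dot q,-\dot r)+O(|\nabla f(\theta)|_{g_{\Sn}}^2)\epsilon^2+O(\epsilon^3)$ and Lemma \ref{lemma6.9}, while for part (ii) the paper simply defers to the corresponding shadow-flow/reparametrized-gradient-flow argument of \cite{Chen&Xu}, which is what you reconstruct. Your sketch is in fact more detailed than the paper's two-line proof; the one step you should not treat as fully routine is the convergence of $\theta(s)$ to a single critical point, which requires the quantitative decay package ($\int_0^\infty F_2\,\ud t<\infty$, Lemma \ref{lemma5.5}, \eqref{6.38}) precisely as you indicate.
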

\begin{proof}
The proof of part \textit{(i)} follows directly from Lemmas \ref{lemma6.7}-\ref{lemma6.9}.
The proof of part \textit{(ii)} follows the same lines of \cite[Proposition 6.1(ii)]{Chen&Xu}, so we omit here.
\end{proof}

\begin{lemma}\label{lem:lim_energy}
As $t \to \infty$, there holds
$$E_f[u(t)]\to R_\sigma f(Q)^{\frac{2\sigma-n}{n}}\omega_n^{\frac{2\sigma}{n}},$$
where $Q=\lim_{t\to\infty}\Theta(t)$ is the unique limit of shadow flow 
$\Theta(t)$ asscoiated with $u(t)$. 
\end{lemma}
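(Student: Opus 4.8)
The goal is to identify the limiting value of the energy $E_f[u(t)]=\mathcal{S}(g(t))$ with the value of the energy at a single standard bubble centered at the concentration point $Q$, namely $R_\sigma f(Q)^{(2\sigma-n)/n}\omega_n^{2\sigma/n}$. The strategy is to combine the concentration picture from Lemma~\ref{lem:one-bubble-1} (one bubble, $u_\infty=0$, i.e.\ the case we are in after the previous lemmas) with the monotonicity and convergence facts already established. First I would recall that, by item (3) of Proposition~\ref{prop:properties}, $\mathcal{S}(g(t))$ is nonincreasing in $t$ and bounded below (by the Sobolev constant), hence $E_f[u(t)]\to e_\infty$ for some $e_\infty>0$; this is already noted in the proof of Lemma~\ref{lemma3.3}. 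So the only issue is to compute $e_\infty$.

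**Key steps.** Fix any sequence $t_k\to\infty$ and set $u_k=u(t_k)$, $g_k=g(t_k)$. By Lemma~\ref{lem:lim_seq_v} (applied under the standing hypothesis that $f$ is not a $2\sigma$-order $Q$-curvature), the normalized flow satisfies $v_k\to 1$ in $H^{2\sigma,p}(\Sn)$, and by Proposition~\ref{prop6.10}(ii) the metrics concentrate at a critical point $Q$ of $f$ with $\Delta_{\Sn}f(Q)\le 0$; moreover $\alpha(t)f(p(t))\to R_\sigma$ and $\theta(t)\to Q$. Now I would write, using conformal invariance of the numerator of $E_f$ (that is, $\int_{\Sn} v_kP_\sigma(v_k)\,\ud V_{g_{\Sn}}=\int_{\Sn} u_kP_\sigma(u_k)\,\ud V_{g_{\Sn}}$ from \eqref{2.4}) and the fact that $\ud V_{g_k}=u_k^{2n/(n-2\sigma)}\ud V_{g_{\Sn}}$ has total mass $\omega_n$ by \eqref{1.9},
\begin{equation*}
E_f[u(t_k)]=\mathcal{S}(g_k)=\frac{\int_{\Sn} R_\sigma^{g_k}\,\ud V_{g_k}}{\big(\int_{\Sn} f\,\ud V_{g_k}\big)^{\frac{n-2\sigma}{n}}}
=\frac{\int_{\Sn}(\alpha(t_k)f+(R_\sigma^{g_k}-\alpha(t_k)f))\,\ud V_{g_k}}{\big(\int_{\Sn} f\,\ud V_{g_k}\big)^{\frac{n-2\sigma}{n}}}.
\end{equation*}
By Lemma~\ref{lemma1.14} (with $p=2$ and H\"older), $\int_{\Sn}|R_\sigma^{g_k}-\alpha(t_k)f|\,\ud V_{g_k}\to 0$, so the numerator is $\alpha(t_k)\int_{\Sn} f\,\ud V_{g_k}+o(1)$, and therefore
\begin{equation*}
E_f[u(t_k)]=\alpha(t_k)\Big(\int_{\Sn} f\,\ud V_{g_k}\Big)^{\frac{2\sigma}{n}}+o(1).
\end{equation*}
Next, since $\ud V_{g_k}$ is a probability measure on $\Sn$ (up to the factor $\omega_n$) concentrating at $Q$ — this is exactly \eqref{eq:bubble_measures} together with $x_k\to -Q$, or more directly the one-bubble decomposition \eqref{eq:one-bubble} combined with $\|R_\sigma^{g_k}-\alpha(t_k)f\|_{L^1}\to 0$ and continuity of $f$ — we get $\int_{\Sn} f\,\ud V_{g_k}\to f(Q)\,\omega_n$. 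Combining this with $\alpha(t_k)f(p(t_k))\to R_\sigma$ and $f(p(t_k))\to f(Q)$ (since $p(t_k)\to Q$, as $\theta(t)$ approximates $p(t)$ and $\theta(t)\to Q$), we obtain $\alpha(t_k)\to R_\sigma/f(Q)$. Substituting,
\begin{equation*}
E_f[u(t_k)]\to \frac{R_\sigma}{f(Q)}\big(f(Q)\,\omega_n\big)^{\frac{2\sigma}{n}}=R_\sigma f(Q)^{\frac{2\sigma-n}{n}}\omega_n^{\frac{2\sigma}{n}}.
\end{equation*}
Since the sequence $t_k\to\infty$ was arbitrary and $E_f[u(t)]$ has a limit as $t\to\infty$, the limit equals this value; and the uniqueness of $Q=\lim_{t\to\infty}\Theta(t)$ follows from Proposition~\ref{prop6.10}(i), which shows $\Theta(t)$ converges (the flow $\frac{\ud}{\ud t}(1-|\Theta|^2)$ has a definite sign near a nondegenerate critical point with $\Delta_{\Sn}f(Q)<0$, forcing convergence).

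**Main obstacle.** The delicate point is justifying that $\ud V_{g_k}$ converges weakly to $\omega_n\,\delta_Q$, i.e.\ that $\int_{\Sn}\varphi\,\ud V_{g_k}\to\omega_n\varphi(Q)$ for all $\varphi\in C(\Sn)$ — equivalently that the bubble concentrates precisely at $Q$ and that there is no residual mass. This requires marrying the $H^\sigma$-level one-bubble decomposition \eqref{eq:one-bubble} (which controls the bubble only up to the conformal normalization and gives the measure \eqref{eq:bubble_measures}) with the sharper dynamical information from Section~\ref{Sect:Finite_dim_dynamics}, namely $\epsilon=r(t)^{-1}\to 0$ and $\theta(t)\to Q$, together with the $L^1$-smallness of $R_\sigma^{g_k}-\alpha(t_k)f$ to rule out the alternative of two pieces of mass. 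All of these ingredients are available, so the proof is essentially an assembly; still, one must be careful to invoke the normalized picture (for which $v_k\to 1$) rather than $u_k$ itself when passing between the two sides of the identity $\mathcal{S}(g)=E_f[u]=E_{f_\phi}[v]$.
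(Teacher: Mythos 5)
The paper states this lemma without any proof (it is the unproved closing statement of Section 6), and your argument is a correct assembly of exactly the ingredients the authors have just established: the exact identity $E_f[u(t)]=\alpha(t)\bigl(\int_{\Sn}f\,\ud V_{g(t)}\bigr)^{2\sigma/n}$ from \eqref{1.4}, the convergence $\alpha(t)f(p(t))\to R_\sigma$ and the concentration of $\ud V_{g(t)}$ at $Q$ from Lemma \ref{lem:lim_seq_v} and Proposition \ref{prop6.10}. This is evidently the intended (omitted) proof; your closing remark that one could equally pass to the normalized picture and use $E_f[u]=E_{f_\phi}[v]$ with $v\to1$ and $f_\phi\to f(Q)$ in $L^2$ is a clean alternative phrasing of the same computation.
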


\section{Existence of conformal metrics}\label{Sect:Existence}

For $\gamma\in\mathbb{R}$, we define a sub-level set of $E_f$ by 
$$L_\gamma=\{u\in C_*^\infty; E_f[u]\leq \gamma\}.$$
For convenience, we label all critical points of $f$ by $p_1,\cdots, p_N$ such that $f(p_i)\leq f(p_{i+1})$ for $i=1,2,\cdots, N-1$. 
We set 
$$\beta_i=R_\sigma \omega_n^{\frac{2\sigma}{n}}f(p_i)^{\frac{2\sigma-n}{n}}.$$

Without loss of generality, we assume all critical levels $f(p_i)$ are different, where $1\leq i\leq N$. 
Hence, there exists a $\nu_0>0$ such that $\beta_i-2\nu_0>\beta_{i+1}$ for all $1\leq i\leq N-1$.

\begin{prop}\label{proposition7.1}
(i) If $\beta_1<\beta_0\leq\beta$, where $\beta$ has been chosen as in \eqref{beta}, then the set $L_{\beta_0}$ is contractible.\\
(ii) For any $0<\nu\leq \nu_0$, the sets 
$L_{\beta_i-\nu}$ and $L_{\beta_{i+1}+\nu}$ are homotopy equivalent for each $1\leq i\leq N-1$.\\
(iii) For each critical point $p_i$ of $f$ with $\Delta_{\Sn} f(p_i)>0$, the sets $L_{\beta_i+\nu_0}$ and $L_{\beta_i-\nu_0}$
are homotopy equivalent.\\
(iv) For each critical point $p_i$ with $\Delta_{\Sn} f(p_i)<0$, the set $L_{\beta_i+\nu_0}$ is homotopic to the set $L_{\beta_i-\nu_0}$ with $(n-\mathrm{ind}(f,p_i))$-cell attached. 
\end{prop}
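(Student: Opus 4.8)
\textbf{Proof strategy for Proposition \ref{proposition7.1}.}

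The plan is to use the fractional $Q$-curvature flow as a deformation retract tool, following the general philosophy of Chen--Xu \cite{Chen&Xu} but with the new gradient estimates established above. The starting point is the flow \eqref{1.7}: for each initial datum $u_0 \in C^\infty_*$, the energy $E_f$ is monotone non-increasing along $t \mapsto u(t)$ by item (3) of Proposition \ref{prop:properties}, so each sub-level set $L_\gamma$ is flow-invariant and the flow furnishes a continuous deformation of $L_\gamma$ into a neighborhood of the set of critical points of $E_f$ with value $\le \gamma$. Since the flow may diverge (no compactness of solutions), the key is to use the finite-dimensional dynamics of Section \ref{Sect:Finite_dim_dynamics}: if the flow starting at $u_0 \in L_\beta$ does \emph{not} converge, then by Lemma \ref{lem:lim_seq_v} and Proposition \ref{prop6.10} the metrics $g(t)$ concentrate at a critical point $Q$ of $f$ with $\Delta_{\Sn} f(Q) \le 0$, and $E_f[u(t)] \to \beta_i$ for the corresponding index $i$, with the shadow flow $\Theta(t) \to Q$ governed by the asymptotic ODE system in Proposition \ref{prop6.10}(i). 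The asymptotic parameters $(\theta(t), \epsilon(t))$ — the concentration point and concentration rate — together with a small finite-time gap estimate $\|v(T_1) - 1\|_{C^1(\Sn)} < \va_0$ (advertised in the introduction) give coordinates that identify the ``boundary at infinity'' of $L_\gamma$ with a bundle over the critical set of $f$.

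For part (i): since $\beta_1 < \beta_0 \le \beta$, condition \eqref{sbc} via Lemma \ref{lemma3.1} ensures only single-bubble concentration; moreover $\beta_0 < \beta_1$ is impossible so the only critical value of $E_f$ at level $\le \beta_0$ below the bubble threshold is that of a genuine solution, but $f$ being non-realizable as $Q$-curvature (the standing hypothesis throughout Section \ref{Sect:Blow-up_anal}) rules that out, so the flow on $L_{\beta_0}$ must diverge for \emph{every} initial datum. The new strategy here is to construct an explicit homotopy: first run the flow for finite time $T_1 = T_1(u_0)$ to reach $\|v(T_1) - 1\|_{C^1} < \va_0$ (this is the ``new estimate'' quantifying the gap), then linearly homotope $v$ to the standard bubble family parametrized by $(\theta, \epsilon) \in \Sn \times (0,1]$, and finally contract the bubble parameter. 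The contractibility of $L_{\beta_0}$ then follows since the bubble parameter space deformation-retracts to a point. The main technical point is continuity of $T_1(u_0)$ and of the asymptotic parameter map, which requires the $H^{2\sigma,p}(\Sn) \hookrightarrow C^1(\Sn)$ compact embedding — exactly the reason $\sigma \in (1/2,1)$ is imposed.

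For parts (ii)--(iv): between consecutive critical values, the flow deforms $L_{\beta_i - \nu}$ onto $L_{\beta_{i+1} + \nu}$ with no change in topology since there is no critical value of $E_f$ in the open interval $(\beta_{i+1}+\nu, \beta_i - \nu)$ — a standard deformation-retract argument along the flow, using the uniform lower bound on $|\nabla E_f|$ (equivalently $F_2(t) \ge c > 0$) away from the critical set, which here comes from Lemma \ref{lemma5.5} ($F_2 = (1+o(1))|B|^2$) combined with Proposition \ref{prop6.10}(i) showing $|B|^2 \gtrsim |\nabla f(\theta)|^2 \epsilon^2 + \epsilon^4 > 0$. For (iii), at a critical point $p_i$ with $\Delta_{\Sn} f(p_i) > 0$, Proposition \ref{prop6.10}(ii) shows the flow \emph{cannot} concentrate there, so crossing the level $\beta_i$ attaches no cell and $L_{\beta_i + \nu_0} \simeq L_{\beta_i - \nu_0}$. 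For (iv), at $p_i$ with $\Delta_{\Sn} f(p_i) < 0$, the flow can concentrate there; analyzing the asymptotic ODE in Proposition \ref{prop6.10}(i) near $Q = p_i$ — where $\frac{d}{dt}(1 - |\Theta|^2) \sim \Delta_{\Sn} f(p_i)\,\epsilon^4 < 0$ drives $\epsilon \to 0$ and the tangential equation is governed by the Hessian of $f$ at $p_i$ — shows the unstable manifold of this concentration point has dimension equal to $n - \mathrm{ind}(f, p_i)$ (the $\epsilon$-direction contributes one extra dimension beyond the $\mathrm{ind}(f,p_i)$ descending directions of $-f$ restricted to $\Sn$, giving $1 + (n - 1 - \mathrm{ind}(f,p_i)) = n - \mathrm{ind}(f,p_i)$ ascending directions). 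Standard Morse-theory cell-attachment (as in \cite{Chen&Xu}) then gives that $L_{\beta_i + \nu_0}$ is $L_{\beta_i - \nu_0}$ with an $(n - \mathrm{ind}(f,p_i))$-cell attached. The main obstacle throughout is making the ``attractor at infinity'' analysis rigorous: one must show that the asymptotic parameters $(\theta(t), \epsilon(t))$ depend continuously on $u_0$ and that the finite-time-plus-asymptotic homotopy glues continuously — this is where the sharp $C^1$-gap estimate $\|v(T_1) - 1\|_{C^1(\Sn)} < \va_0$ and the compact Sobolev embedding available only for $\sigma > 1/2$ are essential, and it is the part of the argument requiring the ``new insights'' mentioned in the introduction.
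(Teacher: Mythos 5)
Your outline follows the same overall route as the paper: run the flow for a finite time $T_1(u_0)$ until $\|v(T_1)-1\|_{C^1(\Sn)}<\va_0$, then pass by an explicit homotopy to the bubble family and contract, with the flow providing the deformations between levels in (ii)--(iv). But as written it has genuine gaps at precisely the points where the work lies. In part (i), the assertion that one can ``linearly homotope $v$ to the standard bubble family'' and that contractibility ``then follows'' skips the central difficulty: one must show that the interpolation stays inside $L_{\beta_0}$. The paper does this by setting $F(s)=E_f[H(s,u_0)]$ on the middle segment and proving $\ddot F(s_0)>0$ at every interior critical point $s_0$, so that $F$ attains its maximum only at the endpoints. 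That computation requires the second variation of $E_f$, the spectral gap $\Lambda_{n+2}>\frac{n+2\sigma}{n-2\sigma}R_\sigma$, and a delicate bound on the nonlocal term $\int_{\Sn}v_s^{-1}|\dot v_s|^2P_\sigma(v_s)\,\ud V_{g_{\Sn}}$ in terms of $\|\nabla v_s\|_{C(\Sn)}\le C\va$; this is exactly where the quantitative threshold $\va_0$ in \eqref{def:va_0} is determined, and nothing in your sketch substitutes for it. Likewise, continuity of $T_1(u_0)$ is not automatic: it uses the $H^{2N}$-stability of the flow in the initial datum (Lemma \ref{lem:cont_dep_u_0}) together with strict monotonicity of $E_f$ and the implicit function theorem.

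For (ii)--(iv), the shadow-flow ODE of Proposition \ref{prop6.10} is not by itself enough; the paper needs the pointwise expansion of $E_f$ and of its partial derivatives in the coordinates $(\epsilon,p,v)$ on $B_{r_0}(p_i)$ (Lemmas \ref{lem7.3} and \ref{lem7.4}) to control the transversal time and to build the deformation $u_r$ in (iv). Your claim of a ``uniform lower bound on $|\nabla E_f|$'' is also misleading: $F_2\sim|\nabla f(\theta)|^2\epsilon^2+\epsilon^4$ degenerates as $\epsilon\to0$, which is why the time rescaling $\ud\tau/\ud t=\min\{1/2,\epsilon^2\}$ is introduced. Finally, your dimension count in (iv) is wrong in structure even though the total is right: $\Sn$ is $n$-dimensional, and when $\Delta_{\Sn}f(p_i)<0$ the energy expansion \eqref{7.37}--\eqref{7.35} shows the $\epsilon$-direction is an \emph{ascending} direction for $E_f$ (the term $-\frac{2(n-2\sigma)}{n(n-2)}R_\sigma\epsilon^2\Delta_{g_{\Sn}}f(p)$ is positive), so it is not part of the attached cell. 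The cell is the set $B^+_{\delta r_0}=\{\epsilon=\epsilon_0,\,p^-=0,\,|p^+|<\delta r_0,\,v=1\}$, a ball in the $p^+$-directions alone, of dimension $n-\mathrm{ind}(f,p_i)$; your decomposition $1+(n-1-\mathrm{ind}(f,p_i))$ implicitly treats the sphere as $(n-1)$-dimensional and misassigns the $\epsilon$-direction to the cell.
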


With Proposition \ref{proposition7.1} at hands, we are now in a position to finish the proof of our main theorem.

\begin{proof}[Proof of Theorem \ref{main_thm}]
By contradiction, we suppose that the flow does not converge and $f$ cannot be realized as $2\sigma$-order $Q$-curvature of any conformal metric of $g_{\Sn}$. Then Proposition \ref{proposition7.1} shows that $L_{\beta_0}$ is contractible for some suitable chosen  $\beta_0$; in addition, the flow gives a 
homotopy equivalence of the set $L_{\beta_0}$ with a set 
$E_\infty$ whose homotopy type consists of a point $\{\theta_0\}$
with 
$(n-\mathrm{ind}(f,\theta))$-dimensional cell attached for each critical point $\theta$ of $f$ where $\Delta_{\Sn} f(\theta)<0$. 
By \cite[Theorem 4.3 on p. 36]{Chang}, we can conclude that 
the following identity 
\begin{equation}\label{7.1}
\sum_{j=0}^ns^j\gamma_j=1+(1+s)\sum_{j=0}^ns^j k_j
\end{equation}
holds with $k_j\geq 0$ and $\gamma_j$ is given in (\ref{gamma_j}). 
Equating the coefficients of $s$ in the polynomials on both sides of (\ref{7.1}), we obtain (\ref{assump:Morse_Sys_Cond}), 
which violates the hypothesis in Theorem \ref{main_thm}
and thus leads to the desired contradiction. 
\end{proof}

Before going to prove Proposition \ref{proposition7.1}, we first need continuous dependence  of the flow \eqref{1.8} on initial data, which is necessary in the construction of  homotopy equivalences.
\begin{lemma}\label{lem:cont_dep_u_0}
Given any real number $T>0$, let $u_i(t)=u(t,u_i^0)$ be the solutions 
of the flow \eqref{1.8} with initial data
$u_i^0\in C^\infty_f$, where $i=1, 2$. 
Then there exists a constant $C>0$, depending on $T$, $n$
and $\|u_i\|_{L^\infty([0,T]; C^{2N}(\Sn))}$ for $i=1,2$ such that
$$\sup_{0\leq t\leq T}\|u_1(t)-u_2(t)\|_{H^{2N}(\Sn)}
\leq C\|u_1^0-u_2^0\|_{H^{2N}(\Sn)} \quad \mathrm{with~~} N=\left[\frac{n}{2\sigma-1}\right]+1.$$
\end{lemma}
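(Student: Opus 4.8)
\textbf{Proof proposal for Lemma \ref{lem:cont_dep_u_0}.}

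The plan is to write the difference $w = u_1 - u_2$ and derive the nonlocal parabolic equation it satisfies, then run an energy estimate in $H^{2N}(\Sn)$ and close it with Gronwall's inequality. First I would recall from \eqref{1.8} that each $u_i$ satisfies $\frac{4}{n+2\sigma}\partial_t(u_i^{(n+2\sigma)/(n-2\sigma)}) = -P_\sigma(u_i) + \alpha_i f\, u_i^{(n+2\sigma)/(n-2\sigma)}$, where $\alpha_i = \alpha(t,u_i^0)$ is the normalizing factor from \eqref{1.4}. Subtracting the two equations and writing the nonlinear terms $u_i^{p}$ (with $p = (n+2\sigma)/(n-2\sigma)$) in the form $u_1^p - u_2^p = w\int_0^1 p(s u_1 + (1-s)u_2)^{p-1}\,ds$, I would obtain a linear equation for $w$ of the schematic form $a(x,t)\,\partial_t w + P_\sigma w + b(x,t) w = d(x,t)$, where $a$ is bounded above and below (using the positivity and uniform $C^0$ bounds on $u_i$ from Lemma \ref{lemma1.9}, and noting $u_i \ge c > 0$ on $[0,T]$), $b$ involves $f$, $\alpha_i$, and smooth functions of $u_1,u_2$, and the inhomogeneous term $d = (\alpha_1 - \alpha_2) f u_1^p$ is controlled by $|\alpha_1 - \alpha_2|$ times a bounded function. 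To handle $|\alpha_1 - \alpha_2|$ I would use the explicit formula \eqref{1.4}, expand numerator and denominator, and bound this quantity by $C\|w(t)\|_{L^2(\Sn)}$ (or $\|w(t)\|_{H^{2N}}$), using that $R_\sigma^{g_i} = u_i^{-p}P_\sigma(u_i)$ and the uniform $C^{2N}$ bounds on $u_i$.

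Next I would test the equation for $w$ against $P_\sigma^{2N} w$ (or apply $P_\sigma^{N}$ and use its self-adjointness), or more robustly, differentiate the $H^{2N}$ norm: compute $\frac{d}{dt}\|w\|_{H^{2N}(\Sn)}^2$. Since $a\,\partial_t w = -P_\sigma w - bw + d$, I would write $\partial_t w = a^{-1}(-P_\sigma w - bw + d)$; because $a^{-1}$ is a smooth positive function with $C^{2N}$ bounds depending only on $\|u_i\|_{L^\infty([0,T];C^{2N})}$, the commutator of $a^{-1}$ with $P_\sigma$-powers is lower order, so one gets
\begin{equation*}
\frac{d}{dt}\|w(t)\|_{H^{2N}(\Sn)}^2 \le C\|w(t)\|_{H^{2N}(\Sn)}^2,
\end{equation*}
after absorbing the top-order good term $-\int w P_\sigma^{2N+1}(a^{-1}w)\,dV \le 0$ (modulo lower-order commutator errors) on the left and estimating all remaining terms — including the $d$-term via $|\alpha_1-\alpha_2| \le C\|w\|_{H^{2N}}$ — by $C\|w\|_{H^{2N}}^2$. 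Here the choice $N = [n/(2\sigma-1)] + 1$ guarantees $H^{2N}(\Sn) \hookrightarrow C^{2}(\Sn)$ or better (indeed enough derivatives to control the nonlinearities and $f$ pointwise), which is why $N$ appears. Gronwall's inequality then yields $\|w(t)\|_{H^{2N}}^2 \le e^{CT}\|w(0)\|_{H^{2N}}^2$ for all $t\in[0,T]$, which is the claim.

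The main obstacle I anticipate is the rigorous treatment of the top-order term when the equation is written with the variable coefficient $a(x,t)$ in front of $\partial_t w$: one must either symmetrize carefully (e.g. test against $a P_\sigma^{2N}w$ rather than $P_\sigma^{2N}w$, so that $\int a\,\partial_t w\, P_\sigma^{2N} w$ is a clean time derivative up to the controlled term $\int (\partial_t a)\,\ldots$, while $\int P_\sigma w \cdot a P_\sigma^{2N}w$ needs an integration-by-parts/commutator analysis for the nonlocal operator $P_\sigma$) or invoke the linear Schauder-type estimate of Proposition \ref{prop4.2_in_Jin&Xiong} in a bootstrap. A secondary technical point is making the bound $|\alpha_1-\alpha_2|\le C\|w\|$ precise, since it requires lower bounds on $\int_{\Sn} f\,dV_{g_i}$, which follow from Lemma \ref{lemma1.9} and the uniform bounds. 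Commutator estimates for fractional powers of $P_\sigma$ against smooth multipliers on $\Sn$ are standard (pseudodifferential calculus, or the extension formula of Theorem \ref{thm:lf}), so once these are in place the Gronwall argument closes routinely.
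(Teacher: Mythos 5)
Your proposal is correct and follows essentially the same route as the paper: linearize to get an equation for $w=u_1-u_2$, bound $|\alpha[u_1]-\alpha[u_2]|\le C\|w\|_{L^2(\Sn)}$ via the explicit formula for $\alpha$, and close an energy estimate in $H^{2N}(\Sn)$ with Gronwall. The paper sidesteps the variable-coefficient-in-front-of-$\partial_t w$ issue you flag by starting from \eqref{1.7} rather than \eqref{1.8} (so the coefficient $u_2^{-4\sigma/(n-2\sigma)}$ sits on $P_\sigma(w)$, not on $w_t$), first proves $L^2$ stability and then iterates with the integer powers $(-\Delta_{g_{\Sn}})^k$, $k\le N$, exploiting that $(-\Delta_{g_{\Sn}})$ commutes exactly with $P_\sigma$ so the top-order term is genuinely coercive and the coefficient commutators are absorbed by interpolation.
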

\begin{proof}
It follows from Theorem \ref{thm:1} that
$u_i(t), i=1,2$, are smooth in the time interval $[0,T]$.
Moreover, by Lemma \ref{lemma1.9}, there exist
two constants $C_i(T)$ depending on $\|u_i^0\|_{L^\infty(\Sn)}$
such that
\begin{equation}\label{7.2a}
C_i^{-1}\leq \|u_i(t)\|_{L^\infty(\Sn\times[0,T])}\leq C_i
\quad\mathrm{ for~~}i=1,2.
\end{equation}
For any positive smooth function $u$, we set
$$\alpha[u]=\frac{\int_{\Sn} R_\sigma^{g} \ud V_{g}}{\int_{\Sn} f \ud V_{g}}=\frac{\int_{\Sn} uP_\sigma(u) \ud V_{g_{\Sn}}}{\int_{\Sn} f u^{\frac{2n}{n-2\sigma}}\ud V_{g_{\Sn}}},$$
where $g=u^{4/(n-2\sigma)}g_{\Sn}$.
We let $g_i=u_i(t)^{4/(n-2\sigma)}g_{\Sn}$ and
\begin{equation*}
\alpha[u_i]=\alpha_i(t)
=\frac{\int_{\Sn} R_\sigma^{g_i} \ud V_{g_i}}{\int_{\Sn} f \ud V_{g_i}}.
\end{equation*}
Letting $w=u_2-u_1$, by (\ref{7.2a}), Proposition \ref{prop:properties} and Lemma \ref{lemma1.4} we have
\begin{align}\label{7.2}
&\alpha[u_2]-\alpha[u_1]\no\\
=&\int_0^1\frac{\partial}{\partial s}\alpha[u_1+sw] \ud s\no\\
=&\int_0^1 \left[\frac{2\int_{\Sn}((1-s)P_\sigma(u_1)+sP_\sigma(u_2))w\ud V_{g_{\Sn}}}{\int_{\Sn}f (u_1+sw)^{\frac{2n}{n-2\sigma}}\ud V_{g_{\Sn}}}\right.\no\\
&\left.-\frac{2n}{n-2\sigma}
\frac{\alpha[u_1+sw]}{\int_{\Sn}f (u_1+sw)^{\frac{2n}{n-2\sigma}}\ud V_{g_{\Sn}}}\int_{\Sn}f (u_1+sw)^{\frac{n+2\sigma}{n-2\sigma}}w\ud V_{g_{\Sn}}\right]\ud s\no\\
=&\int_0^1 \left[\frac{2\int_{\Sn}((1-s)R^{g_1}_\sigma u_1^{\frac{n+2\sigma}{n-2\sigma}}+sR^{g_2}_\sigma u_2^{\frac{n+2\sigma}{n-2\sigma}})w\ud V_{g_{\Sn}}}{\int_{\Sn}f (u_1+sw)^{\frac{2n}{n-2\sigma}}\ud V_{g_{\Sn}}}\right.\no\\
&\left.-\frac{2n}{n-2\sigma}
\frac{\alpha[u_1+sw]}{\int_{\Sn}f (u_1+sw)^{\frac{2n}{n-2\sigma}}\ud V_{g_{\Sn}}}\int_{\Sn}f (u_1+sw)^{\frac{n+2\sigma}{n-2\sigma}}w\ud V_{g_{\Sn}}\right]\ud s\no\\
\leq& C(\|R_\sigma^{g_1}\|^2_{L^2(\Sn)}+\|R_\sigma^{g_2}\|^2_{L^2(\Sn)}+\mathcal{S}(g_1)+\mathcal{S}(g_2))\|w\|_{L^2(\Sn)}\no\\
\leq& C\|w\|_{L^2(\Sn)}.
\end{align}
From (\ref{1.7}),
we have
\begin{align}\label{7.3}
w_t&=-\frac{n-2\sigma}{4}\left[(R_\sigma^{g_2}-\alpha[u_2] f)u_2
-(R_\sigma^{g_1}-\alpha[u_1] f)u_1\right]\no\\
&=-\frac{n-2\sigma}{4}\left[\left(P_\sigma(u_2)u_2^{-\frac{n+2\sigma}{n-2\sigma}}-\alpha[u_2] f\right)u_2
-\left(P_\sigma(u_1)u_1^{-\frac{n+2\sigma}{n-2\sigma}}-\alpha[u_1] f\right)u_1\right]\no\\
&=-\frac{n-2\sigma}{4}\left[u_2^{-\frac{4\sigma}{n-2\sigma}}P_\sigma(w)+d(x,t)w\right]+
\frac{n-2\sigma}{4}(\alpha[u_2]-\alpha[u_1])u_2f,
\end{align}
where
$$d(x,t)=\frac{u_2^{-\frac{4\sigma}{n-2\sigma}}-u_1^{-\frac{4\sigma}{n-2\sigma}}}{u_2-u_1}P_\sigma(u_1)-\alpha[u_1] f.$$
Thus, from (\ref{7.3}), we have
\begin{equation*}
\begin{split}
\frac{\ud}{\ud t}\int_{\Sn} w^2 \ud V_{g_{\Sn}}
=&2\int_{\Sn} w w_t \ud V_{g_{\Sn}}\\
=&-\frac{n-2\sigma}{2}\int_{\Sn}\left[u_2^{-\frac{4\sigma}{n-2\sigma}}wP_\sigma(w)+d(x,t)w^2\right]\ud V_{g_{\Sn}}\\
&+\frac{n-2\sigma}{2}\int_{\Sn}(\alpha[u_2]-\alpha[u_1])u_2f w\ud V_{g_{\Sn}}.
\end{split}
\end{equation*}
By H\"{o}lder's and Young's inequalities, we have
\begin{align*}
&\int_{\Sn}u_2^{-\frac{4\sigma}{n-2\sigma}}wP_\sigma(w)\ud V_{g_{\Sn}}\\
=&\frac{C_{n,-\sigma}}{2}\int_{\Sn}\int_{\Sn} \frac{(w(x)-w(y))\left(w(x)u_2^{-\frac{4\sigma}{n-2\sigma}}(x)-w(y)u_2^{-\frac{4\sigma}{n-2\sigma}}(y) \right)}{|x-y|^{n+2\sigma}}\ud V_{g_{\Sn}}(x)\ud V_{g_{\Sn}}(y)\\
&+R_\sigma \int_{\Sn} u_2^{-\frac{4\sigma}{n-2\sigma}} w^2 \ud V_{g_{\Sn}}\\
\ge& \frac{1}{2} \min_{\Sn} u_2^{-\frac{4\sigma}{n-2\sigma}} \cdot \|w\|_{H^{\sigma}(\Sn)}^{2}-C \max_{\Sn} u_2^{-\frac{4\sigma}{n-2\sigma}} \int_{\Sn} w^2\ud V_{g_{\Sn}}. 
\end{align*}
Hence, we have 
\[
\frac{\ud}{\ud t}\int_{\Sn} w^2 \ud V_{g_{\Sn}} + C_1 \|w\|_{H^{\sigma}(\Sn)}^{2} \le C_2\int_{\Sn} w^2 \ud V_{g_{\Sn}}
\]
for some constants $C_1, C_2>0$. Integrating the above inequality over $(0,t)$, we have  
\begin{equation}\label{ineq:L^2_stability}
\int_{\Sn} w(t)^2 \ud V_{g_{\Sn}}\le e^{C_1 T}\int_{\Sn} w(0)^2 \ud V_{g_{\Sn}} \quad \mathrm{for~~} 0<t \leq T.
\end{equation}

Next, for any positive integer $k\le N$, by \eqref{7.3} we have 
\begin{align*}
&\frac{\ud }{\ud t} \int_{\Sn} |(-\Delta_{g_{\Sn}})^k w |^2 \ud V_{g_{\Sn}}= 2\int_{\Sn} w_t (-\Delta_{g_{\Sn}})^{2k} w \ud V_{g_{\Sn}}\\
=&
-\frac{n-2\sigma}{2}\int_{\Sn} (-\Delta_{g_{\Sn}})^{2k} w \left[u_2^{-\frac{4\sigma}{n-2\sigma}}P_\sigma(w)+d(x,t)w\right]\ud V_{g_{\Sn}}\\
&
+\frac{n-2\sigma}{2}\int_{\Sn} (-\Delta_{g_{\Sn}})^{2k} w (\alpha[u_2]-\alpha[u_1])u_2f \ud V_{g_{\Sn}}.
\end{align*}
Note that $(-\Delta_{g_{\Sn}})P_\sigma=P_\sigma (-\Delta_{g_{\Sn}})$. Hence, 
\begin{align*}
&\int_{\Sn}(-\Delta_{g_{\Sn}})^{2k} w u_2^{-\frac{4\sigma}{n-2\sigma}}P_\sigma(w) \ud V_{g_{\Sn}}\\
=& \int_{\Sn}(-\Delta_{g_{\Sn}})^{k} w (-\Delta_{g_{\Sn}})^k \left(u_2^{-\frac{4\sigma}{n-2\sigma}}P_\sigma(w)\right)\ud V_{g_{\Sn}}\\
\ge& \frac{1}{2} \min_{\Sn} u_2^{-\frac{4\sigma}{n-2\sigma}} \cdot \|(-\Delta_{g_{\Sn}})^kw\|_{H^{\sigma}(\Sn)}^{2} -C \int_{\Sn} w^2 \ud V_{g_{\Sn}},
\end{align*}
where we have used the following interpolation inequalities: $\forall~ \epsilon>0$, there holds
\begin{align*}
\int_{\Sn} |\nabla P_{\sigma} \Big((-\Delta_{g_{\Sn}})^l w\Big)|_{g_{\Sn}}^2 \ud V_{g_{\Sn}} \le& C \|w\|_{H^{2l+2\sigma+1}(\Sn)}^2\\
 \le& \va \|(-\Delta_{g_{\Sn}})^kw\|_{H^{\sigma}(\Sn)}^{2} +C(\va)\int_{\Sn} w^2 \ud V_{g_{\Sn}}
\end{align*}
for any integer $0\le l<k$. Other terms can be estimated similarly. Therefore, we have 
\[
\frac{\ud }{\ud t} \int_{\Sn} |(-\Delta_{g_{\Sn}})^k w |^2 \ud V_{g_{\Sn}} +C_3 \|(-\Delta_{g_{\Sn}})^kw\|_{H^{\sigma}(\Sn)}^{2} \le C_4 \int_{\Sn} w^2  \ud V_{g_{\Sn}}
\]
for some constants $C_3,C_4>0$ depending only $T$, $n$
and $\|u_i\|_{L^\infty([0,T]; C^{2n}(\Sn))}$ with $i=1,2$. For any $t \in (0,T)$, integrating the above inequality over $(0,t)$ and using \eqref{ineq:L^2_stability}, we complete the proof. 
\end{proof}

\begin{proof}[Proof of Proposition \ref{proposition7.1} (i) and (ii)]
Let $u_0 \in L_{\beta_0}$, and $u(t,u_0)$ be the solution of the flow \eqref{1.8} with the initial datum $u_0$ and $v(t,u_0)$ be its normalized flow, then by Proposition \ref{prop:properties} we have
$$E_f[u(t,u_0)]\leq \beta_0.$$

\textit{(i)} Given any finite time $T_1>0$, we define a map on $L_{\beta_0}$ by
$$
H(s,u_0)=
\begin{cases}
u(3sT_1, u_0) & \quad\mathrm{if~~} 0 \le s \le \frac{1}{3}, \\
[(2-3s)u(T_1,
u_0)^{\frac{2n}{n-2\sigma}} + (3s -
1)\det(\ud\phi^{-1}) ]^{\frac{n-2\sigma}{2n}}
&\quad \mathrm{if~~}\frac{1}{3} \le s \le \frac{2}{3}, \\
[\det(\ud[\Psi \circ \delta_{- q(T_1), {3(1 - s) r(T_1) + (3s -
2)}} \circ \pi ] )]^{\frac{n-2\sigma}{2n}} &\quad
\mathrm{if~~} \frac{2}{3} \le s \le 1.
\end{cases}
$$

Due to the length of the proof, we divide it into three steps.

\vskip 8pt

\emph{Step 1.} For any $\va\in (0,\va_0)$ with $\va_0\leq 1/2$ to be determined later, if  \footnote{Only one $T_1$ is enough in the proof of  Proposition \ref{proposition7.1} \textit{(i)}. Indeed, the smallness of $\|v(t)-1\|_{C^1(\Sn)}$ in \cite[formula (7.6)]{Chen&Xu} is not necessarily required for all $t \geq T_1$, but just at some fixed $T_1$. Comparing to \cite{Chen&Xu}, an explicit gap $\va_0$ of $\|v(T_1,u_0)-1\|_{C^1(\Sn)}$ to guarantee \eqref{E_f_below_beta_0} is presented.}  
\begin{equation*}
\|v(T_1,u_0)-1\|_{C^1(\Sn)}<\va
\end{equation*}
for some $T_1>0$, then
\begin{equation}\label{E_f_below_beta_0}
E_f[ H(s, u_0)] \le
\beta_0 \qquad  \mathrm{for} \quad s \in \left[\frac{1}{3},\frac{2}{3}\right].
\end{equation}

\vskip 8pt

To this end,  for convenience, let $F(s)=E_f[H(s,u_0)]$ for
$1/3 \leq s \leq 2/3$, and $v=v(T_1,u_0)$. our strategy is to show that any critical point of $E_f[H(s,u_0)]$ on the interval $s \in (1/3, 2/3)$ (if exists) must be a local minimum point, that is, if  $\dot F(s_0)=0$ for some $s_0 \in (1/3, 2/3)$, then $\ddot F(s_0)>0$.

First by conformal invariance of the
energy, we have
$$ E_f[H(s, u_0)] = E_{f\circ\phi}[ H(s, u_0)\circ\phi (\det \ud\phi )^{\frac{n-2\sigma}{2n}}].$$
Thus, we conclude that $F(s)$ achieves its maximum value only at
$s=1/3$ or $s=2/3$, namely,
\begin{equation*}\label{eq7.1}
E_f[H(s,u_0)] \leq  \max\left\{E_f[u(T_1,
u_0)],E_f[(\det \ud\phi^{-1})^{\frac{n-2\sigma}{2n}}]\right\} \leq \beta_0,
 \mathrm{~~for~~} s \in \left[\frac{1}{3},\frac{2}{3}\right].
\end{equation*}

If we set
$$u_s^{\frac{2n}{n-2\sigma}}=(2-3s)u^{\frac{2n}{n-2\sigma}}+(3s-1)\det(\ud\phi^{-1})$$
and
\begin{equation}
\label{def:v_s} 
v_s^{\frac{2n}{n-2\sigma}} = (2-3s)
v^{\frac{2n}{n-2\sigma}} + (3s - 1)
\end{equation}
for $1/3 \le s \le 2/3$, then $E_f[u_s]=E_{f\circ\phi}[v_s]$. Moreover, it naturally holds
\begin{equation*}\label{normalizations_v_s}
\fint_{\Sn} v_s^{\frac{2n}{n-2\sigma}} \ud V_{g_{\Sn}}=1\;\;\; \mathrm{ and }
\;\;\; \fint_{\Sn} x v_s^{\frac{2n}{n-2\sigma}} \ud V_{g_{\Sn}} = 0.
\end{equation*}
These imply that
\begin{equation}\label{eqn:ds_normalizations}
\int_{\Sn} v_s^{\frac{n+2\sigma}{n-2\sigma}} \dot{v}_s
\ud V_{g_{\Sn}}=0 \;\;\; \mathrm{ and } \;\;\; \int_{\Sn} x v_s^{\frac{n+2\sigma}{n-2\sigma} }{\dot v_s} \ud V_{g_{\Sn}} = 0.
\end{equation}

By \eqref{def:v_s}  we have
$$ \nabla v_s=\left(\frac{v}{v_s}\right)^{\frac{n+2\sigma}{n-2\sigma}}(2-3s)\nabla v, \qquad \dot{v}_s = \frac{3(n-2\sigma)}{2n} v_s^{-\frac{n+2\sigma}{n-2\sigma}}(1 - 
v^{\frac{2n}{n-2\sigma}}) $$
and
$$\ddot{v}_s = -\frac{n+2\sigma}{n-2\sigma}v_s^{-1}|\dot{v}_s|^2.$$
Thus, we obtain
\begin{equation}\label{est:v_Dv}
\frac{1}{2}\leq v_s\leq \frac{3}{2}\quad \mathrm{and} \quad  \|\nabla v_s\|_{C(\Sn)}\leq 3^{\frac{n+2\sigma}{n-2\sigma}}\|\nabla v\|_{C(\Sn)}\leq 3^{\frac{n+2\sigma}{n-2\sigma}}\va . 
\end{equation}
Using an elementary inequality that for any $p>1$,
$$|1-t^p|\leq p(1+t^{p-1})|1-t| \quad \mathrm{~~for~~all~~} t \geq 0,$$
we obtain
$$|v_s^{\frac{n+2\sigma}{n-2\sigma}}-1|\leq |v_s^{\frac{2n}{n-2\sigma}}-1|=(2-3s)|v^{\frac{2n}{n-2\sigma}}-1|\leq 2^\ast (1+v^{\frac{n+2\sigma}{n-2\sigma}})|v-1|,$$
then
\begin{equation}\label{est:v^p-1}
\|v_s^{\frac{n+2\sigma}{n-2\sigma}}-1\|_{C(\Sn)}\leq 4\cdot 3^{\frac{n+2\sigma}{n-2\sigma}} \va.
\end{equation}

For any smooth function $f$ with $\int_{\Sn}f u^{2n/(n-2\sigma)}\ud V_{g_{\Sn}}>0$, a direct computation yields
\begin{align*}
&\ud E_f[u](\eta)\\
=&2\left(\int_{\Sn}f u^{\frac{2n}{n-2\sigma}}\ud V_{g_{\Sn}}\right)^{-\frac{n-2\sigma}{n}}\left[\int_{\Sn}\eta P_\sigma(u) \ud V_{g_{\Sn}}
-\frac{\int_{\Sn}uP_\sigma(u)\ud V_{g_{\Sn}}}{\int_{\Sn}f u^{\frac{2n}{n-2\sigma}}\ud V_{g_{\Sn}}}\int_{\Sn}f u^{\frac{n+2\sigma}{n-2\sigma}}\eta \ud V_{g_{\Sn}}\right]
\end{align*}
and 
\begin{align*}
&\ud^2E_f[u](\eta,\zeta)\\
=&\left.\frac{\ud}{\ud r}\Big(\ud E_f[u+r\zeta](\eta)\Big)\right|_{r=0}\\
=&2\left(\int_{\Sn}f u^{\frac{2n}{n-2\sigma}}\ud V_{g_{\Sn}}\right)^{\frac{2(\sigma-n)}{n}}\times \\
&\left [
\int_{\Sn}\zeta P_\sigma(\eta)\ud V_{g_{\Sn}}\int_{\Sn}f u^{\frac{2n}{n-2\sigma}}\ud V_{g_{\Sn}}
\vphantom{\frac{\int_{\Sn}u P_\sigma(u)\ud V_{g_{\Sn}}}{\int_{\Sn}f u^{\frac{2n}{n-2\sigma}}\ud V_{g_{\Sn}}}} -\frac{n+2\sigma}{n-2\sigma}
\int_{\Sn}u P_\sigma(u)\ud V_{g_{\Sn}}\int_{\Sn}f u^{\frac{4\sigma}{n-2\sigma}}\zeta\eta \ud V_{g_{\Sn}}
\right.\\
&
~~-2\int_{\Sn}\zeta P_\sigma(u)\ud V_{g_{\Sn}}\int_{\Sn}f u^{\frac{n+2\sigma}{n-2\sigma}}\eta \ud V_{g_{\Sn}}-2\int_{\Sn}\eta P_\sigma(u)\ud V_{g_{\Sn}}\int_{\Sn}f u^{\frac{n+2\sigma}{n-2\sigma}}\zeta \ud V_{g_{\Sn}}\\
&\left.~~+\frac{4(n-\sigma)}{n-2\sigma}\frac{\int_{\Sn}u P_\sigma(u)\ud V_{g_{\Sn}}}{\int_{\Sn}f u^{\frac{2n}{n-2\sigma}}\ud V_{g_{\Sn}}}\int_{\Sn}f u^{\frac{n+2\sigma}{n-2\sigma}}\zeta \ud V_{g_{\Sn}}
\int_{\Sn}f u^{\frac{n+2\sigma}{n-2\sigma}}\eta \ud V_{g_{\Sn}}\right]
\end{align*}
for any $\eta, \zeta\in H^\sigma(\Sn)$. 
Note that the Sobolev inequality \eqref{ineq:Sobolev} shows that the mapping 
$$u\mapsto \ud^2E_f[u](\cdot,\cdot)\in L(H^\sigma(\Sn)\times H^\sigma(\Sn);\mathbb{R})$$
is continuous. 

If $\dot F(s_0)=0$, i.e. $\ud E_{f\circ \phi}[v_s](\dot v_s)|_{s=s_0}=0$, then at $s=s_0$,
\begin{equation}\label{eqn:critical_pt}
\int_{\Sn}P_\sigma(v_s) \dot v_s  \ud V_{g_{\Sn}}=a(s)E[v_s],
\end{equation}
where 
$$a(s):=\frac{\int_{\Sn} f\circ \phi v_s^{\frac{n+2\sigma}{n-2\sigma}}\dot v_s  \ud V_{g_{\Sn}}}{\int_{\Sn}fv_s^{\frac{2n}{n-2\sigma}} \ud V_{g_{\Sn}}} \quad \mathrm{and}\quad E[v_s]:=\int_{\Sn}v_s P_\sigma(v_s)\ud V_{g_{\Sn}}.$$
Thus, at $s=s_0$, by \eqref{eqn:critical_pt} we obtain
\begin{align}\label{ineq:2nd_order_F(s)}
\ddot F(s)=&\frac{\ud^2}{\ud s^2}E_{f\circ \phi}[v_s]=\ud^2E_{f\circ \phi}[v_s](\dot{v}_s,\dot{v}_s)+\ud E_{f\circ\phi}[v_s](\ddot{v}_s)\no\\
=&2 \left(\int_{\Sn}f\circ \phi
v_s^{2^\ast}\ud V_{g_{\Sn}}\right)^{\frac{2\sigma-n}{n}}\times\no\\
&\left\{ \int_{\Sn}\dot v_s P_\sigma(\dot v_s)\ud V_{g_{\Sn}}+\frac{4(n-\sigma)}{n-2\sigma} E[v_s]a(s)^2-4a(s)\int_{\Sn}\dot v_s P_\sigma (v_s)\ud V_{g_{\Sn}}\right.\no\\
&\qquad\left.-\frac{n+2\sigma}{n-2\sigma} \int_{\Sn}v_s^{-1} |\dot v_s|^2 P_\sigma (v_s)\ud V_{g_{\Sn}}\right\}\no\\
=&2 \left(\int_{\Sn}f\circ \phi
v_s^{2^\ast}\ud V_{g_{\Sn}}\right)^{\frac{2\sigma-n}{n}}\times\no\\
&\left\{ \int_{\Sn}\dot v_s P_\sigma(\dot v_s)\ud V_{g_{\Sn}}+\frac{4\sigma}{n-2\sigma} E[v_s]a(s)^2-\frac{n+2\sigma}{n-2\sigma} \int_{\Sn}v_s^{-1} |\dot v_s|^2 P_\sigma (v_s)\ud V_{g_{\Sn}}\right\}\no\\
\geq& 2\left(\int_{\Sn}f\circ \phi
v_s^{2^\ast}\ud V_{g_{\Sn}}\right)^{-2/2^\ast}\left\{ \int_{\Sn}\left(\dot v_s P_\sigma(\dot v_s)-\frac{n+2\sigma}{n-2\sigma}R_\sigma|\dot v_s|^2\right)\ud V_{g_{\Sn}}\right.\no\\
&\qquad\left.-\frac{n+2\sigma}{n-2\sigma} \int_{\Sn}\left(v_s^{-1} |\dot v_s|^2 P_\sigma (v_s)-R_\sigma |\dot v_s|^2\right)\ud V_{g_{\Sn}}\right\}.
\end{align}

By \eqref{est:v_Dv} and H\"{o}lder's inequality, we have
\begin{align}\label{est:2nd_diff}
&\left|\int_{\Sn}\left(v_s^{-1} |\dot v_s|^2 P_\sigma (v_s)-R_\sigma |\dot v_s|^2\right)\ud V_{g_{\Sn}}\right|\no\\
=&\frac{C_{n,-\sigma}}{2}\left|\int_{\Sn\times \Sn}\frac{[v_s^{-1}(x)\dot v_s^2(x)-v_s^{-1}(y)\dot v_s^2(y)](v_s(x)-v_s(y))}{|x-y|^{n+2\sigma}}\ud V_{g_{\Sn}}(x)\ud V_{g_{\Sn}}(y)\right|\no\\
\leq& \frac{C_{n,-\sigma}}{2} \int_{\Sn\times \Sn}\frac{|v_s^{-1}(x)-v_s^{-1}(y)|\dot v_s^2(x)|v_s(x)-v_s(y)|}{|x-y|^{n+2\sigma}}\ud V_{g_{\Sn}}(x)\ud V_{g_{\Sn}}(y)\no\\
&+\frac{C_{n,-\sigma}}{2}\int_{\Sn\times \Sn}\frac{|\dot v_s^2(x)-\dot v_s^2(y)|v_s^{-1}(y)|v_s(x)-v_s(y)|}{|x-y|^{n+2\sigma}}\ud V_{g_{\Sn}}(x)\ud V_{g_{\Sn}}(y)\no\\
\leq& 2 C_{n,-\sigma}\|\nabla v_s\|_{C(\Sn)}^2 \int_{\Sn\times \Sn}\frac{\dot v_s^2(x)\ud V_{g_{\Sn}}(x)\ud V_{g_{\Sn}}(y)}{|x-y|^{n+2\sigma-2}}\no\\
&+C_{n,-\sigma} \|\nabla v_s\|_{C(\Sn)}\int_{\Sn\times \Sn}\frac{|\dot v_s(x)+\dot v_s(y)||\dot v_s(x)-\dot v_s(y)|}{|x-y|^{n+2\sigma-1}}\ud V_{g_{\Sn}}(x)\ud V_{g_{\Sn}}(y)\no\\
\leq&2 C_{n,-\sigma}\|\nabla v_s\|_{C(\Sn)}^2c_1^2\|\dot{v}_s\|_{L^2(\Sn)}^2\no\\
&+\sqrt{C_{n,-\sigma}} \|\nabla v_s\|_{C(\Sn)}\left[\frac{C_{n,-\sigma}}{2}\int_{\Sn\times \Sn}\frac{|\dot v_s(x)-\dot v_s(y)|^2}{|x-y|^{n+2\sigma}}\ud V_{g_{\Sn}}(x)\ud V_{g_{\Sn}}(y)\right]^{\frac{1}{2}}\cdot\no\\
&\qquad\left[2\int_{\Sn\times \Sn}\frac{(\dot v_s(x)+\dot v_s(y))^2}{|x-y|^{n+2\sigma-2}}\ud V_{g_{\Sn}}(x)\ud V_{g_{\Sn}}(y)\right]^{\frac{1}{2}}\no\\
\leq& \left(\frac{2 c_1^2C_{n,-\sigma}}{R_\sigma}\|\nabla v_s\|_{C(\Sn)}^2+2c_1\sqrt{\frac{C_{n,-
\sigma}}{R_\sigma}}\|\nabla v_s\|_{C(\Sn)}\right) \|\dot v_s\|_{H^\sigma(\Sn)}^2\no\\
\leq& 3^{\frac{n+2\sigma}{n-2\sigma}}\va\left(2c_1\sqrt{\frac{C_{n,-
\sigma}}{R_\sigma}}+3^{\frac{n+2\sigma}{n-2\sigma}}\frac{ c_1^2C_{n,-\sigma}}{R_\sigma}\right) \|\dot v_s\|_{H^\sigma(\Sn)}^2:=\frac{n-2\sigma}{n+2\sigma}b_0 \va\|\dot v_s\|_{H^\sigma(\Sn)}^2,
\end{align}
where
\begin{align*}
c_1:=&\sqrt{\max_{x \in \Sn}\int_{\Sn}\frac{\ud V_{g_{\Sn}}(y)}{|x-y|^{n+2\sigma-2}}}=\sqrt{\int_{\Sn}\frac{\ud V_{g_{\Sn}}(y)}{|S-y|^{n+2\sigma-2}}}\\
=&\sqrt{2^{2(1-\sigma)}\int_{\R^n}\frac{\ud z}{(1+|z|^2)^{\frac{n}{2}+1-\sigma}}}=\sqrt{2^{1-2\sigma}\omega_{n-1}\frac{\Gamma(\frac{n}{2})\Gamma(1-\sigma)}{\Gamma(\frac{n}{2}+1-\sigma)}}.
\end{align*}

Now we decompose $\dot{v}_s=\psi+w$, where
$$w= \fint_{\Sn}\dot{v}_s\ud V_{g_{\Sn}}
+\sum\limits_{i=1}^{n+1}(\fint_{\Sn}\dot{v}_s
\varphi_i\ud V_{g_{\Sn}})\varphi_i,$$
where 
$$\varphi_i=\sqrt{n+1}~ x_i \qquad \mathrm{for~~}1 \leq i \leq n+1.$$
By \eqref{eqn:ds_normalizations}  we have
\begin{align*}
\frac{2n}{n-2\sigma} \fint_{\Sn}\dot v_s \ud V_{g_{\Sn}}=\frac{2n}{n-2\sigma} \fint_{\Sn}\dot v_s(1-v_s^{\frac{n+2\sigma}{n-2\sigma}}) \ud V_{g_{\Sn}}.
\end{align*}
and 
\begin{align*}
\frac{2n}{n-2\sigma} \fint_{\Sn}\varphi_i\dot v_s \ud V_{g_{\Sn}}=\frac{2n}{n-2\sigma} \fint_{\Sn}\varphi_i\dot v_s(1-v_s^{\frac{n+2\sigma}{n-2\sigma}}) \ud V_{g_{\Sn}}.
\end{align*}
These together with \eqref{est:v^p-1} and H\"older's inequality imply that
\begin{align}\label{est:zero_coeff_w}
\left|\fint_{\Sn}\dot v_s \ud V_{g_{\Sn}}\right|\leq& \left(\fint_{\Sn}|\dot v_s|^2 \ud V_{g_{\Sn}}\right)^{\frac{1}{2}} \|1-v_s^{\frac{n+2\sigma}{n-2\sigma}}\|_{C(\Sn)}\no\\
\leq&4 \cdot 3^{\frac{n+2\sigma}{n-2\sigma}} \va \left(\fint_{\Sn}|\dot v_s|^2 \ud V_{g_{\Sn}}\right)^{\frac{1}{2}}
\end{align}
and
\begin{align}\label{est:1st_coeff_w}
\left|\fint_{\Sn}\varphi_i\dot v_s d \mu_{\Sn}\right|\leq& \sqrt{n+1} \left(\fint_{\Sn}|\dot v_s|^2 \ud V_{g_{\Sn}}\right)^{\frac{1}{2}}\|1-v_s^{\frac{n+2\sigma}{n-2\sigma}}\|_{C(\Sn)}\no\\
\leq& 4\sqrt{n+1} \cdot 3^{\frac{n+2\sigma}{n-2\sigma}} \va \left(\fint_{\Sn}|\dot v_s|^2 \ud V_{g_{\Sn}}\right)^{\frac{1}{2}}.
\end{align}
Then, by \eqref{est:zero_coeff_w} and \eqref{est:1st_coeff_w} we obtain
\begin{align*}
&\fint_{\Sn} \dot{v}_sP_\sigma(\dot v_s)\ud  V_{g_{\Sn}}\\
=& \fint_{\Sn}\psi P_\sigma \psi \ud  V_{g_{\Sn}}+\fint_{\Sn}w P_\sigma w \ud  V_{g_{\Sn}} \\
\geq& \Lambda_2 \fint_{\Sn}|\psi|^2\ud  V_{g_{\Sn}}+\Lambda_1 \sum_{i=1}^{n+1}\left(\fint_{\Sn}\dot{v}_s
\varphi_i \ud V_{g_{\Sn}}\right)^2+R_\sigma \left(\fint_{\Sn} \dot v_s \ud  V_{g_{\Sn}}\right)^2\\
=&\Lambda_2\fint_{\Sn}|\dot{v}_s|^2\ud V_{g_{\Sn}}-(\Lambda_2-\Lambda_1)\sum_{i=1}^{n+1}\left(\fint_{\Sn}\dot{v}_s
\varphi_i \ud V_{g_{\Sn}}\right)^2-(\Lambda_2-\Lambda_0) \left(\fint_{\Sn} \dot v_s \ud  V_{g_{\Sn}}\right)^2\\
=&\left[\Lambda_2-(\Lambda_2-\Lambda_1)(4(n+1) \cdot 3^{\frac{n+2\sigma}{n-2\sigma}} \va)^2-(\Lambda_2-\Lambda_0)(4\cdot 3^{\frac{n+2\sigma}{n-2\sigma}} \va)^2\right]\fint_{\Sn}|\dot{v}_s|^2\ud V_{g_{\Sn}}>0.
 \end{align*}
Here we require that
 \begin{align*}
 &\Lambda_2-[(\Lambda_2-\Lambda_1)(n+1)^2+\Lambda_2-\Lambda_0](4 \cdot 3^{\frac{n+2\sigma}{n-2\sigma}} \va)^2>0 \\
 \Leftrightarrow \quad& 0<\va<\frac{1}{4}\sqrt{\frac{\Lambda_2}{(\Lambda_2-\Lambda_1)(n+1)^2+\Lambda_2-\Lambda_0}}3^{-\frac{n+2\sigma}{n-2\sigma}}\\
 &\qquad\quad=\frac{1}{8}\sqrt{\frac{(n+2+2\sigma)(n+2\sigma)}{\sigma(n+1)((n+1)(n+2\sigma)+2)}}3^{-\frac{n+2\sigma}{n-2\sigma}}.
 \end{align*}
Hence, recalling that $\Lambda_1=\frac{n+2\sigma}{n-2\sigma}R_\sigma$, by \eqref{est:2nd_diff} we conclude that
\begin{align*}
&\int_{\Sn}\left(\dot v_s P_\sigma(\dot v_s)-\frac{n+2\sigma}{n-2\sigma}R_\sigma|\dot v_s|^2\right)\ud V_{g_{\Sn}}-\frac{n+2\sigma}{n-2\sigma} \int_{\Sn}(v_s^{-1} |\dot v_s|^2 P_\sigma (v_s)-R_\sigma |\dot v_s|^2)\ud V_{g_{\Sn}}\\
\geq&\left[1-b_0\va-\frac{\Lambda_1}{\Lambda_2-[(\Lambda_2-\Lambda_1)(n+1)^2+\Lambda_2-\Lambda_0](4 \cdot 3^{\frac{n+2\sigma}{n-2\sigma}} \va)^2}\right]\int_{\Sn}\dot v_s P_\sigma(\dot v_s) \ud  V_{g_{\Sn}}>0,
\end{align*}
if we choose $\va \in (0,\va_0)$ with 
\begin{equation}\label{def:va_0}
\va_0=\min\left\{\frac{1}{2},\frac{1}{8}\sqrt{\frac{(n+2+2\sigma)(n+2\sigma)}{\sigma(n+1)((n+1)(n+2\sigma)+2)}}3^{-\frac{n+2\sigma}{n-2\sigma}},\va_1\right\},
\end{equation}
where $\va_1$ is the positive root of the following algebraic equation
$$1-\frac{\Lambda_1}{\Lambda_2}-b_0\va-\frac{[(\Lambda_2-\Lambda_1)(n+1)^2+\Lambda_2-\Lambda_0](4 \cdot 3^{\frac{n+2\sigma}{n-2\sigma}} )^2}{\Lambda_2}\va^2=0$$
i.e.
$$\frac{[(\Lambda_2-\Lambda_1)(n+1)^2+\Lambda_2-\Lambda_0](4 \cdot 3^{\frac{n+2\sigma}{n-2\sigma}} )^2}{\Lambda_2}\va^2+b_0\va-\frac{4\sigma}{n+2+2\sigma}=0$$
and $b_0$ is given in \eqref{est:2nd_diff}.

Therefore, we conclude that for any $0<\va<\va_0$, there holds $\ddot F(s_0)>0$ whenever $\dot F(s_0)=0$. 
This directly implies \eqref{E_f_below_beta_0}.

\vskip 8pt

\emph{Step 2.} Fix $\va$ as above, there exists a $T_1=T_1(\va,u_0)>0$ which is continuously dependent on $u_0$ in the $H^{2N}(\Sn)$ with $N=[n/(2\sigma-1)]$ as in Lemma \ref{lem:cont_dep_u_0}, such that 
\begin{equation*}\label{smallness:v-1}
\|v(T_1,u_0)-1\|_{C^1(\Sn)}<\va.
\end{equation*}
\vskip 8pt

We first  choose $T_2$ large such that if
$t \ge T_2$, $\|v - 1\|_{C^1(\Sn)} \le 1/2$ which is
possible since the latter goes to $0$ as $t \to \infty$. Thus,  it
follows from the expression for $P_\sigma(v-1)$ as in
\eqref{6.29}, that, for $t \geq T_2$ and some positive constant $C_1$ which
depends on $n$ and $\sigma$, the upper bounds of $F_{2N}$ and
$\alpha(t)$,  $\max_{\Sn} f$ as well as the constant we
have found in Lemma \ref{lemma6.6},
\begin{equation}\label{add2} 
\int_{\Sn}|P_\sigma(v-1)|^{N+1} \ud V_{g_{\Sn}}\leq C_1(F_2^{\frac{1}{2}}+\|f_\phi-f(\theta)\|_{L^2(\Sn)}).
\end{equation}
Then it follows from the Sobolev embedding theorem that there exists a positive constant $C_0$ which
only depends on $n$ and $\sigma$, such that

\begin{equation}
\label{add4} \| v - 1\|_{C^1(\Sn)} \le C_0\left(\int_{\Sn}|P_\sigma(v-1)|^{N+1} \ud V_{g_{\Sn}}\right)^{\frac{1}{N+1}}.
\end{equation}

Now we choose $T_3 > T_2$ such that the quantity 
$$|o(1)| <\frac{n-2\sigma}{4}\left( \omega_n \max_{\Sn} f\right)^{\frac{2\sigma-n}{n}}$$
 in \eqref{5.24} for $t > T_3$. 
 
 Consider
 \begin{equation}\label{add6} 
\mathscr{M}(t) :=\mathscr{M}(t,u_0)= F_2(t) +  E_f[u(t,u_0)].
\end{equation}
Then it follows from Proposition \ref{prop:properties} and \eqref{5.24} that  $\frac{\ud \mathscr{M}(t)}{\ud t}<0$ for $t >T_3$.

Using \eqref{intemediate_est}, \eqref{6.38} and the fact that $\lim_{t \to \infty}F_2(t)=0$ by virtue of Lemma \ref{lem:F_p_mid},
for any $\va \in (0,\va_0)$, we can find a bigger $T_4\ge
T_3$ and a positive constant $C_2$ which depends on $n,\sigma$, and $\|f\|_{C^2(\Sn)}$, such that for all $t \ge T_4$, 
\begin{equation}\label{add3}
\| f_\phi -
f(\theta)\|_{L^2(\Sn)}\leq C_2 F_2^{\frac{1}{2}}
\end{equation}
and
$$F_2(t) \leq \frac{1}{(C_1C_3)^2} \left(\frac{\va}{2C_0}\right)^{2(N+1)}.$$
Here $C_3=\max\{1,C_2\}$ and $C_1,C_0$ are given in the inequalities \eqref{add2}  and \eqref{add4}, respectively. Then we define $\delta :=\delta(\va,u_0)= \min\{
E_f[u(t,u_0)], \mathscr{M}(T_4,u_0)\} > 0$.  Since
$ E_f[u(t,u_0)]<E_f[u_0]$ for all $t \geq 0$ in view of Proposition \ref{prop:properties} and
$\lim_{t \to \infty}F_2(t)=0$, there exists a $T_5 \ge T_4 + 1$ such that
$\mathscr{M}(T_5) < \delta$. Hence the set $\{ t; t\ge T_4 + 1 \mathrm{~~and~~}
\mathscr{M}(t) < \delta \}$ is not empty. Finally we select $T_1(u_0) :=
T_1(\va, u_0) = \inf \{ t; t\ge T_4 + 1 \mathrm{~~and~~} \mathscr{M}(t)< 
\delta \} $. We need to check the following two properties: (a)
$T_1(u_0)$ is continuously dependent on $u_0$ in
$H^{2N}(\Sn)$ and (b) $\| v(T_1,u_0) -
1\|_{C^1(\Sn)} < \va$.

To show the continuity of $T_1(u_0)$.  Let $u_0^k\to u_0$ in $H^{2N}(\Sn)$ as $k \to \infty$ and simplify $T_1^k:=T_1(\va,u_0^k)$ and $T_1=T_1(u_0)$, then it follows from Lemma \ref{lem:cont_dep_u_0} that as $k \to \infty, \mathscr{M}_k(t):=\mathscr{M}(t,u_0^k)\to g(t)$ for all $t \in [0,T_4+1]$. This implies that $\mathscr{M}(T_4,u_0^k) \to \mathscr{M}(T_4,u_0)$ and then $ \delta_k:=\delta(\va, u_0^k)\to\delta$ as $k \to \infty$. Notice that, fix every  $t>T_1$, 
\begin{align*}
	\lim_{k\to \infty} \delta_k-\mathscr{M}_k(t)= \delta-\mathscr{M}(t)>0
\end{align*}
again by Lemma \ref{lem:cont_dep_u_0}, then there exists $N_0(t) \in \mathbb{N}$, such that for all $k\geq N_0(t), \delta_k-\mathscr{M}_k(t)>0$. Hence, by definition of $T_1^k$ we have 

$$\limsup_{k\to \infty}T_1^k\leq T_1.$$ 

On the other hand, if $\liminf_{k\to \infty}T_1^k< T_1$, then it follows from the definition of $T_1^k$ that there exists a sequence of real numbers $\{t_m\}_{m=1}^{\infty}$ such that $\liminf_{k\to \infty}T_1^k<t_m< T_1, t_m \nearrow T_1$ and $ \mathscr{M}(t_m)\geq \delta$. Since $\frac{\ud \mathscr{M}(t)}{\ud t}<0$ for all $t \geq T_4$, then $\mathscr{M}(t_m)>\delta$.  However, for each $m$, we can choose all sufficiently large $k$ such that $\mathscr{M}_k(t_m)>\delta_k$. Consequently, we have
$$\liminf_{k\to \infty}T_1^k\geq t_m.$$
Finally, letting $m \to \infty$, we obtain 
$$\liminf_{k\to \infty}T_1^k\geq T_1.$$
This yields a contradiction.

For the assertion (b), it follows from the selection of $T_1(u_0)$ that $T_1>T_4$ and 
$$F_2(T_1) \le \left(\frac{\va}{2C_0}\right)^{2(N+1)}\frac{1}{(C_1C_3)^2}.$$ 
This together with Estimates \eqref{add2}, \eqref{add4} and \eqref{add3} yields
\begin{align*}
 \|v(T_1,u_0) - 1\|_{C^1(\Sn)} \le& C_0 [ C_1 ( F_2(T_1)^{1/2} + \| f_{\phi(T_1)} - f(\theta(T_1))\|_{L^2(\Sn)})]^{\frac{1}{N+1}} \\
 \leq &C_0 [  C_1C_3 F_2(T_1)^{1/2}]^{\frac{1}{N+1}} < \va.
 \end{align*}

\vskip 8pt

\emph{Step 3.} Let $T_1$ be chosen as in \emph{Step 2}, $H(s,u_0)$ is a contraction within $L_{\beta_0}$.

\vskip 8pt

Since $T_1(u_0)$ is continuous in $H^{2N}(\Sn)$ by virtue of \emph{Step 2}, $H(s,u_0)$ is continuous in $[0,1] \times H^{2N}(\Sn)$ and hence  is a contraction within $C_\ast^\infty$. Furthermore, by definition of $H(s,u_0)$, it is not hard to see that
$$E_f[H(s, u_0)] \le \beta_0\quad  \mathrm{if}\quad   s \in \left[0,
\frac{1}{3}\right]\cup \left[\frac{2}{3}, 1\right].$$
This together with \eqref{E_f_below_beta_0} indicates that 
$$E_f[H(s, u_0)] \le \beta_0\quad  \mathrm{for}\quad   s \in [0,1].$$
Notice that our homotopy $H(s,u_0)$ is the one which is homotopic to the constant function $1\in L_{\beta_0}$. Therefore,  $H(s,u_0)$ is indeed a contraction within $L_{\beta_0}$.

\vskip 8pt

In order to prove \textit{(ii)}, given an initial datum $u_0$, we rescale
the time $t$ by letting $\tau(t)$ solve
\begin{equation}\label{7.34}
\frac{\ud\tau}{\ud t}=\min\left\{\frac{1}{2},\epsilon(t,u_0)^2\right\},~~\tau(0)=0.
\end{equation}
Then, we have $\tau(t)\to\infty$ as $t\to\infty$ (see \cite[(6.27)]{Chen&Xu}).
Set $U(\tau,u_0)=u(t(\tau),u_0)$
and $\Gamma(\tau)=\Theta(t(\tau),u_0)$.
It follows from Proposition \ref{prop6.10}
that, for $\epsilon^2<1/2$, the rescaled flow satisfies
(in the stereographic coordinates)
\begin{equation*}
\left(\frac{\ud\Gamma(\tau)}{\ud\tau}\right)^\top=4n^{-1}(n+1)\alpha(\ud f(\theta)+O(\epsilon))
\end{equation*}
and
\begin{equation*}
\frac{\ud}{\ud\tau}(1-|\Gamma(\tau)|^2)=
\frac{32(n+1)}{(n-2)^2}\alpha\epsilon^3(\Delta_{\Sn}f(\Gamma(\tau))+O(1)|\nabla f(\Gamma(\tau))|_{g_{\Sn}}^2+O(\epsilon|\log\epsilon|)),
\end{equation*}
with error $O(1)$ being bounded as $\epsilon\to 0$.
By using the non-increasing energy of the flow (see Proposition \ref{prop:properties} (3)) and asymptotic behaviors of its shadow flow (see Lemma \ref{lem:lim_seq_v}), we apply a very similar argument in the proof of \cite[Proposition 7.1 (ii) on p.484]{Chen&Xu} to conclude that there exists $T>0$ such that
$$u(T, L_{\beta_i-\nu})\subset L_{\beta_{i+1}+\nu}.$$
 Then, for $u\in L_{\beta_i-\nu}\setminus L_{\beta_{i+1}+\nu}$, we define
$$T(u_0):=\inf\{t\geq 0; E_f[u(t,u_0)]\leq \beta_i+\nu\}\leq T.$$
By Lemma \ref{lem:cont_dep_u_0}, $T(u_0)$ continuously depends  on $u_0$. A mapping
$K(s,u_0)=u(sT(u_0),u_0)$ for $0\leq s\leq 1$ if $u\in L_{\beta_i-\nu}\setminus L_{\beta_{i+1}+\nu}$
and $K(s,u_0)=u_0$ if $u_0\in L_{\beta_{i+1}+\nu}$ defines the desired homotopy equivalence between
$L_{\beta_{i+1}+\nu}$ and $L_{\beta_i-\nu}$.
\end{proof}

For the rest parts of Proposition \ref{proposition7.1}, we need some technical lemmas.

\begin{lemma}\label{lem7.3}
There exists two dimensional constants $C_1,C_2>0$ such that if $\|v-1\|_{H^{\sigma}(\Sn)}$ is sufficiently small, then
$$C_1\|v-1\|_{H^{\sigma}(\Sn)}^2\leq \fint_{\Sn}vP_\sigma(v) \ud V_{g_{\Sn}}-R_\sigma\leq C_2\|v-1\|_{H^{\sigma}(\Sn)}^2$$
for all $v\in H^{2\sigma}(\Sn)\cap C_*^\infty$ satisfying (\ref{2.2}).
\end{lemma}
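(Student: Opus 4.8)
\textbf{Proof proposal for Lemma \ref{lem7.3}.}

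The plan is to expand $v-1$ in the orthonormal basis of eigenfunctions of $P_\sigma$ and use the constraints \eqref{2.2} together with the volume normalization to kill the low-frequency modes, then compare $\fint_{\Sn} vP_\sigma(v)\,\ud V_{g_{\Sn}} - R_\sigma$ with $\|v-1\|_{H^\sigma(\Sn)}^2$ frequency by frequency. Write $v-1 = \sum_{i=0}^\infty v^i \varphi_i$ where $\{\varphi_i\}$ is an $L^2(\Sn)$-orthonormal basis with $P_\sigma \varphi_i = \Lambda_i \varphi_i$ and $\Lambda_0 = R_\sigma < \Lambda_1 = \cdots = \Lambda_{n+1} = \frac{n+2\sigma}{n-2\sigma}R_\sigma < \Lambda_{n+2} \le \cdots$ (with $\varphi_i = \sqrt{n+1}\,x_i$ for $1\le i\le n+1$, matching the notation near \eqref{eigenfunction}). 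First I would establish, exactly as in the proof of Lemma \ref{lemma6.6} (see \eqref{6.28}), that the volume constraint $\fint_{\Sn} v^{\frac{2n}{n-2\sigma}}\,\ud V_{g_{\Sn}} = 1$ and the center-of-mass constraint \eqref{2.2} force
\[
\sum_{i=0}^{n+1} |v^i|^2 = o(1)\,\|v-1\|_{H^\sigma(\Sn)}^2 \quad \text{as } \|v-1\|_{H^\sigma(\Sn)} \to 0,
\]
by Taylor-expanding $v^{\frac{2n}{n-2\sigma}} - 1 = \frac{2n}{n-2\sigma}(v-1) + O(|v-1|^2)$ and projecting onto $\varphi_i$ for $0 \le i \le n+1$.

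Next I would compute directly. Since $\fint_{\Sn} \varphi_0 (v-1)\,\ud V_{g_{\Sn}}$-type terms are controlled, one has
\[
\fint_{\Sn} v P_\sigma(v)\,\ud V_{g_{\Sn}} - R_\sigma = \fint_{\Sn}(v-1)P_\sigma(v-1)\,\ud V_{g_{\Sn}} + 2R_\sigma \fint_{\Sn}(v-1)\,\ud V_{g_{\Sn}},
\]
using $P_\sigma(1) = R_\sigma$ and self-adjointness; the last term is $2R_\sigma v^0 \omega_n^{-1/2}$ (up to normalization of $\varphi_0$), hence $O(\|v-1\|_{H^\sigma(\Sn)}^2)$ by the bound above. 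Meanwhile $\fint_{\Sn}(v-1)P_\sigma(v-1)\,\ud V_{g_{\Sn}} = \sum_{i=0}^\infty \Lambda_i |v^i|^2 = \sum_{i \ge n+2} \Lambda_i |v^i|^2 + o(1)\|v-1\|_{H^\sigma(\Sn)}^2$, and since $\|v-1\|_{H^\sigma(\Sn)}^2 = \sum_i \Lambda_i |v^i|^2$ as well (by definition of the $H^\sigma$-norm via $P_\sigma$ — note $\Lambda_i > 0$ for all $i$), we get $\sum_{i\ge n+2}\Lambda_i|v^i|^2 = (1 + o(1))\|v-1\|_{H^\sigma(\Sn)}^2$. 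Combining, $\fint_{\Sn} v P_\sigma(v)\,\ud V_{g_{\Sn}} - R_\sigma = (1 + o(1))\|v-1\|_{H^\sigma(\Sn)}^2$, which yields both inequalities with $C_1, C_2$ arbitrarily close to $1$ once $\|v-1\|_{H^\sigma(\Sn)}$ is small enough; one may then fix, say, $C_1 = 1/2$ and $C_2 = 2$.

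One subtlety to be careful about: the $H^\sigma(\Sn)$-norm defined in Section \ref{Sect:Prelim} is $\|u\|_{H^\sigma(\Sn)}^2 = \int_{\Sn} u P_\sigma u\,\ud V_{g_{\Sn}}$, which is $\sum_i \Lambda_i |u^i|^2$ with $\Lambda_i \ge R_\sigma > 0$, so it is a genuine norm and controls $\|u\|_{L^2(\Sn)}^2$; this is what makes the Taylor remainder estimates legitimate. The main (very mild) obstacle is just bookkeeping the low-mode estimate \eqref{6.28}-style argument and confirming that all the $o(1)$ terms are uniform over the admissible class — but this is routine once one invokes the Sobolev embedding $H^\sigma(\Sn) \hookrightarrow L^{\frac{2n}{n-2\sigma}}(\Sn)$ from \eqref{ineq:Sobolev} to bound the quadratic remainders, exactly as in Lemma \ref{lemma6.6}. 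Since this lemma and its proof parallel \cite[Lemma 6.7 and surrounding estimates]{Chen&Xu} closely, I would present the computation compactly and refer to Lemma \ref{lemma6.6} for the decomposition details.
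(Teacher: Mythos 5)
Your setup (eigenfunction expansion, killing the modes $0\le i\le n+1$ via the constraints) matches the paper's, and your upper bound is fine. The problem is the lower bound, which is the actual content of the lemma. From
\[
\fint_{\Sn} v P_\sigma(v)\,\ud V_{g_{\Sn}} - R_\sigma = \fint_{\Sn}(v-1)P_\sigma(v-1)\,\ud V_{g_{\Sn}} + 2R_\sigma \fint_{\Sn}(v-1)\,\ud V_{g_{\Sn}}
\]
you note that the second term is $O(\|v-1\|_{H^\sigma(\Sn)}^2)$ and then conclude that the whole right-hand side equals $(1+o(1))\|v-1\|_{H^\sigma(\Sn)}^2$. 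That inference is invalid: a term that is $O(\|v-1\|_{H^\sigma}^2)$ but not $o(\|v-1\|_{H^\sigma}^2)$ cannot be absorbed into a $(1+o(1))$ factor, and here it genuinely is not negligible. Expanding the volume normalization $\fint_{\Sn} v^{2n/(n-2\sigma)}\,\ud V_{g_{\Sn}}=1$ to \emph{second} order gives
\[
2R_\sigma\fint_{\Sn}(v-1)\,\ud V_{g_{\Sn}} = -\frac{n+2\sigma}{n-2\sigma}R_\sigma\fint_{\Sn}(v-1)^2\,\ud V_{g_{\Sn}} + o\bigl(\|v-1\|_{H^\sigma(\Sn)}^2\bigr),
\]
a \emph{negative} contribution of leading order, equal to $-\Lambda_1\fint_{\Sn}(v-1)^2$ up to lower-order terms. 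After removing the low modes, what survives is $\sum_{i\ge n+2}(\Lambda_i-\Lambda_1)|v^i|^2$, not $\sum_{i\ge n+2}\Lambda_i|v^i|^2$. Taking $v-1$ concentrated in the $(n+2)$-nd eigenspace shows the optimal ratio is $1-\Lambda_1/\Lambda_{n+2}$, which by \eqref{eq:Lambda_n+2} equals $\tfrac{4\sigma}{n+2+2\sigma}$ — strictly less than $1$, and already below your proposed $C_1=1/2$ for most admissible $(n,\sigma)$.

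So the lemma is really a spectral-gap statement: positivity of $C_1$ rests on $\Lambda_{n+2}>\Lambda_1=\tfrac{n+2\sigma}{n-2\sigma}R_\sigma$, exactly as exploited in \eqref{7.38} and in the paper's proof. To repair your argument, replace "the constraint term is $O(\|v-1\|^2)$, hence harmless" by the second-order Taylor expansion above, and then bound $\sum_{i\ge n+2}(\Lambda_i-\Lambda_1)|v^i|^2 \ge (1-\Lambda_1/\Lambda_{n+2})\sum_{i\ge n+2}\Lambda_i|v^i|^2 = (1-\Lambda_1/\Lambda_{n+2}+o(1))\|v-1\|_{H^\sigma(\Sn)}^2$. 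A secondary quibble: the estimate $\sum_{i\le n+1}|v^i|^2=o(1)\|v-1\|_{H^\sigma}^2$ only yields $v^0=o(\|v-1\|_{H^\sigma})$; the stronger bound $v^0=O(\|v-1\|_{H^\sigma}^2)$ that you invoke for the correction term also comes from the Taylor expansion of the volume constraint, not from the low-mode estimate itself.
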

\begin{proof}
As a direct consequence of \eqref{6.31}, we obtain 
$$\fint_{\Sn}vP_\sigma(v) \ud V_{g_{\Sn}}-R_\sigma\leq C_1\|v-1\|_{H^{\sigma}(\Sn)}^2$$
for some constant $C_1>0$. On the other hand, if $\|v-1\|_{H^{\sigma}(\Sn)}$ is sufficiently small,
 by (\ref{6.28}), \eqref{2.4} and \eqref{eq:Lambda_n+2} we have
\begin{equation*}
\begin{split}
&\fint_{\Sn}vP_\sigma(v) \ud V_{g_{\Sn}}-R_\sigma\\
=&\fint_{\Sn}(v-1)P_\sigma(v-1)\ud V_{g_{\Sn}}+2R_\sigma\fint_{\Sn}(v-1) \ud V_{g_{\Sn}}\\
\geq& \fint_{\Sn}(v-1)P_\sigma(v-1)\ud V_{g_{\Sn}}+o(1)\|v-1\|_{H^\sigma(\Sn)}^2\\
&+2R_\sigma \left[\frac{n-2\sigma}{2n}\fint_{\Sn} (v^{\frac{2n}{n-2\sigma}}-1)\ud V_{g_{\Sn}}-\frac{1}{2}\frac{n+2\sigma}{n-2\sigma}\fint_{\Sn}(v-1)^2 \ud V_{g_{\Sn}}\right]\\
\geq& \sum_{i=n+2}^\infty \left(\Lambda_i-R_\sigma\frac{n+2\sigma}{n-2\sigma}\right)|v^i|^2+o(1)\|v-1\|_{H^{\sigma}(\Sn)}^2\\
\geq& \left(\Lambda_{n+2}-R_\sigma\frac{n+2\sigma}{n-2\sigma}+o(1)\right)\|v-1\|_{H^{\sigma}(\Sn)}^2\geq \frac{2\sigma}{2+n-2\sigma} \|v-1\|_{H^{\sigma}(\Sn)}^2.
\end{split}
\end{equation*}
This proves the assertion. 
\end{proof}

For $r_0>0$ and each critical point $p_i \in \Sn$ of $f$, we define
\begin{equation*}
\begin{split}
B_{r_0}(p_i)&=\Big\{ u\in C_*^\infty; g=u^{\frac{4}{n-2\sigma}}g_{\Sn}\mathrm{~~induces~~a~~normalized~~metric }\\
&\hspace{8mm} h=\phi^\ast g=v^{\frac{4}{n-2\sigma}}g_{\Sn}\mathrm{~~with~~} \phi=\phi_{-p,\epsilon} \mathrm{~~for~~some~~ } p\in\Sn\mathrm{~~and~~}\\
&\hspace{8mm} 0<\epsilon\leq 1\mathrm{~~such~~that~~}\|v-1\|_{H^{\sigma}(\Sn)}^2+|p-p_i|^2+\epsilon^2<r_0^2\Big\}.
\end{split}
\end{equation*}
As shown in \cite{Chen&Xu}, the new coordinates $(\epsilon, p, v)$ are corresponding to each $u\in B_{r_0}(p_i)$. 
Under the assumption on $f$, by the Morse lemma, we can introduce 
the local coordinates $p=p^++p^-$ near $p_i=0$, such that 
$$f(p)=f(p_i)+|p^+|^2-|p^-|^2.$$

\begin{lemma}\label{lem7.4}
For $r_0>0$ and $u=(\epsilon, p, v)\in B_{r_0}(p_i)$, with $o(1)\to 0$
as $r_0\to0$:\\
(a) There holds
\begin{equation}\label{7.29}
\fint_{\Sn}f\circ \phi_{-p,\epsilon}\ud V_h
=f(p)+\frac{2}{n-2}\epsilon^2\Delta_{g_{\Sn}}f(p)+O(\epsilon^3|\log\epsilon|)
+o(\epsilon)\|v-1\|_{H^\sigma(\Sn)}.
\end{equation}
(b) There holds
\begin{align}\label{7.30}
&\left|\frac{\partial}{\partial\epsilon}E_f[u]+\frac{4(n-2\sigma)}{n(n-2)}\left(\fint_{\Sn}uP_\sigma(u)\ud V_{g_{\Sn}}\right)
\epsilon\, \omega_n^{\frac{2\sigma}{n}}f(p)^{\frac{2(\sigma-n)}{n}}\Delta_{g_{\Sn}}f(p)
\right|\no\\
\leq& C\epsilon^2|\log\epsilon|+C(\epsilon+|p-p_i|)\|v-1\|_{H^\sigma(\Sn)}.
\end{align}
In particular, if $\Delta_{g_{\Sn}}f(p)>0$, then
\begin{align}\label{7.31}
\frac{\partial}{\partial\epsilon}E_f[u]
&\leq-\frac{4(n-2\sigma)}{n(n-2)}R_\sigma
\epsilon\, \omega_n^{\frac{2\sigma}{n}} f(p)^{\frac{2(\sigma-n)}{n}}\Delta_{g_{\Sn}}f(p)\no
\\
&+C\epsilon^2|\log\epsilon|+C(\epsilon+|p-p_i|)\|v-1\|_{H^1(\Sn)}.
\end{align}
(c) For any $q\in T_p(\Sn)$, there holds
\begin{align}\label{7.32}
&\left|\frac{\partial}{\partial p}E_f[u]\cdot q+\frac{n-2\sigma}{n}\left(\fint_{\Sn}vP_\sigma(v)\ud V_{g_{\Sn}}\right)
\omega_n^{\frac{2\sigma}{n}}f(p)^{\frac{2(\sigma-n)}{n}}\ud f(p)\cdot q
\right|\no\\
\leq& C\epsilon(\epsilon+\|v-1\|_{H^\sigma(\Sn)})|q|.
\end{align}
(d) There exists a uniform constant $C_0>0$ such that
\begin{equation}\label{7.33}
\left\langle\frac{\partial}{\partial v}E_f[u],v-1\right\rangle\geq
C_0\|v-1\|_{H^\sigma(\Sn)}^2+o(\epsilon) \|v-1\|_{H^\sigma(\Sn)},
\end{equation}
where $\langle\cdot, \cdot\rangle$ denotes the duality of pairing
of $H^\sigma(\Sn)$ with its dual.
\end{lemma}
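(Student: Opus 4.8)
\textbf{Proof proposal for Lemma \ref{lem7.4}.}

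The plan is to expand the energy $E_f[u]$ in the coordinates $(\epsilon,p,v)$ on $B_{r_0}(p_i)$ provided by the normalization, and to differentiate carefully, treating the perturbation $v-1$ as a small quantity in $H^\sigma(\Sn)$ and $\epsilon$ as a small parameter. Throughout I would use the conformal invariance $E_f[u]=E_{f\circ\phi}[v]$ together with the normalization identities $\fint_{\Sn}v^{2n/(n-2\sigma)}\,\ud V_{g_{\Sn}}=1$ and $\fint_{\Sn}x\,v^{2n/(n-2\sigma)}\,\ud V_{g_{\Sn}}=0$ from \eqref{2.4} and \eqref{2.2}, and their differentiated forms. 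First I would establish part (a): this is a purely geometric computation, expanding $f\circ\phi_{-p,\epsilon}$ in a stereographic chart centered at $p$ (as in the scaled stereographic projection subsection) and integrating against the bubble measure $v^{2n/(n-2\sigma)}\,\ud V_{g_{\Sn}}$; the leading term is $f(p)$, the next is the Laplacian term $\tfrac{2}{n-2}\epsilon^2\Delta_{g_{\Sn}}f(p)$ coming from the second-order Taylor coefficient of $f$ integrated against the standard bubble profile (exactly the integral $\int_{\R^n}\tfrac{1-|z|^2}{1+|z|^2}(\cdots)$ type quantities that appeared in the proof of Lemma \ref{lem:one-bubble-1}), and the remainders $O(\epsilon^3|\log\epsilon|)$ and $o(\epsilon)\|v-1\|_{H^\sigma(\Sn)}$ arise from higher-order Taylor terms and from replacing $v^{2n/(n-2\sigma)}$ by $1$ in the cross terms, controlled via Hölder and the Sobolev embedding. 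This mirrors \cite[Lemma 7.3 / 7.4]{Chen&Xu}, with $dV_{g_{\Sn}}$ replacing the weighted measure there.

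For part (b) I would differentiate $E_f[u]=E_{f\circ\phi_{-p,\epsilon}}[v]$ in $\epsilon$. Using the explicit formula for $dE_f[u](\eta)$ recorded in the excerpt, $\partial_\epsilon E_f[u]$ splits into a term where the derivative hits $f\circ\phi_{-p,\epsilon}$ (this produces, to leading order, $\partial_\epsilon\fint_{\Sn}f\circ\phi_{-p,\epsilon}\,\ud V_h$ times the energy, whose main part is $\tfrac{4}{n-2}\epsilon\,\Delta_{g_{\Sn}}f(p)$ by part (a)) and a term where the derivative hits the bubble factor in the volume/denominator; the latter, by the stationarity of the standard bubble for the unweighted functional, contributes only lower order. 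Collecting, one gets the stated leading coefficient $-\tfrac{4(n-2\sigma)}{n(n-2)}(\fint uP_\sigma u)\,\epsilon\,\omega_n^{2\sigma/n}f(p)^{2(\sigma-n)/n}\Delta_{g_{\Sn}}f(p)$, with error $O(\epsilon^2|\log\epsilon|)+C(\epsilon+|p-p_i|)\|v-1\|_{H^\sigma}$; estimate \eqref{7.31} then follows because $\fint uP_\sigma u\ge R_\sigma$ (Sobolev inequality) and the sign of $\Delta_{g_{\Sn}}f(p)$. Part (c) is analogous: differentiating in $p\in T_p\Sn$, the main term comes from $\ud_p\fint_{\Sn}f\circ\phi_{-p,\epsilon}\,\ud V_h\approx \ud f(p)$, multiplied by $\tfrac{n-2\sigma}{n}(\fint vP_\sigma v)\,\omega_n^{2\sigma/n}f(p)^{2(\sigma-n)/n}$, and the remainder $C\epsilon(\epsilon+\|v-1\|_{H^\sigma})|q|$ comes from the $\epsilon$-size of the angular moments of the bubble and from the $v-1$ corrections.

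Part (d) is where the nonlocal structure enters most essentially, and I expect it to be the main obstacle. We need coercivity of the second variation in the $v$-direction transversal to the kernel directions (constants and first spherical harmonics), robust under the $\epsilon$-dependent change of $f$ to $f\circ\phi_{-p,\epsilon}$. The strategy: write $\langle\partial_v E_f[u],v-1\rangle$ using the first-variation formula with $\eta=v-1$; the principal part is $2(\fint_{\Sn}f\circ\phi\,v^{2n/(n-2\sigma)})^{-(n-2\sigma)/n}[\fint(v-1)P_\sigma v - (\text{energy})\fint f\circ\phi\,v^{(n+2\sigma)/(n-2\sigma)}(v-1)]$. Using $P_\sigma v = P_\sigma(v-1)+R_\sigma$, Taylor-expanding $v^{(n+2\sigma)/(n-2\sigma)}$ and $v^{2n/(n-2\sigma)}$ around $1$, and invoking the orthogonality relations \eqref{6.28} (the low-frequency components of $v-1$ are $o(1)\|v-1\|_{H^\sigma}$ thanks to \eqref{2.2}, \eqref{1.9}) together with the spectral gap $\Lambda_{n+2}>\tfrac{n+2\sigma}{n-2\sigma}R_\sigma$ from \eqref{eq:Lambda_n+2}, one obtains $\fint(v-1)P_\sigma(v-1)-\tfrac{n+2\sigma}{n-2\sigma}R_\sigma\fint(v-1)^2\ge \tfrac{2\sigma}{2+n-2\sigma}\|v-1\|_{H^\sigma}^2+o(1)\|v-1\|_{H^\sigma}^2$, exactly as in Lemma \ref{lem7.3}; this gives the constant $C_0$. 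The error $o(\epsilon)\|v-1\|_{H^\sigma}$ collects: the difference between $\fint f\circ\phi\,(\cdots)$ and its constant value $f(p)$, estimated by $\|f\circ\phi - f(p)\|_{L^{n/(2\sigma)}}=O(\epsilon)$ from Lemma \ref{lemma5.1}/\eqref{intemediate_est} together with the Sobolev inequality \eqref{eq:sobolev}; and the cubic-in-$(v-1)$ terms, bounded by $C\|v-1\|_{H^\sigma}^3=o(1)\|v-1\|_{H^\sigma}^2$. The delicate point is handling the nonlocal cross term $\fint(v-1)P_\sigma((v-1)\cdot(\text{bounded}))$ type contributions arising from $f\circ\phi$ not being constant — here I would use a commutator estimate of the kind already proved in Lemma \ref{lemma5.4} (bounding $\|P_\sigma(\psi\chi)-\psi P_\sigma\chi\|_{L^2}$) to keep everything controlled by $\|v-1\|_{H^\sigma}$, which crucially uses $\sigma\in(1/2,1)$ so that the relevant kernels are integrable and the $H^{2\sigma,p}\hookrightarrow C^1$ embedding is available.
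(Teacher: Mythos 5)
Your overall route is the same as the paper's: use conformal invariance to write $E_f[u]=E_{f\circ\phi_{-p,\epsilon}}[v]$, split every integral over $\ud V_h$ into the corresponding integral over $\ud V_{g_{\Sn}}$ plus a correction weighted by $v^{\frac{2n}{n-2\sigma}}-1$ controlled by Cauchy--Schwarz, import the unweighted bubble computations from Chen--Xu for the leading terms in (a)--(c), and prove (d) from the spectral gap $\Lambda_{n+2}>\frac{n+2\sigma}{n-2\sigma}R_\sigma$ together with the near-vanishing of the low modes of $v-1$ coming from \eqref{1.9} and \eqref{2.2}, exactly as in \eqref{7.38}. Two of your worries are in fact non-issues: in the paper's coordinates $(\epsilon,p,v)$ the numerator $\fint_{\Sn}vP_\sigma(v)\,\ud V_{g_{\Sn}}$ does not depend on $\epsilon$ at all, so there is no separate ``derivative hits the bubble factor'' term to dismiss in (b); and in (d) the non-constant coefficient $f\circ\phi_{-p,\epsilon}$ only multiplies the zeroth-order term $v^{\frac{n+2\sigma}{n-2\sigma}}(v-1)$ in the first-variation formula, so no commutator with $P_\sigma$ of the type in Lemma \ref{lemma5.4} ever arises --- plain H\"older suffices.

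There is, however, one concrete gap. In (a) and (d) you assert errors of size $o(\epsilon)\|v-1\|_{H^\sigma(\Sn)}$, but the bound you actually invoke, $\|f\circ\phi_{-p,\epsilon}-f(p)\|_{L^2(\Sn)}=O(\epsilon)$, only yields $O(\epsilon)\|v-1\|_{H^\sigma(\Sn)}$. The little-$o$ is not cosmetic: in the proof of Proposition \ref{proposition7.1} (iv) these errors are absorbed via Young's inequality as $o(1)(\epsilon^2+\|v-1\|_{H^\sigma(\Sn)}^2)$ against the coercive terms $-C_0\|v-1\|^2_{H^\sigma(\Sn)}$ and $-c\,\epsilon_r(\epsilon_0-\epsilon_r)\Delta_{g_{\Sn}}f(p_r)$, and an $O(1)$ constant there could destroy the sign of $\frac{\ud}{\ud r}E_f[u_r]$. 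The mechanism producing the $o(\epsilon)$ is the refined estimate \eqref{intemediate_est}, $\|f\circ\phi_{-p,\epsilon}-f(p)\|^2_{L^2(\Sn)}\le C|\nabla f(p)|^2_{g_{\Sn}}\epsilon^2+C\epsilon^3$, combined with the fact that $u\in B_{r_0}(p_i)$ forces $|p-p_i|<r_0$, hence $|\nabla f(p)|_{g_{\Sn}}=o(1)$ as $r_0\to 0$ because $p_i$ is a critical point of $f$. You should state this explicitly; with it, your estimates for $I$ in (a) and for $I_2$ in (d) upgrade from $O(\epsilon)$ to $o(\epsilon)$ and the lemma follows as you outline.
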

\begin{proof}
For notational convenience, let
$$A=A(u)=\fint_{\Sn}f\circ \phi_{-p,\epsilon}\ud V_h.$$

\textit{(a)} Observe that
$$A-f(p)
=\fint_{\Sn}(f\circ \phi_{-p,\epsilon}-f(p))\ud V_{g_{\Sn}}+I,$$
where the error term $I$ is given by
$$I=\fint_{\Sn}(f\circ \phi_{-p,\epsilon}-f(p))(v^{\frac{2n}{n-2\sigma}}-1)\ud V_{g_{\Sn}}$$
which can be estimated as follows:
\begin{equation*}
\begin{split}
|I|&\leq \|f\circ \phi_{-p,\epsilon}-f(p)\|_{L^2(\Sn)}
\|v^{\frac{2n}{n-2\sigma}}-1\|_{L^2(\Sn)}\\
&\leq o(\epsilon)\|v-1\|_{H^\sigma(\Sn)}
\end{split}
\end{equation*}
in view of (\ref{intemediate_est})
and $|\ud f(p)|\to 0$ as $r_0\to 0$ since $p_i$ is a critical point of $f$.
Now we can follow the proof of \cite[Lemma 7.3 (a)]{Chen&Xu} to conclude that
\begin{equation*}
\fint_{\Sn}(f\circ\phi_{-p,\epsilon}-f(p))\ud V_{g_{\Sn}}
=\frac{2}{n-2}\epsilon^2\Delta_{g_{\Sn}}f(p)+O(\epsilon^3|\log\epsilon|).
\end{equation*}
Thus, \textit{(a)} follows from combining all these.

\textit{(b)} By (\ref{1.1}), (\ref{1.9}) and (\ref{2.4}), we have
$$\omega_n^{-\frac{2\sigma}{n}} E_f[u]=\frac{\fint_{\Sn}uP_\sigma(u)\ud V_{g_{\Sn}}}{(\fint_{\Sn}f u^{\frac{2n}{n-2\sigma}}\ud V_{g_{\Sn}})^{\frac{n-2\sigma}{n}}}
=\frac{\fint_{\Sn}vP_\sigma(v)\ud V_{g_{\Sn}}}{(\fint_{\Sn}f\circ\phi_{-p,\epsilon} v^{\frac{2n}{n-2\sigma}}\ud V_{g_{\Sn}})^{\frac{n-2\sigma}{n}}}.$$
Then
$$\omega_n^{-\frac{2\sigma}{n}} \frac{\partial}{\partial\epsilon}E_f[u]
=-\frac{n-2\sigma}{n}\left(\fint_{\Sn}vP_\sigma(v)\ud V_{g_{\Sn}}\right)
A^{-\frac{2(n-\sigma)}{n}}
\frac{\partial}{\partial\epsilon}\fint_{\Sn}f\circ\phi_{-p,\epsilon} v^{\frac{2n}{n-2\sigma}}\ud V_{g_{\Sn}}.$$
As in the proof of \cite[Lemma 7.3 (b)]{Chen&Xu},
we can introduce the stereographic coordinates and denote $\phi_{-p,\epsilon}$ by $\psi_\epsilon$
to get
\begin{equation*}
\begin{split}
\frac{\partial}{\partial\epsilon}\fint_{\Sn}f\circ\phi_{-p,\epsilon} v^{\frac{2n}{n-2\sigma}}\ud V_{g_{\Sn}}
&=\omega_n^{-1}\int_{\mathbb{R}^n}\frac{\partial}{\partial\epsilon}f(\psi_\epsilon(z))\left(\frac{2}{1+|z|^2}\right)^n\ud z\\
&\hspace{4mm}
+\omega_n^{-1}\int_{\mathbb{R}^n}\frac{\partial}{\partial\epsilon}f(\psi_\epsilon(z))(v^{\frac{2n}{n-2\sigma}}-1)\left(\frac{2}{1+|z|^2}\right)^n\ud z\\
&:=I+II.
\end{split}
\end{equation*}
Then, by the same argument in the proof of \cite[Lemma 7.3 (b)]{Chen&Xu}, we have
\begin{equation*}
I=\frac{4}{n-2}\epsilon \Delta_{g_{\Sn}}f(p)+O(\epsilon^2|\log\epsilon|).
\end{equation*}
On the other hand, as in the proof of  \cite[Lemma 7.3 (b)]{Chen&Xu}, we have
\begin{align*}
\left|\frac{\partial}{\partial\epsilon}f(\psi_\epsilon(z))\right|\leq C|z| \quad\mathrm{for}\quad z \in \mathbb{R}^n
\end{align*}
and
$$
\left|\frac{\partial}{\partial\epsilon}f(\psi_\epsilon(z))\right|\leq C|p-p_i||z|+C\epsilon|z|^2 \quad \mathrm{~~in~~}B_{\epsilon^{-1}}(0).
$$
Then
\begin{equation*}
\begin{split}
|II|&\leq C\left[(\epsilon+|p-p_i|)\int_{B_{\epsilon^{-1}}(0)}\frac{|v^{\frac{2n}{n-2\sigma}}-1|}{(1+|z|^2)^{n-1}}\ud z
+\int_{\mathbb{R}^n\setminus B_{\epsilon^{-1}}(0)}\frac{|v^{\frac{2n}{n-2\sigma}}-1||z|}{(1+|z|^2)^n}\ud z\right]\\
&\leq C(\epsilon+|p-p_i|)\|v-1\|_{H^\sigma(\Sn)}.
\end{split}
\end{equation*}
Thus, (\ref{7.30}) follows from combining all these and \textit{(a)}.
Moreover, (\ref{eq:coercive-1}) together with $u \in C_\ast^\infty$ gives
$$\fint_{\Sn}uP_\sigma(u)\ud V_{g_{\Sn}}
=\fint_{\Sn}vP_\sigma(v)\ud V_{g_{\Sn}}\geq R_\sigma.$$
Hence, if $\Delta_{g_{\Sn}}f(p)>0$, then (\ref{7.31}) follows from (\ref{7.30}).

\textit{(c)} For any $q\in T_p(\Sn)$, we have
\begin{equation*}
\begin{split}
&\omega_n^{-\frac{2\sigma}{n}}\frac{\partial}{\partial p}E_f[u]\cdot q+\frac{n-2\sigma}{n}\left(\fint_{\Sn}vP_\sigma(v)\ud V_{g_{\Sn}}\right)
A^{\frac{2(\sigma-n)}{n}} \ud f(p)\cdot q\\
&=\frac{2\sigma-n}{n}\left(\fint_{\Sn}vP_\sigma(v)\ud V_{g_{\Sn}}\right)A^{\frac{2(\sigma-n)}{n}}
\left[\fint_{\Sn}(\ud(f\circ\phi_{-p,\epsilon})\cdot q-\ud f(p)\cdot q )\ud V_{g_{\Sn}}\right.\\
&\hspace{8mm}\left.+\fint_{\Sn}(\ud(f\circ\phi_{-p,\epsilon})\cdot q-\ud f(p)\cdot q )(v^{\frac{2n}{n-2\sigma}}-1)\ud V_{g_{\Sn}}\right]\\
&=\frac{2\sigma-n}{n}\left(\fint_{\Sn}vP_\sigma(v)\ud V_{g_{\Sn}}\right)A^{\frac{2(\sigma-n)}{n}} (I_1+I_2).
\end{split}
\end{equation*}
Similar to the proof of \cite[Lemma 7.3 (c)]{Chen&Xu}), we have
$$|I_1|\leq C\epsilon^2 |q|\quad \mathrm{ and }\quad |I_2|\leq C\epsilon\|v-1\|_{H^\sigma(\Sn)}|q|.$$
Now (\ref{7.32}) follows from these.

\textit{(d)} A direct computation yields
\begin{equation*}
\begin{split}
& \omega_n^{-\frac{2\sigma}{n}}\left\langle\frac{\partial}{\partial v}E_f[u],v-1\right\rangle\\
=&2A^{\frac{2\sigma-n}{n}}\Bigg[\fint_{\Sn}(v-1)P_\sigma(v)\ud V_{\Sn}
-\left(\fint_{\Sn} vP_\sigma(v)\ud V_{\Sn}\right)A^{-1}\fint_{\Sn} f\circ\phi_{-p,\epsilon}v^{\frac{n+2\sigma}{n-2\sigma}}(v-1)\ud V_{\Sn}\Bigg]\\
=&2A^{\frac{2\sigma-n}{n}}\Bigg[\fint_{\Sn} (v-1)P_\sigma(v-1)\ud V_{\Sn}
-R_\sigma\fint_{\Sn} (v^{\frac{n+2\sigma}{n-2\sigma}}-1)(v-1)\ud V_{\Sn}+I\Bigg],
\end{split}
\end{equation*}
where
$$I=-\fint_{\Sn}\left[\left(\fint_{\Sn} vP_\sigma(v)\ud V_{\Sn}\right)A^{-1}f\circ\phi_{-p,\epsilon}-R_\sigma\right] v^{\frac{n+2\sigma}{n-2\sigma}}(v-1)\ud V_{\Sn}.$$
By (\ref{6.28}) and (\ref{6.32}), we compute
\begin{equation}\label{7.38}
\begin{split}
&\fint_{\Sn} (v-1)P_\sigma(v-1)\ud V_{\Sn}-R_\sigma\fint_{\Sn} (v^{\frac{n+2\sigma}{n-2\sigma}}-1)(v-1)\ud V_{\Sn}\\
=&\fint_{\Sn} (v-1)P_\sigma(v-1)\ud V_{\Sn}-\frac{n+2\sigma}{n-2\sigma}R_\sigma\fint_{\Sn} (v-1)^2\ud V_{\Sn}
+o(1)\|v-1\|^2_{H^\sigma(\Sn)}\\
=&\sum_{i=0}^\infty\left(\Lambda_i-\frac{n+2\sigma}{n-2\sigma}R_\sigma\right)|v^i|^2+o(1)\|v-1\|^2_{H^\sigma(\Sn)}\\
\geq&\frac{\Lambda_{n+2}-\frac{n+2\sigma}{n-2\sigma}R_\sigma}{\Lambda_{n+2}+1}\sum_{i\geq n+2}^\infty(\Lambda_i+1)|v^i|^2+o(1)\|v-1\|^2_{H^\sigma(\Sn)}\\
\geq& C_0\|v-1\|^2_{H^\sigma(\Sn)}.
\end{split}
\end{equation}
Moreover, we can decompose
\begin{equation*}
\begin{split}
I&=-\left(\fint_{\Sn} vP_\sigma(v)\ud V_{\Sn}-R_\sigma\right)A^{-1}\fint_{\Sn}f\circ\phi_{-p,\epsilon} v^{\frac{n+2\sigma}{n-2\sigma}}(v-1)\ud V_{\Sn}\\
&\hspace{4mm}
-R_\sigma A^{-1}\fint_{\Sn}(f\circ\phi_{-p,\epsilon}-A) v^{\frac{n+2\sigma}{n-2\sigma}}(v-1)\ud V_{\Sn}\\
&=I_1+I_2.
\end{split}
\end{equation*}
By Lemma \ref{lem7.3}, we have
$$|I_1|\leq C\|v-1\|^3_{H^\sigma(\Sn)}=o(1)\|v-1\|^2_{H^\sigma(\Sn)}.$$
On the other hand, using (\ref{intemediate_est}),
(\ref{7.29})
and the fact that $$|\ud f(p)|\to 0~~\mbox{ as }r_0\to 0,$$
we can estimate
\begin{equation*}
\begin{split}
|I_2|&\leq C\left|\fint_{\Sn}(f\circ\phi_{-p,\epsilon}-f(p)) v^{\frac{n+2\sigma}{n-2\sigma}}(v-1)\ud V_{\Sn}\right|\\
&\hspace{4mm}+C\left|\fint_{\Sn}(f(p)-A) v^{\frac{n+2\sigma}{n-2\sigma}}(v-1)\ud V_{\Sn}\right|\\
&\leq C(\|f\circ\phi_{-p,\epsilon}-f(p)\|_{L^2(\Sn)}+|f(p)-A|)\|v-1\|_{H^\sigma(\Sn)}\\
&\leq (o(\epsilon)+C\epsilon^2)\|v-1\|_{H^\sigma(\Sn)}=o(\epsilon) \|v-1\|_{H^\sigma(\Sn)}.
\end{split}
\end{equation*}
Now (\ref{7.33}) follows by collecting all these.
\end{proof}

We are in a position to complete the proof of 
parts \textit{(iii)} and \textit{(iv)} in Proposition \ref{proposition7.1}.

\begin{proof}[Proof of Proposition \ref{proposition7.1} (iii) and (iv)]
As in the proof of part \textit{(ii)}, we choose $\nu\leq r_0^3<\nu_0$ and $r_0>0$ sufficiently
small such that
$B_{r_0}(p_i)\subset L_{\beta_i+\nu}\setminus L_{\beta_i-\nu}$.
As in \textit{(ii)}, for any $1\leq i\leq N$ , we show that there exists a sufficiently large $T>0$ such that
$u(T,L_{\beta_i+\nu_0})\subset L_{\beta_i+\nu}$.
In addition, for any $u_0\in L_{\beta_i+\nu}$,
we can choose a larger $T=T(u_0)>0$ such that
either $u(T,u_0)\in L_{\beta_i-\nu}$ or
$u(t,u_0)\in B_{r_0/4}(p_i)$ for some $t\in [0,T]$.

For $u=(\epsilon, p, v)\in B_{r_0}(p_i)$, we have
\begin{equation}\label{7.37}
\begin{split}
&\omega_n^{-\frac{2\sigma}{n}}(E_f[u]-\beta_i)\\
=&\frac{\fint_{\Sn}uP_\sigma(u)\ud V_{g_{\Sn}}}{(\fint_{\Sn}f u^{\frac{2n}{n-2\sigma}}\ud V_{g_{\Sn}})^{\frac{n-2\sigma}{n}}}
-R_\sigma f(p_i)^{\frac{2\sigma-n}{n}}\\
=&\frac{\fint_{\Sn}vP_\sigma(v)\ud V_{g_{\Sn}}}{(\fint_{\Sn}f\circ\phi_{-p,\epsilon} v^{\frac{2n}{n-2\sigma}}\ud V_{g_{\Sn}})^{\frac{n-2\sigma}{n}}}
-R_\sigma f(p_i)^{\frac{2\sigma-n}{n}}\\
=&A^{\frac{2\sigma-n}{n}}\left[\left(\fint_{\Sn}vP_\sigma(v)\ud V_{g_{\Sn}}-R_\sigma\right)
-R_\sigma f(p_i)^{\frac{2\sigma-n}{n}}(A^{\frac{n-2\sigma}{n}}-f(p_i)^{\frac{n-2\sigma}{n}})\right],
\end{split}
\end{equation}
where $A=\displaystyle\fint_{\Sn}f\circ\phi_{-p,\epsilon} v^{\frac{2n}{n-2\sigma}}\ud V_{g_{\Sn}}$.
Observe that
\begin{equation*}
\begin{split}
A^{\frac{n-2\sigma}{n}}-f(p_i)^{\frac{n-2\sigma}{n}}&=
f(p_i)^{\frac{n-2\sigma}{n}}\Bigg[\Bigg(1+\frac{A-f(p_i)}{f(p_i)}\Bigg)^{\frac{n-2\sigma}{n}}-1\Bigg]\\
&=f(p_i)^{\frac{n-2\sigma}{n}}\Bigg[\frac{n-2\sigma}{n}\frac{A-f(p_i)}{f(p_i)}+O(|A-f(p_i)|^2)\Bigg]
\end{split}
\end{equation*}
and
$$A-f(p_i)=A-f(p)+f(p)-f(p_i).$$
Combining these with Lemma \ref{lem7.4} \textit{(a)}, we find
\begin{equation}\label{7.35}
\begin{split}
&f(p_i)^{\frac{2\sigma}{n}}(A^{\frac{n-2\sigma}{n}}-f(p_i)^{\frac{n-2\sigma}{n}})\\
=&\frac{2(n-2\sigma)}{n(n-2)}\epsilon^2\Delta_{g_{\Sn}}f(p)+\frac{n-2\sigma}{n}(|p^+|^2-|p^-|^2)\\
&+o(1)(\epsilon^2+|p-p_i|^2+\|v-1\|^2_{H^\sigma(\Sn)}).
\end{split}
\end{equation}
Hence, from Lemma \ref{lem7.3}, we can conclude that
$$E_f[u]-\beta_i\geq C_2\|v-1\|_{H^\sigma(\Sn)}^2-C(\epsilon^2+|p-p_i|^2).$$
Consequently, for $u\in L_{\beta_i+\nu}\cap B_{r_0}(p_i)$, we have
\begin{equation}\label{7.36}
\|v-1\|^2_{H^\sigma(\Sn)}\leq C(\epsilon^2+|p-p_i|^2+r_0^3).
\end{equation}

We still use the same normalization (\ref{7.34}) in $t$ used in the proof of part \textit{(ii)}.
With this time scale, it follows from Proposition \ref{prop:properties} (3),
Lemmas \ref{lemma5.5} and \ref{lemma6.7} that
\begin{equation*}
\begin{split}
\frac{\ud}{\ud\tau}E_f[U(\tau,u_0)]
&=\epsilon^{-2}\frac{\ud}{\ud t}E_f[u(t(\tau),u_0)]\\
&\leq -C_3(|\nabla f(p)|_{g_{\Sn}}^2+\epsilon^2|\Delta_{g_{\Sn}}f(p)|^2)\\
&\leq -C_4(\epsilon^2+|p-p_i|^2)
\end{split}
\end{equation*}
with uniform constants $C_3>0$, $C_4>0$ for all $u_0\in B_{r_0}(p_i)$.
We would like to explain how to get this estimate.
With the coordinates we chose, there holds $|\nabla f(p)|_{g_{\Sn}}^2=|p-p_i|^2$.
Also the non-degeneracy condition implies that $|\Delta_{g_{\Sn}} f(p)|>0$
if $r_0$ is sufficiently small since $p_i$ is a critical point of $f$.
From these, it is not hard to see the above estimate holds.

Thus, for $u_0\in B_{r_0}(p_i)\setminus B_{r_0/4}(p_i)$, we have
$$\frac{\ud}{\ud\tau}E_f[U(\tau,u_0)]\leq -C_5r_0^2,$$
with a uniform constant $C_5>0$ in view of (\ref{7.36}).
Hence, the transversal time of the annular region $L_{\beta_i+\nu}\cap (B_{r_0/2}(p_i)\setminus B_{r_0/4}(p_i))$
is uniformly positive. Choosing sufficiently large $T^*>0$ and sufficiently small $\nu>0$, we have
$$U(T^*,L_{\beta_i+\nu})\subset L_{\beta_i-\nu}\cup (B_{r_0/2}(p_i)\cap L_{\beta_i+\nu}).$$
Then
$$T_\nu(u_0)=\min\big\{T^*,\inf\{t; E_f[U(t,u_0)]\leq\beta_i-\nu\}\big\}$$
continuously depends on $u_0$. Thus the map
$(t,u_0)\mapsto U(\min\{t,T_\nu(u_0)\},u_0)$ gives a homotopy equivalence of $L_{\beta_i+\nu}$
with a subset of $L_{\beta_i-\nu}\cup (B_{r_0/2}(p_i)\cap L_{\beta_i+\nu})$.

For part \textit{(iii)}, with the help of Lemma \ref{lem7.4} \textit{(a)} and \textit{(b)}, a similar argument as in the proof  of \cite[Proposition 7.1 (iii) on pp.493-494]{Chen&Xu} goes through.

For part \textit{(iv)}, assume $\Delta_{g_{\Sn}}f(p_i)<0$.
By (\ref{7.37}), (\ref{7.35}) and  Lemma \ref{lem7.3}, we have
\begin{equation*}
\begin{split}
&\omega_n^{-\frac{2\sigma}{n}}(E_f[u]-\beta_i)\\
\geq& A^{\frac{2\sigma-n}{n}}\Bigg[C_2\|v-1\|^2_{H^\sigma(\Sn)}
-\frac{2(n-2\sigma)}{n(n-2)}R_\sigma\epsilon^2\Delta_{g_{\Sn}}f(p)\\
&\qquad\quad~~+\frac{n-2\sigma}{n}R_\sigma(|p^-|^2-|p^+|^2)+o(1)(\epsilon^2+|p-p_i|^2+\|v-1\|^2_{H^\sigma(\Sn)})\Bigg]
\end{split}
\end{equation*}
with $o(1)\to 0$ as $r_0\to 0$.
We can then deduce that there exists some number $\delta>0$
with $4\delta^2\leq  \frac{7}{9}\min\{1,r_0^2\}$ such that
\begin{equation}\label{est:negative_Laplace}
\epsilon^2+|p^-|^2+\|v-1\|^2_{H^\sigma(\Sn)}\leq \frac{r_0^2}{4}
\end{equation}
for any $u=(\epsilon, p, v)\in B_{r_0}(p_i)\cap L_{\beta_i+\nu}$ with $|p^+|<2\delta r_0$, provided
that $r_0>0$ is sufficiently small and $\nu\leq r_0^3$.

Let $a_+=\max\{a,0\}$ for $a\in\mathbb{R}$. Let  $\eta$ be a cut-off function defined by
$$\eta=\eta(|p^+|)=\displaystyle\left(1-\frac{(|p^+|-\delta r_0)_+}{\delta r_0}\right)_+$$
with $\delta>0$ given as above. For $0\leq r\leq 1$ and $u=(\epsilon, p, v)\in B_{r_0}(p_i)$,
we choose $\epsilon_0>0$ sufficiently small such that $\displaystyle 0<\epsilon/3<\epsilon_0<2\epsilon/3$
and define $u_r$ by
$$u_r=(\epsilon_r, p_r, v_r)
=\big(\epsilon+(\epsilon_0-\epsilon)r\eta, p-r\eta p^-,((1-r\eta)v^{\frac{2n}{n-2\sigma}}+r\eta)^{\frac{n-2\sigma}{2n}}\big).$$

\begin{claim}
If $\|v-1\|_{C^1(\Sn)}$ is sufficiently small, then $u_r\in B_{r_0}(p_i)$.
\end{claim}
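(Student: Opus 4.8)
The plan is to check, in turn, the three requirements entering the definition of $B_{r_0}(p_i)$ for the coordinate triple $(\epsilon_r,p_r,v_r)$, and to note that the associated $u_r$ lies in $C_\ast^\infty$. Since $\eta=\eta(|p^+|)\in[0,1]$ is a scalar, $v_r^{2n/(n-2\sigma)}=(1-r\eta)v^{2n/(n-2\sigma)}+r\eta$ is a positive smooth function with $\fint_{\Sn}v_r^{2n/(n-2\sigma)}\ud V_{g_{\Sn}}=(1-r\eta)+r\eta=1$; moreover, using \eqref{2.2} for $v$ together with $\int_{\Sn}x\,\ud V_{g_{\Sn}}=0$, we get $\int_{\Sn}x\,v_r^{2n/(n-2\sigma)}\ud V_{g_{\Sn}}=0$, so $v_r$ is an admissible normalized conformal factor and the corresponding $u_r$ is a positive smooth function with $\int_{\Sn}u_r^{2n/(n-2\sigma)}\ud V_{g_{\Sn}}=\omega_n$ by conformal invariance of the volume, i.e. $u_r\in C_\ast^\infty$. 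Next, from $\epsilon/3<\epsilon_0<2\epsilon/3$ and $0<\epsilon\le 1$ we get $\epsilon_r=\epsilon+(\epsilon_0-\epsilon)r\eta\in[\epsilon_0,\epsilon]\subset(0,1]$, hence $\epsilon_r^2\le\epsilon^2$; and in the Morse coordinates near $p_i=0$ we have $p_r=p^++(1-r\eta)p^-$, so $|p_r|^2=|p^+|^2+(1-r\eta)^2|p^-|^2\le|p|^2$, which gives $|p_r-p_i|^2\le|p-p_i|^2+O(r_0^4)$ once the (third-order at $p_i$) coordinate change is accounted for.

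The main step is to estimate $\|v_r-1\|_{H^\sigma(\Sn)}$. Setting $w=v^{2n/(n-2\sigma)}-1$ and $\mu=\tfrac{n-2\sigma}{2n}$, Taylor expansion gives $v-1=(1+w)^\mu-1=\mu w+O(w^2)$ and $v_r-1=(1+(1-r\eta)w)^\mu-1=(1-r\eta)\mu w+O(w^2)$, so that $v_r-1=(1-r\eta)(v-1)+E$ with $\|E\|_{C^1(\Sn)}\le C\|w\|_{C^1(\Sn)}^2\le C\|v-1\|_{C^1(\Sn)}^2$ (using $\|w\|_{C^1}\le C\|v-1\|_{C^1}$ for $\|v-1\|_{C^1}\le 1$). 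Since $\sigma<1$ one has $\|\cdot\|_{H^\sigma(\Sn)}\le C\|\cdot\|_{C^1(\Sn)}$, whence $\|v_r-1\|_{H^\sigma}\le(1-r\eta)\|v-1\|_{H^\sigma}+C\|v-1\|_{C^1}^2$, and squaring (with $\|v-1\|_{H^\sigma}\le C\|v-1\|_{C^1}$) yields $\|v_r-1\|_{H^\sigma(\Sn)}^2\le\|v-1\|_{H^\sigma(\Sn)}^2+C'\|v-1\|_{C^1(\Sn)}^3$. The role of the $C^1$-smallness hypothesis is precisely to render this nonlinear remainder negligible \emph{without} inflating the coefficient in front of $\|v-1\|_{H^\sigma(\Sn)}^2$.

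To conclude, observe that if $\eta(|p^+|)=0$ then $u_r=u\in B_{r_0}(p_i)$ and nothing is to be shown, so we may assume $|p^+|<2\delta r_0$; then $|p^+|^2<4\delta^2 r_0^2\le\tfrac{7}{9}r_0^4$, and in the regime where the homotopy is applied, $u\in B_{r_0}(p_i)\cap L_{\beta_i+\nu}$, estimate \eqref{est:negative_Laplace} gives $\epsilon^2+|p^-|^2+\|v-1\|_{H^\sigma(\Sn)}^2\le r_0^2/4$. Adding the three bounds,
\[
\|v_r-1\|_{H^\sigma(\Sn)}^2+|p_r-p_i|^2+\epsilon_r^2\le\big(\epsilon^2+|p^-|^2+\|v-1\|_{H^\sigma}^2\big)+|p^+|^2+C'\|v-1\|_{C^1}^3+O(r_0^4)\le\tfrac{r_0^2}{4}+C r_0^4+C'\|v-1\|_{C^1}^3,
\]
which is $<r_0^2$ as soon as $r_0$ is small enough that $C r_0^4\le r_0^2/4$ and $\|v-1\|_{C^1(\Sn)}$ is small enough that $C'\|v-1\|_{C^1}^3\le r_0^2/4$. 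Hence $u_r\in B_{r_0}(p_i)$. The one genuinely delicate point is the $H^\sigma$-estimate of the nonlinear remainder flagged above (the reason $H^\sigma$-smallness of $v-1$ cannot substitute for $C^1$-smallness); the remaining computations, including the bookkeeping of $\delta,r_0,\nu$ so that \eqref{est:negative_Laplace} is applicable and the errors fit inside the available slack, are routine.
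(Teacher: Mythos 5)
Your proof is correct, but it takes a genuinely different route from the paper's. The paper argues by convexity: setting $\eta=1$, it defines
$H(r)=(\epsilon+(\epsilon_0-\epsilon)r)^2+|p-rp^-|^2+\int_{\Sn}(v_r-1)P_\sigma(v_r-1)\,\ud V_{g_{\Sn}}$,
computes $H''(r)$ and shows $H''\ge 0$ once $\|v-1\|_{C^1(\Sn)}$ is small (the $C^1$-smallness enters only to prove, by an estimate analogous to \eqref{est:2nd_diff}, that $\int_{\Sn}v_r''P_\sigma(v_r-1)\,\ud V_{g_{\Sn}}=o(1)\|v_r'\|_{H^\sigma(\Sn)}^2$); hence $H(\eta r)\le\max\{H(0),H(1)\}$, where $H(0)<r_0^2$ because $u\in B_{r_0}(p_i)$ and $H(1)=\epsilon_0^2+|p^+|^2$ (since $v_1\equiv 1$) is controlled by \eqref{est:negative_Laplace} and the choice of $\delta$. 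You instead bound each coordinate of $u_r$ monotonically ($\epsilon_r\le\epsilon$, $|p_r|\le|p|$ using the orthogonality of $p^\pm$) and replace the convexity of the $v$-component by the linearization $v_r-1=(1-r\eta)(v-1)+O(\|v-1\|_{C^1(\Sn)}^2)$, yielding $\|v_r-1\|_{H^\sigma(\Sn)}^2\le\|v-1\|_{H^\sigma(\Sn)}^2+C'\|v-1\|_{C^1(\Sn)}^3$; both arguments then use \eqref{est:negative_Laplace} and $|p^+|<2\delta r_0$ identically. The trade-off is this: the paper's argument never compares $\|v_r-1\|_{H^\sigma(\Sn)}$ with $\|v-1\|_{H^\sigma(\Sn)}$ at intermediate $r$ --- only the endpoint $v_1=1$ matters --- so the $C^1$-smallness it needs is absolute (just enough to force $H''\ge 0$), whereas your cubic remainder must be absorbed into the slack $\tfrac34 r_0^2$, so ``sufficiently small'' now means small relative to $r_0^{2/3}$. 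Since $r_0$ is fixed before the homotopy is built and the $C^1$-smallness is supplied afterwards by the flow, this stronger requirement is still legitimate under the claim's hypothesis, and your argument closes.
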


To that end, we first consider $\eta=1$ and define
$$ H(r) = ( \epsilon + (\epsilon_0 - \epsilon) r)^2 + |p - r p^-|^2 + \int_{\Sn} (v_r-1)P_\sigma(v_r-1) \ud V_{g_{\Sn}}.$$
A direct computation yields
\begin{align*}
H'(r)= 2(\epsilon + (\epsilon_0 - \epsilon) r)(\epsilon_0 - \epsilon) + 2\langle p - rp^-, -p^-\rangle+2\int_{\Sn}v_r'P_\sigma(v_r-1)\ud V_{g_{\Sn}}.
\end{align*}
Notice that
$$v_r'=\frac{n-2\sigma}{2n}v_r^{-\frac{n+2\sigma}{n-2\sigma}}(1-v^{\frac{2n}{n-2\sigma}}), \quad v_r''=-\frac{n+2\sigma}{n-2\sigma}v_r^{-1}(v_r')^2.$$
Then we have
\begin{align*}
\frac{1}{2}H''(r)=&(\epsilon_0 - \epsilon)^2 + |p^-|^2+\int_{\Sn}v_r'P_\sigma(v_r')\ud V_{g_{\Sn}}+\int_{\Sn}v_r''P_\sigma(v_r-1)\ud V_{g_{\Sn}} \\
=&(\epsilon_0 - \epsilon)^2 + |p^-|^2+\int_{\Sn}v_r'P_\sigma(v_r')\ud V_{g_{\Sn}}+o(1) \|v_r'\|_{H^\sigma(\Sn)}^2\geq 0,
\end{align*}
where the second identity follows from
\begin{align*}
\int_{\Sn}v_r''P_\sigma(v_r-1)\ud V_{g_{\Sn}}=&-\frac{n+2\sigma}{n-2\sigma}\int_{\Sn}v_r^{-1}(v_r')^2P_\sigma(v_r-1)\ud V_{g_{\Sn}}\\
=&o(1)\|v_r'\|_{H^\sigma(\Sn)}^2
\end{align*}
through a similar argument to derive \eqref{est:2nd_diff}.

Thus, we conclude that
$$H(r) \leq \max\{H(0),H(1)\}\leq \max\{r_0^2,\epsilon_0^2+|p^+|^2\}.$$
Next, if $\eta r>0$, then  $|p^+|<2\delta r_0$. From this and \eqref{est:negative_Laplace}, we have $\epsilon^2\leq r_0^2/4$ and then $\epsilon_0^2 \leq r_0^2/9$. By the selection of $\delta$, there holds $g(1)\leq \epsilon_0^2+|p^+|^2 \leq (1/9+4\delta^2)<r_0^2$. Hence, $H(\eta r)\leq H(1)\leq r_0^2$.

Hence, under the condition of $\|v-1\|_{C^1(\Sn)}$ being small (which can be
guaranteed by the construction of homotopies below),
we have established that
the homotopy $H_1: \overline{B_{r_0}(p_i)}\cap L_{\beta_i+\nu}\times [0,1]\to \overline{B_{r_0}(p_i)}$
given by $H_1(u,r)=u_r$ is well defined
and $H_1(\cdot, 1)$ maps the set
$\{u\in B_{r_0}(p_i)\cap L_{\beta_i+\nu}; |p^+|<\delta r_0\}$
to a set $B_{\delta r_0}^+$, where
$$B_{\rho}^+:=\{u\in B_{r_0}(p_i); \epsilon=\epsilon_0, p^-=0, |p^+|<\rho, v=1\} \quad\mathrm{for~~} 0<\rho<r_0$$
is clearly diffeomorphic to a unit ball of dimension $n-\mbox{ind}(f,p_i)$.

Now we need to show that the energy of $u_r$ satisfies
$E_f[u_r]\leq\beta_i+\nu$ if $\nu$ is sufficiently small.
To that end, we compute
\begin{equation*}
\begin{split}
\frac{\ud}{\ud r}E_f[u_r]
=&\eta\Bigg(\frac{\partial E_f[u_r]}{\partial\epsilon_r}(\epsilon_0-\epsilon)
-\frac{\partial E_f[u_r]}{\partial p_r} p^{-}\\
&-\frac{n-2\sigma}{2n}\left\langle \frac{\partial E_f[u_r]}{\partial v_r},
v_r^{-\frac{n+2\sigma}{n-2\sigma}}(v^{\frac{2n}{n-2\sigma}}-1)\right\rangle\Bigg)\\
=&\eta(1-r\eta)^{-1}\Bigg(\frac{\partial E_f[u_r]}{\partial\epsilon_r}(\epsilon_0-\epsilon_r)
-\frac{\partial E_f[u_r]}{\partial p_r} \cdot p_r^{-}\\
&-\frac{n-2\sigma}{2n}\left\langle \frac{\partial E_f[u_r]}{\partial v_r},
v_r^{-\frac{n+2\sigma}{n-2\sigma}}(v_r^{\frac{2n}{n-2\sigma}}-1)\right\rangle\Bigg)\\
:=&\eta(1-r\eta)^{-1}D:=\eta(1-r\eta)^{-1}(I-II-III).
\end{split}
\end{equation*}
The last term can be rewritten as
\begin{align*}
III&=\frac{n-2\sigma}{2n}\left\langle \frac{\partial E_f[u_r]}{\partial v_r},
v_r^{-\frac{n+2\sigma}{n-2\sigma}}(v_r^{\frac{2n}{n-2\sigma}}-1)\right\rangle\\
&=\frac{n-2\sigma}{n}\omega_n^{\frac{2\sigma}{n}} A_r^{\frac{2\sigma-n}{n}}
\fint_{\Sn}v_r^{-\frac{n+2\sigma}{n-2\sigma}}(v_r^{\frac{2n}{n-2\sigma}}-1)P_\sigma(v_r)\ud V_{g_{\Sn}}\\
&\hspace{4mm}-\frac{n-2\sigma}{n}\omega_n^{\frac{2\sigma}{n}} A_r^{\frac{2\sigma-n}{n}-1}\left(\fint_{\Sn}v_rP_\sigma(v_r)\ud V_{g_{\Sn}}\right)
\fint_{\Sn}f\circ \phi_{-p_r,\epsilon_r}(v_r^{\frac{2n}{n-2\sigma}}-1)\ud V_{g_{\Sn}}\\
&:=\frac{n-2\sigma}{n}\omega_n^{\frac{2\sigma}{n}} A_r^{\frac{2\sigma-n}{n}}(III_1+III_2),
\end{align*}
where $A_r=\displaystyle\fint_{\Sn}f\circ \phi_{-p_r,\epsilon_r}v_r^{\frac{2n}{n-2\sigma}}\ud V_{g_{\Sn}}$.
Notice that  $\|v_r-1\|_{C^1(\Sn)}=o(1)$ since $\|v-1\|_{C^1(\Sn)}=o(1)$.
We can estimate $III_1$ as follows:
\begin{align*}
III_1=&\fint_{\Sn}v_r^{-\frac{n+2\sigma}{n-2\sigma}}(v_r^{\frac{2n}{n-2\sigma}}-1)P_\sigma(v_r)\ud V_{g_{\Sn}}\\
=&\frac{2n}{n-2\sigma}\left(\fint_{\Sn} (v_r-1)P_\sigma(v_r-1)\ud V_{\Sn}-\frac{n+2\sigma}{n-2\sigma}R_\sigma\fint_{\Sn} (v_r-1)^2\ud V_{\Sn}\right)\\
&+o(1)\|v_r-1\|_{H^\sigma(\Sn)}^2\\
\geq&(C_0+o(1))\|v_r-1\|_{H^\sigma(\Sn)}^2
\end{align*}
for some constant $C_0>0$ depending only on $n,\sigma$, where the last inequality follows from (\ref{7.38}).
We can rewrite $III_2$ as
\begin{equation*}
III_2=-\left(\fint_{\Sn}v_rP_\sigma(v_r)\ud V_{g_{\Sn}}\right)
\left(1-A_r^{-1}\fint_{\Sn}f\circ \phi_{-p_r,\epsilon_r}\ud V_{g_{\Sn}}\right).
\end{equation*}
Note that it follows from the proof of Lemma \ref{lem7.4} \textit{(a)} that
$$\fint_{\Sn}f\circ \phi_{-p_r,\epsilon_r}\ud V_{g_{\Sn}}-f(p_r)
=(A_r-f(p_r))+o(\epsilon_r)\|v_r-1\|_{H^\sigma(\Sn)}.$$
Hence,
\begin{equation*}
\begin{split}
1-A_r^{-1}\fint_{\Sn}f\circ \phi_{-p_r,\epsilon_r}\ud V_{g_{\Sn}}
&=A_r^{-1}\Bigg[A_r-f(p_r)-\left(\fint_{\Sn}f\circ \phi_{-p_r,\epsilon_r}\ud V_{g_{\Sn}}-f(p_r)\right)\Bigg]\\
&=o(\epsilon_r)\|v_r-1\|_{H^\sigma(\Sn)},
\end{split}
\end{equation*}
which implies that
$$III_2=-o(\epsilon_r)\|v_r-1\|_{H^\sigma(\Sn)}\left(\fint_{\Sn}v_rP_\sigma(v_r)\ud V_{g_{\Sn}}\right).$$
Therefore, we obtain
\begin{align*}
III\geq&\frac{n-2\sigma}{n}\omega_n^{\frac{2\sigma}{n}} A_r^{\frac{2\sigma-n}{n}}C_0\|v_r-1\|_{H^\sigma(\Sn)}^2\\
&+o(1)(\epsilon_r+\|v_r-1\|_{H^\sigma(\Sn)})\|v_r-1\|_{H^\sigma(\Sn)}.
\end{align*}
By Lemma \ref{lem7.4}, we have
\begin{align*}
I=&-\frac{4(n-2\sigma)}{n(n-2)}\left(\fint_{\Sn}v_rP_\sigma(v_r)\ud V_{g_{\Sn}}\right)
\epsilon_r\, \omega_n^{\frac{2\sigma}{n}}A_r^{\frac{2(\sigma-n)}{n}}\Delta_{g_{\Sn}}f(p_r) (\epsilon_0-\epsilon_r)\\
&+C\Big[\epsilon_r^2|\log\epsilon_r|+(\epsilon_r+|p_r-p_i|)\|v_r-1\|_{H^\sigma(\Sn)}\Big](\epsilon_r-\epsilon_0)
\end{align*}
and
\begin{align*}
II=&-\frac{n-2\sigma}{n}\left(\fint_{\Sn}v_rP_\sigma(v_r)\ud V_{g_{\Sn}}\right)
\omega_n^{\frac{2\sigma}{n}}A_r^{\frac{2(\sigma-n)}{n}}\ud f(p_r)\cdot p_r^-\\
&
+C\epsilon_r(\epsilon_r+\|v_r-1\|_{H^\sigma(\Sn)})|p_r^-|.
\end{align*}
Since $f(p_r)=f(p_i)+|p_r^+|^2-|p_r^-|^2$ in the local coordinates near $p_i$,
there holds $\ud f(p_r)\cdot p_r^-=-2|p_r^-|^2$.
Therefore, by combining $I$, $II$ and $III$, we conclude that
\begin{align*}
D\leq& \frac{n-2\sigma}{n}\omega_n^{\frac{2\sigma}{n}}A_r^{\frac{2\sigma-n}{n}}\Bigg\{
-\frac{4}{n-2}\left(\fint_{\Sn}v_rP_\sigma(v_r)\ud V_{g_{\Sn}}\right)
\epsilon_r(\epsilon_0-\epsilon_r) \frac{\Delta_{g_{\Sn}}f(p_r)}{A_r} \\
&
-2\left(\fint_{\Sn}v_rP_\sigma(v_r)\ud V_{g_{\Sn}}\right)
\frac{|p_r^-|^2}{A_r}+C\|v_r-1\|_{H^\sigma(\Sn)}|p_r^+|(\epsilon_r-\epsilon)\\
&-C_0\|v_r-1\|^2_{H^\sigma(\Sn)}+o(1)\Big(\epsilon_r(\epsilon_r-\epsilon_0)+\|v_r-1\|_{H^\sigma(\Sn)}^2+|p_r^-|^2\Big)\Bigg\}.
\end{align*}
Thus, we follow the same lines as in \cite[p. 499]{Chen&Xu} to conclude that for any $u \in B_{r_0}(p_i)\cap L_{\beta_i+\nu}$, if  $r_0$ is sufficiently small, then
$$\frac{\ud}{\ud r} E_f[u_r]<0.$$
This gives $$E_f[u_r]<E_f[u_r]|_{r=0}=E_f[u]\leq \beta_i+\nu.$$

With these preparations, we can proceed the argument in the proof of Proposition 7.1 (iv)
in \cite[pp. 499-500]{Chen&Xu} to finish the proof of part \textit{(iv)}.
\end{proof}

\small

\vskip 48pt

\noindent X. Chen

\noindent Department of Mathematics \& IMS, Nanjing University, \\
Nanjing 210093, China\\[1mm]
Email: \textsf{xuezhangchen@nju.edu.cn}

\medskip

\noindent P. T. Ho

\noindent Department of Mathematics, Sogang University, \\
Seoul 121-742, Korea\\[1mm]
Korea Institute for Advanced Study, \\
Hoegiro 85, Seoul 02455, Korea\\[1mm]
Email: \textsf{paktungho@yahoo.com.hk, ptho@sogang.ac.kr}

\end{document}